\documentclass[11pt, a4paper]{article}
\title{Large deviations of Schramm-Loewner evolutions: A survey}
\usepackage[T1]{fontenc}

\usepackage[latin9]{inputenc}

\usepackage{mathtools}
\usepackage{lmodern}
\usepackage{amsthm,amsmath,amsfonts,amssymb}
\usepackage{graphicx}
\usepackage{enumerate,enumitem}
\usepackage{authblk}
\usepackage{cite,url}
\usepackage[normalem]{ulem}

\usepackage{multirow}
\usepackage{array}
\newcolumntype{P}[1]{>{\centering\arraybackslash}p{#1}}
\usepackage{longtable,booktabs}

\usepackage{hyperref}
\hypersetup{
    colorlinks=true, 
    linkcolor=blue, 
    urlcolor=black, 
    citecolor=black,
    linktoc=all 
}

\usepackage[activate={true,nocompatibility},final,tracking=true,kerning=true,spacing=true,factor=1100,stretch=20,shrink=20]{microtype}

\microtypecontext{spacing=nonfrench}

\addtolength{\textwidth}{2cm}
\addtolength{\hoffset}{-1cm}
\addtolength{\textheight}{2.1cm}
\addtolength{\voffset}{-1cm}

\setlength{\parindent}{0em}
\setlength{\parskip}{0.3em}
\linespread{1.07}
\allowdisplaybreaks


\setlist[enumerate]{topsep = 1ex, leftmargin=1cm, itemsep= -2pt}

\let\OLDthebibliography\thebibliography
\renewcommand\thebibliography[1]{
  \OLDthebibliography{#1}
  \setlength{\parskip}{1pt}
  \setlength{\itemsep}{2pt}
}

\setlength{\parskip}{0.25em}

\linespread{1.08}
\allowdisplaybreaks

\newtheorem{thm}{Theorem}[section]
\newtheorem{cor}[thm]{Corollary}

\newtheorem{lem}[thm]{Lemma}
\newtheorem{prop}[thm]{Proposition}

\theoremstyle{definition} 
\newtheorem{df}[thm]{Definition}
\newtheorem{ex}[thm]{Example}
\newtheorem{remark}[thm]{Remark}

\numberwithin{equation}{section}


\usepackage[font=normal]{caption}
\usepackage{floatrow}

\usepackage{mathtools}

\usepackage[dvipsnames]{xcolor}

\global\long\def\domain{D}

\global\long\def\open{O}
\global\long\def\closed{F}

\global\long\def\bpt{a} 
\global\long\def\ept{b} 

\global\long\def\np{n} 

\global\long\def\link#1#2{\{#1,#2\}}

\global\long\def\Hmin{\mc M}

\global\long\def\PartF{\mc Z}

\global\long\def\ii{\mathfrak{i}}

\global\long\def\Catalan{C}

\renewcommand{\liminf}{\varliminf}
\renewcommand{\limsup}{\varlimsup}


\newcommand{\abs}[1]{\left\lvert #1 \right \rvert}

\newcommand{\brac}[1]{\left \langle #1 \right \rangle}
\newcommand{\norm}[1]{\lVert #1 \rVert}

\newcommand{\mc}[1]{\mathcal{#1}}
\newcommand{\m}[1]{\mathbb{#1}}

\newcommand{\rar}[0]{\rightarrow}


\renewcommand\Re{\operatorname{Re}}
\renewcommand\Im{\operatorname{Im}}

\def\PSL{\operatorname{PSL}}
\def\SLE{\operatorname{SLE}}

\def\VMO{\operatorname{VMO}}

\def\a{\alpha}
\def\b{\beta}
\def\g{\gamma}

\def\d{\delta}

\def\z{\zeta}
\def\t{\theta}

\def\l{\lambda}
\def\k{\kappa}

\def\s{\sigma}

\def\O{\Omega}
\def\vare{\varepsilon}
\def\HH{{\mathbb H}}
\def\Chat{\hat{\m{C}}}

\def\diam{{\rm diam}}


\def\dd{\mathrm{d}}


\newcommand{\ad}[1]{\overline{#1}}


\def\P{\m P}

\def\1{\mathbf{1}}


 \newcommand{\weld}{w}

 \def \1{\mathbf{1}}

\def\Id{\operatorname{Id}}

\author{Yilin Wang \\
\emph{Massachusetts Institute of Technology}\\
\protect\url{yilwang@mit.edu}}

\begin{document}



\maketitle
\begin{abstract}
These notes survey the first results on large deviations of Schramm-Loewner evolutions (SLE) with emphasis on interrelations between rate functions and applications to complex analysis. More precisely, we describe the large deviations of SLE$_\kappa$ when the $\kappa$ parameter goes to zero in the chordal and multichordal case and to infinity in the radial case. The rate functions, namely Loewner and Loewner-Kufarev energies, are closely related to the Weil-Petersson class of quasicircles and real rational functions. 
\end{abstract}
\tableofcontents

\section{Introduction} \label{sec:intro}
These notes aim to overview the first results on the large deviations of Schramm-Loewner evolutions (SLE). 
SLE is a one-parameter family,  indexed by $\k \ge 0$, of random non-self-crossing and conformally invariant curves in the plane. They are introduced by Schramm \cite{Schramm2000}  in 1999 by combining  stochastic analysis with Loewner's century-old theory \cite{Loewner23} for the evolution of planar slit domains.  When $\k > 0$, these curves are fractal, and the parameter $\kappa$ reflects the curve's roughness.
 SLEs play a central role in 2D random conformal geometry. For instance, they describe interfaces in conformally invariant systems arising from scaling limits of discrete statistical physics models, which was also Schramm's original motivation, see, e.g.,~\cite{LSW04LERWUST,Schramm:ICM,Smi:ICM,SS09GFF}. More recently, SLEs are shown to be coupled with random surfaces and provide powerful tools in the study of probabilistic Liouville quantum gravity, see, e.g.,~\cite{duplantier_PRL,Quantum_zipper,IG1,MoT}.
SLEs are also closely related to conformal field theory whose central charge is a function of $\k$, 
see, e.g., 
\cite{BB:CFTSLE, Car03,Friedrich_Werner_03,FK_CFT, Dub_SLEVir1, Peltola}.

Large deviation principle describes the probability of rare events of a given family of probability measures on an exponential scale. The formalization of the general framework of large deviation was introduced by Varadhan \cite{Var66} with many contributions by Donsker and Varadhan around the eighties.  Large deviations estimates have proved to be the crucial tool required to handle many questions in statistics, engineering, statistical mechanics, and applied probability.

In these notes, we only give a minimalist account of basic definitions and ideas from both SLE and large deviation theory, only sufficient for considering the large deviations of SLE. We by no means attempt to give a thorough reference to the background of these two theories and apologize for the omission.  
Our approach focuses on showing how large deviation consideration propels to the discovery (or rediscovery) of interesting deterministic objects from complex analysis, including Loewner energy, Loewner-Kufarev energy, Weil-Petersson quasicircles, real rational functions, foliations, etc., and leads to novel results on their interrelation.
Unlike objects considered in random conformal geometry that are often of a fractal or discrete nature, these deterministic objects, arising from the $\k \to 0+$ or $\infty$ large deviations of SLE (on which the rate function is finite), live in the continuum and are more regular. Nevertheless, we will see that the interplay between these deterministic objects are analogous to many coupling results from random conformal geometry whereas proofs are rather simple and based in analysis.
Impatient readers may skip to the last section where we summarize and compare the quantities and theorems from both random conformal geometry and the large deviation world to appreciate the similarity.   The main theorems presented here are collected from \cite{W1,W2,RW,VW1,peltola_wang,APW,VW2}. Compared to the original papers, in most of the time we choose to outline the intuition and omit proofs or only present the proof in a simple case to illustrate the idea. But we also take the opportunity to clarify some subtle points in the original papers.

\bigskip

\textbf{Acknowledgments:} I would like to thank Fredrik Viklund and an anonymous referee for helpful comments on the manuscript. These notes are written based on the lecture series that I gave at the joint webinar of Tsinghua-Peking-Beijing Normal Universities and at Random Geometry and Statistical Physics online seminars in 2020 during the Covid-19 pandemic.  I thank the organizers for the invitation and the online lecturing experience under pandemic's unusual situation and am supported by NSF grant DMS-1953945.

\subsection{Large deviation principle} \label{sec:general_LDP}
 
We first consider a simple example to illustrate the concept of large deviations.
Let $X \sim \mc N(0,\s^2)$ be a real, centered Gaussian random variable of variance $\s^2$. The density function of $X$ is given by 
$$p_{X} (x) = \frac{1}{\sqrt{2\pi \s^2}} \exp\Big(- \frac{x^2}{2 \s^2}\Big).$$
Let $\vare > 0$, $\sqrt \vare X \sim \mc N(0, \s^2 \vare)$. As $\vare \to 0+$, $\sqrt \vare X$ converges almost surely to $0$, so the probability measure $p_{\sqrt \vare X}$ on $\m R$ converges to the Dirac measure $\d_0$.
Let $M > 0$, the rare event $\{\sqrt \vare X \ge M\}$ has probability
$$\P (\sqrt \vare X \ge M) = \frac{1}{\sqrt{2 \pi \s^2 \vare}} \int_M^{\infty} \exp\Big(- \frac{x^2}{2 \s^2 \vare }\Big) \dd x.$$
To quantify how rare this event happens when $\vare \to 0+$, we have
\begin{align}\label{eq:gaussian_LDP}
\begin{split}
\vare \log \P (\sqrt \vare X \ge M)  =& \,  \vare \log \left(\frac{1}{\sqrt{2 \pi \s^2 \vare}} \int_M^{\infty} \exp\Big(- \frac{x^2}{2 \s^2 \vare}\Big) \dd x \right) \\
 = & - \frac{1}{2} \vare \log (2\pi \s^2 \vare) + \vare \log \int_M^{\infty} \exp\Big(- \frac{x^2}{2 \s^2 \vare}\Big) \dd x \\
 \xrightarrow{\vare\to 0+} & - \frac{M^2}{2\s^2} = : - I_X (M) = - \inf_{x \in [M, \infty)} I_X(x)
\end{split}
\end{align}
where $I_X (x) =  x^2 / 2\s^2$ is called the large deviation rate function of the family $\{\sqrt \vare X\}_{\vare > 0}$.

Now let us state the large deviation principle more precisely. Let $\mc X$ be a Polish space, $\mc B$ its completed Borel $\s$-algebra, $\{\mu_{\vare}\}_{\vare > 0}$ a family of probability measures on $(\mc X, \mc B)$. 
\begin{df}
 A \emph{rate function} is a lower semicontinuous mapping $I :\mc X \to
[0,\infty]$, namely, for all $\a \ge 0$, the sub-level set $\{x : I(x) \le \a \}$ is
a closed subset of $\mc X$. A \emph{good rate function} is a rate function for which all
the sub-level sets are compact subsets of $\mc X$.
\end{df}

\begin{df} \label{df:LDP}
We say that a family of probability measures $\{\mu_{\vare}\}_{\vare > 0}$ on $(\mc X, \mc B)$ satisfies the \emph{large deviation principle of rate function $I$} if for all open set $O \in \mc B$ and closed set $F \in \mc B$, 
 $$ \liminf_{\vare \to 0+} \vare \log \mu_{\vare} (O) \ge - \inf_{x \in O} I(x); \quad   \limsup_{\vare \to 0+} \vare \log \mu_{\vare} (F) \le - \inf_{x \in F} I(x). $$
 It is elementary to show that if a large deviation rate function exists then it is unique, see, e.g., \cite[Lem.\,1.1]{Dinwoodie}.
\end{df}

\begin{remark}\label{rem:continuity_set}
If $A \in \mc B$ satisfies $\inf_{x \in A^o} I(x) = \inf_{x \in \ad A} I(x)$ (we call such Borel set $A$ a \emph{continuity set} of $I$), 
then the large deviation principle gives 
$$\lim_{\vare \to 0+} \vare  \log  \mu_{\vare} (A) =- \inf_{x \in A} I(x).$$
\end{remark}

\begin{remark} \label{rem:LDP_gaussian} Using \eqref{eq:gaussian_LDP}, it is easy to show that the distribution of $\{\sqrt \vare X\}_{\vare >0}$ from the example above satisfies the large deviation principle with good rate function $I_X$. 
\end{remark}

The reader should mind carefully that large deviation results depend on the topology involved which can be a subtle point. On the other hand, it follows from the definition that the large deviation principle transfers nicely through continuous functions:

\begin{thm}[Contraction principle {\cite[Thm.\,4.2.1]{DZ10}}]\label{thm:contraction}
If $\mc X, \mc Y$ are two Polish spaces, $f: \mc X \to \mc Y$ a continuous function, and a family of probability measures  $\{\mu_\vare\}_{\vare > 0}$ on $\mc X$ satisfying the large deviation principle with good rate function $I: \mc X \to [0,\infty]$.
Let $I' : \mc Y \to [0,\infty]$ be defined as 
$$I'(y) := \inf_{x \in f^{-1}\{y\}} I(x).$$ 
Then the family of pushforward probability measures $\{f_* \mu_\vare\}_{\vare > 0}$ on $\mc Y$ satisfies the large deviation principle with good rate function $I'$.
\end{thm}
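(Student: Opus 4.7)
The plan is to verify in turn the three requirements: that $I'$ is a good rate function, the large deviation upper bound on closed sets, and the large deviation lower bound on open sets. All three should follow essentially from the continuity of $f$ combined with the hypothesis on $\{\mu_\varepsilon\}$, with minimal extra work. I adopt the convention that $\inf \emptyset = \infty$, so $I'(y) = \infty$ whenever $y \notin f(\mc X)$.

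First I would check that $I'$ is a good rate function. The key identity is that for every $\alpha \ge 0$,
\begin{equation*}
\{y \in \mc Y : I'(y) \le \alpha\} = f\bigl(\{x \in \mc X : I(x) \le \alpha\}\bigr).
\end{equation*}
The inclusion ``$\supseteq$'' is immediate from the definition of $I'$. For ``$\subseteq$'', if $I'(y) \le \alpha$ then one picks a minimizing sequence $x_n \in f^{-1}\{y\}$ with $I(x_n) \to I'(y)$; since $\{I \le \alpha+1\}$ is compact by goodness of $I$ and $I$ is lower semicontinuous, extracting a convergent subsequence $x_{n_k} \to x_*$ yields $I(x_*) \le I'(y) \le \alpha$ and $f(x_*) = y$ by continuity of $f$. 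Thus the sub-level set of $I'$ is the continuous image of a compact set, hence compact in the Hausdorff space $\mc Y$, and in particular closed, giving lower semicontinuity of $I'$.

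Next, the upper and lower LDP bounds transport across $f$ using that preimages of closed (resp.\ open) sets under a continuous map are closed (resp.\ open). For any closed $F \subseteq \mc Y$, the set $f^{-1}(F)$ is closed in $\mc X$, so the LDP for $\{\mu_\varepsilon\}$ gives
\begin{equation*}
\limsup_{\varepsilon \to 0+} \varepsilon \log (f_* \mu_\varepsilon)(F) = \limsup_{\varepsilon \to 0+} \varepsilon \log \mu_\varepsilon\bigl(f^{-1}(F)\bigr) \le - \inf_{x \in f^{-1}(F)} I(x).
\end{equation*}
The right-hand side equals $-\inf_{y \in F} I'(y)$ by the elementary identity
\begin{equation*}
\inf_{x \in f^{-1}(F)} I(x) = \inf_{y \in F} \inf_{x \in f^{-1}\{y\}} I(x) = \inf_{y \in F} I'(y),
\end{equation*}
obtained by partitioning $f^{-1}(F)$ according to the value of $f$. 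The same computation, applied with $O$ open in place of $F$ and with $\liminf$ in place of $\limsup$, yields the lower bound.

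Overall this argument is routine; the only point deserving care is the first step, where goodness of $I'$ is obtained by identifying its sub-level sets with the $f$-image of the sub-level sets of $I$ and then invoking a compactness extraction argument to handle the infimum defining $I'$. Without the goodness assumption on $I$, the infimum over $f^{-1}\{y\}$ might not be attained, and lower semicontinuity of $I'$ could fail; this is the only place in the proof where goodness (rather than mere lower semicontinuity of $I$) is used in an essential way.
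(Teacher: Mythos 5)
Your proof is correct, and since the paper gives no proof of this statement (it simply cites \cite[Thm.\,4.2.1]{DZ10}), there is nothing in the text to diverge from: your argument is essentially the standard one from that reference, identifying the sub-level sets of $I'$ with the $f$-images of the sub-level sets of $I$ via a minimizing-sequence extraction (the only place goodness of $I$ is needed), and then transporting the upper and lower bounds through $f^{-1}$ of closed and open sets together with the identity $\inf_{f^{-1}(A)} I = \inf_A I'$. No gaps.
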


One classical result, of critical importance to our discussion, is the large deviation principle of the scaled Brownian path. Let $T \in (0,\infty)$, we write
$$C^0[0,T] : = \{W : [0,T] \to \m R \,|\,  t \mapsto W_t \text{ is continuous and }W_0 = 0\}$$
and define similarly $C^0[0,\infty)$.
The \emph{Dirichlet energy} of
$W \in C^0[0,T]$  (resp. $W \in C^0[0,\infty)$) is given by
\begin{align} \label{eq_DE}
I_T(W) := \frac{1}{2} \int_{0}^{T} \abs{\frac{\dd  W_t}{\dd t}}^2 \, \dd t \quad 
\left(\text{resp. } I_\infty(W) := \frac{1}{2} \int_{0}^{\infty} \abs{\frac{\dd  W_t}{\dd t}}^2 \dd t \right)
\end{align}
 if $W$ is absolutely continuous, and set to equal $\infty$ otherwise.
 Equivalently, we can write
 \begin{equation}\label{eq:partition}
 I_T(W) = \sup \sum_{i = 0}^{k-1} \frac{(W_{t_{i+1}} - W_{t_{i}})^2}{2(t_{i+1} - t_i)},    \qquad T \in (0,\infty],
 \end{equation}
where the supremum is taken over all $k \in \m N$ and all partitions $\{0 = t_0 < t_1<\cdots <t_k\le T\}$. In fact,
note that the sum on the right-hand side of \eqref{eq:partition} is the Dirichlet energy of the linear interpolation of $W$ from its values at $(t_0, \cdots, t_k)$ which is set to be constant on $[t_k, T]$. The identity \eqref{eq:partition} then follows from the density of piecewise constant functions in $L^2 (\m R_+)$ applied to the approximation of the function $f(t) := \dd W_t/\dd t$. Notice that on any interval $[s,r]$, the constant $a$ minimizing $\int_{s}^r |f(t) - a|^2 \,\dd t$ is the average of $f$ on $[s,r]$. Therefore, the best approximating piecewise linear functions of $W$ with respect to the partition $\{0 = t_0 < t_1<\cdots <t_k\le T\}$ for the Dirichlet inner product is the linear interpolation of $W$.

\begin{thm} \label{thm:Schilder}
\textnormal{(Schilder; see, e.g.,~\cite[Ch.\,5.2]{DZ10})} 
Fix $T \in (0,\infty)$.
The family of processes $\{(\sqrt{\vare} B_t)_{t \in [0,T]}\}_{\vare > 0}$, viewed as a family of random functions in $(C^0[0,T],\norm{\cdot}_{\infty})$, satisfies the large deviation principle 
 with good rate function $I_T$.
\end{thm}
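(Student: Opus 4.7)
My plan is to verify that $I_T$ is a good rate function and then establish the two bounds of Definition~\ref{df:LDP} separately. Goodness is immediate: if $I_T(W) \le \alpha$, the Cauchy-Schwarz inequality applied to $\int_s^t \dot W_u\,\dd u$ yields $|W_t - W_s| \le \sqrt{2\alpha (t-s)}$, so the sub-level set $\{I_T \le \alpha\}$ is a uniformly $1/2$-Hölder family of paths pinned at the origin, hence compact in $(C^0[0,T],\norm{\cdot}_{\infty})$ by Arzelà-Ascoli.

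For the lower bound, I would fix an open set $O \subset C^0[0,T]$ and a path $W \in O$ with $I_T(W) < \infty$, then choose $\delta > 0$ small enough that the ball $B(W,\delta)$ lies in $O$. Since $W$ lies in the Cameron-Martin space, the Girsanov identity obtained by shifting Brownian motion by $W/\sqrt{\vare}$ gives
\[\P(\sqrt{\vare}\, B \in B(W,\delta)) \;=\; e^{-I_T(W)/\vare}\, \esp\!\left[\mathbf{1}_{\norm{\sqrt{\vare}\, B}_{\infty} < \delta}\exp\!\left(-\frac{1}{\sqrt{\vare}} \int_0^T \dot W_s\,\dd B_s\right)\right].\]
The Itô integral is a centered Gaussian of variance $2 I_T(W)$, so restricting the expectation to the event $\{|\int_0^T \dot W_s\,\dd B_s| \le R\}$ (which has probability bounded away from $0$ for large $R$) and using $\P(\norm{\sqrt{\vare}\, B}_\infty < \delta) \to 1$ yields $\vare \log \P(\sqrt{\vare}\, B \in B(W,\delta)) \ge -I_T(W) - R\sqrt{\vare} + o(1)$. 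The lower bound follows after letting $\vare \to 0+$ and taking the infimum over $W \in O$.

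The upper bound decomposes into exponential tightness plus a finite-dimensional LDP. Exponential tightness, namely $\limsup_{\vare \to 0+}\vare \log \P(\sqrt{\vare}\, B \notin K_L) \le -L$ for a suitable compact $1/2$-Hölder ball $K_L$, follows from standard Gaussian tail bounds on the modulus of continuity of $B$. For each partition $\pi = \{0 = t_0 < t_1 < \cdots < t_k \le T\}$, the projection $\pi_*(W) := (W_{t_1}, \ldots, W_{t_k})$ applied to $\sqrt{\vare}\, B$ produces a centered Gaussian vector whose family satisfies the LDP with rate function $I_\pi(x) = \sum_{i=0}^{k-1} (x_{i+1}-x_i)^2/(2(t_{i+1}-t_i))$ (with $x_0 = 0$), by the very same computation as in~\eqref{eq:gaussian_LDP}. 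Combined with exponential tightness, a Dawson-Gärtner projective-limit argument promotes these finite-dimensional LDPs to an LDP on $C^0[0,T]$ with rate function $\sup_\pi I_\pi \circ \pi_*$, which coincides with $I_T$ by the identity \eqref{eq:partition}.

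The main obstacle is precisely the passage from the finite-dimensional LDPs to the LDP on the infinite-dimensional space $C^0[0,T]$: projections alone yield upper bounds only on pre-images of closed sets in $\m R^k$, and exponential tightness is what is needed to approximate a general closed subset of $C^0[0,T]$ by such cylinder sets up to an exponentially negligible error; conversely, the lower bound rests on the Cameron-Martin identity because no fixed Gaussian reference measure on $C^0[0,T]$ absorbs the translates $B + W/\sqrt{\vare}$ into a single density computation as $\vare \to 0+$. Matching these two halves via the supremum formula \eqref{eq:partition}, which ensures that the projective and analytic descriptions of the rate function agree, is the structural heart of the argument.
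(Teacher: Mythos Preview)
The paper does not give its own proof of Schilder's theorem: Theorem~\ref{thm:Schilder} is stated with a reference to \cite[Ch.\,5.2]{DZ10}, followed by a heuristic paragraph explaining why the Dirichlet energy arises from the finite-dimensional marginals via~\eqref{eq:partition}, and a remark that a rigorous proof uses the Cameron--Martin theorem. Your outline is therefore not to be compared against a proof in the paper, but it is worth noting that it matches exactly the two ingredients the paper singles out: Cameron--Martin for the lower bound and finite-dimensional marginals for the identification of the rate function.

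Your sketch is essentially correct as a standard proof. Two small points deserve tightening. First, in the goodness argument, Arzel\`a--Ascoli gives only precompactness of the uniformly $1/2$-H\"older ball; you also need that $\{I_T\le\alpha\}$ is closed, which follows from the supremum formula~\eqref{eq:partition} (each summand is continuous in the sup norm, hence $I_T$ is lower semicontinuous). Second, your phrasing ``a Dawson--G\"artner projective-limit argument promotes these finite-dimensional LDPs to an LDP on $C^0[0,T]$'' compresses two distinct steps: Dawson--G\"artner (Theorem~\ref{thm:Dawson-Gartner}) yields the LDP on the projective limit $\varprojlim \m R^{k}$, whose topology is that of pointwise convergence, not uniform convergence; it is the exponential tightness in $(C^0[0,T],\norm{\cdot}_\infty)$ that then upgrades this to the stated LDP (via the inverse contraction principle, see \cite[Thm.\,4.2.4]{DZ10}). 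You have all the pieces, but the logical order is: finite-dimensional LDP $\Rightarrow$ weak LDP on the product space $\Rightarrow$ (with exponential tightness) full LDP on $C^0[0,T]$.
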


\begin{remark}\label{rem:path_regular_BM}
We note that Brownian path has almost surely infinite Dirichlet energy, i.e., $I_T(B) = \infty$. In fact, $W$ has finite Dirichlet energy implies that $W$ is $1/2$-H\"older, whereas Brownian motion is only a.s.  $(1/2-\d)$-H\"older for $\d >0$.
However, Schilder's theorem shows that Brownian motion singles out the Dirichlet energy which quantifies, as $\vare \to 0+$, the density of Brownian path around a deterministic function $W$. In fact, let $O_\d (W)$ denote the open ball of radius $\d$ centered at $W$ in $C^0[0,T]$. We have for $\d' >\d$, 
$O_\d (W) \subset \ad{O_\d (W)} \subset O_{\d'} (W).$
From the monotonicity of $\d \mapsto \inf_{\tilde W \in O_\d (W)} I_T(\tilde W)$, $O_\d (W)$ is a continuity set for $I_T$ with exceptions for at most countably many $\d$ (which induce a discontinuity of $\inf_{\tilde W \in O_\d (W)} I_T(\tilde W)$ in $\d$). Hence, by possibly avoiding the exceptional values of $\d$, we have
\begin{equation}\label{eq:path_neighbor}
  - \vare \log \P( \sqrt \vare B \in O_\d (W)) \xrightarrow{\vare \to 0+} \inf_{\tilde W \in O_\d (W)} I_T(\tilde W) \xrightarrow{\d \to 0+}  I_T(W).
\end{equation}
The second limit follows from the lower semicontinuity of $I_T$.
More intuitively, we write with some abuse
\begin{equation}\label{eq:heuristic_LDP}
``  \P( \sqrt \vare B \text{ stays close to }  W) \sim_{\vare \to 0+} \exp(- I_T(W) / \vare)".
\end{equation}
\end{remark}

We now give some heuristics to show that the Dirichlet energy appears naturally as the large deviation rate function of the scaled Brownian motion. 
Fix $0= t_0 < t_1 < \ldots < t_k \le T$.
The finite dimensional marginals of Brownian motion $(B_{t_0}, \ldots,  B_{t_{k}})$ gives a family of independent Gaussian random variables $(B_{t_{i+1}} - B_{t_{i}})_{0 \le i \le k-1}$ with variances $(t_{i+1} - t_i)$ respectively. Multiplying the Gaussian vector by $\sqrt \vare$, we obtain the large deviation principle  of the finite dimensional marginal with rate function 
 $W \mapsto \sum_{i = 0}^{k -1} \frac{(W_{t_{i+1}} - W_{t_{i}})^2}{2(t_{i+1} - t_i)}$ from Remark~\ref{rem:LDP_gaussian}, Theorem~\ref{thm:contraction}, and the independence of the family of increments. Approximating Brownian motion on the finite interval $[0,T]$ by its linear interpolations, it suggests that the scaled Brownian paths satisfy the large deviation principle of rate function the supremum of the rate function of all of its finite dimensional marginals which then turns out to be the Dirichlet energy by \eqref{eq:partition}.  
 
A rigorous proof of Schilder's theorem  uses the Cameron-Martin theorem which allows generalization to any abstract Wiener space. Namely, the associated family of Gaussian measures scaled  by $\sqrt \vare$ satisfies the large deviation principle with the rate function being $1/2$ times its Cameron-Martin norm. See, e.g., \cite[Thm.\,3.4.12]{DeuschelStroock}. This result applies to the \emph{Gaussian free field} (GFF), which is the generalization of Brownian motion where the time parameter belongs to a higher dimension space, and the rate function is again the Dirichlet energy (on the higher dimension space). 

Schilder's theorem also holds when $T = \infty$ using the following projective limit argument. 

\begin{df}\label{df:projective}
 A \emph{projective system} $(\mc Y_j, \pi_{ij})$ consists of Polish spaces\footnote{In fact, one may require $\mc Y_j$ to be just  Hausdorff topological spaces and $j \in J$ belong to a partially ordered, right-filtering set $(J, \le)$ which may be uncountable, see \cite[Sec.\,4.6]{DZ10}.} $\{\mc Y_j\}_{j \in \m N}$ and continuous maps $\pi_{ij} : \mc Y_j \to \mc Y_i$ such that $\pi_{jj}$ is the identity map on $\mc Y_j$ and $\pi_{ik} = \pi_{ij} \circ \pi_{jk}$ whenever $i \le j \le k$. The \emph{projective limit} of this system is the subset $$\mc X := \varprojlim \mc Y_j := \{(y_j)_{j \in \m N} \, | \, y_i = \pi_{ij} (y_j), \, \forall i \le j\} \subset \prod_{j \in \m N} \mc Y_j,$$ endowed with the induced topology by the infinite product space $\prod_{j \in \m N} \mc Y_j$.
 In particular, the canonical projection $\pi_j : \mc X \to \mc Y_j$ defined as the $j$-th coordinate map is continuous.
\end{df}
\begin{ex}\label{ex:proj_C}
The projective limit of $(C^0[0,j], \pi_{ij})$, where $\pi_{ij}$ is the restriction map from $C^0[0,j] \to C^0[0,i]$ for $i\le j$, is homeomorphic to $C^0[0,\infty)$ endowed with the topology of uniform convergence on compact sets. 
\end{ex}

\begin{thm}[Dawson-G\"artner {\cite[Thm.\,4.6.1]{DZ10}}]\label{thm:Dawson-Gartner}
Assume that $\mc X$ is the projective limit of $(\mc Y_j, \pi_{ij})$. Let $\{\mu_\vare\}_{\vare >0}$ be a family of probability measures on $\mc X$, such that for any $j \in \m N$, the probability measures $\{\mu_\vare \circ \pi_j^{-1}\}_{\vare > 0}$ on $\mc Y_j$ satisfies the large deviation principle with the good rate function $I_j$. Then $\{\mu_\vare\}_{\vare > 0}$ satisfies the large deviation principle with the good rate function
$$I ((y_j)_{j \in \m N}) = \sup_{j \in \m N} I_j(y_j), \quad (y_j)_{j \in \m N} \in \mc X.$$
\end{thm}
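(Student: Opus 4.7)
The plan is to verify that $I$ is a good rate function and then prove the two LDP bounds separately. Lower semicontinuity is immediate: each $y \mapsto I_j(\pi_j(y))$ is lsc (composition of an lsc function with a continuous one), and pointwise suprema of lsc functions are lsc. For goodness,
$$\{y \in \mc X : I(y) \le \alpha\} = \bigcap_{j \in \m N} \pi_j^{-1}\bigl(\{I_j \le \alpha\}\bigr)$$
sits inside the Tychonoff-compact product $\prod_j \{I_j \le \alpha\}$, and the compatibility relations $y_i = \pi_{ij}(y_j)$ defining $\mc X$ cut out a closed subset, so the sublevel set is compact.

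For the lower bound, fix an open $O \subset \mc X$ and $y \in O$. The projective-limit topology has a basis of cylinder sets, so one may choose $j$ and an open $U \ni \pi_j(y)$ in $\mc Y_j$ with $\pi_j^{-1}(U) \subset O$. Then $\mu_\vare(O) \ge (\mu_\vare \circ \pi_j^{-1})(U)$ and the LDP hypothesis at level $j$ yields
$$\liminf_{\vare \to 0+} \vare \log \mu_\vare(O) \ge -\inf_{z \in U} I_j(z) \ge -I_j(\pi_j(y)) \ge -I(y),$$
after which taking the infimum over $y \in O$ finishes this direction.

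For the upper bound, fix a closed $F$ and a level $\alpha < \inf_F I$; then $\Phi_\alpha := \{I \le \alpha\}$ is compact and disjoint from $F$. Each $y \in \Phi_\alpha$ lies in $F^c$, so by the cylinder basis it has a neighborhood $\pi_{j(y)}^{-1}(V_y) \subset F^c$, and by compactness finitely many such cylinders cover $\Phi_\alpha$. Setting $J$ to be the maximum of these finitely many indices and lifting each cylinder to level $J$ by pulling back along $\pi_{j(y)J}$, they merge into a single open $W \subset \mc Y_J$ with $\Phi_\alpha \subset \pi_J^{-1}(W)$ and $F \subset \pi_J^{-1}(W^c)$. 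Hence
$$\limsup_{\vare \to 0+} \vare \log \mu_\vare(F) \le \limsup_{\vare \to 0+} \vare \log (\mu_\vare \circ \pi_J^{-1})(W^c) \le -\inf_{z \in W^c} I_J(z),$$
and the proof concludes by letting $\alpha \uparrow \inf_F I$ provided one knows $\inf_{W^c} I_J \ge \alpha$.

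This last bound is the main obstacle. Applying the contraction principle (Theorem~\ref{thm:contraction}) to each continuous projection $\pi_{ij}$, together with uniqueness of rate functions, yields the consistency $I_i(z) = \inf\{I_j(w) : \pi_{ij}(w) = z\}$, which in particular gives the monotonicity of $j \mapsto I_j(\pi_j(y))$ used above. If some $z^* \in W^c$ had $I_J(z^*) < \alpha$, then for each $j \ge J$ the fiber $B_j := \pi_{Jj}^{-1}(z^*) \cap \{I_j \le \alpha\}$ would be nonempty (by consistency) and compact (by goodness of $I_j$), and the $B_j$ would form a projective system under the restricted maps $\pi_{ij}$; its projective limit is nonempty as a standard compactness argument, producing $y^* \in \mc X$ with $\pi_J(y^*) = z^*$ and $I(y^*) \le \alpha$, contradicting $\Phi_\alpha \subset \pi_J^{-1}(W)$. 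This lifting from a level-$J$ witness to a witness in the projective limit is the delicate step, and it is precisely there that the goodness hypothesis on each $I_j$ is indispensable.
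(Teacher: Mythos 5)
Your argument is correct. For the record, the paper does not prove this theorem itself: it is quoted from \cite[Thm.\,4.6.1]{DZ10}, so the natural comparison is with the standard proof there, of which your route is a mild variant. Dembo and Zeitouni derive the upper bound from the identity $\inf_{y \in F} I(y) = \sup_{j} \inf_{z \in \overline{\pi_j(F)}} I_j(z)$ for closed $F$, applied together with the level-$j$ upper bounds; you instead cover the compact sublevel set $\{I \le \alpha\}$ by finitely many cylinders contained in the complement of $F$, merge them at a single level $J$ (legitimate because $\m N$ is directed, so the sets $\pi_j^{-1}(U)$ do form a base), and exclude a point $z^* \in W^c$ with $I_J(z^*) < \alpha$ by lifting it through the nonempty compact fibers $B_j$. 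Both arguments rest on exactly the ingredients you isolate: the consistency $I_i(z) = \inf\{I_j(w) : \pi_{ij}(w) = z\}$ obtained from the contraction principle (Theorem~\ref{thm:contraction}) plus uniqueness of rate functions, goodness of each $I_j$, and the nonemptiness of an inverse limit of nonempty compacta; and you correctly flag the lifting step as the place where goodness is indispensable. Two details are worth writing out to make the proof airtight: the thread produced by the compactness argument is indexed by $j \ge J$ and should be completed to an element of $\mc X$ by setting $y_i := \pi_{iJ}(w_J)$ for $i < J$, with $I_i(y_i) \le I_J(w_J) \le \alpha$ by consistency, so that indeed $I(y^*) \le \alpha$; and in the degenerate case $\{I \le \alpha\} = \emptyset$ one takes $W = \emptyset$ and an arbitrary $J$, the same lifting argument then showing $\inf_{\mc Y_J} I_J \ge \alpha$, so the bound is unaffected.
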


Example~\ref{ex:proj_C}, Theorems~\ref{thm:contraction} and \ref{thm:Dawson-Gartner} imply the following Schilder's theorem on the infinite time interval.

\begin{cor}
The family of processes $\{(\sqrt{\vare} B_t)_{t \ge 0}\}_{\vare > 0}$ satisfies the large deviation principle in $C^0[0,\infty)$ endowed with the topology of uniform convergence on compact sets
 with good rate function $I_\infty$.
\end{cor}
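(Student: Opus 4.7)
The plan is to realize $C^0[0,\infty)$, endowed with the topology of uniform convergence on compacta, as the projective limit $\varprojlim_j C^0[0,j]$ of Example~\ref{ex:proj_C}, and then apply the Dawson-G\"artner theorem (Theorem~\ref{thm:Dawson-Gartner}) with the finite-time Schilder theorem (Theorem~\ref{thm:Schilder}) supplying the marginal large deviation principles.

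First, for each $j \in \m N$, the canonical projection $\pi_j : C^0[0,\infty) \to C^0[0,j]$ is simply the restriction $W \mapsto W|_{[0,j]}$. Its pushforward of the law of $(\sqrt \vare B_t)_{t \ge 0}$ is exactly the law of $(\sqrt \vare B_t)_{t \in [0,j]}$, which by Theorem~\ref{thm:Schilder} satisfies the large deviation principle on $C^0[0,j]$ with good rate function $I_j$. Feeding these finite-time LDPs into Theorem~\ref{thm:Dawson-Gartner} yields the large deviation principle on the projective limit, with good rate function
$$I(W) = \sup_{j \in \m N} I_j(W|_{[0,j]}), \quad W \in \varprojlim_j C^0[0,j] \cong C^0[0,\infty).$$
The homeomorphism in Example~\ref{ex:proj_C} transfers this LDP to one on $C^0[0,\infty)$; the contraction principle (Theorem~\ref{thm:contraction}) applied to the homeomorphism and its inverse takes care of this transfer formally.

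It remains to identify $I$ with $I_\infty$. By the variational formula \eqref{eq:partition}, $I_j(W|_{[0,j]})$ equals the supremum of $\sum_{i=0}^{k-1} (W_{t_{i+1}} - W_{t_i})^2/(2(t_{i+1}-t_i))$ over all partitions $0 = t_0 < t_1 < \cdots < t_k \le j$. Since any admissible partition for $i$ is also admissible for $j \ge i$, the quantity is monotone non-decreasing in $j$; as $j \to \infty$ the collection of admissible partitions exhausts all finite partitions of $[0,\infty)$, and one further appeal to \eqref{eq:partition} on $[0,\infty)$ gives $\sup_j I_j(W|_{[0,j]}) = I_\infty(W)$.

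The main potential obstacle is essentially absent here: every ingredient has been set up in the preceding paragraphs, and the only step warranting a moment's care is the identification $\sup_j I_j(\,\cdot\,|_{[0,j]}) = I_\infty$, which is immediate from the monotonicity of the set of admissible partitions in $j$ together with the variational characterization \eqref{eq:partition}. The rest of the argument is a bookkeeping assembly of Example~\ref{ex:proj_C} and Theorems~\ref{thm:Schilder}, \ref{thm:contraction}, and~\ref{thm:Dawson-Gartner}.
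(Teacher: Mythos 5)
Your proposal is correct and follows essentially the same route as the paper, which obtains the corollary by combining Example~\ref{ex:proj_C}, the finite-time Schilder theorem, the contraction principle, and the Dawson-G\"artner theorem; your identification of $\sup_j I_j(W|_{[0,j]})$ with $I_\infty(W)$ via the partition formula \eqref{eq:partition} is the right way to finish.
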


\subsection{Chordal Loewner chain}\label{sec:chordal_Loewner}

The description of SLE is based on the Loewner transform, a deterministic procedure that encodes a non-self-crossing curve on a 2-D domain into a driving function.
In this survey, we use two types of Loewner chain:  the \emph{chordal Loewner chain} in $(\domain;\bpt,\ept)$, where $\domain$ is a simply connected domain with two distinct boundary points  $\bpt$ (starting point) and $\ept$ (target point); and later in Section~\ref{sec:radial_infty}, the \emph{radial Loewner chain} in $\domain$ targeting at an interior point.
The definition is invariant under conformal maps (namely, biholomorphic functions). Hence, by Riemann mapping theorem, it suffices to describe in the chordal case when $(\domain;\bpt,\ept) = (\m H; 0,\infty)$, and in the radial case when $\domain = \m D$, targeting at $0$. 
Throughout the article, $\m H = \{z\in \m C : \Im (z) > 0\}$ is the upper halfplane, $\m H^*  = \{z\in \m C : \Im (z) < 0\}$ is the lower halfplane, $\m D = \{z \in \m C : |z| < 1\}$ is the unit disk, and $\m D^* =\{z \in \m C : |z| > 1\} \cup \{\infty\}$.

We say that $\g$ is a {\em simple curve} in $(D;\bpt,\ept)$, if $D$ is a simply connected domain, $\bpt, \ept$ are two distinct prime ends of  $D$, and $\g$ has a continuous and injective parametrization $(0,T) \to D$ such that $\g(t) \to \bpt$ as $t \to 0$ and $\g(t) \to c \in D \cup \{\ept\}$ as $t \to T$. If $c = \ept$ then we say $\g$ is a {\em chord} in $(D;\bpt,\ept)$.

Let us start with this chordal Loewner description of a simple curve $\gamma$ in $(\m H; 0,\infty)$. We parameterize the curve by the halfplane capacity. More precisely, $\g$ is continuously parametrized by $[0,T)$, where $T \in (0, \infty]$ with $\g_0 = 0$, $\g_t \to \m H \cup \{\infty\}$ as $t \to T$,
in the way such that for all $t \in [0,T)$,
the unique conformal map $g_t$ from $\HH \smallsetminus \gamma_{[0,t]}$ onto $\HH$ with the expansion at infinity $g_t (z)=  z + o(1)$ satisfies
\begin{equation}\label{eq:hydro}
g_t (z)= z + \frac{2t}{ z} + o\left( \frac{1}{z}\right).
\end{equation}
The coefficient $2t$ is the \emph{halfplane capacity} of $\g_{[0,t]}$.
It is easy to  show that $g_t$ can be extended by continuity to the boundary point $\gamma_t$ and that the real-valued function $W_t := g_t ( \gamma_t)$ is continuous with $W_0 = 0$ (i.e., $W \in C^0[0,T)$). This function $W$ is called the \emph{driving function} of $\g$ and $2T$ the total capacity of $\g$.

\begin{remark}\label{rem:pathological}
There are chords with finite total capacity. Namely, $T <\infty$ and $\g_t \to \infty$ as $t \to T$. It happens only when $\g$ goes to infinity while staying close to the real line \cite[Thm.\,1]{LLN_capacity}. 
\end{remark}

Conversely, the \emph{chordal Loewner chain} in $(\m H; 0,\infty)$ driven by a continuous real-valued function $W \in C^0[0,T)$ is the family of conformal maps $(g_t)_{t \in [0,T)}$, obtained
 by solving the Loewner equation for each $z \in \ad{\m H}$,
\begin{align} \label{eqn:LE}
\partial_{t} g_t(z) = \frac{2}{g_t(z)-W_t}  
\qquad \text{with initial condition} \qquad  g_0(z)=z. 
\end{align}
In fact, the solution $t \mapsto g_t(z)$ to~\eqref{eqn:LE} is defined up to the swallowing time of $z$
\begin{align*}
\tau(z) := \sup\{ t \geq 0 \, |\, \inf_{s\in[0,t]}|g_{s}(z)-W_{s}|>0\},
\end{align*}
 which is set to $0$ when $z = 0$. We obtain an increasing family of $\ad{\m H}$-hulls $(K_t := \{z \in \ad{\m H} \, |\,  \tau(z) \leq t\})_{t  \in [0,T)}$ (a compact subset $K \subset \ad{\m H}$ is called a $\ad{\m H}$\emph{-hull} if  $\ad{K \cap \m H} = K$ and $\m H \smallsetminus K$ is simply connected).
Moreover, the solution $g_t$ of \eqref{eqn:LE} restricted to $\m H \smallsetminus K_t$ is the unique conformal map from $\m H \smallsetminus K_t$ onto $\m H$ that satisfies the expansion \eqref{eq:hydro}. See, e.g., \cite[Sec.\,4]{Law05} or \cite[Sec.\,2.2]{WW_St_Flour}.
Clearly $K_t$ and $g_t$ uniquely determine each other. We list a few properties of the Loewner chain.

\begin{itemize}[itemsep = 1pt]
    \item If $W$ is the driving function of a simple chord $\g$ in $(\m H; 0, \infty)$, we have $K_t = \g_{[0,t]}$, and the solution $g_t$ of \eqref{eqn:LE} is exactly the conformal map constructed from $\g$ as in \eqref{eq:hydro}.
\item The  imaginary axis $\ii \m R_+$ is driven by $W \equiv 0$ defined on $\m R_+$.
\item (\emph{Additivity}) Let $(K_t)_{t  \in [0,T)}$ be the family of hulls generated by the driving function $W$. Fix $s > 0$, the driving function generating $(\ad{g_s(K_{t+s} \smallsetminus K_s)} - W_s)_{t \in [0,T-s)}$ is $ t \mapsto W_{s+t} - W_s$.
\item (\emph{Scaling}) Fix $\l >0$, the driving function generating the scaled and capacity-reparameterized family of hulls $(\l K_{\l^{-2}t})_{t  \in [0, \l^2 T)}$ is  $t\mapsto \l W _{\l^{-2}t}$.
This property implies that the driving function of the ray $\{z \in \m H \,|\, \arg z = \alpha\}$ is $W_t = C \sqrt t$, where $C$ only depends on  $\alpha \in (0,\pi)$.
      \item Not every continuous driving function arises from a simple chord. It is unknown how to characterize analytically the class of functions which generate simple curves. Sufficient conditions exist, such as when $W$ is $1/2$-H\"older with H\"older norm strictly less than $4$ \cite{Marshall_Rohde,Lind_sharp}.
\end{itemize}

\subsection{Chordal SLE}\label{sec:SLE}
  We now very briefly review the definition and relevant properties of chordal SLE. 
 For further SLE background, we refer the readers to, e.g., \cite{Law05,WW_St_Flour}.
 The \emph{chordal Schramm-Loewner evolution} of parameter $\k$ in $({\m H}; 0, \infty )$, denoted by $\SLE_\k$, is the process of hulls $(K_t)_{t\ge 0}$ generated by $\sqrt \k B$ via the Loewner transform, where $B$ is the standard Brownian motion and $\k \ge 0$. 
 Rohde and Schramm showed that $\SLE_\k$ is almost surely traced out by a continuous non-self-crossing curve $\g^\k$, called the \emph{trace} of $\SLE_\k$, such that $\m H \smallsetminus K_t$ is the unbounded connected component of $\m H \smallsetminus \g^\k_{[0,t]}$ for all $t \ge 0$. 
 Moreover, SLE traces exhibit phase transitions depending on the value of $\k$:
\begin{thm}[\!\!\cite{Rohde_Schramm,LSW04LERWUST}]\label{thm_transient_sle}
The following statements hold almost surely: 
For $\k\in [0,4]$, $\g^\k$ is a chord. For $\k\in (4,8)$, $\g^\k$ is a self-touching curve. For $\k\in [8,\infty)$, $\g^\k$ is a space-filling curve.
Moreover, for all $\k \ge 0$, $\g^\k$ goes to $\infty$ as $t \rar \infty$. 
\end{thm}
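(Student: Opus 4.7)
The plan follows Rohde and Schramm's strategy, reducing most assertions to an analysis of Bessel-type SDEs derived from the Loewner equation. Conditional on the (highly nontrivial) existence of the continuous trace $\gamma^\k$, the trichotomy $\k \le 4$, $\k \in (4,8)$, $\k \ge 8$ is read off from the hitting behaviour of a one-parameter family of Bessel processes.

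The central object is the auxiliary process $Z_t(z) := g_t(z) - W_t = g_t(z) - \sqrt{\k}\, B_t$ for $z \in \ad{\m H}$. For a real point $x \neq 0$, \eqref{eqn:LE} and It\^o give
\begin{equation*}
    \dd Z_t(x) = \frac{2}{Z_t(x)}\, \dd t - \sqrt{\k}\, \dd B_t,
\end{equation*}
so $Y_t := Z_t(x)/\sqrt{\k}$ is, after a deterministic time-change, a Bessel process of dimension $d(\k) = 1 + 4/\k$, and the swallowing time $\tau(x)$ coincides with the first hitting time of $0$ by $Z_t(x)$. Standard Bessel facts then yield the key dichotomy: $\tau(x) = \infty$ a.s.\ when $d \ge 2$ (i.e.\ $\k \le 4$), and $\tau(x) < \infty$ a.s.\ when $d < 2$ (i.e.\ $\k > 4$). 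Since the trace cannot re-enter a swallowed region, this already implies that for $\k \le 4$ the curve $\g^\k$ does not hit $\m R \setminus \{0\}$, hence is simple (a chord) by combining with the Markov property at the tip; and for $\k > 4$ the curve must touch itself, since every real point is eventually absorbed into $K_t$.

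To separate $\k \in (4,8)$ from $\k \ge 8$ I would run the same computation for an interior point $z = x + \ii y \in \m H$. Writing $Z_t(z)$ in polar form $Z_t = R_t e^{\ii \Theta_t}$ and applying It\^o yields a coupled SDE where $\Theta_t$ satisfies an autonomous (time-changed) diffusion on $(0,\pi)$ with drift coefficient depending only on $\k$. A boundary-classification computation for this angular diffusion shows that $\Theta_t$ exits $(0,\pi)$ almost surely iff $\k \ge 8$, which is equivalent to $z$ being swallowed. For $\k \ge 8$, Fubini implies $\gamma^\k$ has positive Lebesgue measure in every bounded open set, and combined with a zero--one argument and the scale-invariance of $\SLE_\k$ one upgrades this to the space-filling property. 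For $\k \in (4,8)$, the probability $\P(z \in K_\infty)$ is strictly less than $1$, so $\m H \setminus \g^\k$ is a.s.\ nonempty, giving the intermediate self-touching regime.

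Transience ($\gamma^\k_t \to \infty$) follows from target-invariance: the image of chordal $\SLE_\k$ in $(\m H; 0,\infty)$ under the M\"obius inversion $z \mapsto -1/z$ is, as a curve in $(\m H;\infty,0)$, again (up to time reparametrization) a chordal $\SLE_\k$ process, so the non-swallowing / non-self-intersection statements at $0$ dualize to statements about the behaviour near $\infty$; alternatively, one invokes the law of iterated logarithm on $W_t = \sqrt{\k}\, B_t$ together with the additivity property in Section~\ref{sec:chordal_Loewner} to rule out accumulation at a finite point. The main obstacle, and the hardest step in the original proof, is not any of the phase-transition arguments above but rather establishing in the first place that the solutions to~\eqref{eqn:LE} with driving function $\sqrt{\k}\, B$ are generated by a continuous curve: this requires Rohde and Schramm's delicate moment estimates on $|g_t'(z)|$ under the reverse flow, a quantitative control of harmonic measure at the tip, and a Kolmogorov-type continuity criterion, with an extra input from \cite{LSW04LERWUST} at $\k = 8$ where the estimates degenerate and continuity is instead obtained as the scaling limit of the uniform spanning tree Peano curve.
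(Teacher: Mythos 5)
This theorem is quoted in the survey from \cite{Rohde_Schramm,LSW04LERWUST} without proof, so the only meaningful comparison is with the standard argument of Rohde--Schramm, which your sketch does follow in outline (Bessel analysis of $g_t(x)-W_t$ for boundary points, an angular diffusion for interior points, and the existence/continuity of the trace correctly identified as the hard step, with $\kappa=8$ supplied by the UST result of \cite{LSW04LERWUST}). However, your interior-point step contains a genuine error: the criterion separating $(4,8)$ from $[8,\infty)$ is inverted and attached to the wrong event. Writing $Z_t(z)=g_t(z)-W_t=R_te^{\mathfrak{i}\Theta_t}$, It\^o's formula and the time change $\dd s=\kappa\sin^2\Theta_t\,R_t^{-2}\,\dd t$ give an autonomous diffusion with drift $(1-4/\kappa)\cot\Theta$, which near the endpoints behaves like a Bessel process of dimension $3-8/\kappa$; hence it exits $(0,\pi)$ almost surely \emph{iff} $\kappa<8$, not $\kappa\ge 8$. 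Moreover, exit of this diffusion is not equivalent to ``$z$ being swallowed'': for every $\kappa>4$ each fixed interior point is a.s.\ swallowed in finite time, and what the angular behaviour distinguishes is whether $z$ lies on the trace (the angle fails to converge, the recurrent case $\kappa\ge8$) or is enclosed by a loop without being hit (the angle converges to $0$ or $\pi$, the case $\kappa<8$). Consequently your claim that $\mathbb{P}(z\in K_\infty)<1$ for $\kappa\in(4,8)$ is false; the correct statement is $\mathbb{P}(z\in\gamma^\kappa)=0$, and it is this, via Fubini, that shows the trace misses a set of full measure and so is not space-filling. Likewise for $\kappa\ge8$, the non-exit statement gives that each fixed $z$ is a.s.\ on the trace, hence a.e.\ point is visited; upgrading ``almost every point'' to ``every point'' uses the continuity of the trace and a further argument as in Rohde--Schramm, not merely a zero--one law and scaling.

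The transience step is also not acceptable as proposed: asserting that the image of chordal $\SLE_\kappa$ in $(\mathbb{H};0,\infty)$ under $z\mapsto-1/z$ is again chordal $\SLE_\kappa$ is precisely the reversibility theorem (Theorem~\ref{thm:reversibility_SLE} in this survey, due to Zhan, and to Miller--Sheffield for $\kappa\in(4,8]$), which is far deeper than transience, was proved years after \cite{Rohde_Schramm}, and is not known at all for $\kappa>8$ in the whole-plane setting discussed later in the survey; using it here is anachronistic and essentially circular. The fallback ``law of the iterated logarithm plus additivity'' remark is too vague to rule out accumulation at a finite boundary point: in \cite{Rohde_Schramm} transience is a separate estimate-based theorem (with $\kappa=8$ again inherited from the UST convergence). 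The $\kappa\le4$ part of your sketch (Bessel dimension $1+4/\kappa\ge2$, no swallowing of $\mathbb{R}\smallsetminus\{0\}$, simplicity via the domain Markov property) and the emphasis on the trace-existence estimates are correct and consistent with the cited proofs.
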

 
 The SLEs have attracted a great deal of attention during the last 20 years, as they are the first construction of random self-avoiding paths 
and describe the interfaces in the scaling limit of various statistical mechanics models, e.g.,  
 \begin{itemize}[itemsep=-2pt]
\item $\SLE_2$ $\leftrightarrow$ Loop-erased random walk \cite{LSW04LERWUST};
\item $\SLE_{8/3}$ $\leftrightarrow$ Self-avoiding walk (conjecture);
\item $\SLE_3$ $\leftrightarrow$ Critical Ising model interface \cite{Smirnov10};
\item $\SLE_4$ $\leftrightarrow$ Level line of the Gaussian free field \cite{SS09GFF};
\item $\SLE_6$ $\leftrightarrow$ Critical independent percolation interface \cite{Smi:ICM}; 
\item $\SLE_8$ $\leftrightarrow$ Contour line of uniform spanning tree \cite{LSW04LERWUST}.
\end{itemize}

The reason that SLE curves describe those interfaces arising from conformally invariant systems is that they are the unique random Loewner chain that are \emph{scaling}-\emph{invariant} and satisfy the \emph{domain Markov property}. More precisely, for $\l >0$, the law of SLE is invariant under the scaling transformation
$$ (K_t)_{t\geq 0}  \mapsto (K^\l_t  := \l K_{\l^{-2}t})_{t\geq 0}$$ 
 and for all $s\in[0,\infty)$, if one defines
$K_t^{(s)} =  \ad{g_s(K_{s+t}\smallsetminus K_s)} - W_s$,
 where $W$ drives $(K_t)$, 
then $(K_t^{(s)})_{t\geq 0}$ has the same distribution as $(K_t)_{t \geq 0}$ and is independent of $\s(W_r: r\leq s)$. 
In fact, these two properties on $(K_t)$ translate into the properties of the driving function $W$: having independent stationary increments (i.e., being a L\'evy process) and being invariant under the transformation $W_t \leadsto \l W_{\l^{-2} t}$.  Multiples of Brownian motions are the only continuous processes satisfying these two properties. 

The scaling-invariance of SLE in $(\m H; 0,\infty)$ makes it possible to define $\SLE$ in other simply connected domains $(\domain; \bpt, \ept)$ as the preimage of SLE in $(\m H; 0, \infty )$ by a conformal map $\varphi : \domain \to \m H $ sending respectively the prime ends $\bpt, \ept$ to $0, \infty$, since another choice of $\tilde \varphi$ equals $\l \varphi$ for some $\l > 0$.  The chordal SLE is therefore conformally invariant from the definition.

\begin{remark}\label{rem:geodesic}
The $\SLE_0$ in $(\m H; 0,\infty)$ is simply the Loewner chain driven by $W \equiv 0$, namely the imaginary axis $\ii \m R_+$. It implies that the $\SLE_0$ in $(\domain; \bpt, \ept)$ equals $\varphi^{-1} (\ii \m R_+)$ (i.e., the \emph{hyperbolic geodesic} in $\domain$ connecting $\bpt$ and $\ept$).
\end{remark}

\section{Large deviations of chordal $\SLE_{0+}$} \label{sec:LDP_single}
\subsection{Chordal Loewner energy and large deviations}
To describe the large deviations of chordal $\SLE_{0+}$ (see Theorem~\ref{thm:main_single_LDP}), 
let us first specify the topology on the space of simple chords that we consider.

\begin{df} \label{defn: Hausdorff}
The \emph{Hausdorff distance} $d_h$ of two compact subsets $\closed_1, \closed_2 \subset \ad{\m D}$ is defined as
\begin{align*}
d_h(\closed_1, \closed_2) := \inf \Big\{ \vare \geq 0 \; \Big| \; \closed_1 \subset \bigcup_{x \in \closed_2} \ad{B}_\vare(x) \; \text{ and } \; \closed_2 \subset \bigcup_{x \in \closed_1} \ad{B}_\vare(x) \Big\} ,
\end{align*}
where $B_\vare(x)$ denotes the Euclidean ball  of radius $\vare$ centered at $x \in \ad{\m D}$.
We then define the Hausdorff metric on the set of closed subsets of a Jordan domain\footnote{When $\domain$ is bounded by a Jordan curve, Carath\'eodory theorem implies that a uniformizing conformal map $\domain \to \m D$ extends to a homeomorphism between the closures $\ad \domain \to \ad{\m D}$.} $\domain$ via the pullback by a uniformizing conformal map $\domain \to \m D$. 
Although the metric depends on the choice of the conformal map, the topology induced by $d_h$ is canonical, as conformal automorphisms of $\m D$ are fractional linear functions (i.e., M\"obius transformations) which are uniformly continuous on $\ad {\m D}$. 
\end{df}

\begin{df}\label{df:chordalLE}
The \emph{Loewner energy} of a simple curve $\g$ is defined as the Dirichlet energy~\eqref{eq_DE} of its driving function,
\begin{align} \label{eq_LE}
I_{\domain;\bpt, \ept} (\g)
:= I_{\m H; 0 ,\infty}(\varphi (\g)) 
: = I_{T} (W),
\end{align}
where $\varphi$ is any conformal map from $D$ to $\m H$ 
such that $\varphi(\bpt) = 0$ and $\varphi(\ept) = \infty$,
$W$ is the driving function of $\varphi (\g)$, $2T$ is the total capacity of $\varphi(\g)$, and $I_{T} (W)$ is the Dirichlet energy as defined in \eqref{eq_DE} and \eqref{eq:partition}. 
\end{df}
Note that the definition of $I_{\domain;\bpt, \ept} (\g)$ does not depend on the choice of $\varphi$ either. In fact, two choices differ only by post-composing by a scaling factor. From the scaling property of the Loewner driving function, $W$ changes to $t\mapsto \l W_{\l^{-2} t}$ for some $\l > 0$, which has the same Dirichlet energy as $W$.
\begin{remark}\label{rem:minimizer_geodesic}
The Loewner energy $I_{\domain;\bpt, \ept} (\g)$ is non-negative and minimized by the 
hyperbolic geodesic $\eta$ since the driving function of $\varphi(\eta)$ is the constant function $W \equiv 0$ and $I_{\domain;\bpt, \ept}(\eta) = 0$.
\end{remark}

\begin{thm}[]\label{thm:single_qchord}
If $\g$ is a chord and $I_{D;a,b} (\g) < \infty$, then $T = \infty$ (namely, $\g$ has infinite total capacity).  If a driving function $W$ defined on $\m R_+$ satisfies $I_{\infty} (W) <\infty$, then $W$ generates a chord $\g$ in $(\m H; 0,\infty)$. Moreover, $\g$ is a \emph{quasichord}, i.e., the image of $\ii \m R_+$ by a quasiconformal homeomorphism $\m H \to \m H$ fixing $0$ and $\infty$.
\end{thm}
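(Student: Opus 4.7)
The plan is to prove parts 2 and 3 by a partition-and-patching argument, and then to derive part 1 as a corollary. The starting point is the Cauchy--Schwarz bound
\begin{equation*}
|W_t - W_s|^2 \le (t-s) \int_s^t |W'_u|^2 \, \dd u,
\end{equation*}
which forces $W$ to be $1/2$-Hölder with seminorm on $[s,t]$ bounded by the square root of the local energy. Since $I_\infty(W) < \infty$, absolute continuity of $u \mapsto \int_0^u |W'|^2 \, \dd u$ lets me pick a finite partition $0 = t_0 < t_1 < \cdots < t_n$ (with the tail $[t_n,\infty)$ included as the last piece) so that each piece carries Dirichlet energy strictly less than $16 - \delta$ for some $\delta > 0$; equivalently, the $1/2$-Hölder norm of the shifted driving function on each piece is strictly less than $4$.

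Next I would invoke the Marshall--Rohde--Lind theorem on each piece: by the additivity and scaling properties of the chordal Loewner chain recalled in Section~\ref{sec:chordal_Loewner}, each piece is driven by a function of Hölder norm below $4$ and therefore generates a quasislit, i.e., a simple arc that is the image of a vertical slit by a quasiconformal self-map of $\m H$ fixing $0$ and $\infty$. Concatenating the pieces through the inverse maps $g_{t_i}^{-1}$, the curve $\g_{[0,t_n]}$ is a finite composition of quasiconformal maps applied to $\ii[0,t_n]$, hence a quasiarc. On the tail piece the hulls keep growing since $\mathrm{hcap}(K_t) = 2t \to \infty$, so the trace must escape to infinity; together with the previous step this proves part 2.

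The main obstacle is part 3: controlling the dilatation uniformly as the partition is refined. Because compositions of quasiconformal maps multiply their dilatations, a naïve bound blows up over many pieces. I would circumvent this by further subdividing so that each piece carries very small energy $\eta_i = \int_{t_{i-1}}^{t_i} |W'|^2$, with $\sum_i \eta_i = I_\infty(W)$, and then by invoking the sharp quantitative form of Marshall--Rohde--Lind (quadratic dependence of the dilatation on the Hölder norm near the identity) to obtain a dilatation bound $K_i \le 1 + C \eta_i$ on the $i$-th piece. The total dilatation is then bounded by $\prod_i (1+C\eta_i) \le \exp(C\, I_\infty(W))$, uniformly in the partition. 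A local uniform limit of the finite-stage quasiconformal extensions then yields a single global quasiconformal self-map of $\m H$ sending $\ii \m R_+$ onto $\g$.

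Finally, part 1 follows by contradiction. If $\g$ is a chord with $I_{D;a,b}(\g) < \infty$ but finite total capacity $2T < \infty$, applying the partition argument to $W$ on the compact interval $[0,T]$ produces a \emph{finite} composition of quasiconformal maps whose image $\g_{[0,T]}$ is a bounded quasiarc ending at a finite point $\g_T \in \ad{\m H}$, in contradiction with the condition $\g_t \to \infty$ as $t \to T$ required by Remark~\ref{rem:pathological} for $\g$ to be a chord in $(\m H;0,\infty)$ with finite total capacity.
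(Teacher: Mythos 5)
Your strategy for parts 2--3 is essentially the one behind the proof the paper cites (\cite[Prop.~2.1]{W1}): cut the driving function into finitely many pieces of small energy, hence small $1/2$-H\"older seminorm, apply the Marshall--Rohde--Lind theorem to each piece, and patch. But the ``main obstacle'' you single out for part 3 is misdiagnosed, and your fix leans on an estimate you do not justify. Because $I_\infty(W)<\infty$, one fixed finite partition suffices: at most roughly $I_\infty(W)/8$ pieces below the threshold (note the factor $1/2$ in the paper's normalization of the energy, so the threshold is $8$, not $16$) plus a single small-energy tail. Each piece has dilatation bounded in terms of its H\"older seminorm alone, uniformly in the length of its time interval, so the naive product over finitely many pieces is finite and, crucially, uniform in the horizon $T$ of the tail --- which is exactly what your locally uniform limiting argument needs. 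No refinement of the partition is required, and the claimed ``sharp quantitative form'' $K_i\le 1+C\eta_i$ (quadratic in the H\"older norm) is not part of the Marshall--Rohde--Lind statements and is given no proof here; as written, your part 3 hinges on it unnecessarily. The second soft spot is the patching itself: $g_{t_i}^{-1}$ is a conformal map of $\m H$ onto a slit half-plane, not a quasiconformal self-map of $\m H$, so $\g_{[0,t_n]}$ is not literally ``a finite composition of quasiconformal maps applied to a segment''. What is actually needed is the lemma that a concatenation of quasislits is again a quasislit (e.g.\ via the characterization of quasislits as quasiarcs meeting $\m R$ non-tangentially), and this is precisely the content hidden in the citation to \cite[Prop.~2.1]{W1}. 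Relatedly, transience of the trace ($\g_t\to\infty$) comes from this uniform quasi-control (or from the cone containment of finite-energy chords), not merely from $\mathrm{hcap}(K_t)\to\infty$, which by itself only gives unboundedness.

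For part 1 you take a genuinely different route from the paper: the paper deduces $T=\infty$ from the cone containment \cite[Prop.~3.1]{W1} combined with Remark~\ref{rem:pathological}, whereas you argue that finite energy on a finite-capacity interval forces $W$ to extend $1/2$-H\"older continuously to $[0,T]$, hence the trace stays bounded, contradicting $\g_t\to\infty$ as $t\to T$. This is valid, and in fact more elementary than you make it: you do not need the quasislit machinery there, since a bounded continuous driving function on a compact time interval already produces bounded hulls (the pathological finite-capacity chords of Remark~\ref{rem:pathological} have driving functions blowing up as $t\to T$). The trade-off is that the paper's argument also yields the geometric cone information, which your contradiction argument does not.
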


 A \emph{quasiconformal} map is a weakly differentiable homeomorphism that maps infinitesimal circles to infinitesimal ellipses with uniformly bounded eccentricity.  For a brief introduction to the theory of quasiconformal maps, readers might refer to \cite[Ch.\,1]{lehto2012univalent}.

\begin{proof}
 From \cite[Prop.\,3.1]{W1}, if a chord in $(\m H; 0, \infty)$ has finite energy, then there is $\t \in (0, \pi/2)$, such that $\g$ is contained in the cone $\{z \in \m H \,|\, \t \le \arg z \le \pi - \t\}$. This implies that the total capacity of $\g$ is infinite by Remark~\ref{rem:pathological}.
The second claim is proved in \cite[Prop.~2.1]{W1}, which is essentially a consequence of the fact that $1/2$-H\"older driving function with small H\"older norm generates quasichords.
\end{proof}

Theorem~\ref{thm:single_qchord} motivates us to consider the space $\mc X(\domain;\bpt;\ept)$ of unparametrized simple chords with infinite total capacity in $(\domain;\bpt;\ept)$. We endow this space with the relative topology induced by the Hausdorff metric.  
Theorem~\ref{thm:Schilder} suggests that the Loewner energy is the large deviation rate function of $\{\SLE_\k\}_{\k > 0}$, with $\vare = \k$. Indeed, the following result is proved in \cite{peltola_wang} which strengthens a similar result in \cite{W1}.  As we are interested in the $0+$ limit, we only consider $\k \le 4$ where the \emph{trace} $\g^\k$ of $\SLE_\k$ is almost surely in $\mc X(\domain;\bpt,\ept)$.   

\begin{thm}[\!\! {\cite[Thm.\,1.5]{peltola_wang}}]
   \label{thm:main_single_LDP}
The family of distributions $\{\m P^\k\}_{\k > 0}$ on $\mc X (\domain;\bpt,\ept)$  of the chordal $\SLE_\k$ curves
satisfies the large deviation principle with good rate function $I_{\domain;\bpt, \ept}$. That is, for any open set $\open$ and closed set $\closed$  of $\mc X(\domain;\bpt,\ept)$, we have
\begin{align*}
\liminf_{\k \to 0+} \k \log  \m P^\k [\g^\k \in \open ] 
\geq \; & - \inf_{\g \in \open}  I_{\domain;\bpt, \ept}(\g) , \\
\limsup_{\k \to 0+} \k \log  \m P^\k [ \g^\k \in \closed ] 
\leq \; & - \inf_{\g \in \closed}  I_{\domain;\bpt, \ept}(\g) , 
\end{align*}
and the sub-level set $\{\g \in \mc X (\domain;\bpt,\ept) \,| \, I_{\domain;\bpt, \ept}(\g) \le c \}$ is compact for any $c \geq 0$.
\end{thm}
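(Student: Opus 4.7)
\emph{Overall strategy.} The plan is to derive the LDP for $\{\g^\k\}$ by transferring Schilder's theorem (its $T=\infty$ corollary) for the driver $\sqrt\k B$ through the chordal Loewner transform $\Phi : W \mapsto \g_W$. By conformal invariance of both $\SLE_\k$ and the Loewner energy (built into Definition~\ref{df:chordalLE}), combined with the compatibility of the Hausdorff topology under conformal maps via Carath\'eodory's theorem, it suffices to handle the canonical case $(\domain;\bpt,\ept) = (\HH;0,\infty)$.

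\emph{Key continuity lemma.} The crux is to show that, for each $c\ge 0$, $\Phi$ restricts to a continuous map from $\{W \in C^0[0,\infty) : I_\infty(W) \le c\}$, equipped with the topology of uniform convergence on compact intervals, into $(\mc X(\HH;0,\infty), d_h)$. Theorem~\ref{thm:single_qchord} ensures $\Phi$ is well defined on this sub-level set with values in quasichords. The continuity rests on two ingredients: (i) the uniform $1/2$-H\"older bound $|W_t - W_s| \le \sqrt{2c|t-s|}$ from Cauchy-Schwarz, placing every driver in the Marshall-Rohde-Lind stability regime on each finite interval so that $\g_{W_n}|_{[0,T]} \to \g_W|_{[0,T]}$ in Hausdorff metric; and (ii) a uniform tail-escape estimate under the energy bound, ruling out the pathology of Remark~\ref{rem:pathological}, so that $\g_{[T,\infty)}$ vanishes in $d_h$ as $T \to \infty$ uniformly on sub-level sets. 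Since $\{I_\infty \le c\}$ is compact by Schilder's theorem, its continuous image under $\Phi$ is compact in $\mc X$; as this image coincides with $\{I_{\domain;\bpt,\ept} \le c\}$ (chord and driver have equal energies), the rate function is good.

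\emph{Lower bound.} Given an open $O \subset \mc X(\domain;\bpt,\ept)$ and $\g_0 \in O$ with $I_{\domain;\bpt,\ept}(\g_0) < \infty$ (otherwise trivial), let $W_0$ be the driver of $\g_0$. By the continuity lemma, a ball $U := \{W : \|W-W_0\|_{C[0,T]} < \d\} \cap \{I_\infty \le I_{\domain;\bpt,\ept}(\g_0) + 1\}$ is mapped by $\Phi$ into $O$ for appropriate $\d, T > 0$. Schilder's lower bound applied to $U$, in the form of \eqref{eq:path_neighbor}, yields
\begin{equation*}
\liminf_{\k \to 0+} \k \log \P^\k(\g^\k \in O) \ge -I_\infty(W_0) = -I_{\domain;\bpt,\ept}(\g_0),
\end{equation*}
and optimizing over $\g_0 \in O$ completes the bound.

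\emph{Upper bound and main obstacle.} The naive contraction principle (Theorem~\ref{thm:contraction}) fails because $\Phi$ is not globally continuous and, in fact, $\sqrt\k B \notin \{I_\infty \le c\}$ almost surely for every finite $c$. The remedy is to observe that, for any closed $F \subset \mc X$, the set $\Phi^{-1}(F) \cap \{I_\infty \le c\}$ is closed in $C^0[0,\infty)$ (by the continuity lemma), so Schilder's upper bound controls $\P(\sqrt\k B \in \Phi^{-1}(F) \cap \{I_\infty \le c\})$ at rate $-\inf\{I_{\domain;\bpt,\ept}(\g) : \g \in F,\, I_{\domain;\bpt,\ept}(\g) \le c\}$. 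Bridging from this to the full event $\{\g^\k \in F\}$ then requires an exponentially-good-approximation step: one couples $\sqrt\k B$ to a piecewise-linearization or mollification of finite Dirichlet energy whose Loewner trace is $d_h$-close to $\g^\k$ outside an event of probability $e^{-c/\k + o(1/\k)}$, and one sends $c \to \infty$. The principal difficulty, and where the bulk of the technical work lies, is the continuity lemma itself---especially the uniform tail-escape estimate under an energy bound---together with the quantitative Loewner stability underlying the approximation step; both rely on the quasiconformal distortion bounds afforded by Theorem~\ref{thm:single_qchord}.
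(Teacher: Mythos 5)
The survey does not actually prove this theorem: it only records the obstruction (the Loewner transform is not Hausdorff-continuous) and defers the proof to \cite[Sec.\,5]{peltola_wang}, so your proposal has to stand on its own, and as written it has genuine gaps. The most serious one is the lower bound: you apply Schilder's bound, in the form of \eqref{eq:path_neighbor}, to the set $U=\{\|W-W_0\|_{C[0,T]}<\d\}\cap\{I_\infty\le I_{\domain;\bpt,\ept}(\g_0)+1\}$. But $\sqrt\k B$ has almost surely infinite Dirichlet energy, so $\P(\sqrt\k B\in U)=0$; equivalently, $\{I_\infty\le c\}$ has empty interior in the topology of uniform convergence on compacts, so Schilder's lower bound for $U$ is vacuous ($-\inf_{\mathrm{int}\,U}I_\infty=-\infty$). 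The entire difficulty of the lower bound is exactly that sup-norm closeness of the (infinite-energy) random driver to $W_0$ does not by itself force Hausdorff closeness of the traces, and your continuity lemma cannot be invoked because the random driver never lies in a sub-level set --- the very point you correctly flag for the upper bound. A genuine SLE argument (stability of the trace under rough perturbations of a finite-energy driver, plus control of the curve after the truncation time with probability that is not exponentially small) is needed here and is missing.

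Two further steps are not sound as stated. First, the justification of the key continuity lemma: Cauchy--Schwarz gives H\"older-$1/2$ seminorm at most $\sqrt{2c}$, which is below Lind's constant $4$ only when $c<8$, so for general $c$ the drivers are \emph{not} in the Marshall--Rohde--Lind regime; finite-energy drivers generate quasichords only through a localization argument (small energy on short intervals), and this localization is not uniform over $\{I_\infty\le c\}$ since energy may concentrate on arbitrarily short intervals. Uniform quasichord constants, uniform cone containment, and the uniform tail-escape estimate therefore require a separate argument, which you assert rather than prove. Second, the upper bound delegates all content to the claimed exponentially good approximation: that outside an event of probability $e^{-c/\k+o(1/\k)}$ the SLE trace is $d_h$-close to the trace of the linearized or mollified driver. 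This is not a soft coupling statement; since $\g^\k$ is itself rough, proving it requires quantitative SLE estimates and is essentially of the same difficulty as the local upper bound --- it is where the technical work of \cite[Sec.\,5]{peltola_wang} lies. In short, the skeleton (Schilder, continuity on energy balls, goodness of the rate function from compactness of Schilder's sub-level sets, an approximation step for the upper bound) is a reasonable plan, but the three load-bearing steps are respectively invalid, incorrectly justified, and unproven.
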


We note that the Loewner transform which maps a continuous driving function to the union of the hulls it generates is not continuous with respect to the Hausdorff metric. Therefore, we cannot deduce trivially the result using Schilder's theorem and the contraction principle (Theorem~\ref{thm:contraction}). This result thus requires some work and is rather technical, see \cite[Sec.\,5]{peltola_wang} for details.

\begin{remark}\label{rem:more_regular_than_SLE}
As Remark~\ref{rem:path_regular_BM}, we emphasize that finite energy chords are more regular than SLE$_\k$ curves for any $\k > 0$. In fact, we will see in Theorem~\ref{thm:intro_equiv_energy_WP} that finite energy chord is part of a Weil-Petersson quasicircle which is rectifiable, see Theorem~\ref{thm:asymptotically_smooth}. 
 Moreover, finite energy curves do not have corners. If the curve as an angle at time $t$, then the driving function after time $t$ is approximated by $W_{t+s} \approx C \sqrt {s} +W_t$ by additivity and scaling property, see Section~\ref{sec:chordal_Loewner}, which has infinite Dirichlet energy. 
On the other hand, Beffara \cite{Beffara_dimension} shows that for $0< \k \le 8$, $\SLE_\k$ has Hausdorff dimension $1 + \k/8 > 1$ and thus is not rectifiable when $\k >0$.
\end{remark}

\subsection{Reversibility of Loewner energy}

Given that for specific values of $\k$, $\SLE_\kappa$ curves are the scaling limits of interfaces in statistical mechanics lattice models, it was natural to conjecture that they are reversible 
 since interfaces are a priori unoriented. This conjecture was first proved by Zhan \cite{Zhan}  for all $\kappa \in [0, 4]$, i.e., in the case of simple curves, via  couplings of both ends of the  SLE
path. See also Dub\'edat's commutation relations \cite{Dub_comm}, and Miller and Sheffield's approach based on the
Gaussian Free Field  \cite{IG1,IG2,IG3} which also provides a proof in the non-simple curve case when $\kappa \in (4,8]$. 

\begin{thm} [SLE reversibility \cite{Zhan}]\label{thm:reversibility_SLE}
   For $\k \in [0,4]$, the distribution of the trace $\g^\k$ of $\SLE_\k$ in $(\m{H}; 0, \infty)$ coincides with that of its image under $\iota: z \rar -1/z$ upon forgetting the time parametrization.
\end{thm}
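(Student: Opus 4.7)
The case $\k = 0$ is immediate since the $\SLE_0$ chord is $\ii \m R_+$, which is setwise invariant under $\iota$. For $\k \in (0, 4]$ my plan is to follow a coupling strategy in the spirit of Zhan's original argument: simultaneously grow the curve from both endpoints and show that the two random sets agree almost surely.

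First I would construct a two-sided growth process. Pick an auxiliary interior point $z_0 \in \m H$ and consider a pair $(K_t, \tilde K_s)$ of growing hulls in $\m H$: $K$ starts from $0$ targeting $z_0$, while $\tilde K$ starts from $\infty$ targeting $z_0$. To match the marginal of $K$ with the chordal $\SLE_\k$ from $0$ to $\infty$ (stopped before $z_0$ is swallowed), I would use the target-independence of $\SLE_\k(\k-6)$ processes: a chordal $\SLE_\k$ from $0$ to $\infty$ coincides in law, up to the swallowing time of $z_0$, with an $\SLE_\k(\k-6)$ from $0$ to $z_0$ with force point at $\infty$. The symmetric statement applies on the $\tilde K$ side.

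Second, using It\^o's formula I would identify the joint driving function of $(K_t, \tilde K_s)$ and verify Dub\'edat's commutation relations, which ensure that the two growths can be interleaved in any order with a consistent joint law. Letting the stopping times tend to the time at which both tips reach $z_0$, and then varying $z_0$ along a dense countable sequence in $\m H$, one argues that $(K, \tilde K)$ can be coupled so that $K \cup \iota(\tilde K)$ equals $\g^\k$ setwise while each marginal has the law of $\SLE_\k$ in $(\m H; 0, \infty)$ (after applying $\iota$ on the $\tilde K$ side). The desired equality of distributions of $\g^\k$ and $\iota(\g^\k)$ then follows by reading off the coupling.

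The main obstacle is the last identification: showing that the two simultaneously grown curves fill exactly the same set, rather than merely coinciding in distribution on their overlap. This demands fine control on the geometry as the two tips approach each other, together with uniqueness of the commuting coupling, and is precisely the technical heart of Zhan's paper. As a sanity check consistent with the theme of this survey, once Theorem~\ref{thm:reversibility_SLE} is established, combining it with the LDP of Theorem~\ref{thm:main_single_LDP} and the continuity of the pushforward by $\iota$ on $\mc X(\m H; 0, \infty)$ immediately yields the deterministic identity $I_{\m H; 0, \infty}(\g) = I_{\m H; 0, \infty}(\iota(\g))$, which in the finite-energy setting one can hope to prove directly from Definition~\ref{df:chordalLE}.
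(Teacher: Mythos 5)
The survey does not prove this statement at all: Theorem~\ref{thm:reversibility_SLE} is imported as a black box from Zhan's work \cite{Zhan} (with pointers to the alternative routes via Dub\'edat's commutation relations and the Miller--Sheffield GFF coupling), so there is no internal proof to compare yours against. What you have written is a reconstruction of Zhan's two-ended coupling strategy, and as a proof it is incomplete in exactly the place you flag yourself: verifying commutation relations via It\^o's formula only yields consistency of the joint law of the two families of hulls, and the passage from that to the assertion that the curve grown from $0$ and the image under $\iota$ of the curve grown from $\infty$ almost surely coalesce into one and the same chord, with each marginal being precisely chordal $\SLE_\k$ in $(\m H;0,\infty)$, is the entire content of Zhan's theorem and is not supplied. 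A plan that defers this step to the cited paper is a citation, not a proof.

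There is also a technical wrinkle in your setup: aiming the two growths at an interior point $z_0$ makes them radial-type evolutions, and the target-change statement you invoke (chordal $\SLE_\k$ from $0$ to $\infty$ agreeing up to the swallowing time of $z_0$ with an $\SLE_\k(\k-6)$ targeted at $z_0$) mixes chordal and radial normalizations and holds only up to time reparametrization; Zhan's chordal reversibility argument instead couples two chordal $\SLE_\k(\rho)$-type processes grown from the two boundary points $0$ and $\infty$ toward each other, interior targets entering only in the later radial/whole-plane reversibility results. None of this invalidates the overall strategy, but as written your proposal is an outline of the known argument rather than a self-contained one. For the purposes of this survey that is how the theorem is used anyway, and your closing observation is exactly the paper's own use of it: Theorem~\ref{thm:reversibility_SLE} combined with the large deviation principle of Theorem~\ref{thm:main_single_LDP} gives the deterministic energy reversibility of Theorem~\ref{thm:intro_rev}.
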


We deduce from Theorem~\ref{thm:main_single_LDP} and Theorem~\ref{thm:reversibility_SLE} the following result.
\begin{thm}[Energy reversibility \cite{W1}] \label{thm:intro_rev} We have
    $I_{\domain;\bpt,\ept} (\g) = I_{\domain;\ept,\bpt} (\g)$
    for any chord $\g \in \mc X (\domain;\bpt,\ept)$.
\end{thm}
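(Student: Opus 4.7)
The plan is to combine Theorem~\ref{thm:main_single_LDP} (LDP for chordal SLE) with Theorem~\ref{thm:reversibility_SLE} (SLE reversibility) and exploit the uniqueness of the rate function of a large deviation principle. The first observation is that $\mc X(\domain;\bpt,\ept)$ consists of \emph{unparametrized} simple chords, so as a set it coincides with $\mc X(\domain;\ept,\bpt)$; moreover, by Definition~\ref{defn: Hausdorff} the Hausdorff topology is defined on closed subsets of $\ad\domain$ and is insensitive to which endpoint is declared the source. Hence the two spaces agree as Polish spaces, and any family of distributions can be compared on the nose.

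Fix a conformal map $\varphi : \domain \to \HH$ with $\varphi(\bpt) = 0$ and $\varphi(\ept) = \infty$, and set $\iota(z) = -1/z$. Then $\iota \circ \varphi$ sends $\bpt \mapsto \infty$ and $\ept \mapsto 0$, so it is a valid uniformizing map for defining $\SLE_\k$ in $(\domain;\ept,\bpt)$. Theorem~\ref{thm:reversibility_SLE} says that, as unparametrized curves in $\HH$, the SLE$_\k$ trace $\g^\k$ has the same law as $\iota(\g^\k)$. Pulling back by $\varphi$, the random set obtained from $\SLE_\k$ in $(\domain;\bpt,\ept)$ has the same distribution as the one obtained from $\SLE_\k$ in $(\domain;\ept,\bpt)$. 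In other words, the family $\{\m P^\k\}_{\k > 0}$ on $\mc X(\domain;\bpt,\ept) = \mc X(\domain;\ept,\bpt)$ admits two a priori descriptions that give exactly the same probability measures.

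Now apply Theorem~\ref{thm:main_single_LDP} twice to this single family: the first description yields the LDP with good rate function $I_{\domain;\bpt,\ept}$, while the second yields the LDP with good rate function $I_{\domain;\ept,\bpt}$. Since a large deviation rate function is uniquely determined by the family of measures (cf.\ Definition~\ref{df:LDP}), it follows that $I_{\domain;\bpt,\ept}(\g) = I_{\domain;\ept,\bpt}(\g)$ for every $\g \in \mc X(\domain;\bpt,\ept)$.

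The only place where care is needed is the identification of the two topological setups: one must check that the Polish space, its Borel $\s$-algebra, and the measures $\m P^\k$ coincide on both sides, so that the uniqueness of the LDP rate function can be invoked. This is immediate from Definition~\ref{defn: Hausdorff} and from Zhan's theorem (stated precisely for unparametrized traces). Once this bookkeeping is in place, energy reversibility is essentially a one-line consequence, which reflects the general principle emphasized in the survey: symmetries of the $\k \to 0+$ rate function can be read off directly from symmetries of SLE$_\k$ for positive $\k$.
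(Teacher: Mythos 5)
Your overall strategy (combine Theorem~\ref{thm:main_single_LDP} with Theorem~\ref{thm:reversibility_SLE} and use uniqueness of large deviation rate functions) is sound and close in spirit to the paper's argument, but the step on which everything hinges is false as stated: $\mc X(\domain;\bpt,\ept)$ and $\mc X(\domain;\ept,\bpt)$ do \emph{not} coincide as sets. Membership in $\mc X(\domain;\bpt,\ept)$ requires infinite total capacity \emph{in the direction from $\bpt$ to $\ept$}, and this condition is not symmetric under reversal: by Remark~\ref{rem:pathological}, a chord has finite total capacity precisely when it approaches its target point hugging the boundary, and a chord can leave $\bpt$ so tangentially to $\partial\domain$ (e.g., in $(\m H;0,\infty)$, leaving $0$ along a curve whose height decays like $e^{-1/|x|}$ before going to $\infty$ vertically) that the reversed chord, viewed via $z\mapsto -1/z$, approaches $\infty$ with integrable height and hence has finite total capacity. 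Such a chord belongs to $\mc X(\domain;\bpt,\ept)$ but not to $\mc X(\domain;\ept,\bpt)$. Consequently you are not dealing with one family of measures on one Polish space carrying two LDPs, and the uniqueness of the rate function (Definition~\ref{df:LDP}) cannot be invoked on the nose; unparametrizedness and the orientation-insensitivity of the Hausdorff topology do not repair this, because the discrepancy is in the underlying sets, not the topology.

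The gap is repairable, but it requires work you have not done. One route: push both LDPs forward to a common ambient space, e.g.\ the (compact, metric) hyperspace of compact subsets of $\ad\domain$ with the Hausdorff topology, extending each rate function by $+\infty$ off its own space $\mc X(\domain;\bpt,\ept)$ resp.\ $\mc X(\domain;\ept,\bpt)$; since the rate functions are good, the extensions remain lower semicontinuous and the LDP bounds transfer (the measures give full mass to each space, using Theorem~\ref{thm:reversibility_SLE} for the reversed one). Then uniqueness applies on the ambient space and yields the identity on the intersection of the two spaces, while for a chord whose reversal has finite total capacity you must argue separately that \emph{both} energies are infinite, which follows from the first assertion of Theorem~\ref{thm:single_qchord}. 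Alternatively, one argues directly via small-ball probabilities around a fixed chord, choosing radii that are continuity sets and using lower semicontinuity of the energy; this is exactly the paper's proof, and it avoids the space-identification issue because all events are read off inside a single copy of $\mc X(\m H;0,\infty)$ up to null sets. As written, your proof asserts the space identification as ``immediate'' and skips both of these points, so it does not yet constitute a complete argument.
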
 

\begin{proof} 
Without loss of generality, we assume that $(\domain; \bpt, \ept) = (\m H; 0,\infty)$ and show that $I_{\m H; 0,\infty} (\g) =  I_{\m H; 0,\infty} (\iota (\g))$.

We use a conformal map $\vartheta :\m H \to \m D$ that maps $\ii$ to $0$ to define the pullback Hausdorff metric $\vartheta^* d_h$ on the set of closed subsets of $\ad{\m H}$ as in Definition~\ref{defn: Hausdorff}. Our choice of $\vartheta$ satisfies $\vartheta \circ \iota \circ \vartheta^{-1} = - \Id_{\m D}$. In particular, $\iota$ induces an isometry on closed subsets of $\ad{\m H}$.
 Let $\d_n$ be a sequence of numbers converging to $0$ from above, such that $$O_{\d_n} (\g): = \{\tilde \g \in \mc X(\m H; 0, \infty) : \vartheta^* d_h (\g, \tilde \g) < \d_n\}$$
 is a continuity set for $I_{\m H; 0,\infty}$. The sequence exists since there are at most a countable number of $\d$ such that $O_{\d} (\g)$ is not a continuity set as we argued in Remark~\ref{rem:path_regular_BM}. 
From Remark~\ref{rem:continuity_set}, 
\begin{equation}\label{eq:SLE_decay}
\lim_{\k \to 0+} \k \log \m P^\k (\g^\k \in  O_{\d_n} (\g))  = -\inf_{\tilde \g \in O_{\d_n} (\g)} I_{\m H; 0, \infty} (\tilde \g),
\end{equation}
which tends to $- I_{\m H; 0, \infty} (\g)$ as $n \to \infty$ from the lower-semicontinuity of $I_{\m H; 0, \infty}$.
 Theorem~\ref{thm:reversibility_SLE} then shows that
\begin{equation*}
   \m P^\k (\g^\k \in  O_{\d_n} (\g)) = \m P^\k (\iota(\g^\k) \in  \iota (O_{\d_n} (\g))) =  \m P^\k (\g^\k \in  O_{\d_n} (\iota (\g))). 
\end{equation*}
The last equality used the fact that $\iota$ induces an isometry.
We obtain the claimed energy reversibility by applying \eqref{eq:SLE_decay} to $\iota(\g)$.
\end{proof}

\begin{remark}
This proof is different from \cite{W1} but the idea is very close. We use here Theorem~\ref{thm:main_single_LDP} from the recent work \cite{peltola_wang}, whereas the original proof in \cite{W1} uses the more complicated left-right passing events without the strong large deviation result at hand. 
\end{remark}
 
\begin{remark} The energy reversibility is a result about deterministic chords although the proof presented above relies on the probabilistic theory of SLE. 

We note that from the definition alone, the reversibility is not obvious as the setup of Loewner evolution is directional. 
To illustrate this, consider a driving function $W$ with finite Dirichlet energy that is constant  (and contributes $0$ Dirichlet energy) after time $1$. From the additivity property of driving function, $\g_{[1,\infty)}$ is the hyperbolic geodesic in $\m H \smallsetminus \g_{[0,1]}$ with end points $\g_1$ and $\infty$. The reversed curve $\iota (\g)$ is a chord starting with an analytic curve which is different from the imaginary axis. Therefore unlike $\g$, the energy of $\iota (\g)$ typically spreads over the whole time interval $\m R_+$. 
\end{remark}

\subsection{Loop energy and Weil-Petersson quasicircles}
We now generalize the Loewner energy to Jordan curves (simple loops) on the Riemann sphere $\Chat = \m C \cup\{\infty\}$. This generalization reveals more symmetries of the Loewner energy (Theorem~\ref{thm:intro_root_invariance}). Moreover, an equivalent description (Theorem~\ref{thm:intro_equiv_energy_WP}) of the loop energy will provide an analytic proof of those symmetries including the reversibility and a rather surprising link to the class of Weil-Petersson quasicircles.

 Let $\g: [0,1] \to \Chat$ be a continuously parametrized Jordan curve with the marked point $\g (0) = \g(1)$. 
For every $\vare>0$, $\g [\vare, 1]$ is a chord connecting $\g(\vare)$ to $\g (1)$ in the simply connected domain $\Chat \smallsetminus \g[0, \vare]$.
The \emph{rooted loop  Loewner  energy} of $\g$ rooted at $\g(0)$ is defined as
$$I^L(\g, \g(0)): = \lim_{\vare \to 0} I_{\Chat \smallsetminus \g[0, \vare]; \g(\vare), \g(0)} (\g[\vare, 1]).$$

\begin{figure}
 \centering
 \includegraphics[width=0.8\textwidth]{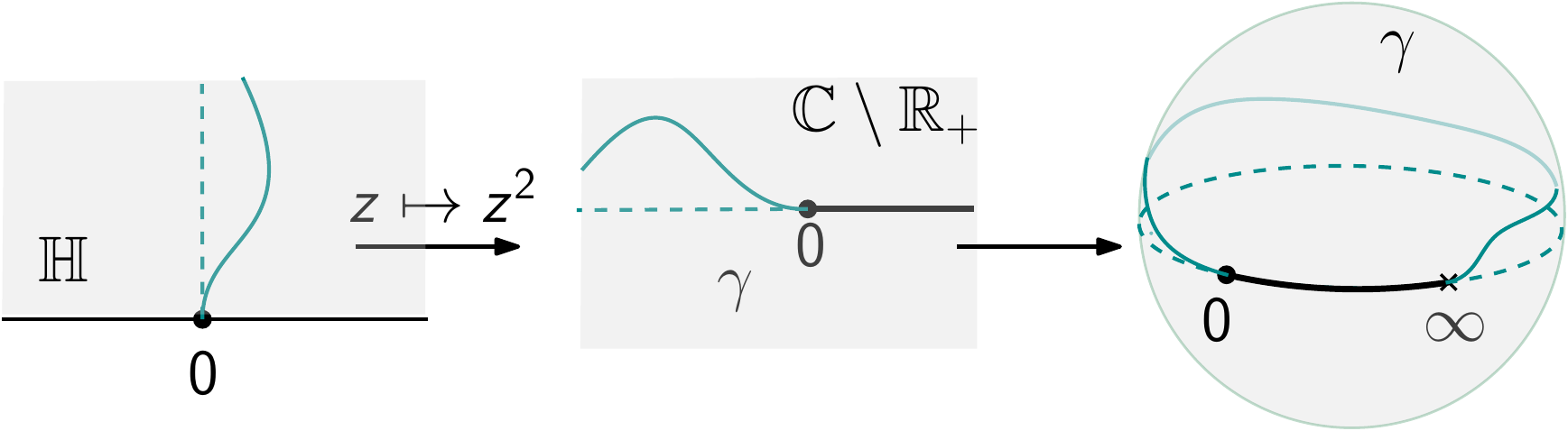}
 \caption{ 
 From chord in $(\m H; 0, \infty)$ to a Jordan curve. \label{fig:squared}}  
 \end{figure}

The loop energy generalizes the chordal energy. In fact, let $\eta$ be a simple chord in $(\m C \smallsetminus \m R_+; 0, \infty)$ and we parametrize $\g = \eta \cup \m R_+$ in a way such that $\g[0,1/2] = \m R_+ \cup \{\infty\}$ and $\g[1/2, 1] = \eta$. Then 
from the additivity of chordal energy (which follows from the additivity of the Loewner driving function),
\begin{align*}
I^L(\g, \infty) = & I_{\m C\smallsetminus \m R_+; 0, \infty} (\eta) + \lim_{\vare \to 0} I_{\Chat \smallsetminus \g[0, \vare]; \g(\vare), \g(1)} (\g[\vare, 1/2]) 
=  I_{\m C\smallsetminus \m R_+; 0, \infty} (\eta),
\end{align*}
since $\g[\vare, 1/2]$ is contained in the hyperbolic geodesic\footnote{Here, $\g[\vare, 1/2]$ is part of a chord but does not make all the way to the target point $\g(1)$, its energy is defined as $I_T(W)$ where $W$ is the driving function of $\g[\vare, 1/2]$ which is defined on an interval $[0,T]$.}  between $\g(\vare)$ and $\g(0)$ in $\Chat \smallsetminus \g[0, \vare]$ for all $0 < \vare <1/2$, see Figure~\ref{fig:squared}.
Rohde and the author proved the following result.

\begin{thm}[\!\! \cite{RW}] \label{thm:intro_root_invariance}
The loop energy does not depend on the root chosen.
\end{thm}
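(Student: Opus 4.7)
The plan is to establish the symmetric two-arc decomposition
\begin{equation*}
I^L(\g, p) \,=\, I_{\Chat \smallsetminus \b;\, p, q}(\a) \,+\, I_{\Chat \smallsetminus \a;\, q, p}(\b),
\end{equation*}
for any two points $p = \g(0)$ and $q = \g(t_0)$ on $\g$, where $\a := \g[0, t_0]$ and $\b := \g[t_0, 1]$ denote the two complementary sub-arcs. Because the right-hand side is manifestly invariant under the exchange $(p,\a)\leftrightarrow(q,\b)$, the root-independence $I^L(\g, p) = I^L(\g, q)$ follows at once.

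To prove the decomposition, I would apply chord additivity --- a restatement of the additivity property of the Loewner chain recalled in Section~\ref{sec:chordal_Loewner} together with the additivity of the Dirichlet energy of the driving function --- to the chord $\g[\vare, 1]$ in $D^\vare := \Chat \smallsetminus \g[0, \vare]$ from $\g(\vare)$ to $p$. Splitting at the capacity time $\tau_\vare$ at which this chord reaches $q$ yields
\begin{equation*}
I_{D^\vare;\, \g(\vare), p}(\g[\vare, 1]) \,=\, A_\vare \,+\, I_{\Chat \smallsetminus \a;\, q, p}(\b),
\end{equation*}
where $A_\vare$ is the Dirichlet energy of the driving function restricted to $[0, \tau_\vare]$ and the second summand is independent of $\vare$, since $D^\vare \smallsetminus \g[\vare, t_0] = \Chat \smallsetminus \a$. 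Letting $\vare \to 0$ therefore reduces the theorem to the limit identification $\lim_{\vare \to 0} A_\vare = I_{\Chat \smallsetminus \b;\, p, q}(\a)$.

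The main obstacle is this last identification: $A_\vare$ comes from a Loewner chain in $D^\vare$ \emph{targeting $p$ and stopped upon reaching $q$}, whereas the right-hand side is a chain in $\Chat \smallsetminus \b$ \emph{starting at $p$ and targeting $q$} --- two different domains with two different targets, so the corresponding driving functions are \textit{a priori} unrelated. To bridge them, I would uniformize both configurations to $\m H$ and track how the conformal map from $D^\vare$ (sending $\g(\vare) \mapsto 0$ and $p \mapsto \infty$) degenerates as $\g[0, \vare]$ collapses to $p$. Reversibility of the chordal Loewner energy (Theorem~\ref{thm:intro_rev}) plays a decisive role here: in the limit, the starting point $\g(\vare)$ and the target $p$ both merge at $p$, and reversibility allows one to flip the orientation of the stopped chord so that it matches the $p \to q$ direction of the right-hand side. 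Once the two uniformizations are shown to differ asymptotically by a M\"obius automorphism of $\m H$ (which preserves Dirichlet energy by the scaling property of the Loewner chain), the limit identification, and hence the full statement, follow.
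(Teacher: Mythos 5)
Your argument hinges entirely on the symmetric two-arc decomposition $I^L(\g,p)=I_{\Chat\smallsetminus\b;\,p,q}(\a)+I_{\Chat\smallsetminus\a;\,q,p}(\b)$, and this identity is false, so the limit identification $\lim_{\vare\to0}A_\vare=I_{\Chat\smallsetminus\b;\,p,q}(\a)$ that you reduce everything to cannot hold in general. Here is a concrete counterexample. Let $\g$ be the graph of a smooth, nonnegative, not identically zero function supported in $[1,2]$, viewed as a Jordan curve through $\infty$, rooted at $p=\infty$ and parametrized so that it first runs along $\a=\m R_-\cup\{0,\infty\}$ from $\infty$ to $q=0$ and then along the ``bumped'' ray $\b$ from $0$ back to $\infty$. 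The paper's own additivity computation below Figure~\ref{fig:squared} (applied with $\m R_-$ in place of $\m R_+$: the remaining piece of the straight half-axis always lies on the hyperbolic geodesic of the slit sphere, by conjugation symmetry of $\Chat\smallsetminus\g[0,\vare]$) gives exactly
\begin{equation*}
I^L(\g,\infty)\;=\;I_{\Chat\smallsetminus\a;\,0,\infty}(\b),
\end{equation*}
so your proposed identity would force $I_{\Chat\smallsetminus\b;\,\infty,0}(\a)=0$, i.e.\ $\m R_-$ would have to be the hyperbolic geodesic of $\Chat\smallsetminus\b$ joining the two tips of $\b$. But if it were, the anticonformal involution of $\Chat\smallsetminus\b$ fixing that geodesic pointwise (pull back $z\mapsto -\bar z$ through a uniformization onto $(\m H;0,\infty)$) would fix an arc of $\m R_-$ pointwise and hence coincide with $z\mapsto\bar z$, making $\b$ invariant under complex conjugation --- false for a nontrivial bump. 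Thus $I_{\Chat\smallsetminus\b;\,p,q}(\a)>0$ while $\lim_\vare A_\vare=0$, and your right-hand side strictly exceeds $I^L(\g,p)$.

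The failure is exactly at the point you flag as the ``main obstacle'': the partial energy $A_\vare$ of $\a$ grown in $D^\vare=\Chat\smallsetminus\g[0,\vare]$ toward $p$ and the chordal energy of $\a$ in $\Chat\smallsetminus\b$ live in genuinely different domains, and changing the reference domain of a chordal energy produces nontrivial corrections (compare the cascade relation of Lemma~\ref{lem:H_cascade}, where Brownian-loop-measure and Poisson-kernel terms appear); these corrections do not vanish and cannot be absorbed by reversibility, since Theorem~\ref{thm:intro_rev} only swaps the endpoints of one chord in one fixed domain. Note also that $D^\vare$ degenerates to $\Chat\smallsetminus\{p\}$ as $\vare\to0$, not to $\Chat\smallsetminus\b$, so the two uniformizations you want to compare do not differ by a M\"obius automorphism of $\m H$, even asymptotically. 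For the record, the survey's proof is of a completely different nature: root invariance is read off from Theorem~\ref{thm:intro_equiv_energy_WP}, whose right-hand side $\mc D_{\m D}(\log|f'|)+\mc D_{\m D^*}(\log|h'|)+4\log|f'(0)|-4\log|h'(\infty)|$ involves no parametrization of $\g$ at all (Remark~\ref{rem:new_proof_invariance}), while the original argument of \cite{RW} is likewise not based on the additive splitting you propose.
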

We do not present the original proof of this theorem since it will follow immediately from Theorem~\ref{thm:intro_equiv_energy_WP}, see Remark~\ref{rem:new_proof_invariance}.
\begin{remark}
 From the definition, the loop energy $I^L$ is invariant under M\"obius transformations of $\Chat$, and $I^{L} (\g) = 0$ if and only if $\g$ is a circle (or a line).
\end{remark}

 \begin{remark}
 The loop energy is presumably the large deviation rate function of SLE$_{0+}$ loop measure on $\Chat$ constructed in \cite{zhan2020sleloop} (see also \cite{Werner_loop,Benoist_loop} for the earlier construction of SLE loop measure when $\k = 8/3$ and $2$). However, the conformal invariance of the SLE loop measures implies that they have infinite total mass and has to be renormalized properly for considering large deviations. We do not claim it here and think it is an interesting question. However,
 these ideas will serve as heuristics to speculate results for finite energy Jordan curves in Section~\ref{sec:interplay}.
 \end{remark}

In \cite{RW} we also showed that if a Jordan curve has finite energy, then it is a \emph{quasicircle}, namely the image of a circle or a line under a quasiconformal map of $\m C$.
However, not all quasicircles have finite energy since they may have Hausdorff dimension larger than $1$.
The natural question is then to identify the family of finite energy quasicircles. 
The answer is surprisingly a family of so-called {\em Weil-Petersson quasicircles}, which has been studied extensively by both physicists and mathematicians since the eighties, see, e.g., \cite{BowickRajeev1987string,witten88,NagVerjovsky,Nag_Sullivan,STZ_KdV,cui00,TT06,sharon20062d,Figalli_circle,shen13,GGPPR,Bishop_WP,johansson2021strong},
and is a very active research area.  See the introduction of \cite{Bishop_WP} for a summary and a list of currently more than twenty equivalent definitions of very different nature. 

The class of Weil-Petersson quasicircles is preserved under M\"obius transformation, so without loss of generality, we will use the following definition of a \emph{bounded} Weil-Petersson quasicircle which is the simplest to state. 
      Let $\gamma$ be a bounded Jordan curve. We write $\Omega$ for the bounded connected component of $\Chat \smallsetminus \gamma$ and $\Omega^*$ for the connected component containing $\infty$. Let $f$ be a conformal map $\m D \to \Omega$ and $h : \m D^* \to \Omega^*$ fixing~$\infty$. 
 
\begin{df} The bounded Jordan curve $\gamma$ is a \emph{Weil-Petersson quasicircle} if and only if the following equivalent conditions hold:
\begin{enumerate}
\item  $\mc D_{\m D} (\log \abs{f'})  =  \frac{1}{\pi}\int_{\m D} \left|\nabla \log \abs{f'}\right|^2  \dd  A =  \frac{1}{\pi}\int_{\m D} \left|f''/f'\right|^2  \dd  A < \infty;$
\item $ \mc D_{\m D^*} (\log \abs{h'})  <\infty$,
\end{enumerate}
where $\dd A$ denotes the Euclidean area measure and $\mc D_{\O} (\varphi) : = \frac{1}{\pi} \int_\O |\nabla \varphi|^2 \dd A$ denotes the Dirichlet energy of $\varphi$ in $\O$.
\end{df}

\begin{thm}[\!{\cite[Thm.\,1.4]{W2}}]\label{thm:intro_equiv_energy_WP}
A bounded Jordan curve $\gamma$ has finite Loewner energy if and only if $\gamma$ is a Weil-Petersson quasicircle. Moreover, we have the identity
       \begin{equation} \label{eq_disk_energy}
   I^L(\gamma) =  \mc D_{\m D} (\log \abs{f'}) + \mc D_{\m D^*} (\log \abs{h'})+4 \log \abs{f'(0)} - 4 \log \abs{h'(\infty)},
 \end{equation}
 where $h'(\infty):=\lim_{z\to \infty} h'(z)$.
\end{thm}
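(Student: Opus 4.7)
My plan is to reduce the loop identity to a chord version and then prove the chord version by differentiating both sides along the Loewner evolution. Using the relation $I^L(\gamma,\infty)=I_{\m C\smallsetminus\m R_+;0,\infty}(\eta)$ recorded before the theorem together with the root invariance (Theorem~\ref{thm:intro_root_invariance}), one reduces to proving a Dirichlet-energy identity for a finite-energy chord $\eta$ in $(\m H;0,\infty)$ involving the conformal uniformizations of the two components of $\m H\smallsetminus\eta$; after transporting them to $\m D$ and $\m D^*$ via a suitable change of coordinates, the $4\log|f'(0)|$ and $-4\log|h'(\infty)|$ corrections arise naturally from the M\"obius normalizations.

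\textbf{Infinitesimal identity along the Loewner flow.} Let $W$ have finite Dirichlet energy and generate a Loewner chain $(g_t)$ with hulls $K_t=\eta[0,t]$, which are quasichords by Theorem~\ref{thm:single_qchord}. Differentiating the Loewner equation gives the pointwise identity $\partial_t\log g_t'(z)=-2(g_t(z)-W_t)^{-2}$, so $\partial_t\log|g_t'(z)|=-2\Re\bigl[(g_t(z)-W_t)^{-2}\bigr]$. Let $\varphi_t$ uniformize one component of $\m H\smallsetminus K_t$ from a reference half-plane with a prescribed normalization. I would compute $\tfrac{d}{dt}\mc D(\log|\varphi_t'|)$ by expanding the squared gradient, applying Green's identity to convert the area integral into a boundary integral on the real line and around the preimage of the tip $\eta_t$, and substituting the explicit formula above. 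Summing the two-sided contributions and tracking the time derivatives of the normalizing constants (which produce the $4\log|f_t'(0)|$ and $-4\log|h_t'(\infty)|$ terms) should yield the infinitesimal balance
\begin{equation*}
\tfrac{d}{dt}\bigl[\mc D_{\m D}(\log|f_t'|)+\mc D_{\m D^*}(\log|h_t'|)+4\log|f_t'(0)|-4\log|h_t'(\infty)|\bigr]=|\dot W_t|^2.
\end{equation*}

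\textbf{Integration and Weil--Petersson equivalence.} Integrating from $t=0$, where both sides vanish, to $t=\infty$ delivers the chord version of identity~\eqref{eq_disk_energy}; combined with the reduction in the first step, and with M\"obius invariance of both sides (for bounded curves not passing through $\infty$), this gives the loop identity. The stated equivalence is then immediate, because finiteness of $\mc D_{\m D}(\log|f'|)$ (equivalently of $\mc D_{\m D^*}(\log|h'|)$) is one of the standard characterizations of the Weil--Petersson class.

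\textbf{Main obstacle.} The hardest step is executing the infinitesimal computation rigorously: justifying Green's identity in the presence of the logarithmic singularity of $\log|\varphi_t'|$ at the tip, and cleanly isolating the residual boundary contribution that there produces $|\dot W_t|^2$. This requires quantitative control of $\varphi_t'$ near the tip, for which one exploits the uniform quasiconformal estimates furnished by Theorem~\ref{thm:single_qchord}. A secondary difficulty is that the above calculation is naturally valid under a smoothness assumption on $W$ (say, $W$ absolutely continuous and piecewise smooth) and must be upgraded to arbitrary finite-energy drivers by approximation, using the lower semicontinuity of $I_\infty$ together with continuity of the two Dirichlet functionals along the approximating sequence.
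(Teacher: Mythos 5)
Your plan, as written, never actually proves the statement at hand. The relation $I^L(\eta\cup\m R_+,\infty)=I_{\m C\smallsetminus\m R_+;0,\infty}(\eta)$ applies only to Jordan curves that contain the ray $\m R_+$ (after a M\"obius map, a circular arc through the chosen root), and a general bounded Jordan curve is not of this form for \emph{any} choice of root; so root invariance does not reduce \eqref{eq_disk_energy} to a chord identity. What your flow-derivative computation is really aimed at is the through-$\infty$ identity \eqref{eq:loewner_energy_infinite}, and there your strategy is close in spirit to the original argument of \cite{W2} (which approximates by piecewise-linear drivers and does the explicit computations your Green's-identity step gestures at). But the specific content of the present theorem is the passage from \eqref{eq:loewner_energy_infinite} to the bounded-curve identity with the terms $4\log|f'(0)|-4\log|h'(\infty)|$, and your proposal disposes of exactly this with ``the corrections arise naturally from the M\"obius normalizations'' and ``M\"obius invariance of both sides.'' That assumes what must be proved: M\"obius invariance of the right-hand side of \eqref{eq_disk_energy} is itself a nontrivial theorem (it is the invariance of the Takhtajan--Teo universal Liouville action \cite{TT06}), and comparing the half-plane expression (which has no normalization terms) with the disk expression when $\infty$ is moved off the curve is not a routine change of variables: under such a map $\log|f'|$ acquires additive logarithmic singularities whose Dirichlet energies are individually infinite, and the finite answer emerges only after cancellations. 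This is precisely why the paper's proof (see Remark~\ref{rem_infinite_curve_LE}) passes through the zeta-regularized determinant expression and the Polyakov--Alvarez conformal anomaly formula; your sketch contains no substitute for that step.

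Secondary points: within this survey's logical ordering, Theorem~\ref{thm:intro_root_invariance} is \emph{deduced from} the theorem you are proving (Remark~\ref{rem:new_proof_invariance}), so invoking it is circular unless you cite the independent proof in \cite{RW}. The infinitesimal identity should produce $\tfrac12|\dot W_t|^2$, not $|\dot W_t|^2$, since $I_T(W)=\tfrac12\int_0^T|\dot W_t|^2\,\dd t$. Also, for finite $t$ the hull $K_t$ is a slit, so $\m H\smallsetminus K_t$ has a single component; the time-$t$ functional whose derivative you take must be set up for the partial curve concatenated with a boundary arc, and you still need convergence of all four terms as $t\to\infty$ and stability under approximation of general finite-energy drivers --- these are exactly the places where the paper's piecewise-linear approximation does real work, so they deserve more than a one-line appeal to lower semicontinuity.
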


\begin{remark}\label{rem_infinite_curve_LE}
If $\gamma$ is a Jordan curve passing through $\infty$, then
\begin{equation}\label{eq:loewner_energy_infinite}
  I^L(\gamma) =  \mc D_{\m H} (\log \abs{f'}) + \mc D_{\m H^*} (\log \abs{h'})
\end{equation}
    where $f$ and $h$ map conformally $\m H$ and $\m H^*$ onto, respectively, $H$ and $H^*$, the two components of $\mathbb{C} \smallsetminus \gamma$, while fixing $\infty$. See \cite[Thm.\,1.1]{W2}.
    
 In fact, the identity \eqref{eq:loewner_energy_infinite} was proved first: We approximate $\g$ by curves generated by piecewise linear driving function, in which case \eqref{eq:loewner_energy_infinite} is checked by explicit computations. We then establish from \eqref{eq:loewner_energy_infinite} an expression of $I^L(\g)$ in terms of zeta-regularized determinants of Laplacians \cite[Thm.\,7.3]{W2} via the Polyakov-Alvarez formula. The expression using determinants has the advantage of being invariant under conformal change of metric and allows us to move the point $\infty$ away from $\g$ and obtain \eqref{eq_disk_energy}.
\end{remark}
\begin{remark}\label{rem:new_proof_invariance}
 Note that the proof of Theorem~\ref{thm:intro_equiv_energy_WP} outlined above is purely deterministic and does not rely on previous results on the reversibility and root-invariance of the Loewner energy. The right-hand side of \eqref{eq_disk_energy} clearly does not depend on any parametrization of $\gamma$, thus this theorem provides another proof of Theorem~\ref{thm:intro_rev} and Theorem~\ref{thm:intro_root_invariance}.
\end{remark}

 Now let us comment on the regularity of Weil-Petersson quasicircles. 

 \begin{thm}\label{thm:asymptotically_smooth}
  Weil-Petersson quasicircles are \emph{asymptotically smooth}, namely, chord-arc with local constant $1$: for all $x,y$ on the curve, the shorter arc $\g_{x,y}$ between $x$ and $y$ satisfies $$\lim_{|x-y| \to 0} \textrm{length}\, (\g_{x,y})/|x-y| = 1.$$
 \textnormal{(}We say $\g$ is \emph{chord-arc} if length$ (\g_{x,y})/|x-y|$ is uniformly bounded.\textnormal{)}
 \end{thm}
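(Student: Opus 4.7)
My plan is to translate the analytic Weil-Petersson condition into a boundary regularity property of the Riemann map $f : \m D \to \O$, and then to invoke a classical geometric characterization due to Pommerenke.

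First, I would upgrade the hypothesis $\mc D_{\m D}(\log |f'|) < \infty$ to a statement about $\log f'$ viewed as a holomorphic function. Because $\m D$ is simply connected and $f$ is conformal (so $f'$ never vanishes), a single-valued holomorphic branch $\phi := \log f'$ exists on $\m D$, and the Cauchy--Riemann equations yield $|\phi'|^2 = |\nabla \log |f'||^2$. Consequently
\[
  \int_{\m D} |\phi'|^2 \, dA = \pi\, \mc D_{\m D}(\log |f'|) < \infty,
\]
so $\phi$ lies in the analytic Dirichlet space on $\m D$.

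Next, I would pass to the boundary. Functions in the analytic Dirichlet space have non-tangential boundary values lying in the fractional Sobolev space $H^{1/2}(\partial \m D)$, which in turn embeds continuously into $\VMO(\partial \m D)$ (since smooth functions are dense in $H^{1/2}$ and already lie in $\VMO$). Hence $\log f' \in \VMOA$.

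Finally, the conclusion would come from a classical theorem of Pommerenke: for a Jordan domain $\O = f(\m D)$ whose boundary is a quasicircle, $\partial \O$ is asymptotically smooth (i.e.\ chord-arc with local constant $1$) if and only if $\log f' \in \VMOA$. Since finite-energy Jordan curves are already known to be quasicircles \cite{RW}, combining this with the previous step delivers the claim. I expect the main obstacle to be quoting Pommerenke's equivalence in a form that yields exactly the local-constant-$1$ chord-arc property; a secondary technical point is to verify rectifiability of $\g$ independently, for instance by applying the John--Nirenberg inequality to $\log |f'| \in \BMO$ to get $|f'| \in L^p(\partial \m D)$ for some $p > 1$ and integrating along the boundary.
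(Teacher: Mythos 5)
Your proposal is correct and follows essentially the same route as the paper: finite Dirichlet energy of $\log|f'|$ puts $\log f'$ in the analytic Dirichlet space, its boundary values lie in $H^{1/2}(S^1)\subset \VMO(S^1)$, hence $\log f'\in \VMOA$, and Pommerenke's theorem then yields asymptotic smoothness. The only cosmetic difference is that the paper quotes Pommerenke's result in a form that gives asymptotic smoothness directly from $\log f'\in\VMOA$, so neither the separate quasicircle hypothesis nor the auxiliary rectifiability argument you flag is needed.
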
 

 For this, we recall the definition of a few classical functional spaces.
For $S = S^1$ or $\m R$ or a locally rectifiable Jordan curve $\g$, the space $\VMO (S)$ consists of functions $u \in L^1_{loc} (S)$ {\em with vanishing mean oscillation} such that
$$\lim_{|I| \to 0} \frac{1}{|I|} \int_I |u (x) - u_I| \, \dd x = 0,$$
where $I$ denotes an arc on $S$, $\dd x$ denotes the arclength measure, and
$$u_I :=\frac{1}{|I|} \int_I  u \,\dd \t.$$
The \emph{homogeneous Sobolev space $H^{1/2} (S)$} consists of function $u$ defined a.e. on $S$ for which the $H^{1/2}$ semi-norm
$$\norm{u}_{H^{1/2}(S)}^2 : = \frac{1}{2\pi^2}\iint_{S \times S} \abs{\frac{u(s) - u(t)}{s -t}}^2 \,\dd t \,\dd s < \infty.$$
\begin{remark}
When $S = S^1$, Douglas formula  says that $\norm{u}_{H^{1/2}(S^1)}^2 = \mc D_{\m D} (\tilde u)$ where $\tilde u$ is the harmonic extension of $u$ to $\m D$, see, \cite[Thm.\,2-5]{Ahlfors_conformal_invariants}. Applying the Cayley transform $\vartheta: z \mapsto \dfrac{z-i}{z+i}$ from $\m H$ to $\m D$, it is straightforward to check that 
\begin{equation}\label{eq:douglas_R}
    \norm{u \circ \vartheta}_{H^{1/2}(\m R)}^2 = \norm{u}_{H^{1/2}(S^1)}^2 = \mc D_{\m D} (\tilde u) = \mc D_{\m H} (\tilde u \circ \vartheta)
\end{equation}
where the last equality follows from the conformal invariance of Dirichlet energy. This gives the Douglas formula  for $S = \m R$. (We note that there are different conventions in the definition of $\norm{\cdot}_{H^{1/2}(\m R)} $ in the literature depending on if one adds the $L^2$ norm to it to define a norm. 
We opt for the semi-norm here as it coincides with the Dirichlet energy of the harmonic extension and that the space $H^{1/2}(\m R)$ coincides with the pull-back of $H^{1/2}(S^1)$ by the Cayley map.)
\end{remark}

We have $H^{1/2} (S) \subset \VMO (S)$. In fact, let $I \subset S$ be any bounded arc, 
\begin{align}
\frac{1}{|I|}\int_{I}|u-u_{I}|\dd x  & \le \frac{1}{|I|^{2}} \iint_{I \times I}|u(x)-u(y)|\,\dd x \dd y \nonumber \\
& \le  \left(\iint_{I \times I}\frac{|u(x)-u(y)|^{2}}{|x-y|^2} \,\dd x \dd y \right)^{1/2} \label{eq:H1/2_VMO}
\end{align}
by Cauchy-Schwarz inequality.

A holomorphic function $g$ defined on $\m D$ is in {\em VMOA}, if $g$ is the harmonic extension of a function in $\VMO (S^1)$. See, e.g., \cite[Thm\,3.6, Sec.\,5]{Gir_BMOA} for this definition. (There are equivalent definitions which use Hardy spaces, see, e.g., \cite{Pommerenke_VMOA}. Interested readers may consult the survey \cite{Gir_BMOA} which gives a comprehensive introduction to the theory of analytic functions of
bounded mean oscillation in the unit disc including the equivalence between different definitions.)

\begin{proof}[Proof of Theorem~\ref{thm:asymptotically_smooth}]
 For a conformal map $f$ from $\m D$ to $\O$ bounded by a Weil-Petersson quasicircle $\g$, we have by definition
$\mc D_{\m D}(\log|f'|) < \infty$. Setting $g : = \log f'$, this implies that $\int_{\m D} |g'(z)|^2 \dd A (z) <\infty$. The boundary values (taken as radial limits) of Dirichlet functions are in $H^{1/2} (S^1)$. See \eqref{eq:trace_H1/2} for more detailed discussion about this fact. Therefore,
  $\log f' \in$ VMOA. 
   A theorem of Pommerenke \cite[Thm.\,2]{Pommerenke_VMOA} then shows that this implies that $\g$ is asymptotically smooth.
\end{proof}
\begin{remark} 
We have already discussed in Remark~\ref{rem:more_regular_than_SLE} that Weil-Petersson quasicircles cannot have corners. Theorem~\ref{thm:asymptotically_smooth} gives another justification of this fact as around the corner, the chord-arc ratio is bounded away from $1$.
However, to talk about corners, the curve has to have  left and right derivatives. Therefore, a Weil-Petersson quasicircle need not be $C^1$. 
In fact, it is not hard to check using Theorem~\ref{thm:intro_equiv_energy_WP} that the spiral defined by $t \mapsto t \exp (\ii \log \log |1/t|)$ in a neighborhood of $0$ can be completed into a Weil-Petersson quasicircle. See, e.g., \cite[Prop.\,6.5]{michelat2021loewner}.
\end{remark}

The connection between Loewner energy and Weil-Petersson quasicircles goes further: Not only Weil-Petersson quasicircles are exactly those Jordan curves with finite Loewner energy, the Loewner energy is also closely related to the K\"ahler structure on the Weil-Petersson Teichm\"uller space $T_0(1)$, identified to the class of Weil-Petersson quasicircles via a conformal welding procedure. 
In fact, the right-hand side of \eqref{eq_disk_energy} coincides with the universal Liouville action introduced by Takhtajan and Teo \cite{TT06} and shown by them to be a K\"ahler potential of the Weil-Petersson metric, which is the unique homogeneous K\"ahler metric on $T_0(1)$ up to a scaling factor. Summarizing, we obtain the following result.
\begin{cor}\label{cor:energy_potential}
   The Loewner energy is a K\"ahler potential of the Weil-Petersson metric on $T_0(1)$.
\end{cor}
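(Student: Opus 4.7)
The plan is to reduce the statement to the central theorem of Takhtajan and Teo \cite{TT06}, using Theorem~\ref{thm:intro_equiv_energy_WP} as a bridge. Recall that the Weil-Petersson universal Teichm\"uller space $T_0(1)$ can be modeled by equivalence classes of Beltrami differentials $\mu$ supported on $\m D^*$ with finite Weil-Petersson norm. Solving the Beltrami equation after extending $\mu$ by zero to $\m D$ produces a quasiconformal self-homeomorphism of $\Chat$ whose restrictions yield a pair of conformal maps $f : \m D \to \O$ and $h : \m D^* \to \O^*$ (normalized, e.g., so that $f(0)=0$ and $h(\infty)=\infty$), whose common boundary $\g$ is a Weil-Petersson quasicircle. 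Every such curve arises this way, giving an identification of $T_0(1)$ with the (normalized) space of Weil-Petersson quasicircles.

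Under this identification, the right-hand side of~\eqref{eq_disk_energy} coincides, up to a positive multiplicative constant, with Takhtajan-Teo's universal Liouville action $\mathbf{S}_1[\mu]$. Their main theorem asserts that $\mathbf{S}_1$ is smooth on $T_0(1)$ and that $\ii\,\partial \bar\partial\, \mathbf{S}_1$ is proportional to the Weil-Petersson K\"ahler form $\o_{WP}$; equivalently, $\mathbf{S}_1$ is a global K\"ahler potential for $\o_{WP}$. Combining this with Theorem~\ref{thm:intro_equiv_energy_WP}, which identifies $I^L(\g)$ with $\mathbf{S}_1[\mu]$ along the welding bijection above, immediately gives that $I^L$ is also a K\"ahler potential for $\o_{WP}$, which is the assertion of the corollary.

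The only real obstacle is bookkeeping: one must verify that the normalizations of $f$ and $h$, the constants in front of the Dirichlet integrals $\mc D_{\m D}$ and $\mc D_{\m D^*}$, and the definition of the Weil-Petersson pairing all match between our setup and that of \cite{TT06}. Once this is done, the boundary terms $4\log|f'(0)|$ and $-4\log|h'(\infty)|$ appearing in \eqref{eq_disk_energy} play exactly the role of the anchoring terms in $\mathbf{S}_1$, which compensate for the non-invariance of the bulk Dirichlet integrals under changes of normalization. No further analysis beyond this identification is required.
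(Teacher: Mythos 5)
Your proposal matches the paper's own justification: Theorem~\ref{thm:intro_equiv_energy_WP} identifies $I^L$ (via the right-hand side of \eqref{eq_disk_energy}) with the universal Liouville action of Takhtajan--Teo, who proved that this action is a K\"ahler potential of the Weil-Petersson metric on $T_0(1)$, the space being identified with Weil-Petersson quasicircles through conformal welding. This is essentially the same (short) argument the paper gives, with your extra remarks on normalizations and constants being harmless bookkeeping.
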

We do not enter into further details as it goes beyond the scope of large deviations that we choose to focus on here and refer the interested readers to \cite{TT06,W2}. This result gives an unexpected link between the probabilistic theory of SLE and Teichm\"uller theory, although the deep reason behind the link remains rather obscure.

\section{Cutting, welding, and flow-lines} \label{sec:interplay}

Pioneering works \cite{Dub_couplings,Quantum_zipper,IG1} on couplings between SLEs and Gaussian free field (GFF) have led to many remarkable applications in 2D random conformal geometry. These coupling results are often speculated from the link with discrete models.
In \cite{VW1}, Viklund and the author provided another viewpoint on these couplings through the lens of large deviations by showing the interplay between Loewner energy of curves and Dirichlet energy of functions defined in the complex plane (which is the large deviation rate function of scaled GFF). 
These results are analogous to the SLE/GFF couplings, but the proofs are remarkably short and use only analytic tools without any of the probabilistic models. 

\subsection{Cutting-welding identity}
 To state the result, 
 we write $\mc{E}(\Omega)$ for the space of real functions on a domain $\Omega \subset \m C$ with weak first derivatives in $L^2(\Omega)$ and recall  the Dirichlet energy of $\varphi \in \mc E(\Omega)$ is 
 \[\mc D_\Omega(\varphi) := \frac{1}{\pi} \int_\Omega |\nabla \varphi|^2 \dd A (z).\]
\begin{thm}[Cutting {\cite[Thm.\,1.1]{VW1}}]\label{thm:welding_coupling1}
    Suppose $\gamma$ is a Jordan curve through $\infty$ and $\varphi \in  \mc E(\mathbb{C})$. Then we have the identity:
    \begin{equation}\label{jan26.1}
    \mc D_\mathbb{C}(\varphi) + I^L(\gamma)  = \mc D_{\mathbb{H}}(u) + \mc D_{\mathbb{H}^*}(v) ,\end{equation}
    where  
    \begin{equation}\label{eq:ppS}
     u =  \varphi \circ f + \log \abs{f'}, \quad v =  \varphi \circ h + \log \abs{h'},
\end{equation}
  and $f$ and $h$ map conformally $\m H$ and $\m H^*$ onto, respectively, $H$ and $H^*$, the two components of $\mathbb{C} \smallsetminus \gamma$, while fixing $\infty$. 
\end{thm}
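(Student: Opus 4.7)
The plan is to expand the right-hand side of~\eqref{jan26.1}, reduce the identity to the vanishing of two ``cross'' terms, and then verify the cancellation via integration by parts along $\m R$.

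Write $\langle u, v\rangle_\Omega := \frac{1}{\pi}\int_\Omega \nabla u \cdot \nabla v \, \dd A$ for the polarization of $\mc D_\Omega$. Bilinear expansion gives
\begin{equation*}
\mc D_\m H(u) + \mc D_{\m H^*}(v) = \mc D_\m H(\varphi\circ f) + \mc D_{\m H^*}(\varphi\circ h) + \mc D_\m H(\log|f'|) + \mc D_{\m H^*}(\log|h'|) + 2\bigl[\langle \varphi\circ f, \log|f'|\rangle_\m H + \langle \varphi\circ h, \log|h'|\rangle_{\m H^*}\bigr].
\end{equation*}
Conformal invariance of the Dirichlet energy, together with the fact that $\g$ has zero Lebesgue measure (automatic once $I^L(\g)<\infty$, by Theorem~\ref{thm:asymptotically_smooth}), gives $\mc D_\m H(\varphi\circ f) + \mc D_{\m H^*}(\varphi\circ h) = \mc D_H(\varphi) + \mc D_{H^*}(\varphi) = \mc D_\m C(\varphi)$, while Remark~\ref{rem_infinite_curve_LE} identifies $\mc D_\m H(\log|f'|) + \mc D_{\m H^*}(\log|h'|) = I^L(\g)$. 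Hence~\eqref{jan26.1} reduces to the cross-term cancellation
\begin{equation*}
\langle \varphi\circ f, \log|f'|\rangle_\m H + \langle \varphi\circ h, \log|h'|\rangle_{\m H^*} = 0. \qquad (\ast)
\end{equation*}

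To establish $(\ast)$, I would first assume $\varphi \in C_c^\infty(\m C)$ and $\g$ smooth. Since $\log|f'|$ is harmonic in $\m H$, Green's first identity (outward normal $-\partial_y$) together with the Cauchy-Riemann equations for the holomorphic $\log f'$ yields
\begin{equation*}
\pi\langle \varphi\circ f, \log|f'|\rangle_\m H = \int_\m R (\varphi\circ f)(x)\, \Im\!\left(\frac{f''(x)}{f'(x)}\right)\dd x,
\end{equation*}
and, with the opposite sign produced by the outward normal $+\partial_y$ on $\m H^*$,
\begin{equation*}
\pi\langle \varphi\circ h, \log|h'|\rangle_{\m H^*} = -\int_\m R (\varphi\circ h)(x)\, \Im\!\left(\frac{h''(x)}{h'(x)}\right)\dd x.
\end{equation*}
The change of variables $w = f(x)$ identifies $\Im(f''/f')(x)\,\dd x = \dd(\arg f'(x))$ with the differential $\dd\theta(w)$ of the tangent angle of $\g$ at $w$ in the orientation induced by $f$, and similarly for $h$. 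The crucial observation is that $f$ and $h$ induce the \emph{same} orientation on $\g$: as $x$ increases, $f$ traces $\g$ with $H$ on its left while $h$ traces $\g$ with $H^*$ on its right, and since $H$ and $H^*$ lie on opposite sides of $\g$ these describe the same direction. Both boundary integrals thus equal $\int_\g \varphi\, \dd\theta$ for the common tangent-angle differential, and $(\ast)$ follows from the opposite signs.

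Finally, the smoothness assumptions are removed by density. $C_c^\infty(\m C)$ is dense in $\mc E(\m C)$ modulo constants, and both sides of~\eqref{jan26.1} are continuous in $\varphi$ there via the Cauchy-Schwarz bound $|\langle u, v\rangle_\Omega| \le \mc D_\Omega(u)^{1/2}\mc D_\Omega(v)^{1/2}$. If $I^L(\g)=\infty$, then writing $\log|f'| = u - \varphi\circ f$ and applying the triangle inequality for Dirichlet norms forces $\mc D_\m H(u) + \mc D_{\m H^*}(v) = \infty$, so both sides of~\eqref{jan26.1} agree in the extended sense; if $I^L(\g)<\infty$, then $\g$ is a Weil-Petersson quasicircle, which can be approximated by smooth Jordan curves $\g_n$ through $\infty$ converging in the Weil-Petersson sense, and one passes to the limit in the cross-term identity. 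The main obstacle is this last curve-approximation step: continuous dependence of $\langle \varphi\circ f_n, \log|f_n'|\rangle_\m H$ on $\g_n$ requires careful control of the conformal maps $f_n \to f$, $h_n \to h$ and their derivatives on the boundary, where, for a generic Weil-Petersson quasicircle, $\log f'$ and $\log h'$ have boundary traces only in $\VMOA \cap H^{1/2}(\m R)$, so the naive pointwise boundary interpretation from the smooth case must be replaced by the weak Dirichlet-inner-product formulation throughout.
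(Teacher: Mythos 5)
Your proposal is correct and follows essentially the same route as the paper: expand the Dirichlet energies, use conformal invariance and the identity $I^L(\g) = \mc D_{\m H}(\log|f'|) + \mc D_{\m H^*}(\log|h'|)$ to reduce to the vanishing of the cross terms, and then cancel them by a Green/Stokes boundary computation, with your tangent-angle differential $\dd(\arg f')$ being just the geodesic-curvature term $k(f(x))|f'(x)|\,\dd x$ the paper uses. Like the paper, you prove the smooth/compactly supported case and defer the general case to an approximation argument, correctly flagging the curve-approximation step as the technical part handled in the original reference.
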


A function $\varphi\in \mc E (\m C)$ has vanishing mean oscillation. 
 In fact, the following version of Poincar\'e inequality (which can be obtained using a scaling argument) shows
if $D \subset \mathbb{C}$ is a disk or square and $ \varphi \in W^{1,2}(D)$, then $$\int_{D}|\varphi-\varphi_{D}|^2 \,\dd A \le 4 \, (\diam \, D)^2 \int_{D}|\nabla \varphi|^2 \,\dd A.$$ Here we use the notation
 \[ \varphi_D = \frac{1}{|D|}\int_D \varphi \, \dd A
 \]
 for the average of $\varphi \in L^1(D)$ over $D$ and we write $|D|$ for the Lebesgue measure of $D$. Therefore,  using Cauchy-Schwarz inequality, we have
 $$\frac{1}{|D|} \int_{D} |\varphi - \varphi_D| \,\dd A \le \sqrt {\frac{1}{|D|} \int_{D} |\varphi - \varphi_D|^2 \,\dd A} \le \frac{2}{\sqrt \pi} \sqrt{\int_{D}|\nabla \varphi|^2 \,\dd A} $$
 which converges to $0$ as $\diam (D) \to 0$.

The John-Nirenberg inequality (see, \cite[Lem.\,1]{JohnNirenberg} or \cite[Thm.\,VI.2.1]{garnett}) shows that $e^{2\varphi}$ is locally integrable. In other words,
$e^{2 \varphi} \dd A$ defines a $\sigma$-finite measure supported on $\m{C}$, absolutely continuous with respect to Lebesgue measure $\dd A$. The transformation law \eqref{eq:ppS} is chosen such that $e^{2u} \dd A $ and $e^{2v} \dd A $ are the pullback measures by $f$ and $h$ of $e^{2 \varphi} \dd A $, respectively.

Let us first explain why we consider this theorem as a finite energy analog of an SLE/GFF coupling. Note that we do not make rigorous statement here and only argue heuristically. 
The first coupling result we refer to is the quantum zipper theorem, which couples $\SLE_\k$ curves with \emph{quantum surfaces} via a cutting operation and as welding curves 
   \cite{Quantum_zipper,MoT}. A quantum surface is a domain equipped with a
  Liouville quantum gravity ($\sqrt \k$-LQG) measure, defined using a regularization of $e^{\sqrt \k \Phi}\dd A (z)$, where $\sqrt \k \in (0,2)$, and $\Phi$ is a Gaussian field with the covariance of a free boundary GFF\footnote{In fact, $\Phi$ is a  free boundary GFF  plus a logarithmic singularity $- \sqrt \k \log |\cdot|$. Note that the factor of the logarithmic singularity converges to $0$ as $\k \to 0$.}. 
The analogy is outlined in the table below. In the left column we list concepts from random conformal geometry and in the right column the corresponding finite energy objects.

\renewcommand{\arraystretch}{1.06}

\begin{center} 
  \setlength{\LTpre}{12pt}
  \setlength{\LTpost}{-2pt}
\begin{longtable}{ l  l  }
 \toprule
    \multicolumn{1}{c}{\bf  SLE/GFF with $\k \to 0+$}  &   \multicolumn{1}{c}{\bf  Finite energy} \\ \hline 
     $\SLE_\kappa$ loop    & Jordan curve $\gamma$ with $I^L(\g) <\infty$ \\
     & i.e.,  a Weil-Petersson quasicircle $\g$\\
      \hline
        Free boundary GFF $\sqrt \k \Phi$ on $\mathbb{H}$ (on $\m C$) &  $2u \in \mathcal{E}(\m H)$  ($2\varphi \in \mathcal{E}(\m C)$)   \\ \hline
              $\sqrt \k$-LQG on quantum plane $\approx e^{\sqrt \k \Phi} \dd A$ & $e^{2 \varphi} \,\dd A, \, \varphi \in \mathcal{E}(\m C)$ \\ \hline

  $\sqrt \k$-LQG on quantum half-plane on $\mathbb{H}$ & $e^{2 u} \,\dd A ,  \, u \in \mathcal{E}(\m H)$
  \\ \hline
    $\sqrt \k$-LQG boundary measure $\approx e^{\sqrt \k \Phi/2} \dd x$  & $e^{u}\,\dd x, \, u \in H^{1/2}(\mathbb{R})$ \\
    \hline
  $\SLE_\kappa$ cuts an independent quantum & A Weil-Petersson quasicircle $\gamma$ cuts \\
      plane $e^{\sqrt \k \Phi} \dd A (z)$ into  independent & $\varphi \in \mathcal{E}(\mathbb{C})$ into $u \in \mc E(\m H), v \in \mc E(\m H^*)$, \\
     quantum half-planes $e^{\sqrt \k \Phi_1}, e^{\sqrt \k \Phi_2}$ &  $I^L(\gamma) + \mc{D}_{\m{C}}(\varphi) = \mc{D}_{\m{H}}(u) + \mc{D}_{\m{H}^*}(v)$\\
  \bottomrule 
\end{longtable}
\end{center}

 To justify the analogy of the last line, we argue heuristically as follows.  From the left-hand side, one expects that under an appropriate choice of topology and for small $\k$,
 \begin{align}\label{eq:heuristic_cutting}
 \begin{split}
 `` & \P ( \SLE_\k  \text{ loop stays close to } \gamma,\, \sqrt{\k}\Phi \text{ stays close to } 2 \varphi) \\
 = &\;\P(\sqrt \k \Phi_1 \text{ stays close to } 2 u, \, \sqrt{\k} \Phi_2 \text{ stays close to } 2 v)".    
 \end{split}
 \end{align}
 
 From the large deviation principle and the independence between SLE and $\Phi$, we obtain similarly as \eqref{eq:heuristic_LDP}
 \begin{align*}
   `` &\;  \lim_{\k \to 0} - \k \log  \P(\SLE_\k  \text{ stays close to } \gamma,\, \sqrt{\k} \Phi \text{ stays close to } 2 \varphi) \\ 
     =  & \; \lim_{\k \to 0} - \k  \log  \P(\SLE_\k \text{ stays close to } \gamma) +\lim_{\k \to 0}- \k  \log  \P(\sqrt{\k} \Phi \text{ stays close to } 2 \varphi) \\
     = & \; I^L (\gamma) + \mc D_{\m C} (\varphi)".
 \end{align*}
 
 On the other hand the independence between $\Phi_1$ and $\Phi_2$ gives
 \begin{align*}
`` & \lim_{\k \to 0} -\k \log  \P(\sqrt{\k} \Phi_1  \text{ stays close to } 2 u,\, \sqrt{\k} \Phi_2 \text{ stays close to }  2 v)\\
 & =   \mc D_{\m H} (u) + \mc D_{\m H^*} (v)".
 \end{align*}
 We obtain the identity \eqref{jan26.1} using \eqref{eq:heuristic_cutting} heuristically.  
 
 We now present our short proof of Theorem~\ref{thm:welding_coupling1} in the case where $\gamma$ is smooth and $\varphi \in C_c^{\infty} (\m C)$ to illustrate the idea. The general case follows from an approximation argument, see \cite{VW1} for the complete proof. 
 
\begin{proof}[Proof of Theorem~\ref{thm:welding_coupling1} in the smooth case]
 From Remark~\ref{rem_infinite_curve_LE}, if $ \infty 
 \in \gamma$, then
 \begin{equation*}
 I^L(\gamma) = \frac{1}{\pi} \int_{\HH} \left|\nabla \s_f \right|^2 \, \dd A (z) + \frac{1}{\pi} \int_{\HH^*} \left|\nabla \s_h \right|^2 \, \dd A (z),
 \end{equation*}
 where $\s_f$ and $\s_h$ are the shorthand notation for $\log |f'|$ and $\log |h'|$.
 The conformal invariance of Dirichlet energy gives
 $$\mc D_{\m H} (\varphi\circ f) + \mc D_{\m H^*} (\varphi\circ h)  =  \mc D_{H} (\varphi) + \mc D_{H^*} (\varphi) =  \mc D_{\m C} (\varphi). $$
     To show \eqref{jan26.1}, after expanding the Dirichlet energy terms, it suffices to verify the cross terms
     vanish:
      \begin{equation}\label{eq:cross_term}
      \int_{\m H} \brac{\nabla \s_f(z), \nabla (\varphi\circ f) (z)}\dd A (z) + \int_{\m H^*} \brac{\nabla \s_h(z), \nabla (\varphi \circ h) (z)}\dd A (z) = 0.
      \end{equation} 
    Indeed, by Stokes' formula, the first term on the left-hand side 
    equals  
\begin{align*}
 \int_{\m R} \partial_n \s_f(x) \varphi (f (x))\dd x & = \int_{\m R} k(f(x)) \abs{f'(x)} \varphi (f(x))\dd x
 = \int_{\partial H} k(z) \varphi(z) \abs{\dd z}
\end{align*}    
where $k(z)$ is the geodesic curvature of $ \gamma = \partial H$ at $z$ using the identity
 $\partial_{n} \s_f (x) = |f'(x)|k (f(x))$  (this follows from an elementary differential geometry computation, see, e.g., \cite[Appx.\,A]{W2}). 
The geodesic curvature at the same point $z \in \gamma$, considered as a point of $\partial H^*$, equals $-k(z)$. Therefore the sum in \eqref{eq:cross_term} cancels out and completes the proof in the smooth case. 
 \end{proof}
 
The following result is on the converse operation of the cutting, which shows that we can also recover $\g$ and $\varphi$ from $u$ and $v$ by \emph{conformal welding}.
More precisely, an increasing homeomorphism $\weld:  \m R \to  \m R$ is said to be a (conformal) welding homeomorphism of a Jordan curve $\g$ through $\infty$, if there are conformal maps $f,h$ of the upper and lower half-planes onto the two components of $\m C \smallsetminus \g$, respectively, such that $\weld = h^{-1} \circ f|_{\m R}$.
 In general, for a given homeomorphism $\weld$, there might not exist a triple $(\g, f,h)$ which solves the \emph{welding problem}. Even when the solution exists, it might not be unique.

  However, we now construct a welding homeomorphism starting from $u \in \mc E(\m H)$ and $v \in \mc E(\m H^*) $ and show that there exists a unique normalized solution to the welding problem, and the curve obtained is Weil-Petersson. For this, we recall that
the {\em trace} of a generalized function in a Sobolev space of a domain $D$ is the boundary value of the function on $\partial D$. It is defined through a {\em trace operator} extending the restriction map for smooth functions. More precisely, for $u \in \mc E (\m H) \cap C^{\infty} (\ad {\m H})$, we define 
$\mc R [u] := u|_{\m R}.$
We have
$$\norm{ \mc R [u]}_{H^{1/2} (\m R)}^2 = \mc D_{\m H} (\mc P[u]) \le \mc D_{\m H} (u),$$
where $\mc P[u]$ is the Poisson integral of $u$, the equality follows from Douglas formula \eqref{eq:douglas_R}, and the inequality follows from the Dirichlet principle.
Therefore, $\mc R$ extends to a bounded operator $\mc E (\m H) \to H^{1/2}(\m R)$ using the density of smooth functions in $\mc E (\m H)$.

 There is a more concrete way to describe the trace of $u \in \mc E (\m H)$ following Jonsson and Wallin \cite{JW1984} using averages over balls  as follows. We remark that this definition even generalizes to rougher domains bounded by chord-arc curves.

 Suppose $\tilde u \in \mc E(\m C)$ and $\g$ is a chord-arc curve in $\hat{\mathbb{C}}$. The {\em Jonsson-Wallin trace} of $u$ on $\g$ is defined for arclength a.e. $z \in \g$ by the following limit of averages
\begin{equation}\label{def:trace}
\mc R_{\g}[\tilde u](z):=\lim_{r \to 0+}\tilde u_{B(z,r)}, 
\end{equation}
where $B(z,r) = \{w: |w-z| < r\}$. Let $\O$ be a domain bounded by a chord-arc curve $\g$ and $u \in \mc E (\O)$,
the trace of $u$ on a $\g = \partial \O$ is
\begin{equation*}
\mc R_{\O \to \g}[u](z) := \mc R_{ \g}[\tilde u] (z) ,\quad \text{ for arclength a.e. }z\in \g,
\end{equation*}
 where  $\tilde u \in \mc E (\m C)$ is any function such that $\tilde u|_{\O} =u$. In particular, the definition $\mc R_{\O \to \g}[u]$
  does not depend on the choice of the extension $\tilde u \in \mc E (\m C)$.  Moreover, 
  \begin{equation} \label{eq:trace_H1/2}
      \mc R_{\Omega \to \g}[u] \in H^{1/2}(\g).
  \end{equation} 
  We refer to \cite[Appx.\,A]{VW1} for more details.

With a slight abuse of notation, we write $u \in H^{1/2} (\m R)$ also for the trace of $u \in \mc E(\m H)$ on $\m R$.
As $H^{1/2} (\m R) \subset \operatorname{VMO}(\m R)$ (see Equation~\eqref{eq:H1/2_VMO}), John-Nirenberg inequality implies that $\dd \mu : = e^{u} \dd x$ is a $\sigma$-finite measure supported on $\m R$. 
  \begin{lem}[\!\!{\cite[Lem.\,2.4]{VW1}}] \label{lem:infinite_measure}
  Suppose $u \in H^{1/2}(\mathbb{R})$ and $d \mu = e^{u} dx$. 
      Then $\mu(I) = \infty$ for any unbounded interval $I$.
   \end{lem}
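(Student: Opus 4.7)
The plan is to reduce to controlling the dyadic means of $u$ on $I$ and then apply Jensen's inequality. By the translation invariance and the reflection symmetry of $H^{1/2}(\mathbb{R})$, we may assume $I = [1,\infty)$, and partition $I$ into dyadic intervals $J_n := [2^n, 2^{n+1}]$, $n \geq 0$. Setting $a_n := u_{J_n} = |J_n|^{-1}\int_{J_n} u\,\dd x$ (well defined since $H^{1/2}(\mathbb{R}) \subset \VMO(\mathbb{R}) \subset L^1_{\mathrm{loc}}(\mathbb{R})$, as recalled around \eqref{eq:H1/2_VMO}), Jensen's inequality gives $\mu(J_n) \geq |J_n|\, e^{a_n} = 2^n e^{a_n}$. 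So it suffices to show that $a_n$ cannot drop faster than $O(\sqrt n)$, so that the exponential growth $2^n$ eventually dominates.

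To control the increment $a_{n+1} - a_n$, I would rewrite it as a double integral over $J_n \times J_{n+1}$ and apply Cauchy-Schwarz against the weight $|s-t|^{-1}$:
$$|a_{n+1} - a_n|^2 \leq \frac{\iint_{J_n \times J_{n+1}} |s-t|^2\, \dd s\, \dd t}{|J_n|^2|J_{n+1}|^2} \cdot E_n, \qquad E_n := \iint_{J_n \times J_{n+1}} \frac{(u(s) - u(t))^2}{|s-t|^2}\, \dd s\, \dd t.$$
Since $\diam(J_n \cup J_{n+1}) \leq 2|J_{n+1}|$, the geometric factor is bounded by an absolute constant (in fact $8$). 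Crucially, the rectangles $\{J_n \times J_{n+1}\}_{n \geq 0}$ are pairwise disjoint in $\mathbb{R}^2$, so
$$\sum_{n \geq 0} E_n \leq 2\pi^2\, \|u\|_{H^{1/2}(\mathbb{R})}^2 < \infty.$$

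The telescoping identity $a_n - a_0 = \sum_{k < n}(a_{k+1} - a_k)$ and one more Cauchy-Schwarz give
$$|a_n - a_0|^2 \leq n \sum_{k = 0}^{n-1}|a_{k+1} - a_k|^2 = O(n),$$
so $a_n \geq a_0 - C\sqrt n$. Substituting back into Jensen,
$$\mu(J_n) \geq 2^n \exp\bigl(a_0 - C \sqrt n\bigr) \xrightarrow{n \to \infty} +\infty,$$
hence $\mu(I) \geq \sup_n \mu(J_n) = \infty$.

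The main obstacle is the estimate in the second step: one must pick the dyadic scaling so that the rectangles $J_n \times J_{n+1}$ tile a disjoint subset of $\mathbb{R}^2$ whose total $H^{1/2}$-contribution is finite, while simultaneously keeping the geometric Cauchy-Schwarz factor uniformly bounded in $n$. If one telescopes along intervals of a fixed scale instead, the $E_n$ no longer sum to a bounded multiple of $\|u\|_{H^{1/2}}^2$ and the argument breaks; the dyadic choice is precisely matched to the scale-invariance of the $H^{1/2}$ seminorm on $\mathbb{R}$. Everything else in the argument is just Jensen plus Cauchy-Schwarz.
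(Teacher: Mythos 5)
Your argument is correct, and it is essentially the proof given in the cited source \cite[Lem.\,2.4]{VW1} (the survey itself does not reproduce a proof): reduce to a half-line, bound consecutive dyadic averages $|u_{J_{n+1}}-u_{J_n}|$ by Cauchy--Schwarz against the $H^{1/2}$ kernel, use disjointness of the rectangles $J_n\times J_{n+1}$ to sum these contributions against $\norm{u}_{H^{1/2}(\m R)}^2$, and conclude via Jensen that $\mu(J_n)\ge 2^n e^{a_0-C\sqrt n}\to\infty$. The only cosmetic point is that $u\in L^1_{loc}$ is better justified directly from finiteness of the seminorm (on a bounded $I$, Fubini gives some $t_0$ with $\int_I(u-u(t_0))^2<\infty$, so $u\in L^2(I)$) rather than through the VMO inclusion, whose definition already presupposes local integrability.
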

   Let $v \in \mc E (\m H^*)$, we set similarly $\dd \nu = e^{v} \dd x$. 
  We define an increasing homeomorphism $\weld$ by $\weld(0)=0$ and then
\begin{equation}\label{def:homeo1}
\weld(x)=
\begin{cases}
      \inf\left\{ y\ge 0: \mu[0,x] = \nu[0,y]\right\} & \text{ if } x > 0;\\
    -\inf\left\{ y\ge 0: \mu[x,0] = \nu[-y,0]\right\} & \text{ if } x <0.
   \end{cases} \end{equation}
From Lemma~\ref{lem:infinite_measure},
$\weld$ is well-defined and $\mu([a,b]) = \nu(\weld([a,b]))$ for any choice of $a \le b$.   We say $\weld$ is the \emph{isometric welding homeomorphism} associated with $\mu$ and $\nu$.

\begin{thm}[Isometric conformal welding {\cite[Thm.\,1.2]{VW1}}]\label{thm:tuple12}
Let $u \in \mc E(\mathbb{H})$ and  $v \in \mc E(\mathbb{H}^*)$. The isometric welding problem for the measures $e^{u} \dd x$ and $e^{v} \dd x$ on $\m R$ has a solution $(\gamma, f, h)$ and the welding curve $\gamma$ is a Weil-Petersson quasicircle. Moreover, there exists a unique
$\varphi \in \mc E(\mathbb{C})$ such that \eqref{eq:ppS} is satisfied.
 \end{thm}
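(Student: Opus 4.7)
The plan is to construct the welding solution $(\gamma, f, h)$ together with $\varphi$ via a smooth approximation argument, using the cutting identity of Theorem~\ref{thm:welding_coupling1} as the essential a priori bound. First, approximate $u \in \mc E(\m H)$ and $v \in \mc E(\m H^*)$ by sequences $u_n, v_n$ in $C^\infty(\ad{\m H}) \cap \mc E(\m H)$ and $C^\infty(\ad{\m H^*}) \cap \mc E(\m H^*)$ converging in Dirichlet norm, with traces converging in $H^{1/2}(\m R)$. Lemma~\ref{lem:infinite_measure} still applies to $u_n, v_n$, so \eqref{def:homeo1} yields welding homeomorphisms $\weld_n$ which are smooth diffeomorphisms of $\m R$ (with derivative $\weld_n'(x) = e^{u_n(x) - v_n(\weld_n(x))}$). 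Each such smooth $\weld_n$ admits a classical welding solution: a smooth Jordan curve $\gamma_n$ through $\infty$ and conformal maps $f_n, h_n$ from $\m H, \m H^*$ onto the two components of $\m C \smallsetminus \gamma_n$, normalized by $f_n(0) = 0$ and $f_n, h_n$ tangent to the identity at $\infty$, with $\weld_n = h_n^{-1} \circ f_n|_{\m R}$.

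For each $n$, define $\varphi_n$ on $\m C$ by the formulas forced by \eqref{eq:ppS}:
\begin{equation*}
 \varphi_n = u_n \circ f_n^{-1} + \log |(f_n^{-1})'| \text{ on } H_n, \qquad \varphi_n = v_n \circ h_n^{-1} + \log |(h_n^{-1})'| \text{ on } H_n^*.
\end{equation*}
A direct computation using $\zeta = f_n(x) = h_n(\weld_n(x))$ shows that the isometric property $\mu_n[0,x] = \nu_n[0,\weld_n(x)]$ is exactly equivalent to the equality of the boundary values of $\varphi_n$ from the two sides of $\gamma_n$, so $\varphi_n \in \mc E(\m C)$. Theorem~\ref{thm:welding_coupling1} applied to $(\gamma_n, \varphi_n)$ then yields
\begin{equation*}
 \mc D_{\m C}(\varphi_n) + I^L(\gamma_n) = \mc D_{\m H}(u_n) + \mc D_{\m H^*}(v_n),
\end{equation*}
which is uniformly bounded as $n \to \infty$.

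By Theorem~\ref{thm:intro_equiv_energy_WP}, the uniform bound on $I^L(\gamma_n)$ constrains $\gamma_n$ to a relatively compact subset of the Weil-Petersson class; in particular, the sequences $\log|f_n'|$ and $\log|h_n'|$ are bounded in the appropriate Dirichlet spaces. Extracting a subsequence, $f_n \to f$ and $h_n \to h$ locally uniformly on $\m H, \m H^*$ to conformal maps onto the two components $H, H^*$ of $\m C \smallsetminus \gamma$ for some Weil-Petersson quasicircle $\gamma$. Simultaneously, the $H^{1/2}(\m R)$ convergence of the traces combined with the John-Nirenberg inequality gives vague convergence $\mu_n \to \mu$ and $\nu_n \to \nu$, hence $\weld_n \to \weld$ pointwise; this identifies $(\gamma, f, h)$ as a solution of the welding problem for $\weld$. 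Finally, define $\varphi$ on $\m C$ by the same formulas as $\varphi_n$ with $f, h$ in place of $f_n, h_n$: the isometric property of $\weld$ ensures that the traces of $\varphi$ from $H$ and $H^*$ coincide arclength-almost everywhere on $\gamma$, and since $\gamma$ is chord-arc by Theorem~\ref{thm:asymptotically_smooth}, the Jonsson-Wallin trace theory \eqref{eq:trace_H1/2} upgrades this trace agreement to $\varphi \in \mc E(\m C)$. Uniqueness of $\varphi$ is immediate from \eqref{eq:ppS}.

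The main obstacle is precisely the compactness step: promoting Dirichlet convergence of $(u_n, v_n)$ to convergence of the welding curves and maps in a topology strong enough to identify the limit as the welding of the original $\weld$ (rather than some other welding curve that might also have bounded Loewner energy). This requires carefully combining the Weil-Petersson compactness afforded by Theorem~\ref{thm:intro_equiv_energy_WP} with quantitative stability of the isometric welding construction under Dirichlet perturbations of $u$ and $v$.
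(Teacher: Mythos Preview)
Your approach is genuinely different from the paper's. The paper proceeds directly: it shows that the isometric welding homeomorphism $\weld$ defined by \eqref{def:homeo1} satisfies $\log \weld' \in H^{1/2}(\m R)$, and then invokes the characterization of Shen--Tang (and Shen--Tang--Wu) that $\log \weld' \in H^{1/2}(\m R)$ is equivalent to $\weld$ being the welding homeomorphism of a Weil-Petersson quasicircle. This immediately yields the solution $(\gamma,f,h)$, after which one checks that the function $\varphi$ defined piecewise by \eqref{eq:ppS} on $\m C \smallsetminus \gamma$ glues to an element of $\mc E(\m C)$. The key computation is that, formally, $\weld'(x) = e^{u(x)-v(\weld(x))}$, so $\log \weld' = u - v\circ \weld$; the work is to make sense of this and to show $v\circ \weld \in H^{1/2}(\m R)$, which uses that composition with a quasisymmetric map preserves $H^{1/2}$.

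Your approximation-plus-compactness route avoids the Shen--Tang input and instead leans on the cutting identity as an a priori bound. This is a reasonable strategy, but the step you flag as ``the main obstacle'' is a genuine gap as written. A uniform bound on $I^L(\gamma_n)$ gives precompactness of the normalized maps $f_n,h_n$ only in the locally uniform topology on $\m H,\m H^*$; it does not by itself control boundary behavior well enough to pass the relation $\weld_n = h_n^{-1}\circ f_n|_{\m R}$ to the limit. Concretely, you need to know that the limit maps $f,h$ extend to homeomorphisms of the closed half-planes and that $h^{-1}\circ f|_{\m R}$ agrees with your limit $\weld$, not merely that $\gamma$ is \emph{some} Weil-Petersson curve. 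Likewise, ``$H^{1/2}$ convergence of traces plus John--Nirenberg gives vague convergence $\mu_n\to\mu$'' is plausible but requires a quantitative argument (e.g., uniform local $A_\infty$ bounds on $e^{u_n}$), and pointwise convergence $\weld_n\to\weld$ does not automatically follow from vague convergence of the measures without excluding mass escaping or concentrating. In short, your plan can likely be completed, but it would require substantial additional estimates; the paper's route sidesteps all of this by appealing directly to the $H^{1/2}$ characterization of Weil-Petersson weldings.
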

\begin{proof}[Proof sketch]
We first prove that the increasing isometry $\weld : \m R \to \m R$ obtained from  the measures $e^{u} \dd x$ and $e^{v} \dd x$ satisfies $\log \weld' \in H^{1/2}$, which is equivalent to $\weld$ being the welding homeomorphism of a Weil-Petersson quasicircle \cite{Shen-Tang,Shen-Tang-Wu} and shows the existence of solution $(\g, f, h)$. 
We then check that the function $\varphi$ defined a priori in $\m C \smallsetminus \g$ from $u, v$ and the transformation law \eqref{eq:ppS}, extends to a function in $\mc E (\m C)$. See \cite[Sec.\,3.1]{VW1} for the details.
\end{proof}
\begin{remark}
 The solution $(\gamma, f, h)$ in Theorem~\ref{thm:tuple12} is unique if appropriately normalized since quasicircles are conformally removable. Theorem~\ref{thm:welding_coupling1} then shows an upper bound of the welded curve's Loewner energy:
    \begin{equation} \label{eq:dissipation}
        I^L(\gamma)  = \mc D_{\mathbb{H}}(u) + \mc D_{\mathbb{H}^*}(v) - \mc D_\mathbb{C}(\varphi) \le \mc D_{\mathbb{H}}(u) + \mc D_{\mathbb{H}^*}(v).
    \end{equation} 
\end{remark}

\subsection{Flow-line identity}\label{sec:flowline}
Now let us turn to the second identity between Loewner energy and Dirichlet energy.  The idea is very simple: since the Dirichlet energy of a harmonic function equals that of its harmonic conjugate,  \eqref{eq:loewner_energy_infinite} can be written as
\begin{equation}\label{eq:loewner_winding}
I^L(\gamma) = \frac{1}{\pi} \int_{\HH} \left|\nabla \arg f' \right|^2 \, \dd A (z) + \frac{1}{\pi} \int_{\HH^*} \left|\nabla \arg h' \right|^2 \, \dd A (z).
\end{equation}
We will interpret this identity as a flow-line identity.

More precisely, let $\g$ be a Weil-Petersson quasicircle passing through $\infty$. Since $\g$ is asymptotically smooth (Theorem~\ref{thm:asymptotically_smooth}), we can parametrize it by arclength $s\mapsto \g(s)$.
  By Theorem~II.4.2 of \cite{GM}\footnote{ In \cite{GM} the result is stated for conformal maps $f$ defined on $\m D$. However, as the existence of $\g'$ and the limit of $\arg f'$ are local property, the result also applies to $\m H$.}, for almost every $\zeta = \g(s)$ such that $\g'(s)$ exists, 
   $\arg \frac{f(z) - \zeta}{z - f^{-1} (\zeta)}$ has a non-restricted
  limit as $z$ approaches $f^{-1} (\zeta)$ in $\m H$ (which also coincides with the non-tangential limit of $\arg f'$) and we denote this limit by $\tau(\zeta)$. Moreover,
\begin{equation}\label{eq:tau_def}
 \g'(s) = \lim_{t\to s} \frac{\g(t) - \zeta}{t-s} = \pm \lim_{t \to s \pm} \frac{\g(t) -\zeta}{|\g(t) - \zeta| } = e^{\ii \tau (\zeta)}.
 \end{equation}
 The second equality uses the fact that $\g$ is asymptotically smooth. Identity \eqref{eq:tau_def} shows that $\tau$ can be interpreted as the ``winding'' of $\g$.  We note that the harmonic measure and the arclength measure are mutually absolutely continuous on $\g$ (see, e.g., \cite[Thm.\,VII.4.3]{GM}), $\arg f'$ has limits almost everywhere on $\m R$ and coincides with the Jonsson-Wallin trace $\mc R_{\m H \to \m R}[\arg f']$. Therefore without ambiguity, we write simply the trace as $\arg f'|_{\m R}$.
 
 Since $\arg f'$ is harmonic in $\m H$, the following lemma is not surprising.
 \begin{lem}[\!\!{\cite[Lem.\,3.9]{VW1}}]\label{lem:arg_equals_tau}
 Suppose $\g$ is a Weil-Petersson curve through $\infty$. Then,
 \[
\arg f' (z) = \mc P[\tau] \circ f(z), \quad \forall z \in  {\m H},
 \]
 where $\mc P[\tau]$ is the Poisson extension of $\tau$ to $\m C \smallsetminus \g$. 
 \end{lem}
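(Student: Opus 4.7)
The plan is to show that both $\arg f'$ and $\mc P[\tau]\circ f$ are harmonic functions on $\m H$ with the same boundary values on $\m R$, and then invoke uniqueness via the Poisson extension.

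First, I would verify that $\arg f'$ is harmonic on $\m H$ with finite Dirichlet energy. Since $\log f'$ is holomorphic, $\arg f'$ is a harmonic conjugate of $\log|f'|$, so $\mc D_{\m H}(\arg f') = \mc D_{\m H}(\log|f'|)$, which is finite by the Weil-Petersson hypothesis (Remark~\ref{rem_infinite_curve_LE}). By the trace operator $\mc E(\m H) \to H^{1/2}(\m R)$ discussed before the lemma, $\arg f'$ admits a Jonsson--Wallin trace $\mc R_{\m H \to \m R}[\arg f'] \in H^{1/2}(\m R)$.

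Next, I would identify this trace with $\tau \circ f$. By \eqref{eq:tau_def} and the discussion preceding the lemma, $\arg f'$ has non-tangential limits equal to $\tau(\zeta)$ at $f^{-1}(\zeta)$ for arclength-a.e. $\zeta \in \g$; since harmonic measure on $\g$ and arclength are mutually absolutely continuous, transferring back to $\m R$ shows that $\arg f'$ has non-tangential limits $\tau \circ f$ at Lebesgue-a.e. point of $\m R$. For a harmonic function of finite Dirichlet energy, the Jonsson--Wallin trace coincides almost everywhere with its non-tangential boundary values, so $\mc R_{\m H \to \m R}[\arg f'] = \tau \circ f$. Consequently, $\arg f'$ equals the Poisson extension $\mc P_{\m H}[\tau \circ f]$ on $\m H$, since a harmonic function of finite Dirichlet energy is the unique (energy-minimizing, by the Douglas formula \eqref{eq:douglas_R}) harmonic extension of its $H^{1/2}$ trace.

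Finally, I would identify $\mc P_{\m H}[\tau \circ f]$ with $\mc P[\tau] \circ f$ by conformal invariance: the Poisson extension to $\m C \smallsetminus \g$ is defined intrinsically via harmonic measure, and for $z \in \m H$, the pushforward of $\omega_z^{\m H}$ under $f$ is $\omega_{f(z)}^{H}$. Changing variables gives
\[
\mc P[\tau](f(z)) = \int_\g \tau \, d\omega_{f(z)}^{H} = \int_{\m R} (\tau \circ f) \, d\omega_z^{\m H} = \mc P_{\m H}[\tau \circ f](z),
\]
which combined with the previous step yields the claim.

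The main obstacle is the second paragraph: one must ensure that the abstract Jonsson--Wallin trace (defined via ball averages) agrees almost everywhere with the non-tangential pointwise limits used to define $\tau$. This identification relies on the asymptotic smoothness of $\g$ (Theorem~\ref{thm:asymptotically_smooth}) together with the Dirichlet-energy regularity of $\arg f'$, which ensures that the two a priori distinct notions of boundary value coincide a.e.\ on $\m R$.
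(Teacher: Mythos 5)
Your argument is correct and is essentially the route the survey sets up (the lemma itself is only cited from \cite[Lem.\,3.9]{VW1}, with the boundary-value identification of $\arg f'$ with $\tau\circ f$ and its agreement with the Jonsson--Wallin trace already stated in the paragraph preceding the statement): harmonicity and finite Dirichlet energy of $\arg f'$, identification of its a.e.\ non-tangential boundary values with $\tau\circ f$, the fact that a finite-energy harmonic function on $\m H$ is the Poisson integral of its $H^{1/2}$ boundary trace, and conformal invariance of harmonic measure giving $\mc P_{\m H}[\tau\circ f]=\mc P[\tau]\circ f$. The only points worth making explicit are the standard ones you implicitly use: $\mc P[\tau]$ is well defined because $H^{1/2}\subset \operatorname{BMO}$ boundary data are integrable against harmonic measure, and the uniqueness step (finite-energy harmonic function equals the Poisson extension of its trace) follows, e.g., by transferring to $\m D$ via the Cayley map and expanding in Fourier series.
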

 Identity \eqref{eq:loewner_winding} implies that $\arg f' \in \mc E(\m H)$, we have $ \tau \circ f  = \arg f'|_{\m R} \in H^{1/2}(\m R)$  from \eqref{eq:trace_H1/2}. 
  
  \begin{lem}\label{lem:arg_h_2pi}
  With the same assumptions and notations as above, there exist a continuous branch of $\arg (\cdot )$ such that $\arg h' = \mc P[\tau] \circ h$ in $\m H^*$.
  \end{lem}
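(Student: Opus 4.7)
The plan is to reduce Lemma~\ref{lem:arg_h_2pi} to the already-established Lemma~\ref{lem:arg_equals_tau} via reflection across the real axis. I would begin by defining $\tilde h(z) := \overline{h(\bar z)}$ for $z \in \m H$; this is holomorphic, conformal from $\m H$ onto $\overline{H^*}$ and fixes $\infty$. Its boundary curve $\tilde \g := \overline \g$ is again a Weil--Petersson quasicircle through $\infty$, as is visible from Theorem~\ref{thm:intro_equiv_energy_WP} together with the invariance of the Dirichlet energy under conjugation. In particular Lemma~\ref{lem:arg_equals_tau} applies to the pair $(\tilde h, \tilde\g)$.

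The next step is to transport the winding under the reflection. Parametrizing $\tilde\g(s) := \overline{\g(s)}$ by arclength gives $\tilde\g'(s) = \overline{\g'(s)} = e^{-\ii\tau(\g(s))}$, so the winding function of $\tilde\g$ satisfies $\tilde\tau(\overline\zeta) = -\tau(\zeta)$ at arclength almost every $\zeta \in \g$. Lemma~\ref{lem:arg_equals_tau} then yields a continuous branch with $\arg\tilde h'(z) = \mc P^{\overline{H^*}}[\tilde\tau]\circ\tilde h(z)$ for $z \in \m H$, where $\mc P^{\overline{H^*}}$ denotes the Poisson extension to $\overline{H^*}$. I would then unwind the two identities $\tilde h'(z) = \overline{h'(\bar z)}$ (giving $\arg\tilde h'(z) = -\arg h'(\bar z) + 2\pi k$ for some integer $k$ depending on the choice of branch on each side) and $\mc P^{\overline{H^*}}[\tilde\tau](\overline w) = -\mc P^{H^*}[\tau](w)$ for $w \in H^*$, the latter coming from the change of variables $\tilde\zeta = \bar\zeta$ in the Poisson integral together with the fact that conjugation is an isometry of harmonic measure between $H^*$ and its reflection. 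Substituting $w = h(\bar z)$ and then writing $w \in \m H^*$ in place of $\bar z$ gives $\arg h'(w) = \mc P[\tau]\circ h(w) + 2\pi k$; selecting the branch of $\arg h'$ on the simply connected domain $\m H^*$ that absorbs this integer multiple of $2\pi$ produces the claimed identity.

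The main obstacle is nothing conceptually deep but rather the careful bookkeeping of orientations and branches: verifying that the reflected configuration $(\tilde h, \tilde\g, \overline{H^*})$ matches the orientation convention implicit in Lemma~\ref{lem:arg_equals_tau} (so that $\overline{H^*}$ lies on the correct side of $\tilde\g$), and checking that the two sign flips — one in passing from $\tau$ to $\tilde\tau$, one in passing from $\arg\tilde h'$ to $\arg h'$ — cancel against each other to leave only an ambiguity by a multiple of $2\pi$, rather than an extra $\pi$ that could not be absorbed into a branch choice. Once this is confirmed, the conclusion is immediate, which also clarifies why the statement is phrased in terms of the existence of a continuous branch rather than singling out a canonical one.
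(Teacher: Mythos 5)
Your reflection trick is a legitimate way to move the problem from $\m H^*$ to $\m H$, and the orientation bookkeeping you flag is in fact harmless: the welding $\weld=h^{-1}\circ f|_{\m R}$ is increasing, so $f$ and $h$ induce the same orientation on $\g$, and conjugation flips the tangent direction and $\arg h'$ consistently. The genuine gap is the step ``$\tilde\tau(\bar\zeta)=-\tau(\zeta)$''. When you apply Lemma~\ref{lem:arg_equals_tau} to the pair $(\tilde h,\tilde\g)$, the winding function appearing in its conclusion is, by the paper's definition, the one constructed from $\tilde h$ itself, i.e.\ the nontangential boundary limit of $\arg\tilde h'$, which after unwinding the conjugation is just $-\arg h'|_{\m R}$ up to one global constant. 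The $\tau$ in the statement you must prove is instead defined through $f$, the uniformizing map of the \emph{other} side of $\g$. The tangent identity $\tilde\g'(s)=e^{-\ii\tau(\g(s))}$ only says that these two boundary functions agree modulo $2\pi$ at arclength-a.e.\ point, with an integer that may a priori vary from point to point; boundary values exist only a.e., the curve is merely chord-arc, and there is no continuity along $\g$ that promotes this to agreement up to a single constant. Writing $\tilde\tau=-\tau$ outright therefore assumes essentially what the lemma asserts, and your application of Lemma~\ref{lem:arg_equals_tau} to $(\tilde h,\tilde\g)$ then only yields the unsurprising fact that the harmonic function $\arg h'$ is the Poisson extension of its own boundary values.

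That pointwise-varying multiple of $2\pi$ is exactly where the paper's proof does its work: one shows $\tau\circ h=\tau\circ f\circ \weld^{-1}\in H^{1/2}(\m R)$ because the welding homeomorphism is quasisymmetric and $H^{1/2}$ is preserved under composition with quasisymmetric maps (the Nag--Sullivan input), notes that $\arg h'|_{\m R}\in H^{1/2}(\m R)$ as the trace of a harmonic function with finite Dirichlet energy, and then concludes that the $2\pi\m Z$-valued difference $\arg h'|_{\m R}-\tau\circ h$, lying in $H^{1/2}\subset\VMO(\m R)$, must be constant by Lemma~\ref{lem:VMO_constant}. None of these ingredients (quasisymmetry of the welding, $H^{1/2}$ invariance under quasisymmetric composition, the VMO rigidity lemma) appears in your proposal, and without something playing their role the ``bookkeeping of branches'' cannot be completed: a harmonic function of finite Dirichlet energy whose boundary trace is a nonconstant $2\pi\m Z$-valued function exists in general, so the discrepancy cannot simply be absorbed into a choice of branch.
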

  
  \begin{proof}
  The welding homeomorphism $\weld : = h^{-1} \circ f|_{\m R}$ is a quasisymmetric homeomorphism (and so is also $\weld^{-1}$) since $\g$ is a quasicircle.
  Using $\tau \circ h = \tau \circ f \circ \weld^{-1}$ and the fact that the composition of an
$H^{1/2}(\m R)$ function with a quasisymmetric homeomorphism is still in $H^{1/2}(\m R)$
 (see \cite[Section 3]{NS95} for a proof in the setting of the unit circle and the proof for the line is exactly the same), we obtain that $\tau\circ h \in H^{1/2}(\m R)$.

  Since $\arg h'$ defined using a continuous branch of $\arg(\cdot)$ is also harmonic and has finite Dirichlet energy, the difference $v := \arg h'|_{\m R} - \tau \circ h$ is in $H^{1/2}(\m R)$. On the other hand, $v$ takes value in $2\pi \m Z$.  We conclude with the following lemma which shows that VMO functions behave like continuous functions.
  \end{proof}
  
  \begin{lem}\label{lem:VMO_constant}
  If a function $v \in \VMO (\m R)$ takes only integer values, then $v$ is constant.
  \end{lem}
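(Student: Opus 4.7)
The plan is to argue by contradiction, exploiting a simple quantitative tension: on an interval where the average of an integer-valued function lands halfway between two integers, the mean oscillation cannot be smaller than $1/2$, whereas the VMO property forces it to be arbitrarily small on short intervals.

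First I would fix a threshold, say $\varepsilon = 1/4$, and use $v \in \VMO(\m R)$ to produce $\delta > 0$ such that $\frac{1}{|I|}\int_I |v - v_I| \, \dd x < 1/4$ for every interval $I$ with $|I| \le \delta$. Next, assume for contradiction that $v$ is not essentially constant. Since $v$ takes only integer values, there must exist two distinct integers $m < n$ such that both $\{v = m\}$ and $\{v = n\}$ have positive Lebesgue measure; pick Lebesgue density points $a$ of $\{v=m\}$ and $b$ of $\{v=n\}$, and assume $a < b$ without loss of generality.

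The key step is to consider the continuous function
\[
F(x) := v_{B(x,r)} = \frac{1}{2r}\int_{x-r}^{x+r} v(y) \, \dd y
\]
for a fixed small $r$. By the Lebesgue differentiation theorem, $F(a) \to m$ and $F(b) \to n$ as $r \to 0$, so choosing $r < \delta/2$ small enough we can guarantee $F(a) < m + 1/8$ and $F(b) > n - 1/8$. Since $F$ is continuous in $x$ and $n \ge m+1$, the intermediate value theorem yields $x^\ast \in (a,b)$ with $F(x^\ast) = m + 1/2$. At this point, setting $I = B(x^\ast, r)$, we have $v_I = m+1/2$ and, because $v$ is integer-valued, $|v(y) - v_I| \ge 1/2$ for every $y$; therefore
\[
\frac{1}{|I|}\int_I |v - v_I| \, \dd y \ge \tfrac{1}{2} > \tfrac{1}{4},
\]
contradicting the VMO bound since $|I| = 2r < \delta$. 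Hence $v$ must be essentially constant.

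I do not anticipate a serious obstacle; the only subtlety is to make sure the VMO hypothesis is read uniformly over short intervals (as in the definition recalled earlier in the paper), so that $\delta$ can be chosen before the candidate interval $I = B(x^\ast, r)$ is produced. Everything else reduces to the intermediate value theorem applied to the smoothed function $F$ together with the Lebesgue density theorem at the two chosen points.
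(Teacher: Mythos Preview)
Your argument is correct and, in my view, slightly cleaner than the paper's. The paper proceeds constructively: for any short interval $I$ the average $v_I$ is within $\|v\|_{*,I}$ of a unique integer $n_I$, and a subdivision argument shows $n_I$ is independent of the interval at all small scales; a further density estimate then forces $v=n$ a.e. Your route is instead a direct contradiction: the sliding average $F(x)=v_{B(x,r)}$ is continuous in $x$ (since $v\in L^1_{loc}$), so the intermediate value theorem produces an interval whose average is exactly $m+\tfrac12$, and integer-valuedness immediately gives mean oscillation $\ge \tfrac12$ there, violating the uniform VMO bound at scale $2r<\delta$. The paper's argument has the minor advantage of identifying the constant along the way, while yours isolates the single quantitative fact driving the lemma and avoids any iterative refinement.
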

  This lemma follows immediately from a more general result \cite[Thm.\,1]{BBM_15}. However, we provide an elementary proof in this much simpler case.
  \begin{proof}
  We write $\norm{v}_{*, I}$ for the BMO norm of $v$ on an interval $I$, defined by
  $$\norm{v}_{*,I} := \sup_{J \subset I} \frac{1}{|J|} \int_{J} \abs{u(x) - u_{J}} \dd x.$$
  We have $v \in \VMO (\m R)$ if and only if $\sup_{I \subset \m R, \,|I| < \vare} \norm{v}_{*,I} \xrightarrow[]{\vare \to 0} 0. $
  Since $v$ only takes values in $\m Z$, for any interval $I$ such that $\norm{v}_{*,I} < 1/6$, there exists a unique $n_I \in \m Z$ such that 
  \begin{equation}\label{eq:integer_mean}
      |v_I - n_I| \le  \norm{v}_{*,I} \quad \text{and} \quad |\{x \in I \,|\, v (x) \neq n_I\}|/|I| < 2\norm{v}_{*,I} < 1/3.
  \end{equation}
  For any small number $0 < \d < 1/6$, take $\vare > 0$ such that $\sup_{I \subset \m R, \,|I| < \vare} \norm{v}_{*,I} < \d$. The map $x\mapsto n_{[x, x + \vare]}$ is constant by \eqref{eq:integer_mean}. We call this value $n_{\vare}$. By subdividing further, we see that $n_{\vare'}$ does not depend on $\vare'$ when $\vare' < \vare$. We write $n_{\vare} = n$.
  Let $\d' <\d$ and subdividing $[x_0, x_0+ \vare]$ into smaller disjoint intervals $J_1 \bigsqcup \cdots \bigsqcup J_k$ such that $\norm {v} _{*,J_i} < \d'$ for all $i$, \eqref{eq:integer_mean} implies
  $$|\{x \in J_i \,|\, v (x) \neq n\}|/|J_i| < 2 \d'.$$
  Let $\d' \to 0$, we obtain that $|\{x \in [x_0, x_0 +\vare] \,|\, v (x) \neq n\}| = 0$. It implies that $v$ equals to $n$ almost everywhere on $[x_0, x_0 +\vare]$, hence on $\m R$.  
  \end{proof}

\begin{thm}[Flow-line identity {\cite[Thm.\,3.10]{VW1}}] \label{thm:flow-line}
If $\g$ is a Weil-Petersson  curve through $\infty$, we have the identity 
\begin{equation}\label{eq_flow_identity}
I^L(\g) = \mc D_{\m C} (\mc P[\tau]).
\end{equation}
Conversely, if $\varphi \in \mc E(\m C)$ is continuous and $\lim_{z \to  \infty} \varphi(z)$ exists and is finite, then for all $z_0 \in \m C$, any solution to the differential equation
\begin{equation}\label{eq:def_flowline}
\dot{\g}(t) = \exp\left(\ii \varphi (\g(t))\right),\quad  t\in (-\infty, \infty) \quad \text{and} \quad \g(0) = z_0
\end{equation}
is a $C^1$ Weil-Petersson curve through $\infty$. Moreover,
\begin{equation}\label{eq:flowline}
 \mc D_{\m C} (\varphi) = I^L(\g) + \mc D_{\m C} (\varphi_0),   
\end{equation}
where $ \varphi_0=\varphi- \mc P [\varphi|_{\g}]$ has zero trace on $\g$.
\end{thm}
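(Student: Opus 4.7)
My plan is to handle the two directions in sequence, with the forward identity furnishing the tools needed for the converse. \textbf{Forward direction.} Begin with Remark~\ref{rem_infinite_curve_LE}: $I^L(\g) = \mc D_{\m H}(\log|f'|) + \mc D_{\m H^*}(\log|h'|)$. Because $\log f'$ is holomorphic on the simply connected domain $\m H$, the Cauchy--Riemann equations give $|\nabla\log|f'||^2 = |\nabla \arg f'|^2$ pointwise, hence $\mc D_{\m H}(\log|f'|) = \mc D_{\m H}(\arg f')$; the same for $h$ on $\m H^*$. This rewrites the Loewner energy in the ``winding'' form \eqref{eq:loewner_winding}. Next, Lemmas~\ref{lem:arg_equals_tau} and~\ref{lem:arg_h_2pi} identify $\arg f' = \mc P[\tau]\circ f$ on $\m H$ and $\arg h' = \mc P[\tau]\circ h$ on $\m H^*$, so conformal invariance of Dirichlet energy turns each term into
\[
\mc D_\m H(\arg f') = \mc D_H(\mc P[\tau]), \qquad \mc D_{\m H^*}(\arg h') = \mc D_{H^*}(\mc P[\tau]).
\]
Since $\g$ is rectifiable by Theorem~\ref{thm:asymptotically_smooth} and hence Lebesgue-negligible, summing these pieces gives $\mc D_\m C(\mc P[\tau])$ and establishes \eqref{eq_flow_identity}.

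\textbf{The flow-line as a simple $C^1$ curve through $\infty$.} For the converse, view $z\mapsto e^{\ii\varphi(z)}$ as a continuous unit vector field on $\Chat$ (using the finite limit at infinity). Peano's theorem supplies a $C^1$ arclength solution $\g : \m R \to \m C$ of \eqref{eq:def_flowline}. Because the field tends to the constant direction $e^{\ii\varphi_\infty}$ at $\infty$, any orbit is asymptotic to a half-line in that direction, so $\g(t)\to\infty$ as $|t|\to\infty$. Simplicity is a geometric fact about a single $C^1$ orbit of the field: at any hypothetical self-intersection $\zeta=\g(s_1)=\g(s_2)$ the two tangents $\dot\g(s_j) = e^{\ii\varphi(\zeta)}$ coincide, and a local monotonicity argument combined with the global escape to $\infty$ excludes such revisits. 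Granted these facts, $\g$ is a $C^1$ Jordan curve through $\infty$ with continuous unit tangent, and by \eqref{eq:tau_def} a continuous branch of the winding along $\g$ satisfies $\tau\circ\g = \varphi\circ\g$, so that the trace $\varphi|_\g$ coincides with $\tau$.

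\textbf{Closing the loop.} Because $\g$ is $C^1$, a classical boundary-regularity result (e.g.\ \cite[Thm.~II.4.2]{GM}) shows $\arg f'$ extends continuously up to $\ad{\m H}$ with boundary value $\tau\circ f = \varphi\circ f$ on $\m R$, and by uniqueness of bounded harmonic extensions $\arg f' = \mc P[\varphi|_\g]\circ f$ on $\m H$. Conformal invariance together with the Dirichlet principle gives
\[
\mc D_\m H(\arg f') = \mc D_H(\mc P[\varphi|_\g]) \le \mc D_\m C(\varphi) < \infty,
\]
and harmonic conjugacy yields $\mc D_\m H(\log|f'|)<\infty$; the same bound holds for $h$. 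Thus $I^L(\g)<\infty$ by Remark~\ref{rem_infinite_curve_LE}, so $\g$ is a Weil-Petersson quasicircle by Theorem~\ref{thm:intro_equiv_energy_WP}, and the forward identity applies: $\mc D_\m C(\mc P[\varphi|_\g]) = \mc D_\m C(\mc P[\tau]) = I^L(\g)$. Combining this with the Dirichlet-orthogonal decomposition $\mc D_\m C(\varphi) = \mc D_\m C(\mc P[\varphi|_\g]) + \mc D_\m C(\varphi_0)$ --- valid because $\mc P[\varphi|_\g]$ is harmonic on each component of $\m C\smallsetminus\g$ and $\varphi_0$ has vanishing trace on $\g$ --- produces \eqref{eq:flowline}.

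\textbf{Main obstacle.} The principal difficulty is the geometric step: showing that a solution of \eqref{eq:def_flowline} with merely continuous right-hand side is a simple $C^1$ curve escaping to $\infty$ in both directions, since standard ODE uniqueness is unavailable. Without this, neither the trace $\varphi|_\g$ nor the continuous boundary extension of $\arg f'$ is accessible; once simplicity and escape at infinity are secured, the identification $\varphi|_\g = \tau$ and the $C^1$ boundary-regularity theorem break the apparent circularity between the forward identity and the verification that $\g$ is Weil-Petersson.
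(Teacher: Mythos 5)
Your forward argument and your final derivation of \eqref{eq:flowline} follow the paper's own route: harmonic conjugation turns \eqref{eq:loewner_energy_infinite} into \eqref{eq:loewner_winding}, Lemmas~\ref{lem:arg_equals_tau} and~\ref{lem:arg_h_2pi} identify $\arg f'$ and $\arg h'$ with $\mc P[\tau]\circ f$ and $\mc P[\tau]\circ h$, conformal invariance and the negligibility of $\g$ sum the two halves to $\mc D_{\m C}(\mc P[\tau])$, and at the end the Dirichlet-orthogonal decomposition together with $\varphi=\tau$ on $\g$ and \eqref{eq_flow_identity} gives \eqref{eq:flowline}. Those parts are correct and match the text.

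The genuine gap is the geometric core of the converse, which you flag as ``the main obstacle'' but then essentially assert: that \emph{any} solution of \eqref{eq:def_flowline} is a simple curve tending to $\infty$ in both directions. Your escape-to-infinity claim (``the field tends to a constant direction at $\infty$, so any orbit is asymptotic to a half-line'') is circular: it presupposes that the orbit reaches and stays near $\infty$, while a priori a unit-speed solution could remain in a bounded region for all time (infinite length does not imply escape). Your simplicity claim (``tangents coincide at a self-intersection, plus a local monotonicity argument and global escape'') is not a proof: equality of tangents does not exclude tangential self-intersections, no flow-box or monotonicity is available because uniqueness of solutions fails for a merely continuous field, and you invoke the (unproved) escape to $\infty$ inside this step. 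The missing ideas — this is precisely the content of \cite[Thm.\,3.10]{VW1}, to which the survey defers — are: (a) if $\g(s_1)=\g(s_2)$, extract an innermost closed $C^1$ sub-loop; its tangent angle is the single-valued continuous function $\varphi\circ\g$, so the net turning around the loop is zero, contradicting Hopf's Umlaufsatz, which forces turning $\pm 2\pi$ for a simple closed $C^1$ loop; and (b) a separate argument ruling out a finite accumulation point, combining simplicity, infinite arclength, and the near-constancy of the direction field on a small disk around such a point. Without (a)--(b) neither the trace $\varphi|_\g$ nor the boundary identification of $\arg f'$ is available, so your ``closing the loop'' step cannot start; that step is otherwise sound, except that \cite[Thm.\,II.4.2]{GM} only yields a.e.\ nontangential limits, and the continuous extension of $\arg f'$ to $\ad{\m H}$ for a $C^1$ curve should instead be drawn from the Lindel\"of-type theorem (e.g.\ \cite[Thm.\,3.5]{Pommerenke_boundary}), a fixable citation rather than a structural issue.
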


The identity \eqref{eq_flow_identity} is simply a rewriting of \eqref{eq:loewner_winding}.
A solution to \eqref{eq:def_flowline} is called a \emph{flow-line} of the winding field $\varphi$ passing through $z_0$.
Here, we put a stronger condition by assuming $\varphi$ is continuous and admits a limit in $\m R$ as $z \to \infty$ (in other words, $\varphi \in \mc E(\m C) \cap C^0(\Chat)$). This condition allows us to use Cauchy-Peano theorem to show the  existence of the flow-line. However, we cautiously note that the solution to \eqref{eq:def_flowline} may not be unique. 
    The orthogonal decomposition of $\varphi$ for the Dirichlet inner product gives $\mc D_{\m C}(\varphi) = \mc D_{\m C}(\mc P [\varphi|_{\gamma}]) + \mc D_{\m C}(\varphi_0)$. Using \eqref{eq_flow_identity} and the observation that for all $z \in \g$, $\varphi (z) = \tau (z)$,  we obtain \eqref{eq:flowline}.
    
\begin{remark}
The additional assumption of $\varphi \in C^0(\Chat)$ is for technical reason to consider the flow-line of $e^{\ii \varphi}$ in the classical differential equation sense. One may drop this assumption by defining a flow-line to be a chord-arc curve $\g$ passing through $\infty$ on which $\varphi = \tau$ arclength almost everywhere. We will further explore these ideas in a setting adapted to bounded curves (see Theorem~\ref{thm:complex_id_bounded}).
\end{remark}

This identity is analogous to the \emph{flow-line coupling} between SLE and GFF, of critical importance, e.g., in the imaginary geometry framework of Miller-Sheffield \cite{IG1}: very loosely speaking, an $\SLE_\k$ curve is coupled with a GFF $\Phi$ and may be thought of as a flow-line of the vector field $e^{\ii \Phi/\chi}$, where $\chi = 2/\g - \g/2$. As $\g \to 0$, we have  $e^{\ii \Phi/\chi} \sim e^{\ii \g \Phi/2}$.

Let us finally remark that by combining the cutting-welding \eqref{jan26.1} and flow-line \eqref{eq:flowline} identities, we obtain the following complex identity. See also Theorem~\ref{thm:complex_id_bounded} the complex identity for a bounded Jordan curve.

\begin{cor}[Complex identity {\cite[Cor.\,1.6]{VW1}}]\label{cor:complex_field}
Let $\psi$ be a complex-valued function on $\m C$ with finite Dirichlet energy and whose imaginary part is continuous in $\Chat$.
   Let $\gamma$ be a flow-line of the vector field $e^{\psi}$.
  Then we have
  $$ \mc D_{\m C}(\psi) = \mc D_{\m H}(\zeta) + \mc D_{\m H^*}(\xi),$$
  where $\zeta = \psi \circ f + (\log f')^*$, $\xi = \psi \circ h + (\log h')^*$ and $z^*$ is the complex conjugate of $z$.
\end{cor}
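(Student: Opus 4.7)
The strategy is to split the complex field $\psi = \alpha + \ii \beta$ with $\alpha := \Re \psi$ and $\beta := \Im \psi$, and recognize the corollary as the superposition of the cutting identity (Theorem~\ref{thm:welding_coupling1}) applied to the real part $\alpha$ and the flow-line identity (Theorem~\ref{thm:flow-line}) applied to the imaginary part $\beta$, with the two $I^L(\gamma)$ contributions cancelling. I will use that the Dirichlet energy of a complex-valued function splits as the sum of those of its real and imaginary parts, so that
$\mc D_{\m C}(\psi) = \mc D_{\m C}(\alpha) + \mc D_{\m C}(\beta)$ and similarly
$\mc D_{\m H}(\zeta) + \mc D_{\m H^*}(\xi) = \bigl[\mc D_{\m H}(\Re \zeta) + \mc D_{\m H^*}(\Re \xi)\bigr] + \bigl[\mc D_{\m H}(\Im \zeta) + \mc D_{\m H^*}(\Im \xi)\bigr].$

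The two building blocks come directly from the preceding theorems. Writing $e^{\psi} = e^{\alpha} e^{\ii \beta}$ and noting that the positive factor $e^{\alpha}$ only reparametrizes the orbit, I observe that $\gamma$ is also a flow-line of $e^{\ii \beta}$ in the sense of \eqref{eq:def_flowline} (or its chord-arc extension mentioned after Theorem~\ref{thm:flow-line}). Since $\beta \in \mc E(\m C) \cap C^0(\Chat)$ by hypothesis, the second part of Theorem~\ref{thm:flow-line} gives
$$\mc D_{\m C}(\beta) = I^L(\gamma) + \mc D_{\m C}(\beta_0), \qquad \beta_0 := \beta - \mc P[\beta|_\gamma],$$
while Theorem~\ref{thm:welding_coupling1} applied to $\alpha \in \mc E(\m C)$ yields
$$\mc D_{\m C}(\alpha) + I^L(\gamma) = \mc D_{\m H}(u) + \mc D_{\m H^*}(v), \qquad u := \alpha \circ f + \log |f'|,\ v := \alpha \circ h + \log |h'|.$$
Adding these two produces $\mc D_{\m C}(\psi) = \mc D_{\m H}(u) + \mc D_{\m H^*}(v) + \mc D_{\m C}(\beta_0)$, so the remaining task is to match this with $\mc D_{\m H}(\zeta) + \mc D_{\m H^*}(\xi)$.

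For this, since $(\log f')^{*} = \log |f'| - \ii \arg f'$, I read off $\Re \zeta = u$ and $\Im \zeta = \beta \circ f - \arg f'$, and symmetrically for $\xi$. The flow-line property forces $\beta|_\gamma = \tau$ arclength-a.e., where $\tau$ is the winding of $\gamma$, and Lemmas~\ref{lem:arg_equals_tau} and \ref{lem:arg_h_2pi} identify $\arg f' = \mc P[\tau] \circ f$ on $\m H$ and $\arg h' = \mc P[\tau] \circ h$ on $\m H^*$. Consequently $\Im \zeta = (\beta - \mc P[\tau]) \circ f = \beta_0 \circ f$ and $\Im \xi = \beta_0 \circ h$, so conformal invariance of the Dirichlet integral gives $\mc D_{\m H}(\Im \zeta) + \mc D_{\m H^*}(\Im \xi) = \mc D_H(\beta_0) + \mc D_{H^*}(\beta_0) = \mc D_{\m C}(\beta_0)$, and adding the real-part contribution $\mc D_{\m H}(u) + \mc D_{\m H^*}(v)$ yields exactly the expression obtained for $\mc D_{\m C}(\psi)$ above. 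The step I expect to require the most care is this identification of $\Im \zeta$ with $\beta_0 \circ f$ (and its $\m H^*$ analog): it hinges on reading the flow-line hypothesis as $\beta|_\gamma = \tau$, on the two lemmas that rewrite $\arg f'$ and $\arg h'$ as Poisson extensions of $\tau$, and on the fact that $\beta_0$ is a single function on $\m C$ whose restrictions to $H$ and $H^*$ have matching (zero) trace on $\gamma$, which is guaranteed by $\gamma$ being a Weil-Petersson (hence conformally removable) quasicircle. After this identification, the assembly is pure bookkeeping.
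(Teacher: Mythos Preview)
Your proof is correct and follows essentially the same approach as the paper's: split $\psi$ into real and imaginary parts, apply the cutting identity (Theorem~\ref{thm:welding_coupling1}) to $\Re\psi$ and the flow-line identity (Theorem~\ref{thm:flow-line}) to $\Im\psi$, then identify $\Re\zeta=u$ and $\Im\zeta=\beta_0\circ f$ (via $\arg f'=\mc P[\tau]\circ f$) so that the $I^L(\gamma)$ terms cancel and conformal invariance of the Dirichlet energy finishes the bookkeeping. The paper's proof is organized in the same way, with your $\beta_0$ written as $\Im\psi_0$.
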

\begin{remark}
  A flow-line $\g$ of the vector field $e^{\psi}$ is understood as a flow-line of $e^{\ii \Im \psi}$, as the real part of $\psi$ only contributes to a reparametrization of $\g$.
\end{remark}

\begin{proof}
From the identity   $\arg f'= \mc P [\Im \psi] \circ f$, we have 
   \begin{align*}
       \zeta  &= \left( \Re \psi \circ f + \log |f'|\right) + \ii  \left( \Im \psi \circ f - \arg f'\right) = u + \ii  \Im \psi_0 \circ f; \\
       \xi &= v + \ii \Im \psi_0 \circ h,
   \end{align*}
   where $u : = \Re \psi \circ f + \log |f'| \in \mc E (\m H)$, $v : = \Re \psi \circ h + \log |h'| \in \mc E (\m H^*)$ and $\psi_0 = \psi - \mc P[\psi|_{\gamma}]$.
   From the cutting-welding identity \eqref{jan26.1}, we have 
   $$\mc D_{\m C} (\Re \psi) + I^L(\gamma) = \mc D_{\m H} (u) + \mc D_{\m H^*} (v).$$
   On the other hand, the flow-line identity gives
   $\mc D_{\m C} (\Im \psi) = I^L(\gamma) + \mc D_{\m C} (\Im \psi_0).$
   Hence,
   \begin{align*}
       \mc D_{\m C} (\psi) &= \mc D_{\m C} (\Re \psi) +\mc D_{\m C} (\Im \psi) =  \mc D_{\m C} (\Re \psi) + I^L(\gamma) + \mc D_{\m C} (\Im \psi_0) \\
       &=   \mc D_{\m H} (u) + \mc D_{\m H^*} (v) + \mc D_{\m C} (\Im \psi_0)\\
       & =\mc D_{\m H}(\zeta) + \mc D_{\m H^*}(\xi)
   \end{align*}
   as claimed.
\end{proof}

\begin{remark}\label{rem:complex_identity_two_theorems}
From Corollary~\ref{cor:complex_field} we recover the flow-line identity (Theorem~\ref{thm:flow-line}) by taking $\Im \psi = \varphi$ and $\Re \psi = 0$. 
Similarly, the cutting-welding identity \eqref{jan26.1} follows from taking $\Re \psi = \varphi$ and $\Im \psi = \mc P [\tau]$ where $\tau$ is the winding function along the curve $\g$.  
Therefore, the complex identity is equivalent to the union of  cutting-welding and flow-line identities. 
\end{remark}

\subsection{Applications} \label{sec:application}
We now show that these identities between Loewner and Dirichlet energies have interesting consequences in geometric function theory. 

The cutting-welding identity has the following application.
  Suppose $\gamma_1,\gamma_2$ are locally rectifiable Jordan curves in $\Chat$ of the same length (possibly infinite if both curves pass through $\infty$) bounding two domains $\O_1$ and $\O_2$ and we mark a point on each curve. Let $\weld$ be an arclength isometry $\gamma_1 \to \gamma_2$ matching the marked points. We obtain a topological sphere from $\O_1 \cup \O_2$ by identifying the matched points. 
  Following Bishop \cite{bishop_isometric}, the arclength isometric welding problem is to find a Jordan curve $\gamma \subset \Chat$, and conformal mappings $f_1, f_2$ from $\O_1$ and $\O_2$ to the two connected components of $\Chat \smallsetminus \gamma$, such that $f_2^{-1} \circ f_1|_{\gamma_1} = \weld$.  
  The arclength welding problem is in general a hard question and have many pathological examples. For instance, the mere rectifiability of $\gamma_1$ and $\gamma_2$ does not guarantee the existence nor the uniqueness of $\gamma$, but the chord-arc property does. However, chord-arc curves are not closed under isometric conformal welding: the welded curve can have Hausdorff dimension arbitrarily close to $2$, see \cite{david_chord_arc, Semmes86, bishop_isometric}.
 Rather surprisingly,  Theorem~\ref{thm:welding_coupling1} and Theorem~\ref{thm:tuple12}
  imply that Weil-Petersson quasicircles are closed under arclength isometric welding. Moreover,
$I^L(\gamma) \le I^L(\gamma_1) + I^L(\gamma_2)$. 

We describe this result more precisely in the case when both $\g_1$ and $\g_2$ are Weil-Petersson quasicircles through $\infty$ (see \cite[Sec.\,3.2]{VW1} for the bounded curve case). Let $H_i, H_i^*$ be the connected components of $\m C \smallsetminus \g_i$.
\begin{cor} [\!\! {\cite[Cor.\,3.4]{VW1}}]\label{cor:arc-length}
   Let $\gamma$ (resp. $\tilde \gamma$) be the arclength isometric welding curve of the domains $H_1$ and $H_2^*$ (resp.  $H_2$ and $H_1^*$). 
  Then $\gamma$ and $\tilde \gamma$ are also Weil-Petersson quasicircles. 
  Moreover, 
  $$I^L(\gamma)  + I^L(\tilde \gamma) \le I^L(\gamma_1) + I^L(\gamma_2).$$
\end{cor}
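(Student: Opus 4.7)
The plan is to combine the cutting identity (Theorem~\ref{thm:welding_coupling1}) with the isometric welding theorem (Theorem~\ref{thm:tuple12}), applied twice in parallel: once to produce $\gamma$ from the pieces $(H_1, H_2^*)$, and once to produce $\tilde\gamma$ from $(H_2, H_1^*)$. The Dirichlet energies of two ``residual'' potentials will appear as a non-negative deficit that yields the claimed inequality.

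First I would apply Theorem~\ref{thm:welding_coupling1} to each $\gamma_i$ with the trivial potential $\varphi \equiv 0$. Choosing conformal maps $f_i : \m H \to H_i$ and $h_i : \m H^* \to H_i^*$ fixing $\infty$, the transformation law \eqref{eq:ppS} specializes to
\[
u_i := \log|f_i'| \in \mc E(\m H), \qquad v_i := \log|h_i'| \in \mc E(\m H^*),
\]
and \eqref{jan26.1} reduces to $I^L(\gamma_i) = \mc D_{\m H}(u_i) + \mc D_{\m H^*}(v_i)$ for $i = 1, 2$. Next I would verify that $e^{u_i}\dd x$ on $\m R$ equals the pull-back under $f_i$ of the arclength measure on $\gamma_i$ (and symmetrically for $e^{v_i}\dd x$). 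This uses the asymptotic smoothness from Theorem~\ref{thm:asymptotically_smooth}, which guarantees $|f_i'|$ has boundary values giving the arclength density in $L^1_{\mathrm{loc}}(\m R)$. Consequently, up to pre-composing $f_1$ and $h_2$ with real translations (which changes none of the Dirichlet energies), the isometric welding homeomorphism produced by \eqref{def:homeo1} from $e^{u_1}\dd x$ and $e^{v_2}\dd x$ coincides, transported through $f_1$ and $h_2$, with the arclength identification $\gamma_1 \to \gamma_2$ matching the marked points.

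Theorem~\ref{thm:tuple12} applied to the pair $(u_1, v_2)$ therefore yields that the welding curve $\gamma$ is a Weil-Petersson quasicircle, together with conformal maps $f, h$ to its complementary components and a unique $\varphi \in \mc E(\m C)$ satisfying \eqref{eq:ppS}; Theorem~\ref{thm:welding_coupling1} then gives
\[
I^L(\gamma) + \mc D_{\m C}(\varphi) = \mc D_{\m H}(u_1) + \mc D_{\m H^*}(v_2).
\]
The same argument applied to the pair $(u_2, v_1)$ shows that $\tilde\gamma$ is Weil-Petersson and provides $\tilde\varphi \in \mc E(\m C)$ with
\[
I^L(\tilde\gamma) + \mc D_{\m C}(\tilde\varphi) = \mc D_{\m H}(u_2) + \mc D_{\m H^*}(v_1).
\]
Adding the two identities and invoking $I^L(\gamma_i) = \mc D_{\m H}(u_i) + \mc D_{\m H^*}(v_i)$,
\[
I^L(\gamma) + I^L(\tilde\gamma) + \mc D_{\m C}(\varphi) + \mc D_{\m C}(\tilde\varphi) = I^L(\gamma_1) + I^L(\gamma_2),
\]
and the inequality follows from the non-negativity of $\mc D_{\m C}$. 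The main technical point is the identification of the measure-theoretic welding \eqref{def:homeo1} of $e^{u_1}\dd x$ and $e^{v_2}\dd x$ with the geometric arclength welding of $\gamma_1$ and $\gamma_2$; once this is in hand the computation is bookkeeping. Checking that the welded curve passes through $\infty$ (so that the ``through-$\infty$'' version of Theorem~\ref{thm:welding_coupling1} used here is applicable) is a separate but easy point, following from Lemma~\ref{lem:infinite_measure} applied to the unbounded tails of $e^{u_1}\dd x$ and $e^{v_2}\dd x$.
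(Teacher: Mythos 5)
Your proposal is correct and follows essentially the same route as the paper: express $I^L(\gamma_i) = \mc D_{\m H}(\log|f_i'|) + \mc D_{\m H^*}(\log|h_i'|)$, apply Theorem~\ref{thm:tuple12} to the pairs $(u_1,v_2)$ and $(u_2,v_1)$, and conclude via the cutting identity in the form \eqref{eq:dissipation}. The extra care you take in identifying the measure-theoretic welding of $e^{u_1}\dd x$ and $e^{v_2}\dd x$ with the geometric arclength welding is a point the paper leaves implicit, but it does not change the argument.
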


\begin{proof} For $i=1,2$, let $f_i$ be a conformal equivalence $\m H \to H_i$, and $h_i:  \m H^* \to H_i^*$ both fixing $\infty$.  
By \eqref{eq:loewner_energy_infinite},
$$I^L(\gamma_i) = \mc D_{\m H}\left(\log |f_i'| \right) + \mc D_{\m H^*}\left(\log |h_i'|\right).$$

Set $u_i : = \log |f'_i|$, $v_i := \log |h'_i|$. Then $\gamma$ is the welding curve obtained from Theorem~\ref{thm:tuple12} with $u = u_1$, $v= v_2$ and $\tilde \gamma$ is the welding curve for $u = u_2$, $v= v_1$.
Then \eqref{eq:dissipation} implies
\begin{align*}
    I^L(\gamma) + I^L(\tilde \gamma)  \le \mc D_{\m H}\left(u_1\right) + \mc D_{\m H^*}\left(v_2\right) + 
\mc D_{\m H}\left(u_2\right) + \mc D_{\m H^*}\left(v_1\right)  =  I^L(\gamma_1) + I^L(\gamma_2)
\end{align*}
as claimed.
\end{proof}

The flow-line identity has the following consequence that we omit the proof.
When $\gamma$ is a bounded Weil-Petersson quasicircle (resp. Weil-Petersson quasicircle passing through $\infty$), we let $f$ be a conformal map from $\m D$ (resp. $\m H$) to one connected component of $ \m C \smallsetminus \gamma$. 

\begin{cor}[\!\!{\cite[Cor.\,1.5]{VW1}}]
Consider the family of analytic curves $\gamma_r := f(r \m T)$, where $0< r <1$  \textnormal{(}resp. $\gamma^{r} := f(\m R + \ii r)$, where $r > 0$\textnormal{)}.
   For all $0 < s < r <1$ \textnormal{(}resp. $0 < r < s$\textnormal{)}, we have
   $$ I^L(\gamma_s) \le I^L(\gamma_r) \le I^L(\gamma), \quad (\text{resp. } I^L(\gamma^s) \le I^L(\gamma^r) \le I^L(\gamma),)$$
   and equalities hold if only if $\gamma$ is a circle \textnormal{(}resp. $\gamma$ is a line\textnormal{)}.
   Moreover, $I^L(\gamma_r)$ \textnormal{(}resp. $I^L(\gamma^r)$\textnormal{)} is continuous in $r$ and
   \begin{align*}
   &  I^L(\gamma_r) \xrightarrow{r \to 1-} I^L(\gamma); \quad I^L(\gamma_r) \xrightarrow{r \to 0+} 0\\
   (\text{resp. } & I^L(\gamma^r) \xrightarrow{r \to 0+} I^L(\gamma); \quad I^L(\gamma^r) \xrightarrow{r \to \infty} 0).
   \end{align*}
\end{cor}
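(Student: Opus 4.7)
My plan is to combine the flow-line identity (Theorem~\ref{thm:flow-line}) with the Dirichlet principle applied to harmonic extensions of the winding function. I present the argument in the unbounded case in detail; the bounded case is analogous modulo the extra boundary terms in~\eqref{eq_disk_energy}.

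\textbf{Setup.} Fix a Weil-Petersson quasicircle $\g$ through $\infty$ and a conformal map $f : \m H \to H$ fixing $\infty$. For $r > 0$, set $F_r(z) := f(z + \ii r) : \m H \to H^r$, where $H^r := f(\{z \in \m H : \Im z > r\})$, so that $\g^r = \partial H^r$ and the exterior simply connected domain is $(H^r)^* = H^* \cup \g \cup (H \smallsetminus \ad{H^r})$. Writing $\tau$, $\tau_r$ for the winding functions of $\g$, $\g^r$, the flow-line identity \eqref{eq_flow_identity} together with Lemmas~\ref{lem:arg_equals_tau}--\ref{lem:arg_h_2pi} gives
\begin{equation*}
I^L(\g) = \mc D_H(\mc P[\tau]) + \mc D_{H^*}(\mc P[\tau]), \qquad I^L(\g^r) = \mc D_{H^r}(\mc P[\tau_r]) + \mc D_{(H^r)^*}(\mc P[\tau_r]),
\end{equation*}
where $\mc P[\cdot]$ denotes Poisson extension of the winding into the relevant simply connected component of the complement.

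\textbf{Comparison via Dirichlet principle.} The restriction $\mc P[\tau]|_{H^r}$ is harmonic on $H^r$ with trace $\tau_r$ on $\g^r$ (by Lemma~\ref{lem:arg_equals_tau} applied along the horizontal line $\m R + \ii r$, using that $F_r$ realises $\g^r$ as the image of the real line under a conformal map of $\m H$), so uniqueness of harmonic extensions gives $\mc D_{H^r}(\mc P[\tau_r]) = \mc D_{H^r}(\mc P[\tau])$. On the exterior side, I define $\tilde g$ on $(H^r)^*$ by pasting $\mc P[\tau]|_{H^*}$ and $\mc P[\tau]|_{H \smallsetminus \ad{H^r}}$ across their common trace $\tau$ on $\g$; then $\tilde g$ lies in $H^1((H^r)^*)$ with trace $\tau_r$ on $\g^r$, so the Dirichlet principle yields
\begin{equation*}
\mc D_{(H^r)^*}(\mc P[\tau_r]) \le \mc D_{(H^r)^*}(\tilde g) = \mc D_{H^*}(\mc P[\tau]) + \mc D_{H \smallsetminus \ad{H^r}}(\mc P[\tau]).
\end{equation*}
Summing the two contributions yields $I^L(\g^r) \le I^L(\g)$, with equality iff $\tilde g$ is harmonic across $\g$, which forces $\g$ to be a line. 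The monotonicity $I^L(\g^s) \le I^L(\g^r)$ for $r < s$ then follows by running the same comparison with $(\g^r, F_r)$ in the roles of $(\g, f)$. Continuity in $r$ and the limits $I^L(\g^r) \to I^L(\g)$ as $r \to 0^+$ and $I^L(\g^r) \to 0$ as $r \to \infty$ follow from monotone/dominated convergence of $\mc D_{\{\Im > r\}}(\log|f'|)$ combined with continuous dependence of Poisson extensions on their $H^{1/2}(\m R)$ traces.

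\textbf{Bounded case and main obstacle.} For the bounded case, I run the analogous argument with $f_r(z) := f(rz)$, $\Omega_r = f(r\m D)$, and $\g_r = \partial \Omega_r$, carrying the extra boundary terms $4\log|f_r'(0)| - 4\log|h_r'(\infty)|$ from~\eqref{eq_disk_energy}; their net $r$-dependence is controlled by the explicit scaling $f_r = f \circ (z \mapsto rz)$ and the behaviour of the outer capacity $|h_r'(\infty)|$, and combines with the annular Dirichlet contribution on $\Omega \smallsetminus \ad{\Omega_r}$ to close the inequality (a computation one can sanity-check on $\g = \m T$, where all terms vanish identically). The main technical obstacle in both settings is verifying that the pasted $\tilde g$ lies in $H^1$ with Jonsson-Wallin trace $\tau_r$ on the chord-arc curve $\g^r$: the traces of $\mc P[\tau]|_H$ and $\mc P[\tau]|_{H^*}$ on $\g$ are defined via Jonsson-Wallin averages and must be shown to match, which relies on the asymptotic smoothness of Weil-Petersson quasicircles (Theorem~\ref{thm:asymptotically_smooth}) and the trace theory developed in Appendix~A of~\cite{VW1}.
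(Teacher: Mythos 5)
Your comparison argument for the inequalities is sound, and it is essentially the route the survey intends: the survey omits the proof and presents the corollary as a consequence of the flow-line identity, and your Dirichlet-principle pasting is precisely the orthogonal decomposition that underlies \eqref{eq:flowline}. Since $\mc P[\tau]$ is harmonic in a neighborhood of $\g^r$ and restricts there to the winding $\tau_r$, your two estimates amount to $I^L(\g^r)=\mc D_{\m C}(\mc P[\tau_r])=\mc D_{\m C}(\mc P[\tau])-\mc D_{\m C}\big(\mc P[\tau]-\mc P[\tau_r]\big)\le I^L(\g)$, i.e.\ the same mechanism as \eqref{eq_flow_identity}--\eqref{eq:flowline}; the iteration for monotonicity and the equality analysis (an entire harmonic function of finite Dirichlet energy is constant, hence $\tau$ is constant and $\g$ is a line -- a step you assert but do not write) are fine, modulo the trace-matching technicalities you correctly flag, and modulo the small a priori point that to write $I^L(\g^r)$ as the sum of the two harmonic-extension energies you should first note $\g^r$ is a quasicircle (image of a line under a quasiconformal extension of $f$) so that the outer winding identity applies.

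The genuine gap is the ``Moreover'' part. Your only control of the exterior term $\mc D_{(H^r)^*}(\mc P[\tau_r])$ is the pasted-competitor bound, which tends to $\mc D_{H^*}(\mc P[\tau])+\mc D_{H}(\mc P[\tau])=I^L(\g)$ as $r\to\infty$, so it cannot yield $I^L(\g^r)\to 0$; dominated convergence of $\mc D_{\{\Im z>r\}}(\log|f'|)$ only handles the interior half. Likewise $I^L(\g^r)\to I^L(\g)$ as $r\to 0+$ does not follow from monotonicity (which gives only an upper bound on the limit), and ``continuous dependence of Poisson extensions on their traces'' is not a usable statement here because both the domain $(H^r)^*$ and the boundary identification vary with $r$; one needs an actual argument, e.g.\ a two-sided comparison of interior and exterior extension energies across the quasicircles $\g^r$ with constants controlled in $r$, or a direct study of $r\mapsto \mc D_{\m C}\big(\mc P[\tau]-\mc P[\tau_r]\big)$, neither of which you supply. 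Finally, the bounded case is only gestured at: there the relevant identity involves the modified winding $\arg\big(zf'(z)/f(z)\big)$ and the derivative terms, compare \eqref{eq_disk_energy} and \eqref{eq:energy_varphi_gamma}, and asserting that $4\log|f_r'(0)|-4\log|h_r'(\infty)|$ ``combines with the annular Dirichlet contribution to close the inequality'' is not a proof -- these terms must be tracked explicitly, since they are exactly what distinguishes the bounded statement (equality case a centered circle, $I^L(\g_r)\to 0$) from a naive transplant of the half-plane computation.
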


\begin{remark}
Both limits and the monotonicity are consistent with the fact that the Loewner energy measures the ``roundness'' of a Jordan curve. In particular, the vanishing of the energy of $\gamma_r$ as $r \to 0$ expresses the fact that conformal maps take infinitesimal circles to infinitesimal circles. 
\end{remark}

\section{Large deviations of multichordal SLE$_{0+}$} \label{sec:multi}

\subsection{Multichordal SLE}

We now consider the multichordal $\SLE_\k$, that are families of random curves (multichords) connecting pairwise distinct boundary points of a simply connected planar domain $\domain$. Constructions for multichordal SLEs have been obtained by many groups 
\cite{Car03, WW_toulouse_Girsanov, BBK, Dub_comm, KL07, Law09, IG1, IG2, BPW, PW19}, and models the interfaces in two-dimensional statistical mechanics models with alternating boundary condition.

As in the single-chord case, we include the marked boundary points to the domain data $(\domain; x_1, \ldots, x_{2\np})$, assuming that 
they appear in counterclockwise order along the boundary $\partial \domain$. 
The objects considered in this section are defined in a conformally invariant or covariant way. So without loss of generality, we assume that $\partial \domain$ is smooth in a neighborhood of the marked points. 
Due to the planarity, there exist $\Catalan_\np$ different possible pairwise non-crossing connections for the curves, where
\begin{align} \label{eq:catalan}
\Catalan_\np = \frac{1}{\np+1} \binom{2\np}{\np}
\end{align}
is the $\np$:th Catalan number.
We enumerate them in terms of \emph{$\np$-link patterns} 
\begin{align} \label{eq: alpha}
\a = \{ \link{a_1}{b_1}, \link{a_2}{b_2}, \ldots, \link{a_\np}{b_\np} \},
\end{align}
that is, partitions of $\{1,2,\ldots,2\np\}$ giving a non-crossing pairing of the marked points. 
Now, for each $\np \geq 1$ and $\np$-link pattern $\a$,
we let $\mc X_{\a}(\domain;  x_1, \ldots,  x_{2\np})  \subset \prod_j \mc X(\domain; x_{a_j}, x_{b_j})$ 
denote the set of \emph{multichords} $\ad \g = (\g_1, \ldots, \g_\np)$
consisting of pairwise disjoint chords where $\g_j \in \mc X(\domain; x_{a_j}, x_{b_j})$  for each $j\in\{1,\ldots,\np\}$. We endow  $\mc X_{\a}(\domain;  x_1, \ldots,  x_{2\np}) $ with the relative product topology and recall that $\mc X(\domain; x_{a_j}, x_{b_j})$ is endowed with the topology induced from a Hausdorff metric defined in Section~\ref{sec:LDP_single}.
\emph{Multichordal $\SLE_\k$} is a random multichord $\ad \g = (\g_1, \ldots, \g_\np)$ in $\mc X_\a(\domain;x_1, \ldots,  x_{2\np})$, characterized in two equivalent ways, when $\k >0$.

{\bf By re-sampling property:} From the statistical mechanics model viewpoint, the natural definition of multichordal SLE is such that for each $j$,  
the chord $\g_j$ has the same law as the trace of a chordal $\SLE_\k$ in $(\hat \domain_j; x_{a_j}, x_{b_j})$, conditioned on the other curves $\{\g_i \; | \; i \neq j \}$. Here, $\hat \domain_j$ is the component of $\domain \smallsetminus \bigcup_{i \neq  j} \g_i$
containing $\g_j$, highlighted in grey in Figure~\ref{fig:NSLE}. 
In~\cite{BPW}, the authors
proved that when $\k \in (0,4]$, the multichordal $\SLE_\k$ is
the unique stationary measure of a Markov chain
on $\mc X_{\a}(\domain;x_1,\ldots,x_{2\np})$ 
defined by re-sampling the curves from their conditional laws. This idea was already introduced and used earlier in~\cite{IG1, IG2},
where Miller \& Sheffield studied interacting SLE curves 
coupled with the Gaussian free field (GFF) in the framework of the so-called imaginary geometry.

\begin{figure}[ht]
\centering
\includegraphics[width=0.35\textwidth]{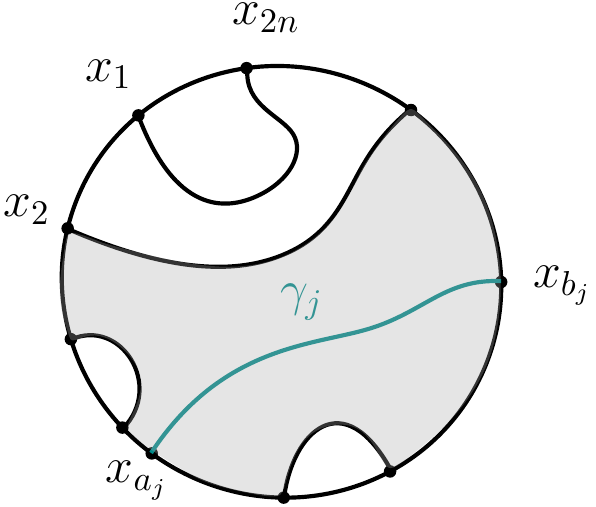}
\caption{Illustration of a multichord and the domain $\hat \domain_j$ containing $\g_j$.
\label{fig:NSLE} }
\end{figure}

{\bf By Radon-Nikodym derivative:} We assume\footnote{The same result holds for $8/3\le \k \le 4$, when $c(\k) \ge 0$, if one includes into the exponent in \eqref{eq:RN} the indicator function of the event that all $\g_j$ are pairwise disjoint.} that $0<\k < 8/3$.
Multichordal $\SLE_\k$ in $\mc X_{\a}(\domain;x_1,\ldots,x_{2\np})$  can be obtained by reweighting $n$ independent $\SLE_\k$ (of the same domain data and link pattern) by
    \begin{equation}\label{eq:RN}
       \frac{1}{\mc Z}   \exp \left( \frac{c(\k)}{2} m_D (\g_1, \ldots, \g_n) \right),
\text{    where }
    c (\k): =  \frac{(3\k-8)(6-\k)}{2\k} < 0
    \end{equation}
    is the central charge associated to $\SLE_\k$. The quantity $m_\domain(\ad \g)$ is defined using  the \emph{Brownian loop measure} $\mu^{loop}_\domain$ introduced by Lawler, Schramm, and Werner \cite{LSW_CR_chordal,LW2004loupsoup}:
    \begin{align}\label{eq:loop_formula}
    \begin{split}
m_\domain(\ad \g) & := 
 \sum_{p=2}^\np 
 \mu_\domain^{\mathrm{loop}} \Big( \big\{ \ell \; \big| \; \ell \cap \g_i \neq \emptyset \text{ for at least $p$ chords } \g_i \big \} \Big) \\
 & =  
 \int \max\Big( \# \{\text{chords hit by } \ell\} - 1, \, 0\Big) \,\dd \mu_\domain^{\mathrm{loop}} (\ell)
    \end{split}
\end{align}
which is positive and finite whenever the family $(\g_i)_{i = 1 \ldots n}$ is disjoint.
In fact, the Brownian loop measure is an infinite measure on Brownian loops, which is conformally invariant, and for $D' \subset D$, $\mu^{loop}_{D'}$ is simply $\mu^{loop}_{D}$ restricted to loops contained in $D'$.
When $D$ has non-polar boundary, the divergence of total mass of $\mu^{loop}_{D}$ comes  only from the contribution of small loops. In particular, the summand $\{ \ell \; \big| \; \ell \cap \g_i \neq \emptyset \text{ for at least $p$ chords } \g_i \}$ is finite if $p \ge 2$ and the chords are disjoint. 
For $n$ independent chordal SLEs connecting $(x_1, \ldots, x_{2n})$, chords may intersect each other. However, in this case $m_D$ is infinite and the Radon-Nikodym derivative \eqref{eq:RN} vanishes since $c < 0$. 
We note that $m_\domain(\g) = 0$ if $\np = 1$ (which is expected since no reweighting is needed for the single SLE).

\begin{remark}
The central charge $c(\k)$ and Brownian loop measure  appear in the conformal restriction formula for a single SLE \cite{LSW_CR_chordal}, which compares the law of SLE trace under the change of the ambient domain. See also \cite[Prop.\,3.1]{KL07}.
It is therefore not surprising to see such terms in the Radon-Nikodym derivatives 
\eqref{eq:RN} of multichordal SLE from the re-sampling property. Indeed, the expression \eqref{eq:loop_formula} already appears in \cite{KL07} for  multichords with ``rainbow'' link pattern. We refer the readers to \cite[Thm.\,1.3]{PW19} for the case of multichords with general link patterns. Note that our expression looks different from \cite{PW19} but is simply a combinatorial rearrangement. The precise definition of Brownian loop measure is not important for the presentation here, so we choose to omit it from our discussion.
\end{remark}
\begin{remark}
Notice that when $\k = 0$, $c = -\infty$, the second characterization does not apply. We first show the existence and uniqueness of multichordal $\SLE_0$ using the first characterization by making links to rational functions. 
\end{remark}

\subsection{Real rational functions and Shapiro's conjecture}\label{subsec:rational}

From the re-sampling property, the multichordal $\SLE_0$ in $\mc X_\a(\domain; x_1, \ldots, x_{2\np})$ 
as a deterministic multichord $ \ad \g = (\g_1, \ldots, \g_\np)$ with the property that 
each $\g_j$ is the $\SLE_0$ curve in its own component $(\hat \domain_j; x_{a_j}, x_{b_j})$. 
In other words, each $\g_j$ is the \emph{hyperbolic geodesic} in $(\hat \domain_j; x_{a_j}, x_{b_j})$, see Remark~\ref{rem:geodesic}.
We call a multichord with this property a \emph{geodesic multichord}. Without loss of generality, we assume that $\domain = \m H$.

The existence of geodesic multichord for each $\a$ follows by characterizing them as minimizers of a lower semicontinuous Loewner energy which is the large deviation rate function of multichordal $\SLE_{0+}$, 
to be discussed in Section~\ref{subsec:LDP_multi}. 
Assuming the existence, the uniqueness is a consequence of the following algebraic result.

We first recall some terminology.
A \emph{rational function} is an analytic branched cover of $\Chat$ over $\Chat$, or equivalently, the ratio of two polynomials $P,Q \in \m C[X]$. 
A point $x_0 \in \Chat$ is a \emph{critical point} (equivalently, a branched point) of a rational function $h$
with \emph{index} $k \ge 2$ if 
\begin{align*}
h(x) = h(x_0) + C (x - x_0)^k  + O ((x - x_0)^{k+1})
\end{align*}
for some constant $C \neq 0$ in a local chart  of $\Chat$ around $x_0$. 
A point $y \in \Chat$ is a \emph{regular value} of $h$ if $y$ is not image of any critical point.
The \emph{degree} of $h$ is the number of preimages of any regular value. We call $h^{-1} (\m R \cup \{\infty\})$ the \emph{real locus} of $h$, and $h$ is a  \emph{real} rational function if $P$ and $Q$ can be chosen from $\m R[X]$, or equivalently, $h(\m R \cup \{\infty\}) \subset \m R \cup \{\infty\}$.

\begin{thm}[\!\!{\cite[Thm.\,1.2, Prop.\,4.1]{peltola_wang}}]
\label{thm:real_rational}
Let $\bar{\eta} \in \mc X_\a (\m H; x_1, \ldots, x_{2\np})$ be a geodesic multichord.
The union of $\bar{\eta}$, its complex conjugate $\ad \eta^*$, and $\m R \cup \{\infty\}$ is the real locus of a real rational function $h_\eta$  of degree $\np+1$ with critical points $\{x_1, \ldots, x_{2\np}\}$. The rational function is unique up to post-composition by $\PSL(2, \m R)$\footnote{The group 
$$\PSL(2,\m R) = \Big\{A = \begin{pmatrix}
a & b \\ c & d 
\end{pmatrix} :  a,b,c,d \in \m R, \, ad - bc = 1\Big\}_{/A \sim -A} $$
acts on $\m H$ by 
$A : z \mapsto \frac{az + b}{cz + d}$, a M\"obius transformation of $\m H$.} and by the map $\m H \to \m H^* : z \mapsto -z$.
\end{thm}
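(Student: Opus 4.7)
The plan is to construct $h_\eta$ explicitly by gluing together Riemann maps of the components of $\m H \smallsetminus \ad\eta$ through Schwarz reflection, and then extending across $\m R$. The crucial combinatorial observation is that the $\np+1$ simply connected components of $\m H \smallsetminus \ad\eta$ form a tree under the ``share a chord'' adjacency relation, because $\m H$ is simply connected and $\ad\eta$ is a disjoint union of non-crossing chords. This tree structure ensures that the inductive gluing is automatically consistent, with no monodromy to check.

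\emph{Construction on $\m H$.} Label the components $\O_0, \ldots, \O_\np$ and choose a Riemann map $\phi_0 \colon \O_0 \to \m H$. Each chord $\eta_j$ bordering $\O_0$ is a hyperbolic geodesic in $\hat\m H_j$ and hence an analytic arc: uniformizing $\hat\m H_j$ so that the endpoints of $\eta_j$ go to $0$ and $\infty$ turns $\eta_j$ into $\ii\m R_+$, so Schwarz reflection across $\eta_j$ is a well-defined anti-conformal involution. Consequently $\phi_0$ extends analytically across each adjacent $\eta_j$ to a conformal map of the neighboring component onto $\m H^*$. Iterating along the tree of components produces a continuous function $h \colon \m H \to \Chat$ that is conformal on each $\O_i$, alternately maps onto $\m H$ or $\m H^*$, and sends $\ad\eta$ into $\m R$.

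\emph{Extension to $\Chat$ and analyticity at the marked points.} Extend $h$ to $\m H^*$ by $h(\bar z) = \overline{h(z)}$; consistency on $\m R \smallsetminus \{x_1, \ldots, x_{2\np}\}$ is immediate. At each $x_j$, the chord $\eta_j$ is a hyperbolic geodesic of $\hat\m H_j$ terminating at a point where $\partial\hat\m H_j$ locally coincides with $\m R$, so $\eta_j$ meets $\m R$ orthogonally at $x_j$. Hence $\m R$ together with $\eta_j \cup \{x_j\} \cup \ad\eta_j^*$ forms a local cross that partitions a disk around $x_j$ into four sectors, each mapped by $h$ conformally onto $\m H$ or $\m H^*$ with alternating orientation. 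Post-composing $h$ with a local square root straightens the cross, so $h$ extends holomorphically to $x_j$ with a critical point of local index $2$.

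\emph{Degree, real structure, and uniqueness.} The resulting $h \colon \Chat \to \Chat$ is therefore a rational function, and the relation $h(\bar z) = \overline{h(z)}$ makes it real. For a regular value $y \in \m H$ one preimage comes from each $\O_i$ mapping onto $\m H$ and one from each $\O_i^*$ whose reflection $\O_i$ maps onto $\m H^*$, giving exactly $\np+1$ preimages, so $\deg h = \np+1$. Riemann--Hurwitz then accounts for the total ramification $2\np$ precisely through the $2\np$ critical points $x_j$ of index $2$, forcing these to be the only critical points. For uniqueness, any second candidate $\tilde h$ restricts on $\O_0$ to a conformal map onto $\m H$ or $\m H^*$; hence $\tilde h \circ h^{-1}|_{\O_0}$ is an element of $\PSL(2,\m R)$, possibly precomposed with $z \mapsto -z$, and analytic continuation extends this identity to all of $\Chat$. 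The main obstacle is the local analysis at each $x_j$: verifying that $\eta_j$ meets $\m R$ orthogonally and that the four sectors glue into a genuine square-root branch requires reconciling the intrinsic hyperbolic geometry of $\hat\m H_j$ with the boundary geometry of $\m H$, and this is the place where the tree-gluing construction must be pinned down most carefully.
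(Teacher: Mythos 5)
Your proposal is correct and follows essentially the same route as the paper's proof: uniformize one face, extend by Schwarz reflection across the geodesic chords and across $\m R$, count $\np+1$ faces mapping to $\m H$ to get the degree, and derive uniqueness up to $\PSL(2,\m R)$ and $z \mapsto -z$ from the choice of starting face. The extra detail you supply (the tree structure of the faces guaranteeing consistent continuation, and the orthogonal-crossing/square-root analysis giving index-two critical points at the $x_j$) is exactly what the survey leaves implicit and defers to the original reference, and it is sound.
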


\begin{remark}
By the Riemann-Hurwitz formula  on Euler characteristics,
a rational function of degree $\np+1$ has $2\np$ distinct critical points 
if and only if they all have index two:
\begin{align*}
(\np+1) \chi (\Chat)  - 2\np (2-1) = 2\np + 2 - 2\np = 2 = \chi (\Chat).
\end{align*}
\end{remark}

We prove Theorem~\ref{thm:real_rational} by constructing the rational function associated to a geodesic multichord $\ad \eta$.
\begin{proof}
The complement $\m H \smallsetminus \ad \eta$ has $\np+1$ components that we call \emph{faces}. 
We pick an arbitrary face $F$ and consider a uniformizing conformal map $h_\eta$ from $F$ onto $\m H$.
Without loss of generality, we assume that $F$ is adjacent to $\eta_1$. We call   $F'$ the other face adjacent to $\eta_1$.
Since $\eta_1$ is a hyperbolic geodesic in $\hat \domain_{1}$, 
the map $h_\eta$ extends by reflection to a conformal map on $\hat \domain_{1}$.
In particular, this extension of $h_\eta$ maps $F'$ conformally onto $\m H^*$. 
By iterating the analytic continuation across all the chords $\eta_k$, we obtain a meromorphic function $h_\eta \colon \m H \to \Chat$. Furthermore, 
$h_\eta$ also extends to $\ad{\m H}$,  
and its restriction $h_\eta|_{\m R \cup \{\infty\}}$ 
takes values in $\m R \cup \{\infty\}$. Hence, 
Schwarz reflection allows us to extend $h_\eta$ to $\Chat$ by setting $h_\eta(z) := h_\eta(z^*)^*$ for all $z \in \m H^*$.

Now, it follows from the construction that $h_\eta$ is a real rational function of degree $\np+1$, as exactly $\np+1$ faces are mapped to $\m H$ and $\np+1$ faces to $\m H^*$. Moreover, $h_\eta^{-1} (\m R \cup \{\infty\})$ is precisely the union of $\ad \eta$, its complex conjugate $\ad \eta^*$, and $\m R \cup\{\infty\}$. Finally, another choice of the face $F$ we started with yields the same function up to post-composition by $\PSL(2,\m R)$ and $z \mapsto -z$.
This concludes the proof.
\end{proof}

To find out all the geodesic multichords connecting $\{x_1, \ldots, x_{2\np}\}$, it thus suffices to classify all the rational functions with this set of critical points. The following result is due to Goldberg.

\begin{thm}[\!\!\cite{Gol91}] Let $z_1, \ldots, z_{2n}$ be $2n$ distinct complex numbers. There are at most $C_n$ rational functions \textnormal{(}up to post-composition by $\PSL(2, \m C)$\footnote{Namely, by M\"obius transformations of $\Chat$.}\textnormal{)} of degree $n+1$ with critical points $z_1, \ldots, z_{2n}$.
\end{thm}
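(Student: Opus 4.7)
The strategy is to identify degree $n+1$ rational functions modulo $\PSL(2,\m C)$ post-composition with points of the Grassmannian $\operatorname{Gr}(2, n+2)$, and to encode the critical point locus by the Wronski map. To a rational function $h = P/Q$ (written in lowest terms, with $\deg P, \deg Q \le n+1$) I associate the two-dimensional subspace $V_h := \operatorname{span}(P,Q) \subset \m C[X]_{\le n+1}$. Post-composition by a M\"obius transformation sends the basis $(P,Q)$ to another basis $(aP+bQ, cP+dQ)$ of the same subspace; hence $V_h$ depends only on the $\PSL(2,\m C)$-class of $h$, and conversely each $V \in \operatorname{Gr}(2, n+2)$ with coprime basis recovers a degree $n+1$ rational function. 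The critical points of $h$ are precisely the zeros in $\Chat$ of the Wronskian $W(P,Q) := PQ' - P'Q \in \m C[X]_{\le 2n}$ (the $X^{2n+1}$ coefficient cancels identically, and any critical point at $\infty$ shows up as a degree deficit). Since $W(aP+bQ, cP+dQ) = (ad-bc)\, W(P,Q)$, the projective class $[W] \in \m P^{2n}$ is a well-defined function of $V$, and moreover $z_i$ is a zero of $W(V)$ if and only if $V$ contains a polynomial vanishing to order at least two at $z_i$ (the vanishing of $W(V)(z_i)$ is exactly the determinantal condition for a nontrivial linear combination of a basis of $V$ to have this property).

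This yields the Wronski morphism
\[
\operatorname{Wr} \colon \operatorname{Gr}(2, n+2) \longrightarrow \m P^{2n}, \qquad V \mapsto [W(V)].
\]
Both source and target are smooth, projective, and of dimension $2n$. Next I would verify that $\operatorname{Wr}$ is a finite morphism: properness is automatic since the source is projective, and quasi-finiteness follows by observing that the fiber above a squarefree polynomial $F$ of degree $2n$ is cut out by polynomial equations and is zero-dimensional. A finite morphism of equidimensional smooth projective varieties has every set-theoretic fiber of cardinality at most its degree. The problem therefore reduces to showing $\deg \operatorname{Wr} = C_n$, since the fiber over $[\prod_{i=1}^{2n}(X - z_i)]$ is exactly the set of classes we wish to count.

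The degree is then computed by Schubert calculus on $\operatorname{Gr}(2, n+2)$. The locus $\Sigma_i$ of subspaces $V$ containing a polynomial vanishing to order at least two at $z_i$ is a codimension-one Schubert variety attached to the osculating flag of $\m C[X]_{\le n+1}$ at $z_i$, and represents the single-box Schubert class $\sigma_1$. For generic $z_1, \ldots, z_{2n}$ one argues that the intersection $\Sigma_1 \cap \cdots \cap \Sigma_{2n}$ is transverse, so that its cardinality equals the intersection number $\sigma_1^{2n}$. Expanding by Pieri's rule in the Schubert basis of $H^*(\operatorname{Gr}(2, n+2))$, the coefficient of the top class $[\mathrm{pt}]$ (the class of the $2 \times n$ rectangular Young diagram) is the number of standard Young tableaux on a $2 \times n$ rectangle, which by the hook length formula equals
\[
\frac{(2n)!}{(n+1)!\, n!} = C_n.
\]

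The main obstacle is the Schubert-theoretic step: identifying the analytic condition ``Wronskian vanishes at $z_i$'' with the osculating-flag Schubert cycle of class $\sigma_1$, and verifying that for generic critical points the intersection is transverse, so that the scheme-theoretic intersection number coincides with the set-theoretic generic fiber cardinality. Once this is in place, the bound $|\operatorname{Wr}^{-1}([F])| \le \deg \operatorname{Wr} = C_n$ applies to every fiber, and in particular to the fiber over $[\prod_i (X - z_i)]$ corresponding to any $2n$ prescribed distinct critical points, which is exactly Goldberg's assertion.
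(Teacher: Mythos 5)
The paper does not prove this statement at all: it is quoted verbatim from Goldberg \cite{Gol91} as an external input (only Theorem~\ref{thm:real_rational} and the deduction of Corollary~\ref{thm:main_unique_minimizing} are proved in the survey), so there is no in-paper argument to compare yours against. Judged on its own, your outline is the classical Wronski-map/Schubert-calculus proof, which is essentially the route taken in the cited literature: the dictionary between $\PSL(2,\m C)$-classes of degree $n+1$ maps and $2$-planes $V\subset \m C[X]_{\le n+1}$, the identification of the fiber of $\operatorname{Wr}$ over $[\prod_i(X-z_i)]$ with the classes to be counted (here you should note that a common zero of a basis of $V$, or a drop of degree to $\le n$ for all of $V$, forces a multiple zero of the Wronskian or $\deg W<2n$, so such $V$ cannot occur in this fiber), and the degree computation $\sigma_1^{2n}=C_n$ are all correct.

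Two points in the middle of your argument deserve repair. First, your justification of quasi-finiteness is circular: saying the fiber ``is cut out by polynomial equations and is zero-dimensional'' asserts exactly what needs proof. The clean fix is to observe that the Wronskian of two linearly independent polynomials is never identically zero, so $\operatorname{Wr}$ is the restriction to the Pl\"ucker-embedded $\operatorname{Gr}(2,n+2)$ of a linear projection whose center misses the Grassmannian; such a projection is automatically a finite morphism, and its degree is the degree of the Grassmannian in the Pl\"ucker embedding, namely $\int \sigma_1^{2n}=C_n$. Second, the generic-transversality step you single out as the main obstacle is not actually needed for the stated upper bound: since $\operatorname{Wr}^*\mathcal O(1)$ has first Chern class $\sigma_1$, the degree of the finite morphism is $\int\sigma_1^{2n}$ with no transversality discussion, and the bound for \emph{every} (not just generic) choice of distinct $z_i$ then follows from the fact you invoke, that a finite surjective morphism onto a normal variety ($\m P^{2n}$ here) has set-theoretic fibers of cardinality at most its degree. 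Transversality of the osculating-flag Schubert varieties would only be needed if you wanted exact counts (e.g.\ the Eremenko--Gabrielov refinement for real critical points mentioned after Corollary~\ref{cor:Catalan}), which is more than the theorem claims.
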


Assuming the existence of geodesic multichord in $\mc X_\a (\m H; x_1, \ldots, x_{2n})$ and observing that two rational functions constructed in Theorem~\ref{thm:real_rational} are $\PSL(2,\m C)$ equivalent if and only if they are equivalent under the action of the group generated by $\brac{\PSL(2,\m R), z\mapsto -z}$, we obtain:

\begin{cor}
\label{thm:main_unique_minimizing}
 There exists a unique geodesic multichord in $\mc X_\a (\domain; x_1, \ldots, x_{2\np})$ for each~$\a$. 
\end{cor}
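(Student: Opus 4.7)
The strategy is a squeezing argument using the bijection geodesic multichord $\leftrightarrow$ real rational function from Theorem~\ref{thm:real_rational} together with Goldberg's upper bound. Granting existence of a geodesic multichord for each link pattern, let $N$ be the total number of geodesic multichords in $\bigsqcup_\a \mc X_\a(\m H; x_1, \ldots, x_{2\np})$. Since any multichord realizes a unique link pattern, we have $N \ge \Catalan_\np$ (at least one per link pattern, and distinct patterns correspond to disjoint sets of multichords by definition). I will build an injection from the set of geodesic multichords into the set of rational functions of degree $\np+1$ with critical set $\{x_1,\ldots,x_{2\np}\}$, taken up to post-composition by $\PSL(2,\m C)$; Goldberg's theorem then yields $N \le \Catalan_\np$, and the squeeze forces exactly one geodesic multichord per link pattern.

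To construct the injection, send $\bar\eta$ to the $\PSL(2,\m C)$-class of $h_\eta$ produced by Theorem~\ref{thm:real_rational}. For injectivity, suppose $h_{\eta'} = \phi \circ h_\eta$ for some $\phi \in \PSL(2,\m C)$ and two geodesic multichords $\bar\eta,\bar{\eta'}$. Since both $h_\eta$ and $h_{\eta'}$ are real rational functions, $h_{\eta'}(\m R\cup\{\infty\}) \subseteq \m R\cup\{\infty\}$ and also $h_{\eta'}(\m R\cup\{\infty\}) = \phi(h_\eta(\m R\cup\{\infty\})) \subseteq \phi(\m R\cup\{\infty\})$. The image $\phi(\m R\cup\{\infty\})$ is a generalized circle in $\Chat$, while $h_{\eta'}(\m R\cup\{\infty\})$ is infinite (it is the image of an infinite set under a non-constant rational map), so the double containment forces $\phi(\m R\cup\{\infty\}) = \m R\cup\{\infty\}$, meaning $\phi \in G := \langle \PSL(2,\m R), z\mapsto -z\rangle$. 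Consequently $h_\eta^{-1}(\m R\cup\{\infty\}) = h_{\eta'}^{-1}(\phi(\m R\cup\{\infty\})) = h_{\eta'}^{-1}(\m R\cup\{\infty\})$, so the two real loci coincide; intersecting with $\m H$ and discarding $\m R\cup\{\infty\}$ recovers $\bar\eta = \bar{\eta'}$.

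The main obstacle is precisely this reduction from $\PSL(2,\m C)$-symmetry to $G$-symmetry; it is the one non-combinatorial step, and without it there would be no reason for Goldberg's count $\Catalan_\np$ of $\PSL(2,\m C)$-classes to match the number of $G$-classes of the real rational functions produced by Theorem~\ref{thm:real_rational}. Once the injection is in place, the chain $\Catalan_\np \le N \le \Catalan_\np$ pins down $N = \Catalan_\np$, and since each link pattern already contributes at least one geodesic multichord, it contributes exactly one, as claimed.
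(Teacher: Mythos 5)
Your proposal is correct and takes essentially the same route as the paper: granting existence, it counts geodesic multichords against Goldberg's bound of $\Catalan_\np$ classes up to $\PSL(2,\m C)$, the key point being that $\PSL(2,\m C)$-equivalence of the rational functions from Theorem~\ref{thm:real_rational} forces equivalence under $\brac{\PSL(2,\m R), z\mapsto -z}$ and hence equality of real loci. This is exactly the observation the paper invokes; you merely supply the short generalized-circle argument behind it.
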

The multichordal $\SLE_0$ is therefore well-defined. We also obtain a by-product of this result:

\begin{cor}[\!\!{\cite[Cor.\,1.3]{peltola_wang}}]
\label{cor:Catalan}
If all critical points of a rational function are real, then it is a real rational function up to post-composition by a M\"obius 
transformation of $\Chat$.
\end{cor}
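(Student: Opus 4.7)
The plan is a counting argument combining Goldberg's upper bound with the $\Catalan_\np$ real rational functions produced by Theorem~\ref{thm:real_rational}, showing that they already exhaust every $\PSL(2,\m C)$-equivalence class. I work in the generic situation where the given rational function $h$ has degree $\np+1$ with $2\np$ pairwise distinct simple critical points $x_1, \ldots, x_{2\np}$, all real; non-generic configurations (collisions or higher-index critical points) should then follow by a limiting argument on the critical-point data.

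For each of the $\Catalan_\np$ link patterns $\a$ on $\{1,\ldots,2\np\}$, Corollary~\ref{thm:main_unique_minimizing} provides a unique geodesic multichord $\bar{\eta}_\a \in \mc X_\a(\m H; x_1, \ldots, x_{2\np})$, and Theorem~\ref{thm:real_rational} attaches to it a real rational function $h_\a$ of degree $\np+1$ whose critical set is exactly $\{x_1, \ldots, x_{2\np}\}$, well-defined modulo post-composition by the group $G := \langle \PSL(2,\m R),\, z \mapsto -z \rangle$. This produces $\Catalan_\np$ candidate real rational functions.

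The main step, which I expect to be the delicate one, is to verify that $\{h_\a\}_\a$ represent $\Catalan_\np$ \emph{distinct} $\PSL(2,\m C)$-equivalence classes. Suppose $h_\b = \phi \circ h_\a$ for some $\phi \in \PSL(2,\m C)$ and two patterns $\a, \b$. Because $h_\a$ is real and has positive degree, the set $h_\a(\m R \cup \{\infty\})$ is an infinite subset of $\m R \cup \{\infty\}$, and it is also contained in $\phi^{-1}(h_\b(\m R \cup \{\infty\})) \subset \phi^{-1}(\m R \cup \{\infty\})$. Since any generalized circle in $\Chat$ is determined by three of its points, this forces $\phi^{-1}(\m R \cup \{\infty\}) = \m R \cup \{\infty\}$, so $\phi$ lies in the stabilizer of $\m R \cup \{\infty\}$ in $\PSL(2,\m C)$, which is precisely $G$. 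Consequently $h_\a^{-1}(\m R \cup \{\infty\}) = h_\b^{-1}(\m R \cup \{\infty\})$; intersecting this common real locus with $\m H$ and invoking Theorem~\ref{thm:real_rational} recovers the multichord, giving $\bar{\eta}_\a = \bar{\eta}_\b$ and hence $\a = \b$. Ruling out non-real $\phi$ is the crux; the rest of the argument is bookkeeping.

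Finally, Goldberg's theorem bounds the total number of $\PSL(2,\m C)$-equivalence classes of degree-$(\np+1)$ rational functions with critical set $\{x_1, \ldots, x_{2\np}\}$ by $\Catalan_\np$. Combined with the previous paragraph, the classes $\{[h_\a]\}_\a$ already saturate this bound, so the given $h$ must be $\PSL(2,\m C)$-equivalent to some $h_\a$: there exists $\phi \in \PSL(2,\m C)$ with $\phi \circ h = h_\a$, and $\phi \circ h$ is then a real rational function, as claimed.
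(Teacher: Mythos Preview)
Your argument is correct and follows essentially the same route as the paper. The paper's proof is implicit: it states the corollary as a ``by-product'' after observing that two rational functions constructed in Theorem~\ref{thm:real_rational} are $\PSL(2,\m C)$-equivalent if and only if they are equivalent under $G = \langle \PSL(2,\m R), z\mapsto -z\rangle$, then counts against Goldberg's bound---exactly your strategy. You actually supply more detail than the paper on the key observation (that any $\phi \in \PSL(2,\m C)$ intertwining two real $h_\a$ must lie in $G$), and your argument there is clean; note that the stabilizer of $\m R \cup \{\infty\}$ in $\PSL(2,\m C)$ is $\PGL(2,\m R)$, which coincides with $G$ since $z\mapsto -z$ represents the non-identity component. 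One caveat: the paper, like you, works with $2\np$ distinct simple critical points (this is built into Goldberg's theorem and Theorem~\ref{thm:real_rational}), and neither the paper nor you give the limiting argument for degenerate critical configurations---so your honesty about that restriction matches the paper's implicit scope.
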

This corollary is a special case of the Shapiro conjecture  concerning real solutions to enumerative geometric problems on Grassmannians, see~\cite{Sot00}. 
Eremenko and Gabrielov \cite{EG02} first proved this conjecture 
for the Grassmannian of $2$-planes, when the conjecture is equivalent to Corollary~\ref{cor:Catalan}. See also \cite{EG11} for another elementary proof.

\subsection{Large deviations of multichordal $\SLE_{0+}$}\label{subsec:LDP_multi}
We now introduce the Loewner potential and energy and discuss the large deviations of multichordal $\SLE_{0+}$. 
\begin{df}\label{df:potential}
Let $\ad \g : = (\g_1, \ldots, \g_n)$.
The \emph{Loewner potential} of  $\ad \g$ is given by
\begin{align}\label{eq:initialdef_Hn}
\mc H_\domain(\ad \g) := \; & \frac{1}{12} \sum_{j=1}^\np I_{\domain}(\g_j) 
+ m_\domain(\ad \g) - \frac{1}{4} \sum_{j=1}^\np \log P_{\domain; x_{a_j}, x_{b_j}},
\end{align}
where $I_D (\g_j) = I_{D; x_{a_j}, x_{b_j}}(\g_j)$ is the chordal Loewner energy of $\g_j$ (Definition~\ref{df:chordalLE}), $m_D(\ad \g)$ is defined in \eqref{eq:loop_formula}, and $P_{\domain;\bpt,\ept}$ is the
\emph{Poisson excursion kernel}:
\begin{align*}
P_{\domain;\bpt,\ept} := |\varphi'(\bpt)| |\varphi'(\ept)| P_{\m H;\varphi(\bpt),\varphi(\ept)} , \qquad
\text{and} \qquad P_{\m H;\bpt,\ept} := |\ept - \bpt|^{-2}.
\end{align*}
 Here $\varphi \colon \domain \to \m H$ is a conformal map such that $\varphi(x), \varphi(y) \in \m R$, and $\varphi'(x)$ and $\varphi'(y)$ are well-defined since we assumed that $\partial D$ is smooth in a neighborhood of $x$ and $y$.

We denote the \emph{minimal potential} by
\begin{align} \label{eq:initialdef_Hmin}
\Hmin^\a_\domain (x_1, \ldots,x_{2\np})
:= \inf_{\ad \g} \mc H_\domain (\ad \g) \in (-\infty, \infty),
\end{align}
with infimum taken over all multichords $\ad \g \in \mc X_\a(\domain;x_1, \ldots,x_{2\np})$.
\end{df}
\begin{remark}
When $n = 1$,
$$\mc H_{D} (\g) = \frac{1}{12} I_D(\g) - \frac{1}{4} \log P_{D;x,y}, \qquad \forall \g \in \mc X(D; x,y).$$
The infimum of $\mc H_D$ in $\mc X(D; x,y)$ is realized for the minimizer of $I_{D;x,y}$, which is the hyperbolic geodesic in $(D;x,y)$.
\end{remark}
One important property of the Loewner potential is that it satisfies the following cascade relation which follows from a conformal restriction formula for Loewner energy and the definition of $m_\domain (\ad \g)$.
\begin{lem}[\!\!{\cite[Lem.\,3.8, Cor.\,3.9]{peltola_wang}}]
\label{lem:H_cascade}
For each $j \in \{1,\ldots,\np\}$, we have
\begin{align} \label{eq:induction_H}
\mc H_\domain(\ad \g) 
= \; & \mc H_{\hat{\domain}_j} (\g_j) 
+ \mc H_\domain (\g_1, \ldots, \g_{j-1}, \g_{j+1} \ldots, \g_\np). 
\end{align}
In particular, any minimizer of $\mc H_\domain$ in $\mc X_\a (\domain;x_1, \ldots,  x_{2\np})$ is a geodesic multichord, and $\mc H_\domain (\ad \g) < \infty$ if and only if $\{\g_j\}$ are disjoint chords with $I_D(\g_j) < \infty$. 
\end{lem}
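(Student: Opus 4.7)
The plan is to reduce \eqref{eq:induction_H} to two ingredients: a single-chord conformal restriction identity for the Loewner potential, and an elementary decomposition of the loop functional $m_\domain$. Write $\mc H_\domain^{(1)}(\g) := \tfrac{1}{12} I_\domain(\g) - \tfrac{1}{4} \log P_{\domain; x_{a_j}, x_{b_j}}$ for the single-chord version of the potential applied to $\g = \g_j$, so that $\mc H_\domain(\ad \g) = \sum_j \mc H_\domain^{(1)}(\g_j) + m_\domain(\ad \g)$.

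The first ingredient is the single-chord identity
$$ \mc H_\domain^{(1)}(\g_j) - \mc H_{\hat \domain_j}^{(1)}(\g_j) = \mu_\domain^{\mathrm{loop}}\big( \ell : \ell \cap \g_j \neq \emptyset,\; \ell \not\subset \hat \domain_j \big). $$
This encodes the conformal restriction behaviour of the chordal Loewner energy $I_\domain(\g_j)$ together with the transformation law $P_{\domain;a,b} = |\varphi'(a)||\varphi'(b)|\, P_{\varphi(\domain); \varphi(a), \varphi(b)}$ applied to a conformal inclusion $\hat \domain_j \hookrightarrow \domain$ fixing neighbourhoods of $x_{a_j}$ and $x_{b_j}$; probabilistically it is the $\k \to 0+$ limit of the SLE$_\k$ restriction formula of \cite{LSW_CR_chordal}, in which the combination $\k\, c(\k)/2 \to -12$ produces the prefactor $1$ on the right-hand side.

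The second ingredient is combinatorial. Since $\hat \domain_j$ is the connected component of $\domain \smallsetminus \bigcup_{i \neq j} \g_i$ containing $\g_j$ and Brownian loops are connected, a loop $\ell \subset \domain$ meeting $\g_j$ satisfies $\ell \not\subset \hat \domain_j$ if and only if $\ell$ also meets some $\g_i$ with $i \neq j$. Writing $k(\ell) = \#\{i \neq j : \ell \cap \g_i \neq \emptyset\}$, the pointwise identity $\max(k+1-1,0) - \max(k-1,0) = \1_{\{k \geq 1\}}$ (when $\ell$ hits $\g_j$, and $0$ otherwise) integrated against $\mu_\domain^{\mathrm{loop}}$ yields
$$ m_\domain(\ad \g) - m_\domain(\g_1, \ldots, \g_{j-1}, \g_{j+1}, \ldots, \g_\np) = \mu_\domain^{\mathrm{loop}}\big( \ell \cap \g_j \neq \emptyset,\; \ell \not\subset \hat \domain_j \big). $$
Subtracting $\mc H_\domain(\g_1, \ldots, \g_{j-1}, \g_{j+1}, \ldots, \g_\np)$ from $\mc H_\domain(\ad \g)$ (the $I_\domain(\g_i)$ and $\log P_{\domain; x_{a_i}, x_{b_i}}$ with $i \neq j$ cancel identically) and combining the two ingredients gives \eqref{eq:induction_H}.

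For the consequences, the cascade with $j$ arbitrary shows that any minimizer $\ad \g$ of $\mc H_\domain$ must minimize $\mc H_{\hat \domain_j}^{(1)}$ in $\g_j$ with the other chords held fixed; the remark following Definition~\ref{df:potential} identifies this minimizer as the hyperbolic geodesic of $\hat \domain_j$, so $\ad \g$ is a geodesic multichord. Finiteness of $\mc H_\domain(\ad \g)$ is equivalent to simultaneous finiteness of each $I_\domain(\g_j)$ and of $m_\domain(\ad \g)$; the latter diverges precisely when two chords touch, because arbitrarily small Brownian loops near the intersection contribute infinite $\mu_\domain^{\mathrm{loop}}$-mass to the integrand in \eqref{eq:loop_formula}. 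The principal difficulty is the single-chord restriction identity above: its deterministic derivation requires careful manipulation of the Loewner equation and of the conformal derivatives entering $P_{\domain; a, b}$, but it is the finite-energy (large deviation) counterpart of the classical SLE conformal restriction formula.
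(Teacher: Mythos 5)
Your two-ingredient strategy --- a single-chord conformal restriction identity for the potential plus the combinatorial decomposition of $m_\domain$ --- is exactly the argument behind the citation \cite[Lem.\,3.8, Cor.\,3.9]{peltola_wang} that the survey itself invokes without proof, and your second ingredient and the deduction of the two consequences are correct. However, your first ingredient is stated with the wrong sign, and with that sign the derivation fails. The correct restriction identity is
\begin{equation*}
\mc H_{\hat{\domain}_j}(\g_j) \;=\; \mc H^{(1)}_{\domain}(\g_j) \;+\; \mu^{\mathrm{loop}}_\domain\big(\ell\cap\g_j\neq\emptyset,\ \ell\not\subset\hat\domain_j\big),
\end{equation*}
i.e., shrinking the domain \emph{increases} the potential: in energy form the restriction formula reads $I_{U}(\g)=I_{\m H}(\g)+3\log \Phi'(0)\Phi'(\infty)+12\,\mu^{\mathrm{loop}}_{\m H}(\ell\cap\g\neq\emptyset,\ \ell\not\subset U)$ for $\Phi\colon U\to\m H$ fixing $0,\infty$, and the derivative term is exactly absorbed by the covariance $P_{U;0,\infty}=\Phi'(0)\Phi'(\infty)P_{\m H;0,\infty}$ of the Poisson kernel inside $\mc H$, leaving only the positive loop term attached to the \emph{smaller} domain. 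The sign is in fact forced by the lemma itself at $\np=2$: there \eqref{eq:induction_H} is literally $\mc H_{\hat\domain_1}(\g_1)=\mc H^{(1)}_\domain(\g_1)+\mu^{\mathrm{loop}}_\domain(\ell\text{ hits both chords})$, since $m_\domain$ of a single chord vanishes. Your own heuristic also gives this direction when carried out: taking $-\k\log$ of the Radon--Nikodym derivative of SLE$_\k$ in $U$ with respect to SLE$_\k$ in $\domain$, proportional to $\1_{\g\subset U}\exp\big(\tfrac{c(\k)}{2}\mu^{\mathrm{loop}}_\domain(\ell\cap\g\neq\emptyset,\ell\not\subset U)\big)$, puts the coefficient $-\k c(\k)/2\to +12$ on $I_U-I_\domain$, not on $I_\domain-I_U$.

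Concretely, with your stated sign the two loop terms add instead of cancel: subtracting $\mc H_\domain(\g_1,\ldots,\g_{j-1},\g_{j+1},\ldots,\g_\np)$ from $\mc H_\domain(\ad\g)$ and inserting your two identities yields $\mc H_\domain(\ad\g)=\mc H_{\hat\domain_j}(\g_j)+\mc H_\domain(\g_1,\ldots,\g_{j-1},\g_{j+1},\ldots,\g_\np)+2\,\mu^{\mathrm{loop}}_\domain(\ell\cap\g_j\neq\emptyset,\ \ell\not\subset\hat\domain_j)$, which is not \eqref{eq:induction_H}. Once the sign is corrected the cancellation is exact, and the rest of your write-up goes through: the connectedness argument identifying $\{\ell\not\subset\hat\domain_j\}$ with $\{\ell$ hits some $\g_i,\ i\neq j\}$ on the event $\ell\cap\g_j\neq\emptyset$, the minimization in each fixed $\hat\domain_j$ producing the hyperbolic geodesic, and the finiteness discussion via small loops near a touching point are all fine; and treating the restriction formula itself as a cited input is consistent with the survey, which likewise only points to these two ingredients.
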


Using techniques from quasiconformal mappings and the fact that multichords with finite potential consist of quasichords by Theorem~\ref{thm:single_qchord}, the Loewner potential is shown to have the following properties.
\begin{prop}[\!\!{\cite[Prop.\,3.13]{peltola_wang}}] \label{prop: H_semicontinuous}
 The sub-level set 
$$\{\ad \g \in \mc X_\a(\domain; x_1, \ldots, x_{2\np}) \;|\; \mc H_D(\ad  \g) \le c\}$$ 
is compact for any $c \ge \Hmin^\a_\domain$. In particular, there exists a multichord  in $\mc X_\a(\domain; x_1, \ldots, x_{2\np})$ minimizing $\mc H_\domain$. 
\end{prop}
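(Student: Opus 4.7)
The plan is to prove compactness of the sub-level set $\{\ad\g \in \mc X_\a(\domain; x_1, \ldots, x_{2\np}) : \mc H_\domain(\ad\g) \le c\}$ in two steps: confinement inside a compact ambient subset (from the single-chord compactness in Theorem~\ref{thm:main_single_LDP}) and closedness (from lower semicontinuity of $\mc H_\domain$). Existence of a minimizer then follows by a standard minimizing-sequence argument.

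First, I would exploit coercivity. The last term of \eqref{eq:initialdef_Hn} is a constant depending only on $(\domain; x_1, \ldots, x_{2\np})$, and the loop-measure term $m_\domain(\ad\g) \ge 0$ by the non-negativity of the integrand in \eqref{eq:loop_formula} whenever $\ad\g$ consists of disjoint chords. Hence $\mc H_\domain(\ad\g) \le c$ yields a uniform bound
\[
I_\domain(\g_j) \;\le\; 12\Bigl( c + \tfrac{1}{4} \textstyle\sum_{i=1}^{\np} \log P_{\domain; x_{a_i}, x_{b_i}} \Bigr) \;=:\; 12 M
\]
for every $j$. By Theorem~\ref{thm:main_single_LDP}, each factor set $\{\g \in \mc X(\domain; x_{a_j}, x_{b_j}) : I_\domain(\g) \le 12M\}$ is compact, so the sub-level set sits inside the corresponding compact product in $\mc X_\a$.

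Next, I would establish lower semicontinuity of $\mc H_\domain$ along any convergent sequence $\ad\g^{(k)} \to \ad\g$ in the product Hausdorff topology. The sum $\sum_j I_\domain(\g_j) \le \liminf_k \sum_j I_\domain(\g_j^{(k)})$ is immediate from lower semicontinuity of the rate function $I_\domain$ (Theorem~\ref{thm:main_single_LDP}); the Poisson-kernel term is constant in $\ad\g$. For the loop term, writing
\[
m_\domain(\ad\g) = \int \bigl( \#\{j : \ell \cap \g_j \ne \emptyset\} - 1 \bigr)_+ \,\dd \mu_\domain^{\mathrm{loop}}(\ell),
\]
I would apply Fatou's lemma; the $\mu_\domain^{\mathrm{loop}}$-a.e.\ lower semicontinuity of the integrand should hold because finite-energy chords are quasichords (Theorem~\ref{thm:single_qchord}), so $\mu_\domain^{\mathrm{loop}}$-a.e.\ Brownian loop meets each limit chord $\g_j$ transversally and therefore also meets $\g_j^{(k)}$ for $k$ large. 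Combining the three bounds yields $\mc H_\domain(\ad\g) \le \liminf_k \mc H_\domain(\ad\g^{(k)})$; the sub-level set is thus closed in the compact ambient set, hence compact.

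The main obstacle is precisely this $\mu_\domain^{\mathrm{loop}}$-almost-sure lower semicontinuity of the hitting count. The pointwise event $\{\ell \cap \g \ne \emptyset\}$ is closed (upper semicontinuous) in $\g$ for Hausdorff convergence, so a naive Fatou bound runs the wrong way, and the argument genuinely uses the quasichord regularity provided by finite Loewner energy to exclude loops that only touch chords tangentially. Granting the compactness, existence of a minimizer is immediate: any minimizing sequence eventually lies in the non-empty compact set $\{\mc H_\domain \le \Hmin^\a_\domain(x_1, \ldots, x_{2\np}) + 1\}$ and admits a subsequential limit, which attains the infimum by lower semicontinuity.
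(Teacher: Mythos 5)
Your overall strategy---coercivity from the single-chord good rate function, closedness via lower semicontinuity of $\mc H_\domain$, and the direct method for the minimizer---is the natural one, and it is consistent with what the survey indicates about the cited proof (the survey itself gives no proof of this proposition: it only cites \cite[Prop.\,3.13]{peltola_wang} and remarks that the argument uses quasiconformal techniques together with the fact that finite-potential multichords consist of quasichords, Theorem~\ref{thm:single_qchord}). Your first step is fine: since $m_\domain \ge 0$ and the Poisson-kernel term depends only on the boundary data, $\mc H_\domain(\ad\g)\le c$ bounds each $I_\domain(\g_j)$, and Theorem~\ref{thm:main_single_LDP} places the sub-level set inside a compact product, with lower semicontinuity of $\sum_j I_\domain(\g_j)$ also coming for free.

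The genuine gap is exactly the step you ``grant'': the $\mu^{\mathrm{loop}}_\domain$-a.e.\ lower semicontinuity of the hitting count along Hausdorff-convergent sequences. As you correctly observe, $\1\{\ell\cap\g\neq\emptyset\}$ is only \emph{upper} semicontinuous in $\g$, so Fatou by itself runs the wrong way; what is needed is that a.e.\ loop hitting the limit chord $\g_j$ also hits $\g_j^{(k)}$ for all large $k$. Your sketch asserts, but does not prove, the two nontrivial inputs this requires: (i) for a quasichord, $\mu^{\mathrm{loop}}_\domain$-a.e.\ loop meeting $\g_j$ enters \emph{both} complementary components of $\hat\domain_j\smallsetminus\g_j$ (a Beurling-type, two-sided thickness estimate combined with the Markov property under the loop measure---this is precisely where the quasichord regularity from Theorem~\ref{thm:single_qchord} must be used quantitatively); and (ii) a separation lemma stating that a connected set meeting both components at definite distance from $\g_j$ must intersect every chord in $(\domain;x_{a_j},x_{b_j})$ that is sufficiently Hausdorff-close to $\g_j$---this is not immediate from Hausdorff closeness alone and needs its own argument (e.g.\ connecting the two points to the respective boundary arcs away from $\g_j$). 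A further point left implicit: a Hausdorff limit of elements of $\mc X_\a$ need not lie in $\mc X_\a$, since distinct chords may collide in the limit; compactness therefore also requires showing that the uniform bound on $m_\domain$ excludes collisions. Your lower semicontinuity statement does yield this, but only if it is proved in a form allowing the value $+\infty$ at touching configurations, and this should be said explicitly. In short: the skeleton is correct and matches the intended route, but the analytic core of the proposition---the semicontinuity of the loop term, where the quasiconformal input enters---is missing.
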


At this point, we know from Lemma~\ref{lem:H_cascade} that the infimum in \eqref{eq:initialdef_Hmin} is attained by a geodesic multichord in $\mc X_\a(\domain; x_1, \ldots, x_{2\np})$. This shows the existence of geodesic multichord and completes the proof of the uniqueness as in Corollary~\ref{thm:main_unique_minimizing}.

\begin{df} \label{df:multi_energy}
We define the \emph{multichordal Loewner energy} of $\ad \g$ as
\begin{align*}
I_\domain^\a (\ad \g) 
& := 12 ( \mc H_\domain (\ad \g) - \Hmin^\a_\domain (x_1, \ldots,x_{2\np}) )\\
& = \left(\sum_{j=1}^\np I_{\domain}(\g_j) 
+ 12 m_\domain(\ad \g)\right) - \inf_{\ad \g'}\left(\sum_{j=1}^\np I_{\domain}(\g'_j) 
+ 12 m_\domain(\ad \g')\right).
\end{align*}
\end{df}

\begin{thm}[\!\!{\cite[Thm.\,1.5]{peltola_wang}}]
\label{thm:main_multi_LDP}
The family of laws $(\m P_\a^\k)_{\k > 0}$ of multichordal $\SLE_\k$ 
satisfies the large deviation principle in $\mc X_\a (\domain;x_1,\ldots,x_{2\np})$ with good rate function $I^\a_\domain$.
\end{thm}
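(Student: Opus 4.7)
The plan is to obtain the multichordal LDP by combining the single-chord LDP (Theorem~\ref{thm:main_single_LDP}) with the Radon--Nikodym representation \eqref{eq:RN} and a Varadhan-type tilted-LDP argument. Since we only care about $\k \to 0+$, we may assume $0 < \k < 8/3$ throughout, so that \eqref{eq:RN} applies. Writing $\m Q_\a^\k$ for the law of $\np$ independent chordal $\SLE_\k$'s (one in each $(\domain; x_{a_j}, x_{b_j})$), \eqref{eq:RN} says
\begin{equation*}
\frac{\dd \m P_\a^\k}{\dd \m Q_\a^\k}(\ad \g) \;=\; \frac{1}{\mc Z_\k} \exp\!\Big(\tfrac{c(\k)}{2}\, m_\domain(\ad \g)\Big),
\end{equation*}
with $m_\domain(\ad \g) \in [0,\infty]$ and $m_\domain(\ad \g) = \infty$ exactly when the chords fail to be disjoint. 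A direct computation gives $\k\, c(\k)/2 \to -12$ as $\k \to 0+$, so at the logarithmic scale $\k$ this tilt will contribute the term $-12\, m_\domain(\ad \g)$ to the rate function, matching the definition of $I^\a_\domain$.

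\textbf{Step 1 (LDP for $\m Q_\a^\k$).} By Theorem~\ref{thm:main_single_LDP}, each marginal $\SLE_\k$ in $(\domain;x_{a_j},x_{b_j})$ satisfies the LDP in $\mc X(\domain; x_{a_j}, x_{b_j})$ with good rate function $I_{\domain; x_{a_j}, x_{b_j}}$. Since independent product laws tensorize for LDP (a general fact; see, e.g., \cite[Ex.\,4.2.7]{DZ10}), the product law $\m Q_\a^\k$ satisfies the LDP on $\prod_j \mc X(\domain; x_{a_j}, x_{b_j})$ with good rate function $\ad \g \mapsto \sum_j I_\domain(\g_j)$. Restricting to $\mc X_\a(\domain; x_1, \ldots, x_{2\np})$ is harmless in the limit: a multichord with $\sum_j I_\domain(\g_j) < \infty$ consists of finite-energy quasichords (Theorem~\ref{thm:single_qchord}), and the subspace of disjoint multichords is relatively open in the product space, while multichords violating the link pattern have strictly positive rate contribution from $m_\domain$ in the next step.

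\textbf{Step 2 (Tilting).} We now apply a Laplace--Varadhan type tilt with the functional $F_\k(\ad \g) = -\tfrac{\k c(\k)}{2}\, m_\domain(\ad \g)$, which converges pointwise to $F(\ad\g) = 12\, m_\domain(\ad \g)$. Since $c(\k) < 0$, the weight is uniformly bounded by $1$, so all Varadhan upper-bound moment hypotheses are trivial. Combined with lower semicontinuity of $m_\domain$ on $\mc X_\a$ (inherited from Fatou applied to the Brownian loop measure representation \eqref{eq:loop_formula}) and the fact that on the finite-energy set $m_\domain$ is finite and continuous under Hausdorff convergence of disjoint quasichords, a standard tilted-LDP argument (cf.\ \cite[Thm.\,4.3.1, Sec.\,4.5]{DZ10}) yields the LDP for $\m P_\a^\k$ with rate function $\sum_j I_\domain(\g_j) + 12\, m_\domain(\ad \g) - L$, where
\begin{equation*}
L \;=\; \lim_{\k \to 0+} \k \log \mc Z_\k \;=\; -\inf_{\ad \g' \in \mc X_\a} \Big(\sum_j I_\domain(\g'_j) + 12\, m_\domain(\ad \g')\Big),
\end{equation*}
the last identity following from the normalization $\m P_\a^\k(\mc X_\a) = 1$ combined with the Laplace principle applied to $\m Q_\a^\k$. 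Comparing with Definition~\ref{df:multi_energy} identifies this limit with $I^\a_\domain(\ad \g)$, and goodness of the rate function follows from Proposition~\ref{prop: H_semicontinuous}.

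\textbf{Main obstacle.} The delicate point is not the abstract tilting mechanism but verifying the precise continuity properties of $\ad \g \mapsto m_\domain(\ad \g)$ needed for Varadhan's lemma --- in particular, continuity on the open set of disjoint finite-energy multichords and lower semicontinuity globally. The Brownian loop measure is infinite, so the formula \eqref{eq:loop_formula} must be controlled by isolating the ``large loops hitting at least two chords'' contribution. A related technicality is that the Loewner transform is not continuous in the Hausdorff topology (already noted after Theorem~\ref{thm:main_single_LDP}), so one must carry out the tilting entirely at the level of curves in $\mc X_\a$ rather than passing through driving functions; this forces some care when matching the upper and lower large deviation bounds near configurations where chords come close to touching or to $\partial\domain$.
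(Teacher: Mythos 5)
Your proposal follows essentially the same route as the paper's: the multichordal LDP in \cite[Thm.\,1.5]{peltola_wang} is deduced from the single-chord LDP (Theorem~\ref{thm:main_single_LDP}) together with the Radon--Nikodym description \eqref{eq:RN} by exactly the tilting-plus-normalization argument you outline (cf.\ Remark~\ref{rem:expectation_asymptotics}), with the technical heart being the (semi)continuity properties of $m_\domain$ that you correctly single out. One small correction: global lower semicontinuity of $m_\domain$ alone via ``Fatou'' is delicate (loops touching a chord without crossing spoil pointwise semicontinuity of the indicators in \eqref{eq:loop_formula}), but it is not what is needed --- the upper bound only requires lower semicontinuity of the combined functional $\sum_j I_{\domain}(\g_j)+12\,m_\domain(\ad\g)$, which is exactly what Proposition~\ref{prop: H_semicontinuous} (compact, hence closed, sub-level sets of $\mc H_\domain$) supplies, while the lower bound uses continuity of $m_\domain$ at disjoint finite-energy (quasichord) configurations, a lemma proved in \cite{peltola_wang}.
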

\begin{remark}
When $ \np = 1$, Theorem~\ref{thm:main_multi_LDP} is equivalent to  Theorem~\ref{thm:main_single_LDP}.
\end{remark}

\begin{remark}\label{rem:expectation_asymptotics}
The expression of the rate function can be guessed from the Radon-Nikodym derivative \eqref{eq:RN}. In fact, we write heuristically the density of a single SLE as $\exp(-I_D (\g)/\k)$ for small $\k$ from Theorem~\ref{thm:main_single_LDP}. Taking the expectation $\m E_\k^{ind}$ of \eqref{eq:RN}  with respect to the distribution of $n$ independent SLE$_\k$ in $\prod_j \mc X(\domain; x_{a_j}, x_{b_j})$,
$$\m E_\k^{ind} \exp \left( \frac{c(\k)}{2} m_D  \right) \sim_{\k \to 0+}  \exp \left(-\frac{1}{\k} \inf_{\ad \g'}\left(\sum_{j=1}^\np I_{\domain}(\g'_j) 
+ 12 m_\domain(\ad \g')\right)\right)$$
since $c(\k)/2 \sim -12/\k$. The density of multichordal SLE$_\k$ is thus given by 
$$\frac{\exp \left( \frac{c(\k)}{2} m_D (\ad \g) \right) \prod_{j} \exp\left(-\frac{I_D (\g_j)}{\k}\right)}{\m E_\k^{ind} \exp \left( \frac{c(\k)}{2} m_D  \right)} \sim_{\k \to 0+} \exp\left(- \frac{I^\a_D(\ad \g) }{\k}\right). $$
\end{remark}

Theorem~\ref{thm:main_multi_LDP} and the uniqueness of the energy minimizer imply immediately:

\begin{cor} 
\label{cor:main_limit}
As $\k \to 0+$, multichordal $\SLE_\k$ 
in $\mc X_\a(\domain;x_1, \ldots, x_{2\np})$ converges in probability to 
the unique geodesic multichord $\ad \eta$
in $\mc X_\a(\domain;x_1, \ldots, x_{2\np})$. 
\end{cor}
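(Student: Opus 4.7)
The plan is to deduce this corollary as a standard consequence of a large deviation principle whose rate function admits a unique zero, applied to Theorem~\ref{thm:main_multi_LDP}. First, I would combine Definition~\ref{df:multi_energy} with Corollary~\ref{thm:main_unique_minimizing} and Lemma~\ref{lem:H_cascade}: the latter identifies minimizers of $\mc H_D$ in $\mc X_\a(D;x_1,\dots,x_{2n})$ with geodesic multichords, and the former provides uniqueness. Hence $I^\a_D$ vanishes exactly at $\ad\eta$ and is strictly positive elsewhere on $\mc X_\a(D;x_1,\dots,x_{2n})$.

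Fix an arbitrary open neighborhood $U$ of $\ad\eta$ in $\mc X_\a(D;x_1,\dots,x_{2n})$; convergence in probability amounts to showing $\m P^\k_\a(\ad\g^\k\notin U)\to 0$ as $\k\to 0+$. Since $U^c$ is closed, the large deviation upper bound from Theorem~\ref{thm:main_multi_LDP} gives
\begin{equation*}
\limsup_{\k\to 0+}\k\log\m P^\k_\a(\ad\g^\k\in U^c)\le -\inf_{\ad\g\in U^c}I^\a_D(\ad\g).
\end{equation*}
It therefore suffices to establish $c:=\inf_{\ad\g\in U^c}I^\a_D(\ad\g)>0$, for then $\m P^\k_\a(\ad\g^\k\in U^c)\le e^{-c/(2\k)}$ for all small enough $\k$, which tends to $0$.

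For the strict positivity, I would argue by contradiction using that $I^\a_D$ is a good rate function. If the infimum were zero, pick a minimizing sequence $\ad\g_n\in U^c$ with $I^\a_D(\ad\g_n)\to 0$. The sequence lies eventually in the sub-level set $\{I^\a_D\le 1\}$, which by Proposition~\ref{prop: H_semicontinuous} (equivalently, the goodness asserted in Theorem~\ref{thm:main_multi_LDP}) is compact. Extracting a subsequential limit $\ad\g_\infty$ in $U^c$ and using the lower semicontinuity of $I^\a_D$ yields $I^\a_D(\ad\g_\infty)=0$, so by uniqueness $\ad\g_\infty=\ad\eta$. But $U^c$ is closed and disjoint from the neighborhood $U$ of $\ad\eta$, a contradiction.

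There is essentially no analytic obstacle here: all the work is already encoded in the goodness of the rate function, in the uniqueness of the minimizer (Corollary~\ref{thm:main_unique_minimizing}), and in Theorem~\ref{thm:main_multi_LDP} itself. The only subtle point is that convergence in probability requires a notion of closeness between multichords; here I use the relative product Hausdorff topology on $\mc X_\a(D;x_1,\dots,x_{2n})$ introduced before Theorem~\ref{thm:main_multi_LDP}, in which the LDP is stated and in which $\{\ad\eta\}$ is closed, so the argument above goes through verbatim.
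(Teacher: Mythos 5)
Your proof is correct and follows essentially the same route as the paper: apply the large deviation upper bound of Theorem~\ref{thm:main_multi_LDP} to the complement of a Hausdorff-open neighborhood of $\ad\eta$ and use that the infimum of $I^\a_\domain$ there is strictly positive. The only difference is that you spell out the strict positivity via goodness of the rate function, lower semicontinuity, and uniqueness of the minimizer, a step the paper's one-line proof leaves implicit.
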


\begin{proof}
Let $\mc B^h_\vare(\bar{\eta}) \subset \mc X_\a(\domain) : = \mc X_\a(\domain;x_1,\ldots,x_{2\np})$ 
be the Hausdorff-open ball of radius $\vare$ around the unique geodesic multichord $\bar{\eta}$.
Then, we have 
\begin{align*}
\limsup_{\k \to 0+} \k \log \m P^\k [\ad \g^\k \in \mc X_\a(\domain) \smallsetminus \mc B^h_\vare(\bar{\eta})] 
\leq - \inf_{\ad \g \in \mc X_\a(\domain) \smallsetminus \mc B^h_\vare(\bar{\eta})} I_\domain^\a(\ad \g) < 0.
\end{align*}
This proves the corollary. 
\end{proof}

\subsection{Minimal potential}

To define the energy $I_\domain^{\a}$, one could have added
to the potential $\mc H_\domain$ an arbitrary constant that depends only on the boundary data $(x_1, \ldots, x_{2\np};\a)$, e.g., one may drop the Poisson kernel terms in $\mc H_\domain$ which then alters the value of the minimal potential. 
The advantage of using the Loewner potential \eqref{eq:initialdef_Hn} is that it
allows comparing the potential of geodesic multichords of different boundary data. This becomes interesting when $n \ge 2$ as the moduli space of the boundary data is non-trivial.
We now discuss equations satisfied by the minimal potential based on \cite{peltola_wang} and the more recent work \cite{alberts2020pole}.

We first use Loewner's equation to describe each individual chord in the geodesic multichord, whose Loewner driving function can be expressed in terms of the minimal potential. We state the result when $D  = \m H$ and let $\mc U_\a = 12 \mc M_{\m H}^\a$.
\begin{thm}[\!\!{\cite[Prop.\,1.7]{peltola_wang}}]\label{thm:marginal}
Let $\ad \eta$ be the geodesic multichord in $\mc X_\a (\m H; x_1, \ldots, x_{2\np})$.
For each $j \in \{1,\ldots,\np\}$, the Loewner driving function $W$
of the chord $\eta_j$
and the evolution $V_t^i = g_t(x_i)$ of the other marked points satisfy the differential equations
\begin{align}\label{eqn:DE_for_drivers}
\begin{dcases}
\frac{\dd  W_t}{ \dd t} =  -\partial_{a_j} \mc U_\a (V_t^1, \ldots, V_t^{a_j-1}, W_t, V_t^{a_j+1}, \ldots, V_t^{2\np} )  , 
\qquad W_0 = x_{a_j} , \\
\frac{\dd V_t^i}{\dd t} = \frac{2}{V_t^i-W_t}, \qquad V_0^i = x_i, 
\quad \textnormal{for } i\neq a_j, 
\end{dcases}
\end{align}
for $0 \leq t < T$, where $T$ is the lifetime of the solution
and $(g_t)_{t \in [0,T]}$ is the Loewner flow generated by $\eta_j$. Similar equations hold with $a_j$ replaced by $b_j$.
\end{thm}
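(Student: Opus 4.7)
My plan is to prove Theorem~\ref{thm:marginal} by combining the variational characterization of the geodesic multichord with the conformal covariance of the minimal potential $\mc M^\a_\domain$ along the Loewner flow of $\eta_j$. A heuristic from random conformal geometry motivates the formula: for $\k>0$ the marginal driving function of $\eta_j$ in multichordal $\SLE_\k$ satisfies an SDE of the form $dW_t = \sqrt{\k}\, dB_t + \k\,\partial_{a_j}\log \mc Z_\a(V_t^1,\ldots,W_t,\ldots,V_t^{2\np})\, dt$, and from Theorem~\ref{thm:main_multi_LDP} together with Remark~\ref{rem:expectation_asymptotics} one expects $\k\log\mc Z_\a\to -\mc U_\a$ as $\k\to 0+$, formally giving the claimed ODE in the limit. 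I would nonetheless prove the theorem directly at $\k=0$, staying within the deterministic framework of this section.

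The first step is a hereditary property of geodesic multichords. Write $D_t := \m H\smallsetminus \eta_j[0,t]$ and $\bar{\eta}^{(t)} := (\eta_1,\ldots,\eta_j[t,T),\ldots,\eta_\np)$. For each $i \neq j$, the complementary component $\hat{\domain}_i$ is unchanged by the truncation, and $\eta_j[t,T)$ remains the hyperbolic geodesic in $\hat{\domain}_j$ from $\eta_j(t)$ to $x_{b_j}$; hence $\bar{\eta}^{(t)}$ is a geodesic multichord, and by Corollary~\ref{thm:main_unique_minimizing} it is the unique one in $(D_t; x_1,\ldots,\eta_j(t),\ldots,x_{2\np})$. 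From the conformal invariance of $I_\domain$ and $m_\domain$ and the covariance of the Poisson excursion kernel, one obtains the transformation rule
\[
\mc M^\a_\domain(x_1,\ldots,x_{2\np}) = \mc M^\a_{\domain'}(x_1',\ldots,x_{2\np}') - \tfrac{1}{4}\sum_{i=1}^{2\np}\log|\varphi'(x_i)|
\]
for any conformal $\varphi:\domain\to\domain'$ with $\varphi(x_i)=x_i'$. Applying this with $\varphi = g_t : D_t\to \m H$ at the marked points $i\neq a_j$, regularizing the tip $\eta_j(t)$ via the square-root asymptotics $g_t(z)-W_t \sim c(t)\sqrt{z-\eta_j(t)}$, and combining with the additivity $I_\m H(\eta_j) = I_t(W) + I_{D_t}(\eta_j[t,T))$ together with the Brownian loop-measure increment $m_\m H(\bar{\eta}) - m_{D_t}(\bar{\eta}^{(t)})$, I obtain an identity of the form
\[
U(t) := \mc U_\a(V_t^1,\ldots,W_t,\ldots,V_t^{2\np}) = U(0) - \tfrac{1}{2}\int_0^t |\dot W_s|^2\, ds - 3\sum_{i\neq a_j}\log|g_t'(x_i)| + R(t),
\]
where $R(t)$ absorbs the tip contribution and the loop-measure correction.

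The final step is differentiation. Assuming $\mc U_\a$ is $C^1$ in its arguments away from coincidences of marked points (which can be extracted from the real-rational-function description in Theorem~\ref{thm:real_rational}, since the geodesic multichord depends smoothly on the critical point configuration), the chain rule and the Loewner equations $\dot V_t^i = 2/(V_t^i-W_t)$ yield
\[
\dot U(t) = \dot W_t\,\partial_{a_j}\mc U_\a + \sum_{i\neq a_j}\frac{2}{V_t^i - W_t}\,\partial_i\mc U_\a.
\]
On the other hand, differentiating the previous display and using $\partial_t \log|g_t'(x_i)| = -2/(V_t^i - W_t)^2$ together with the explicit form of $R'(t)$, the two expressions for $\dot U(t)$ can be matched; after cancellation of the terms indexed by $i\neq a_j$, one is forced to $\dot W_t = -\partial_{a_j}\mc U_\a$. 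The main obstacle in this scheme is the precise bookkeeping of $R(t)$, specifically computing the $t$-derivative of the tip coefficient $c(t)$ and the derivative of the Brownian loop measure of loops intersecting $\eta_j[0,t]$ and at least one other chord. The companion equations with $a_j$ replaced by $b_j$ follow either by reversing the parametrization of $\eta_j$ and invoking the reversibility of the Loewner energy (Theorem~\ref{thm:intro_rev}), or by repeating the argument with the roles of the endpoints swapped.
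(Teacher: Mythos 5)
Your heuristic and your first step are sound: the observation that the truncated family $\bar\eta^{(t)}$ is again a geodesic multichord in the slit configuration is correct, and it is exactly the mechanism the actual proof exploits. The survey's own treatment verifies \eqref{eqn:DE_for_drivers} for $n=1$ by the explicit formula $\Hmin_{\m H}(x_1,x_2)=\frac12\log|x_2-x_1|$ and the known rate $\dot W_0=6(x_2-x_1)^{-1}$ for the semicircle, and for $n\ge 2$ it maps $(\hat D_j;x_{a_j},x_{b_j})$ to $(\m H;0,\infty)$ — where, by the geodesic property you correctly identified, the image of $\eta_j$ has constant driving function — and then uses the conformal restriction formula for how driving functions transform under conformal maps, together with the cascade relation of Lemma~\ref{lem:H_cascade}, to identify the resulting drift with $-\partial_{a_j}\mc U_\a$ (see \cite[Sec.\,4.2]{peltola_wang}). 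Your proposal instead tries to reach the ODE through an energy-balance identity for $U(t)=\mc U_\a(V_t^1,\ldots,W_t,\ldots,V_t^{2n})$, and this is where the genuine gap lies: the displayed identity for $U(t)$ is posited, not derived, and everything hard has been swept into the unspecified remainder $R(t)$ — the regularized derivative at the slit tip, the derivative of the loop-measure term, and also the fact that $\frac12\int_0^t|\dot W_s|^2\,\dd s$ is not the truncated chordal energy $I_{\m H;x_{a_j},x_{b_j}}$ (your $W$ drives $\eta_j$ in the capacity parametrization toward $\infty$, not toward $x_{b_j}$, so a further restriction correction enters).

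More seriously, even granting that identity with some explicit $R(t)$, the matching step cannot "force" $\dot W_t=-\partial_{a_j}\mc U_\a$. Comparing the chain-rule expression for $\dot U(t)$ with the derivative of your display yields a single scalar relation in which $\dot W_t$ appears quadratically (through $-\frac12\dot W_t^2$ and $\dot W_t\,\partial_{a_j}\mc U_\a$); by the null-state equation of Theorem~\ref{thm:null-state}, that relation is automatically consistent with the claimed ODE for a suitable $R'(t)$, so by itself it carries no information about $\dot W_t$ unless $R'(t)$ is computed independently — and even then the quadratic leaves a root ambiguity needing a separate argument. In other words, the identification of the drift must come from an actual computation (the conformal-restriction/cascade route above, or an envelope-type differentiation of $\mc M^\a$ at the minimizer), not from bookkeeping whose key term is left as "the main obstacle". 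Finally, the $C^1$ regularity of $\mc U_\a$ that your chain rule requires is asserted via smooth dependence of the geodesic multichord on the marked points, but this differentiability is itself a nontrivial input established in \cite{peltola_wang}, not something you may assume.
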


Here again, SLE large deviations enable us to speculate the form of Loewner differential equations \eqref{eqn:DE_for_drivers}. 
In fact, for each $\np$-link pattern $\a$, one associates to the multichordal $\SLE_\k$ 
a (\emph{pure}) \emph{partition function} 
$\PartF_\a$ defined  as 
\begin{align*}
\PartF_\a(\m H;x_1, \ldots,x_{2\np}) := 
\Big( \prod_{j=1}^\np P_{\m H;x_{a_j},x_{b_j}} \Big)^{(6-\k)/2\k} \times
 \m E_\k^{ind}\exp\left(\frac{c(\k)}{2} \, m_\domain(\ad \g) \right).
\end{align*}
As $-\k \log \left[P_{\m H;x_{a_j},x_{b_j}}{}^{(6-\k)/2\k}\right] \sim - 3 \log P_{\m H;x_{a_j},x_{b_j}}$, from Remark~\ref{rem:expectation_asymptotics} and \eqref{eq:initialdef_Hmin}
we obtain 
\begin{align}\label{eq:intro_limit_Z}
-\k \log \PartF_\a(\m H;x_1, \ldots,x_{2\np}) 
\quad \overset{\k \to 0+}{\longrightarrow}  \quad
\mc U_\a (x_1, \ldots,x_{2\np}).
\end{align}
The marginal law of the chord $\g_j^\k$ in the multichordal $\SLE_\k$ in $\mc X_\a (\m H; x_1, \ldots, x_{2\np})$ is given by the stochastic Loewner equation derived from $\mc Z_\a$:
\begin{align*}
\begin{dcases}
\dd W_t 
 = \sqrt{\kappa} \,\dd B_t 
+\kappa \,\partial_{a_j}\log\PartF_{\alpha} \left(V_t^1, \ldots, V_t^{a_j-1}, W_t, V_t^{a_j+1}, \ldots, V_t^{2n}\right) \dd t, \\ 
W_0  =  x_{a_j} , \\
\dd V_t^i  = \dfrac{2 \,\dd t}{V_t^i-W_t}, \qquad V_0^i = x_i, \quad \text{for } i\neq a_j. 
\end{dcases}
\end{align*}
 See~\cite[Eq.\,(4.10)]{PW19}). 
Replacing naively $\k \log \PartF_\a$ by 
$-\mc U_\a$, we obtain \eqref{eqn:DE_for_drivers}.

To prove Theorem~\ref{thm:marginal} rigorously, we analyse the geodesic multichords and the minimal potential directly and do not need to go through the SLE theory, which might be more tedious to control the errors when interchanging derivatives and limits.
Let us check \eqref{eqn:DE_for_drivers}  when $ n = 1$. For $n \ge 2$, we conformally map $(\hat D_j; x_{a_j}, x_{b_j})$ to $(\m H; 0, \infty)$ and use the conformal restriction formula which gives the change of the driving function under conformal maps.  See \cite[Sec.\,4.2]{peltola_wang}.

When $n =1$,
the minimal potential has an explicit formula:
\begin{align} \label{eq:Hmin_explicit2}
\Hmin_{\m H}(x_1, x_2) = \frac{1}{2} \log |x_2 - x_1|
\quad \Longrightarrow \quad 
\partial_1 \Hmin_{\m H}(x_1, x_2) = \frac{1}{2(x_1 - x_2)} .
\end{align}
The hyperbolic geodesic in $(\m H;x_1, x_2)$ is the semi-circle $\eta$ with endpoints $x_1$ and $x_2$.
We compute directly that $\frac{\dd}{\dd t} W_t |_{t = 0}  = 6(x_2 - x_1)^{-1}$. See, e.g., \cite[Eq.\,(4.3)]{peltola_wang} or \cite[Sec.\,5]{KNK_exact}.
Since hyperbolic geodesic is preserved under its own Loewner flow, i.e., $g_t (\eta_{[t,T]})$ is the semi-circle with end points $W_t$ and $V_t = g_t (x_2)$, we obtain 
\begin{align*} \begin{dcases}
\frac{\dd W_t}{\dd t}  = \; & \dfrac{6}{V_t - W_t}  , 
\qquad W_0 = x_1 , \\
\frac{\dd V_t}{\dd t}  = \; & \dfrac{2 }{V_t - W_t} , \qquad V_0 = x_2.
\end{dcases}
\end{align*}
By~\eqref{eq:Hmin_explicit2}, 
this is exactly Equation~\eqref{eqn:DE_for_drivers} when $n = 1$.

Similarly, the level two null-state Belavin-Polyakov-Zamolodchikov equations satisfied by the SLE partition function
\begin{align}\label{eq: BPZ}
\left( \frac{\kappa}{2} \partial_{x_j}^2 + \sum_{i \neq j} \left( \frac{2}{x_i - x_j} \partial_{x_i} - \frac{(6-\kappa)/\kappa}{(x_i - x_j)^2} \right) \right) \mathcal{Z}_\a = 0, \quad j=1,\ldots,2n, 
\end{align}
prompts us to find the following equations (see also \cite{BBK,alberts2020pole}). 

\begin{thm}[\!\! {\cite[Prop.\,1.8]{peltola_wang}}] \label{thm:null-state}
For $j \in \{1,\ldots, 2\np\}$, we have
\begin{align} \label{eq:deterministic_PDEs}
\frac{1}{2} ( \partial_j \mc U_\a (x_1, \ldots,  x_{2\np}) )^2 - \sum_{i \neq j} 
\frac{2}{x_i - x_j} \partial_i \mc U_\a (x_1, \ldots,  x_{2\np})
 = \sum_{ i \neq j}  \frac{6}{(x_i - x_j)^2}.
\end{align}
\end{thm}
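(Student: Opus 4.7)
The plan is to derive equation \eqref{eq:deterministic_PDEs} as a deterministic Hamilton-Jacobi identity for $\mc U_\a$, by differentiating a Loewner-flow invariance of the geodesic multichord at time zero and comparing with Theorem~\ref{thm:marginal}. Fix $j$; by the symmetry $a_j \leftrightarrow b_j$ (using reversibility, Theorem~\ref{thm:intro_rev}) it suffices to prove \eqref{eq:deterministic_PDEs} at the coordinate $x_{a_j}$. Let $\ad\eta$ be the unique geodesic multichord in $\mc X_\a(\m H;x_1,\ldots,x_{2n})$ from Corollary~\ref{thm:main_unique_minimizing}, let $g_t$ be the Loewner flow of $\eta_j$, and let $W_t$ and $(V_t^i)_{i\neq a_j}$ be as in Theorem~\ref{thm:marginal}. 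Since $g_t$ is conformal on $\m H\smallsetminus\eta_j[0,t]$ and conformal maps preserve hyperbolic geodesics, the image
\[
\ad\eta^{(t)}:=\bigl(g_t(\eta_1),\ldots,g_t(\eta_j[t,T]),\ldots,g_t(\eta_n)\bigr)
\]
is again a geodesic multichord, in $(\m H;V_t^1,\ldots,W_t,\ldots,V_t^{2n})$, so by Corollary~\ref{thm:main_unique_minimizing},
\[
F(t):=\mc U_\a(V_t^1,\ldots,W_t,\ldots,V_t^{2n})=12\,\mc H_{\m H}(\ad\eta^{(t)}).
\]

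Next I would compute $F'(0)$ in two ways. For the first (analytic) computation, the chain rule together with $\dot V_0^i = 2/(x_i-x_{a_j})$ for $i\neq a_j$ and $\dot W_0=-\partial_{a_j}\mc U_\a$ (Theorem~\ref{thm:marginal}) gives
\[
F'(0)=-\bigl(\partial_{a_j}\mc U_\a\bigr)^2+\sum_{i\neq a_j}\frac{2\,\partial_i\mc U_\a}{x_i-x_{a_j}}.
\]
For the second (geometric) computation, I would expand $12\,\mc H_{\m H}(\ad\eta^{(t)})$ using Definition~\ref{df:potential}: conformal invariance of each chordal Loewner energy and of the Brownian loop measure transfers those quantities from $\m H$ to $\m H\smallsetminus\eta_j[0,t]$, and the conformal-restriction formula underlying the cascade relation~\eqref{eq:induction_H} (see \cite[Sec.\,3]{peltola_wang}) moves them back to $\m H$ with an explicit correction. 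The only piece that loses mass under the flow is the energy of $\eta_j$, whose additivity contributes $-\tfrac12(\dot W_0)^2=-\tfrac12(\partial_{a_j}\mc U_\a)^2$; each Poisson kernel $\log P_{\m H;V_t^{a_k},V_t^{b_k}}$ differentiates explicitly, and after cancellation with the restriction correction the only surviving terms are the $-6/(x_i-x_{a_j})^2$ coming from the evolution of $V_t^i$ relative to $W_t$, so that
\[
F'(0)=-\tfrac12\bigl(\partial_{a_j}\mc U_\a\bigr)^2-\sum_{i\neq a_j}\frac{6}{(x_i-x_{a_j})^2}.
\]
Matching the two expressions for $F'(0)$ and rearranging produces exactly \eqref{eq:deterministic_PDEs}.

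The main obstacle is the bookkeeping in the geometric step: one must verify that all contributions depending on the global geometry of $\ad\eta$ (chord energies of $\eta_k$ for $k\neq j$, the mixed loop-measure term, the other Poisson kernels) cancel against the conformal-restriction correction, leaving only the two explicit quantities above. This cancellation is the deterministic shadow of the semi-classical reduction of the BPZ system~\eqref{eq: BPZ}: formally substituting $\mc Z_\a=\exp(-\mc U_\a/\k+o(1/\k))$ into \eqref{eq: BPZ} and equating $\k^{-1}$ coefficients immediately recovers \eqref{eq:deterministic_PDEs}, which in fact motivated the statement; the Loewner-flow argument above sidesteps the need to justify twice-differentiation of the large-deviation limit \eqref{eq:intro_limit_Z} as $\k\to 0+$. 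One also uses throughout that $\mc U_\a$ is sufficiently regular to apply the chain rule, which is established in the proof of Theorem~\ref{thm:marginal}.
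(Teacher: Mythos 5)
First, a point of comparison: the survey itself does not prove Theorem~\ref{thm:null-state}; it motivates \eqref{eq:deterministic_PDEs} as the semiclassical limit of the BPZ system \eqref{eq: BPZ} and defers the proof to \cite[Prop.\,1.8]{peltola_wang}, whose argument likewise analyzes the geodesic multichord under the Loewner flow of one chord via conformal restriction. Your skeleton is the right one and is consistent: $\bar\eta^{(t)}$ is again the geodesic multichord for the flowed boundary data (conformal maps preserve hyperbolic geodesics and the complementary components), so by uniqueness $F(t)=\mc U_\a(V_t^1,\ldots,W_t,\ldots,V_t^{2\np})=12\,\mc H_{\m H}(\bar\eta^{(t)})$, the chain-rule computation of $F'(0)$ via Theorem~\ref{thm:marginal} is correct, and matching it with your claimed geometric value would indeed yield \eqref{eq:deterministic_PDEs} (both sides check out for $\np=1$). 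The reduction to the endpoint $x_{a_j}$ is fine and does not even require reversibility, since Theorem~\ref{thm:marginal} is stated for both endpoints; deferring the differentiability of $\mc U_\a$ to the analysis behind Theorem~\ref{thm:marginal} is acceptable.

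The genuine gap is the geometric half of the computation, which you assert rather than prove, and whose sketched accounting is not correct as stated. The quantity $\frac12\int_0^t|\dot W_s|^2\,\dd s$ is not a summand of the potential \eqref{eq:initialdef_Hn}: the potential contains $I_{\m H;x_{a_j},x_{b_j}}(\eta_j)$, normalized at the target $x_{b_j}$, whereas $W$ in Theorem~\ref{thm:marginal} is the capacity-normalized driving function toward $\infty$. Already for $\np=1$ the geodesic has $I_{\m H;x_1,x_2}(\eta)=0$ and loses no energy under the flow, yet $\dot W_0=6/(x_2-x_1)\neq 0$, and the entire derivative $F'(0)=-24/(x_2-x_1)^2$ comes from the Poisson-kernel term $6\log|V_t-W_t|$, not from ``energy additivity''. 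In general, the terms $-\frac12(\partial_{a_j}\mc U_\a)^2$ and $-6/(x_i-x_{a_j})^2$ emerge only after combining: (i) energy additivity in the correct coordinates together with a single-chord restriction formula comparing the $\infty$-normalized and $x_{b_j}$-normalized energies of the consumed segment; (ii) the conformal covariance factors $\tfrac14\cdot 12\,\log|g_t'(x_i)|$ of the Poisson kernels at the other marked points, whose derivative $\partial_t\log g_t'(x_i)=-2/(V_t^i-W_t)^2$ is the actual source of the constants $6$; (iii) the change of the loop-measure term $m_{\m H}$ under removal of the hull $\eta_j[0,t]$; and (iv) a regularization at the tip of the slit, where $|g_t'|$ is not classically defined. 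Verifying that these contributions recombine into exactly $-\frac12(\partial_{a_j}\mc U_\a)^2-\sum_{i\neq a_j}6/(x_i-x_{a_j})^2$ is the analytic content of the theorem---it is precisely the conformal-restriction analysis behind Lemma~\ref{lem:H_cascade} and \cite[Sec.\,4]{peltola_wang}---so as written your proposal reduces the statement to an unproven claim of essentially the same depth, and the $\np=1$ case shows the cancellation cannot be organized the way you describe.
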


The recent work \cite{alberts2020pole} gives further an explicit expression of $\mc U_\a (x_1, \cdots, x_{2n})$ in terms of the rational function $h_\eta$ associated to the geodesic multichord in $\mc X_\a (\m H; x_1, \ldots, x_{2n})$ as considered in Section~\ref{subsec:rational}. 
More precisely, following \cite{alberts2020pole}, we normalize the rational function such that $h_\eta (\infty) = \infty$ by possibly post-composing $h_\eta$ by an element of $\PSL(2,\m R)$ and denote the other $n$ poles  $(\zeta_{\a,1}, \cdots, \zeta_{\a,n})$ of $h_\eta$.

\begin{thm}[\!\! {\cite[Thm.\,2.8]{alberts2020pole}}]
For the boundary data $(x_1, \ldots, x_{2n}; \a)$, we have
\begin{align} \label{eq: Z}
\exp (-\mc U_\a) = \, C \!\!\prod_{1 \leq j < k \leq 2n} \!\! (x_j - x_k)^2 \!\! \prod_{1 \leq l < m \leq n} \!\! (\zeta_{\alpha,l} - \zeta_{\alpha,m})^8 \prod_{k=1}^{2n} \prod_{l=1}^n (x_k - \zeta_{\alpha,l})^{-4},
\end{align}
where $C$ is a constant which only depends on $n$.
\end{thm}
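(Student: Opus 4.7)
My plan is to identify $\mc U_\a$ by its characterization via the null-state PDE system \eqref{eq:deterministic_PDEs} together with the coalescence asymptotics forced by the cascade relation \eqref{eq:induction_H}, and to verify that the right-hand side of \eqref{eq: Z} satisfies both. This bypasses a direct evaluation of the Loewner-energy and Brownian-loop contributions to $\mc H_{\m H}(\ad \eta)$, replacing them by algebraic manipulations with the rational function $h_\eta$.

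First I would establish uniqueness: any continuous $\mc U$ on $\{x_1 < \cdots < x_{2n}\}$ that satisfies \eqref{eq:deterministic_PDEs} and has, for each linked pair $\{a_j, b_j\} \in \a$, the asymptotic
\begin{equation*}
\mc U(x_1, \ldots, x_{2n}) + 6 \log |x_{b_j} - x_{a_j}| \;\longrightarrow\; \mc U_{\a'}(x_1, \ldots, \widehat{x_{a_j}}, \widehat{x_{b_j}}, \ldots, x_{2n}) \quad \text{as } x_{a_j} \to x_{b_j},
\end{equation*}
with $\a'$ the pruned link pattern, must equal $\mc U_\a$. The asymptotic itself follows from the cascade \eqref{eq:induction_H}: as $x_{a_j} \to x_{b_j}$, the chord $\eta_j$ collapses and the Poisson-kernel term $-\tfrac14 \log P_{\m H; x_{a_j}, x_{b_j}} = \tfrac12 \log |x_{b_j} - x_{a_j}|$ (multiplied by $12$) supplies the $-6\log$ singularity, while the remainder reduces to the $(n-1)$-chord potential. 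Uniqueness then follows by induction on $n$ combined with a characteristics argument for the quasilinear system \eqref{eq:deterministic_PDEs}, the base case $n = 1$ being \eqref{eq:Hmin_explicit2}.

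The core step is verifying both properties for $F_\a := \prod_{j < k}(x_j - x_k)^2 \prod_{l < m}(\zeta_{\a, l} - \zeta_{\a, m})^8 \prod_{k, l}(x_k - \zeta_{\a, l})^{-4}$. The poles $\zeta_{\a, l}$ depend implicitly on $(x_1, \ldots, x_{2n})$ through the requirement that $h_\eta'(z) = A \prod_k (z - x_k) \big/ \prod_l (z - \zeta_{\a, l})^2$ integrates to a well-defined rational function, i.e.\ $\operatorname{Res}_{z = \zeta_{\a, l}} h_\eta'(z) = 0$ for each $l$. These conditions translate into the $n$ algebraic constraints
\begin{equation*}
\sum_{k=1}^{2n} \frac{1}{\zeta_{\a, l} - x_k} \;=\; 2 \sum_{m \neq l} \frac{1}{\zeta_{\a, l} - \zeta_{\a, m}}, \qquad l = 1, \ldots, n,
\end{equation*}
whose implicit differentiation supplies $\partial_{x_j} \zeta_{\a, l}$. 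Substituting into $\partial_{x_j} \log F_\a$ reduces the null-state PDE, with $\mc U_\a$ replaced by $-\log F_\a$, to a polynomial identity in $\{x_k, \zeta_{\a, l}\}$.

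The main obstacle is the algebraic bookkeeping in this verification: the chain rule through $\partial_{x_j} \zeta_{\a, l}$ produces a cloud of rational cross terms whose cancellations are opaque in coordinates. I would recast the identity coordinate-freely by recognizing the left-hand side of \eqref{eq:deterministic_PDEs} (with $\mc U_\a$ replaced by $-\log F_\a$) minus the right-hand side as a sum of residues on $\Chat$ of a single rational function built from $h_\eta'$ and $h_\eta''/h_\eta'$, and then invoking the residue theorem together with the constraint equations above to conclude. Finally, the multiplicative constant $C$ is determined recursively by matching the coalescence asymptotic from the uniqueness step, tracking the unique pole $\zeta_{\a, l}$ that is swept into the collision point; it should depend only on $n$.
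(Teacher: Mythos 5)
A preliminary remark: the survey does not prove this statement at all — it is quoted verbatim from \cite{alberts2020pole} — so your proposal can only be judged on its own terms and against that external argument, not against an in-paper proof.

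On its own terms, the shape of your plan is sensible and your key algebraic input is correct: the conditions $\operatorname{Res}_{z=\zeta_{\a,l}}h_\eta'(z)=0$ are exactly the constraints $\sum_k(\zeta_{\a,l}-x_k)^{-1}=2\sum_{m\neq l}(\zeta_{\a,l}-\zeta_{\a,m})^{-1}$, i.e.\ the statement that $(\zeta_{\a,l})_l$ is a critical point, in the $\zeta$-variables, of the logarithm of the master function $\Phi(x,\zeta)$ appearing on the right-hand side, and your bookkeeping for how the product degenerates under coalescence (so that $C$ depends only on $n$) is consistent. But there are two genuine problems. First, a sign error in the load-bearing asymptotic: from \eqref{eq:induction_H} and $P_{\m H;a,b}=|b-a|^{-2}$ one gets $\mc U_\a\sim 6\log|x_{b_j}-x_{a_j}|+\mc U_{\a'}$, i.e.\ $\mc U_\a-6\log|x_{b_j}-x_{a_j}|\to\mc U_{\a'}$, not the $+$ you display (and the collapsing linked pair must be taken adjacent, since collapsing a non-adjacent linked pair forces the intermediate points to collide as well). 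Second, and more seriously, the uniqueness lemma you lean on — that a continuous solution of \eqref{eq:deterministic_PDEs} with these coalescence asymptotics must equal $\mc U_\a$ — is the crux and is only gestured at. It is not a routine characteristics argument: the system is quadratic in $\nabla\mc U$ (so characteristics carry branch ambiguities), it admits at least one global solution per link pattern, and in the $\k>0$ analogue the corresponding characterization of pure partition functions by the BPZ system plus asymptotics is a hard theorem requiring extra growth hypotheses. As written, this lemma carries essentially the full difficulty of the result you want.

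Both issues can be alleviated by an observation you missed: since the constraints say precisely $\partial_{\zeta_l}\log\Phi=0$ at $\zeta=\zeta_\a(x)$, and \eqref{eq:deterministic_PDEs} involves only \emph{first} derivatives, the implicit terms $\partial_{x_j}\zeta_{\a,l}$ never enter; one simply has $\partial_{x_j}\log F_\a=2\sum_{k\neq j}(x_j-x_k)^{-1}-4\sum_l(x_j-\zeta_{\a,l})^{-1}$, so verifying \eqref{eq:deterministic_PDEs} for $-\log F_\a$ is a purely algebraic identity in $(x,\zeta)$ modulo the constraints (your residue-summation idea then applies cleanly), and your ``main obstacle'' largely dissolves. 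Better still, this lets you bypass the unproved uniqueness lemma: compute the initial driving-function derivative of the chord $\eta_j$ of the geodesic multichord directly from the local structure of $h_\eta$ (this pole-dynamics computation is the heart of \cite{alberts2020pole}), show it equals $-\partial_{a_j}(-\log F_\a)$, and compare with Theorem~\ref{thm:marginal} to conclude $\partial_{a_j}\mc U_\a=\partial_{a_j}(-\log F_\a)$ at every configuration; since this holds for every growth point, the two functions differ by a constant on the connected configuration space, and the (sign-corrected) coalescence asymptotics together with \eqref{eq:Hmin_explicit2} determine the constant by induction on $n$. That is essentially the route of \cite{alberts2020pole}, and it replaces your abstract uniqueness step by a concrete computation.
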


\begin{remark}
Finally let us remark that another reason to include the Poisson kernel to the Loewner potential is that it relates to the more general framework of defining Loewner energy in terms of the zeta-regularized determinants of Laplacians.
We do not enter into further details here and refer the interested readers to \cite[Thm.\,1.9]{peltola_wang}.
\end{remark}

\section{Large deviations of radial $\SLE_\infty$}\label{sec:radial_infty}
We now turn to the large deviations of $\SLE_\infty$, namely, when $\vare : = 1/\kappa$ using the notation in Definition~\ref{df:LDP}. From \eqref{eqn:LE}, one can easily show that in the chordal setup, for any fixed $t$, the conformal map $f_t = g_t^{-1} : \m H \to \m H\smallsetminus K_t$ converges uniformly on compact sets to the identity map as $\k \to \infty$ almost surely. In other words, the complement of the $\SLE_\k$ hull converges for the \emph{Carath\'eodory topology} towards $\m H$, which is not interesting for the large deviations. The main hurdle is that the driving function can be arbitrarily close to the target boundary point (i.e., $\infty$) where we normalize the conformal maps in the Loewner evolution.
For this reason, we switch to the radial version of SLE.

\subsection{Radial SLE}\label{sec:radial_SLE}

We now describe the radial SLE on the unit disk $\mathbb{D}$ targeting at $0$. 
The \emph{radial Loewner differential equation} driven by a continuous function $\m R_+ \to S^1 : t \mapsto \zeta_t$ is defined as follows: For all $z \in \mathbb{D}$, consider the equation
\begin{equation}
\partial_t g_t(z) = g_t(z)\frac{\zeta_t+g_t(z)}{\zeta_t-g_t(z)}, \qquad g_0 (z) = z.  \label{eq:sle_ODE}
\end{equation}
As in the chordal case, the solution $t \mapsto g_t(z)$ to~\eqref{eq:sle_ODE} is defined up to the swallowing time
\begin{align*}
\tau(z) := \sup\{ t \geq 0 \, |\, \inf_{s\in[0,t]}|g_{s}(z)-\zeta_{s}|>0\},
\end{align*}
and the growing hulls are given by $K_t = \ad{\{z \in \m D \, |\,  \tau(z) \leq t\}}$. 
The solution $g_t$ is the conformal map from $D_t := \m D \smallsetminus K_t$ onto $\m D$ satisfying $g_t (0) = 0$ and $g_t'(0) = e^t$.

\emph{Radial SLE$_\kappa$} is the curve $\g^\k$ tracing out the growing family of hulls $(K_t)_{t \ge 0}$ driven by a Brownian motion on the unit circle $S^1=\{\zeta\in\mathbb{C}: \left\lvert \zeta \right \rvert=1\}$ of variance $\kappa$, i.e.,
\begin{equation}\zeta_t := \b^\k_t = e^{iB_{\kappa t}}, \label{def:circ-bm}\end{equation}
where $B_t$ is a standard one dimensional Brownian motion.
Radial SLEs exhibit the same phase transitions as in the chordal case as $\kappa$ varies. In particular, when $\kappa \geq 8$, $\g^\k$ is almost surely space-filling and $K_t = \g^\k_{[0,t]}$.

We now argue heuristically to intuit the $\k\to \infty$ limit and the large deviation result of radial SLE proved in \cite{APW}. 
  During a short time interval $[t, t + \Delta t]$ where the Loewner flow is well-defined for a given point $z \in \m D$, we have $g_s(z) \approx g_{t}(z)$ for $s \in [t, t + \Delta t]$. Hence, writing the time-dependent vector field  $(z (\zeta_t+z)(\zeta_t -z)^{-1})_{t \ge 0}$ generating the Loewner chain as 
$(\int_{S^1} z (\zeta+z)(\zeta-z)^{-1} \delta_{\b^\k_t}(\dd \zeta))_{t \ge 0}$, where $\delta_{\b^\k_t}$ is the Dirac measure at $\b^\k_t$, we obtain that  $\Delta g_t (z)$ is approximately
\begin{align}\label{eq:heuristic_uniform}
  \int_{t}^{t+\Delta t} \int_{S^1} g_t (z) \frac{\zeta +g_t (z)}{\zeta - g_t (z)} \,\delta_{\b^\k_s}(\dd \zeta)\dd s = \int_{S^1} g_t (z) \frac{\zeta + g_t (z)}{\zeta - g_t (z)} \,\dd (\ell^\k_{t+\Delta t} (\zeta)- \ell^\k_{t}(\zeta)),   
\end{align}
where $\ell^\k_t$ is the \emph{occupation measure} (or \emph{local time}) on $S^1$ of $\b^\k$ up to time $t$. 
As $\kappa \to \infty$, the occupation measure of $\b^\k$ during $[t,t+\Delta t]$ converges to the uniform measure on $S^1$ of total mass $\Delta t$.  Hence the radial Loewner chain converges to a \emph{measure-driven Loewner chain} (also called \emph{Loewner-Kufarev chain}) with the uniform probability measure on $S^1$ as driving measure, i.e.,
$$\partial_t g_t (z) = \frac{1}{2\pi} \int_{S^1} g_t(z)\frac{\zeta +g_t(z)}{\zeta - g_t(z)} \,|\dd \zeta| = g_t(z). $$ 
This implies $g_t (z) = e^{t} z$. Similarly, \eqref{eq:heuristic_uniform} suggests that the large deviations of SLE$_\infty$ can also be obtained from the large deviations of the process of occupation measures $(\ell^\k_t)_{t\ge 0}$.

\subsection{Loewner-Kufarev equations in $\m D$}\label{subsec:LK_eq}
We now give a more detailed account of the Loewner-Kufarev chain. 
The Loewner chains described in Section~\ref{sec:chordal_Loewner} and \ref{sec:radial_SLE} are driven by a function taking values in $\m R$ or $S^1$. It is well-adapted to the study the conformal map from the unit disk or the upper half-plane to a slit domain by progressively growing the slit and we obtain an evolution family of slit domains.
This method was extended by Kufarev \cite{Kufarev} and further developed by Pommerenke \cite{Pom1965} to cover general evolution families beyond slit domains. In this case, a family of measures drive the dynamics and is described by the Loewner-Kufarev equation.

Let $\mc M (\O)$ (resp. $\mc M_1 (\O)$) be the space of Borel measures (resp. probability measures) on $\O$. We define
$$\mathcal{N}_+ = \{\rho\in\mathcal{M}(S^1 \times \m R_+): \rho(S^1\times I)= |I| \text{ for all intervals } I \subset \m R_+\}.$$
From the disintegration theorem (see e.g. \cite[Theorem 33.3]{billingsley}),
for each measure $\rho\in\mathcal{N}_+$ there exists a Borel measurable map $t\mapsto \rho_t$ from $\m R_+$ to $\mathcal{M}_1(S^1)$ such that
$\dd \rho = \rho_t(\dd \zeta)\,\dd t$.
We say $(\rho_t)_{t \ge 0}$ is a \emph{disintegration} of $\rho$; it is unique in the sense that any two disintegrations $(\rho_t)_{t \ge 0}, (\widetilde \rho_t)_{t \ge 0}$ of $\rho$ must satisfy $\rho_t = \widetilde \rho_t$ for a.e. $t \ge 0$. We denote by $(\rho_t)_{t \ge 0}$ one such disintegration of $\rho\in\mathcal{N}_+$.

For $z \in \mathbb{D}$, consider the \emph{Loewner-Kufarev ODE}
\begin{equation}\label{eq:ODE_LK}
\partial_t g_t(z) = g_t(z) \int_{S^1} \frac{ \zeta + g_t(z)}{\zeta - g_t (z) } \,\rho_t(\dd \zeta), \quad  g_0(z)=z.   
\end{equation}
Let $\tau(z)$ be the supremum of all $t$ such that the solution is well-defined up to time $t$ with $g_t(z)\in\mathbb{D}$, and $D_t:=\{z\in \mathbb{D}: \tau(z) >t\}$ is a simply connected open set containing $0$. 
The function $g_t$ is the unique conformal map of $D_t$ onto $\mathbb{D}$ such that $g_t (0) = 0$ and $g_t'(0) > 0$.
Moreover, it is straightforward to check that 
$\partial_t \log g_t'(0) =  |\rho_t| = 1$. Hence,  $g_t'(0) = e^t$, namely, $D_t$ has conformal radius $e^{-t}$ seen from $0$.
We call $(g_t)_{t \ge 0}$ the \emph{Loewner-Kufarev chain} (or simply Loewner chain) driven by $\rho \in \mc N_+$. 

It is also convenient to use its inverse $(f_t := g_t^{-1})_{t  \ge 0}$, which satisfies the \emph{Loewner PDE}:
\begin{equation} \label{eq:loewner-pde}
\partial_t f_t (z) =   -z f_t'(z)  \int_{S^1} \frac{\zeta + z}{\zeta -z}  \,\rho_t(\dd \zeta)  , \quad f_0 (z) = z.
\end{equation}

We write $\mc {L}_+$ for the set of Loewner-Kufarev chains defined for time  $\m R_+$. An element of $\mathcal{L}_+$ can be equivalently represented by $(f_t)_{t\ge 0}$ or $(g_t)_{t\ge 0}$ or the evolution family of domains $(D_t)_{t\ge 0}$ or the evolution family of hulls $(K_t = \ad{\m D \smallsetminus D_t})_{t\ge 0}$.  

\begin{remark}\label{rem:Pommerenke_existence}
In terms of the domain evolution,  according to a theorem of Pommerenke \cite[Satz 4]{Pom1965} (see also \cite[Thm.\,6.2]{Pom_uni} and \cite{rosenblum}), $\mc L_+$ consists exactly of those $(D_t)_{t \ge 0}$  such that $D_t \subset \m D$ has conformal radius  $e^{-t}$ and for all $0 \le s \le t$, $D_t \subset D_s$.
\end{remark}

We now restrict the Loewner-Kufarev chains to the time interval $[0,1]$ for the topology discussion and simplicity of notation. The results can be easily generalized to other finite intervals $[0,T]$ or to $\m R_+$ as the projective limit of chains on all finite intervals.
Define
$$\mathcal{N}_{[0,1]} = \{\rho\in\mathcal{M}_1(S^1 \times [0,1]): \rho(S^1\times I)= |I| \text{ for all intervals } I \subset [0,1]\},$$
endowed with the Prokhorov topology (the topology of weak convergence) and the corresponding set of restricted Loewner chains $\mc L_{[0,1]}$.
 Identifying an element $(f_t)_{t\in [0,1]}$ of $\mc L_{[0,1]}$ with the function $f$ defined by $f(z,t) = f_t(z)$ and endow $\mathcal{L}_{[0,1]}$ with the topology of uniform convergence of $f$ on compact sets of $\mathbb{D} \times [0,1]$. (Or equivalently, viewing $\mathcal{L}_{[0,1]}$ as the set of domain evolutions $(D_t)_{t \in [0,1]}$, this is the topology of \emph{uniform Carath\'eodory convergence}.)
The following result allows us to study the limit and large deviations with respect to the  topology of uniform Carath\'eodory convergence.
\begin{thm}[\!\! {\cite[Prop.\,6.1]{MilShe2016}},\cite{JohSolTur2012}]\label{thm:cont-bij-loewner-transf}
	The Loewner transform $ \mathcal{N}_{[0,1]} \to \mathcal{L}_{[0,1]}$:  $\rho \mapsto f$ is a homeomorphism. 
\end{thm}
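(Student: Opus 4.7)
The plan is to prove bijectivity and continuity of the forward map $\Phi:\rho \mapsto f$, and then upgrade to a homeomorphism by compactness. For the forward direction, the Loewner-Kufarev ODE~\eqref{eq:ODE_LK} has, for each $z \in \m D$, a unique maximal solution since the driving vector field is Lipschitz in $z$ on each compact subset of $\m D$ uniformly in $t$, producing $(f_t) \in \mc L_{[0,1]}$. Surjectivity of $\Phi$ is exactly Pommerenke's theorem cited in Remark~\ref{rem:Pommerenke_existence}. For injectivity, the Loewner PDE~\eqref{eq:loewner-pde} yields
\[
p_\rho(z,t) := \int_{S^1} \frac{\zeta+z}{\zeta-z}\,\rho_t(\dd \zeta) = -\frac{\partial_t f_t(z)}{z\,f'_t(z)},
\]
so $f$ determines the holomorphic function $z \mapsto p_\rho(z,t)$ for almost every $t$. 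Since $\Re p_\rho(\cdot,t)$ is the Poisson integral of the probability measure $\rho_t$, the latter is uniquely recovered from $p_\rho(\cdot,t)$, and $\rho$ is thus determined.

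Next I would establish continuity of $\Phi$. The space $\mc N_{[0,1]}$ is a closed subset of the compact metrizable space $\mc M_1(S^1 \times [0,1])$ in the Prokhorov topology, so it is itself compact and sequential continuity suffices. Suppose $\rho^n \to \rho$ weakly. Since each $f^n_t$ maps $\m D$ into $\m D$ with $f^n_t(0)=0$, the family $\{f^n\}$ is precompact in the topology of uniform convergence on compacts of $\m D \times [0,1]$: Montel gives equicontinuity in $z$, while the uniform bound $|p_{\rho^n}(z,\cdot)|\le (1+|z|)/(1-|z|)$ on compacts of $\m D$ gives equicontinuity in $t$ through the Loewner PDE. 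Extract a subsequence $f^{n_k} \to \tilde f$ uniformly on compacts; then $(f^{n_k}_s)'$ also converges uniformly on compacts to $\tilde f'_s$ by Weierstrass' theorem. Passing to the limit in the integrated Loewner PDE
\[
f^n_t(z) = z - \int_0^t z\,(f^n_s)'(z)\,p_{\rho^n}(z,s)\,\dd s,
\]
the test function $(\zeta,s) \mapsto \mathbf{1}_{[0,t]}(s)(\zeta+z)/(\zeta-z)$ is bounded and continuous $\rho$-almost everywhere on $S^1\times[0,1]$, so the Portmanteau theorem combined with the uniform convergence of $(f^{n_k}_s)'(z)$ in $s$ yields $\tilde f_t(z) = z - \int_0^t z\,\tilde f'_s(z)\,p_\rho(z,s)\,\dd s$. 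Uniqueness of solutions to the Loewner PDE with driver $\rho$ forces $\tilde f = \Phi(\rho)$, so the full sequence $f^n$ converges to $\Phi(\rho)$. Since $\mc L_{[0,1]}$ is Hausdorff, a continuous bijection from the compact space $\mc N_{[0,1]}$ onto $\mc L_{[0,1]}$ is automatically a homeomorphism.

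The main obstacle I anticipate is the joint passage to the limit of the product $(f^{n_k}_s)'(z)\,p_{\rho^{n_k}}(z,s)$ under $\dd s$-integration: the first factor converges uniformly on compacts of $\m D \times [0,1]$, whereas the second converges only in the weak sense as measures in $s$, so combining them requires the uniform $L^\infty$ bound on $p$ and a careful dominated-convergence argument rather than a direct pairing of two weak-type convergences. A related subtlety is that the injectivity argument only recovers $\rho_t$ for almost every $t$, so one must invoke the essential uniqueness of the disintegration $(\rho_t)$ recorded after the definition of $\mc N_+$ to conclude uniqueness of $\rho$ as a measure on $S^1 \times [0,1]$.
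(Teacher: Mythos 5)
The paper itself does not prove this statement: it is quoted from the literature (Miller--Sheffield, Prop.~6.1, and Johansson Viklund--Sola--Turitsyn), so there is no internal proof to compare against. Your argument is correct and is essentially a self-contained reconstruction of the standard proof in those references: injectivity via the Herglotz representation (recovering $\rho_t$ from $\Re p_\rho(\cdot,t)$ for a.e.\ $t$, then using essential uniqueness of the disintegration), precompactness of $\{f^n\}$ from the Schwarz--Pick bound on $(f^n_s)'$ together with $|p_{\rho^n}(z,s)|\le (1+|z|)/(1-|z|)$, and passage to the limit in the integrated Loewner PDE. The joint limit you flag is handled exactly as you suggest: split $(f^{n_k}_s)'(z)p_{\rho^{n_k}}(z,s)$ as $[(f^{n_k}_s)'(z)-\tilde f_s'(z)]p_{\rho^{n_k}}(z,s)$ plus $\tilde f_s'(z)p_{\rho^{n_k}}(z,s)$; the first term dies by uniform convergence and the uniform bound, and the second by testing $\rho^{n_k}\to\rho$ against the bounded function $\mathbf{1}_{[0,t]}(s)\,\tilde f_s'(z)\,(\zeta+z)/(\zeta-z)$, whose discontinuity set $S^1\times\{t\}$ is $\rho$-null because of the marginal constraint. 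Your endgame via compactness of $\mc N_{[0,1]}$ (continuous bijection onto a Hausdorff space) is a clean way to get continuity of the inverse for free, rather than arguing bicontinuity directly as the references do. The one step worth expanding is the appeal to ``uniqueness of solutions to the Loewner PDE'': a priori the limit $\tilde f$ only satisfies the integrated PDE, so you should note that each $\tilde f_t$ is univalent (Hurwitz, using $\tilde f_t'(0)=e^{-t}$) and then identify $\tilde f$ with $\Phi(\rho)$ by passing to the inverse maps $\tilde g_t=\tilde f_t^{-1}$, which satisfy the Loewner--Kufarev ODE \eqref{eq:ODE_LK} whose solutions are unique by the Lipschitz/Carath\'eodory argument you already used for well-posedness; this is standard but not literally immediate from the PDE alone.
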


By showing that the random measure $\d_{\b_t^\k} (\dd \zeta) \,\dd t \in \mc N_{[0,1]}$ converges almost surely to the uniform measure $(2\pi)^{-1}|\dd \z|\, \dd t $ on $S^1 \times [0,1]$ as $\k \to \infty$, we obtain:

\begin{thm}[\!\! {\cite[Prop.\,1.1]{APW}}]\label{thm-LLN-main}
	As $\kappa \to \infty$, the domain evolution $(D_t)_{t\in [0,1]}$ of the radial $\SLE_\k$  converges  almost surely to $( e^{-t} \m D)_{t\in [0,1]}$ for the uniform Carath\'eodory topology.
\end{thm}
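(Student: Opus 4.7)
The plan is to use Theorem~\ref{thm:cont-bij-loewner-transf}, which identifies the Loewner transform as a homeomorphism from driving measures $\mc N_{[0,1]}$ (with weak topology) to evolution families $\mc L_{[0,1]}$ (with uniform Carathéodory topology). Let $\rho^\kappa := \delta_{\beta^\kappa_t}(\dd\zeta)\,\dd t$ denote the random driving measure of radial $\SLE_\kappa$, where $\beta^\kappa_t = e^{\ii B_{\kappa t}}$, and let $\rho_\infty := (2\pi)^{-1}|\dd\zeta|\,\dd t$. It then suffices to show that $\rho^\kappa \to \rho_\infty$ weakly almost surely; invoking the homeomorphism translates this directly into uniform Carathéodory convergence of $(D^\kappa_t)_{t\in[0,1]}$ to the Loewner chain driven by $\rho_\infty$.

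To identify this limit explicitly, I would substitute the disintegration $\rho_{\infty,t} = (2\pi)^{-1}|\dd\zeta|$ into the Loewner-Kufarev ODE~\eqref{eq:ODE_LK}. Writing $\frac{\zeta+w}{\zeta-w} = -1 + \frac{2\zeta}{\zeta-w}$ and using the residue identity $\int_{S^1} \frac{\zeta}{\zeta-w}\frac{|\dd\zeta|}{2\pi} = 1$ for $|w|<1$, one finds $\int_{S^1}\frac{\zeta+w}{\zeta-w}\frac{|\dd\zeta|}{2\pi} = 1$, so \eqref{eq:ODE_LK} reduces to $\partial_t g_t(z) = g_t(z)$, giving $g_t(z) = e^t z$ and $D_t = e^{-t}\m D$, as required.

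For the weak convergence $\rho^\kappa \to \rho_\infty$, since $S^1 \times [0,1]$ is compact, it is enough to test against a countable uniformly dense family of continuous functions on $S^1 \times [0,1]$. By Stone-Weierstrass and approximation of time-indicators, the task reduces to showing that for each $\phi$ in a countable dense subset of $C(S^1)$ and each rational $r \in [0,1]$,
\[
\int_0^r \phi(\beta^\kappa_t)\,\dd t \;=\; \frac{1}{\kappa}\int_0^{\kappa r}\phi(e^{\ii B_s})\,\dd s \;\xrightarrow[\kappa \to \infty]{\text{a.s.}}\; r \int_{S^1}\phi\;\frac{|\dd\zeta|}{2\pi},
\]
after the change of variables $s = \kappa t$. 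This is the Birkhoff ergodic theorem applied to Brownian motion on the compact group $S^1$, whose unique invariant probability measure is the uniform one, with ergodicity coming from the spectral gap of $-\tfrac12\Delta_{S^1}$. Intersecting the countably many full-measure events yields a single almost sure event on which $\rho^\kappa \to \rho_\infty$ weakly. The main subtlety, handled automatically by invoking Theorem~\ref{thm:cont-bij-loewner-transf}, is to upgrade weak convergence of driving measures to uniform-in-$t$ Carathéodory convergence of the evolving domains; pointwise Carathéodory convergence at each fixed $t$ would follow from much less, but uniformity is precisely what the homeomorphism encodes.
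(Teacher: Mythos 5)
Your proposal is correct and follows essentially the same route as the paper: establish almost sure weak convergence of the driving measures $\delta_{\beta^\kappa_t}(\dd\zeta)\,\dd t$ to the uniform measure $(2\pi)^{-1}|\dd\zeta|\,\dd t$ and then transfer this to uniform Carath\'eodory convergence via the homeomorphism of Theorem~\ref{thm:cont-bij-loewner-transf}, identifying the limit chain as $g_t(z)=e^t z$. The only difference is that you fill in the weak-convergence step (left to the cited reference in the survey) with the ergodic theorem for Brownian motion on $S^1$, which is a valid way to do it.
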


\subsection{Loewner-Kufarev energy and large deviations}

From the contraction principle Theorem~\ref{thm:contraction} and Theorem~\ref{thm:cont-bij-loewner-transf}, the large deviation principle of radial $\SLE_\k$ as $\k \to \infty$ boils down to the large deviation principle of $\d_{\b_t^\k} (\dd \zeta) \, \dd t \in \mc N_{[0,1]}$ with respect to the Prokhorov topology. For this, we approximate $\d_{\b_t^\k} (\dd \zeta) \,\dd t$ by 
\[ \rho^\k_n : =  \sum_{i = 0}^{2^n -1} \mu_{n,i}^\k (\dd \z) \1_{t \in [i/2^n, (i+1)/2^n)} \,\dd t, \]
where $\mu_{n,i}^\k \in \mc M_1 (S^1)$ is the time average of the measure $\delta_{\b^\k_t}$ on the interval $[i/2^n, (i+1)/2^n)$. In terms of the occupation measures, 
$$\mu_{n,i}^\k = 2^{n}( \ell^\k_{(i+1)/2^n} - \ell^\k_{i/2^n}).$$

We start with the large deviation principle for $\mu_{n,i}^\k$ as $\k \to \infty$. Let $\ad \ell^\k_t := t^{-1} \ell^\k_t$ be the average occupation measure of $\b^\k$ up to time $t$.
From the Markov property of Brownian motion, we have $\mu^\k_{n,i} = \ad \ell^\k_{2^{-n}}$ in distribution up to a rotation (by $\b^\k_{i/2^n}$). 
The following result is a special case of a theorem of Donsker and Varadhan. 

	Define the functional $I^{DV}: \mathcal{M}(S^1) \to [0,\infty]$ by
	\begin{equation} \label{eq:I_DV}
	I^{DV}(\mu)	= \frac12 \int_{S^1} |v' (\zeta)|^2 \, |\dd \zeta|,	    
	\end{equation}
		if $\mu = v^2(\zeta) |\dd \zeta|$ for some function $v \in W^{1,2}(S^1)$ and $\infty$ otherwise.

\begin{remark} Note that $I^{DV}$ is rotation-invariant and
  $I^{DV} (c \mu_i) = c I^{DV} (\mu_i)$ for $c > 0$.
\end{remark}

\begin{thm}[\!\! {\cite[Thm.\,3, Thm.\,5]{DonVar1975}}]\label{thm-DV}

Fix $t > 0$. The average occupation measure $\{\ad \ell^\k_t\}_{\k > 0}$ admits a large deviation principle as $\kappa \to \infty$ with good rate function $t I^{DV}$. 	Moreover, $I^{DV}$ is  convex.
	\end{thm}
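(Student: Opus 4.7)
The plan is to reduce to the classical long-time Donsker-Varadhan LDP for ergodic Markov processes and then identify the abstract variational rate function with the explicit Dirichlet functional $I^{DV}$. First, I would use the time rescaling $u = \k s$: since $\b^\k_s = e^{\ii B_{\k s}}$, the occupation measures satisfy $\ell^\k_t(A) = \k^{-1} \ell^1_{\k t}(A)$ and therefore $\ad\ell^\k_t = \ad\ell^1_{\k t}$. Setting $T := \k t$, the desired LDP with speed $\k^{-1}$ and rate $t\, I^{DV}$ becomes exactly the standard long-time LDP (speed $T^{-1}$, rate $I^{DV}$) for the average occupation measure of Brownian motion on $S^1$. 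This is the content of \cite[Thm.\,3]{DonVar1975} applied to the ergodic Feller process on the compact manifold $S^1$ (with invariant measure $\tfrac{1}{2\pi}|\dd\z|$), which produces an abstract good rate function on $(\mc M_1(S^1),\text{weak})$:
\[ J(\mu) \;=\; \sup_{u>0\,\text{smooth}} \int_{S^1} \Big(-\frac{Lu}{u}\Big)\,d\mu, \qquad L = \tfrac{1}{2}\partial_\t^2. \]

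The substantive step is then to show $J = I^{DV}$. If $\mu = v^2 \,|\dd\z|$ with $v \in W^{1,2}(S^1)$, setting $\phi := \log u$ and performing one integration by parts on $S^1$ (no boundary terms since $S^1$ is closed) yields
\[ -\!\int \frac{Lu}{u}\,v^2\,|\dd\z| \;=\; \int \phi'\, v v'\,|\dd\z|\; -\; \tfrac{1}{2}\!\int (\phi')^2\,v^2\,|\dd\z|. \]
Cauchy-Schwarz on the cross term gives the bound $\le \tfrac{1}{2} \int (v')^2\,|\dd\z| = I^{DV}(\mu)$, and equality is reached in the limit $u = v$ (with $v$ replaced by $v + \vare$ and mollified to ensure strict positivity and smoothness, and $\vare \downarrow 0$ by monotone convergence). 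This identifies $J(\mu) = I^{DV}(\mu)$ when $v \in W^{1,2}(S^1)$ and $J(\mu) = +\infty$ otherwise; since $J$ is automatically a good rate function, so is $I^{DV}$.

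For convexity of $I^{DV}$, write $\mu_i = v_i^2 |\dd\z|$ for $i = 0, 1$ and set $v_\a := \sqrt{\a v_1^2 + (1-\a) v_0^2}$, so that $\mu_\a := \a \mu_1 + (1-\a)\mu_0 = v_\a^2 |\dd\z|$. Pointwise Cauchy-Schwarz
\[ (v_\a')^2 \;=\; \frac{(\a v_1 v_1' + (1-\a) v_0 v_0')^2}{\a v_1^2 + (1-\a) v_0^2} \;\le\; \a (v_1')^2 + (1-\a)(v_0')^2 \]
integrates to $I^{DV}(\mu_\a) \le \a I^{DV}(\mu_1) + (1-\a) I^{DV}(\mu_0)$. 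The main obstacle, assuming the abstract DV theorem as a black box, is the upper bound $J \ge I^{DV}$: one cannot plug $u = v$ directly into the variational formula when $v$ vanishes or is merely Sobolev, so the crux of the argument is a careful approximation by strictly positive smooth test functions together with monotone/dominated convergence to saturate the supremum.
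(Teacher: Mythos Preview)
Your proposal is correct and matches the paper's treatment, which does not prove this result but simply cites Donsker--Varadhan: the remark following the statement notes that \cite[Thm.\,3]{DonVar1975} supplies the abstract LDP with the variational rate function and \cite[Thm.\,5]{DonVar1975} (or alternatively \cite[Thm.\,3.5]{APW} in the specific $S^1$ case) identifies it with the explicit Dirichlet form $I^{DV}$. Your time-rescaling $\ad\ell^\k_t = \ad\ell^1_{\k t}$ is exactly the bridge needed to recast the $\k\to\infty$ statement as the classical $T\to\infty$ occupation-time LDP, and your completing-the-square computation is the standard route to $J=I^{DV}$.

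One small simplification: convexity of $I^{DV}$ is immediate from the variational representation $J(\mu)=\sup_u \int (-Lu/u)\,\dd\mu$, which is a supremum of linear functionals in $\mu$; your direct pointwise Cauchy--Schwarz argument on $v_\a$ is correct but unnecessary once $J=I^{DV}$ is established.
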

	\begin{remark}
	The expression $I^{DV}$ in \cite[Thm.\,3]{DonVar1975} is in a different form but is shown in \cite[Thm.\,5]{DonVar1975} to equal to \eqref{eq:I_DV} for general Markov processes. See also \cite[Thm.\,3.5]{APW} for an alternative elementary proof of the identity which is adapted to the specific case of Brownian occupation measure on $S^1$.
	\end{remark}
 The $\k \to \infty$ large deviation principle is understood in the sense of Definition~\ref{df:LDP} with $\vare = 1/\k$, i.e.,
 for any open set $\open$ and closed set $\closed \subset \mathcal M_1(S^1)$,
	\begin{align*}
		\liminf_{\kappa \to \infty} \frac{1}{\kappa} \log \mathbb{P} [ \ad \ell^\k_t \in \open ] \geq -\inf_{\mu \in \open} t I^{DV}(\mu); \\
	\limsup_{\kappa \to \infty} \frac{1}{\kappa} \log \mathbb{P} [ \ad \ell^\k_t \in \closed ] \leq -\inf_{\mu \in \closed} t I^{DV}(\mu).
\end{align*}

Theorem~\ref{thm-DV} and the Markov property of Brownian motion imply that the $2^n$-tuple $(\mu^\k_{n,0}, \ldots. \mu^\k_{n,2^n -1})$ satisfies the large deviation principle with rate function as $\k \to \infty$
$$I_n^{DV} (\mu_0, \ldots, \mu_{2^n -1}) := 2^{-n} \sum_{i = 0}^{2^n-1}  I^{DV} (\mu_i). $$

Taking the $n \to \infty$ limit, it leads to the following definition.
 \begin{df}
 We define the \emph{Loewner-Kufarev energy} on $\mathcal{L}_{[0,1]}$ (or equivalently on $\mathcal{N}_{[0,1]}$)
 $$S_{[0,1]} ((D_t)_{t \in [0,1]}) : = 
 S_{[0,1]} (\rho) :=\int_0^1 I^{DV}(\rho_t)\, dt$$ 
 	where $\rho$ is the driving measure generating $(D_t)_{t \in [0,1]}$.    
 \end{df}
 
\begin{thm}[\!\!{\cite[Thm.\,1.2]{APW}}]
	\label{thm:ldp-circ-bm}
	The measure $\d_{\b_t^\k} (\dd \zeta) \,\dd t\in\mathcal{N}_{[0,1]}$ satisfies the large deviation principle with good rate function $S_{[0,1]}$ as $\k \to \infty$.
\end{thm}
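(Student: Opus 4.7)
The plan is to establish an LDP at each dyadic scale via Donsker-Varadhan and the Markov property of $B$, and then pass to the limit using exponential approximation. Let $\rho_n^\k$ be the piecewise-constant-in-time measure built in the excerpt from the occupation of $\b^\k$ on the $2^n$ dyadic sub-intervals of $[0,1]$. Write $\mu_{n,i}^\k = R(\b^\k_{i/2^n})\tilde\mu_{n,i}^\k$, where $R(\zeta_0)$ is the rotation by $\zeta_0$ and $\tilde\mu_{n,i}^\k$ is the time-averaged occupation measure of the $i$th Brownian increment $(e^{\ii(B_{\k(i/2^n+s)} - B_{\k i/2^n})})_{s \in [0,2^{-n}]}$. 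By independence of Brownian increments, $(\tilde\mu_{n,i}^\k)_{i=0,\ldots,2^n-1}$ are i.i.d., each distributed as $\ad \ell^\k_{2^{-n}}$. Theorem~\ref{thm-DV} combined with this independence yields the joint LDP for $(\tilde\mu_{n,i}^\k)_i$ with good rate $I_n^{DV}(\mu_0,\ldots,\mu_{2^n-1}) := 2^{-n}\sum_i I^{DV}(\mu_i)$. Conditioning on the starting points $(\b^\k_{i/2^n})$ (which are independent of the $\tilde\mu_{n,i}^\k$) and using the rotation invariance of $I^{DV}$ transfers this to the same rate for $(\mu_{n,i}^\k)_i$. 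The contraction principle then yields an LDP for $\rho_n^\k \in \mc N_{[0,1]}$ with good rate function
\[ S_n(\rho) := 2^{-n}\sum_{i=0}^{2^n-1} I^{DV}(\ad \rho_{n,i}), \qquad \ad \rho_{n,i} := 2^n \int_{i/2^n}^{(i+1)/2^n} \rho_t \, \dd t. \]

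Next, I would identify $\sup_n S_n(\rho) = S_{[0,1]}(\rho)$. The convexity of $I^{DV}$ (Theorem~\ref{thm-DV}) and Jensen's inequality applied on each dyadic interval show that $S_n(\rho)$ is monotone nondecreasing in $n$ and bounded above by $\int_0^1 I^{DV}(\rho_t)\,\dd t$. For the reverse direction, the Lebesgue differentiation theorem applied to the measure-valued integrable map $t\mapsto\rho_t$ yields $\ad \rho_{n,\intpart{2^nt}} \to \rho_t$ in the Prokhorov topology for almost every $t$; the lower semicontinuity of $I^{DV}$ together with Fatou's lemma then gives $\liminf_n S_n(\rho) \ge \int_0^1 I^{DV}(\rho_t)\,\dd t$, proving equality.

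Finally, I would check that $\rho_n^\k$ is an exponentially good approximation of $\d_{\b^\k_t}(\dd\zeta)\,\dd t$ in the Prokhorov metric on $\mc N_{[0,1]}$. Because both measures have the same time marginal $\dd t$ on $[0,1]$ and differ only by averaging the $\zeta$-disintegration over time windows of length $2^{-n}$ (with no transport in $\zeta$), the Prokhorov distance between the two is bounded deterministically by a constant multiple of $2^{-n}$, uniformly in $\k$. Exponential equivalence is therefore automatic, and the standard approximation lemma for LDPs \cite[Thm.\,4.2.16]{DZ10} combined with the LDP for the $\rho_n^\k$ delivers the claimed LDP with good rate function $\sup_n S_n = S_{[0,1]}$. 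The principal obstacle is the middle step: the identification $\sup_n S_n = S_{[0,1]}$ requires combining Banach-space-valued Lebesgue differentiation of $t\mapsto\rho_t$ with the lower semicontinuity of $I^{DV}$, although the convexity of $I^{DV}$ keeps both directions tractable.
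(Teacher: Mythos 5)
Your overall architecture is sound and shares the paper's two key ingredients: the Donsker--Varadhan LDP for the dyadic block occupation measures with additive rate $I_n^{DV}$, and the identification $\sup_n S_n = S_{[0,1]}$ (this is exactly Lemma 3.8 of \cite{APW}; your proof sketch of it -- Jensen/convexity for monotonicity and the upper bound, Lebesgue differentiation plus lower semicontinuity and Fatou for the lower bound -- is the standard argument). Where you genuinely diverge is the passage from the finite-dimensional LDPs to the LDP on $\mc N_{[0,1]}$: the paper realizes $\mc N_{[0,1]}$ as the projective limit of $\mc M_1(S^1)^{2^n}$ under the averaging projections and invokes Dawson--G\"artner (Theorem~\ref{thm:Dawson-Gartner}), whereas you embed the marginals back into $\mc N_{[0,1]}$ as piecewise-constant-in-time measures $\rho_n^\k$, observe the deterministic Prokhorov bound $2^{-n}$ (which is correct: coupling by resampling the time coordinate uniformly within each dyadic window moves mass only in $t$ by at most $2^{-n}$), and apply the exponentially-good-approximation theorem \cite[Thm.\,4.2.16]{DZ10}. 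Both routes work; Dawson--G\"artner hands you the full LDP and the rate $\sup_n I_n^{DV}\circ\pi_n$ directly, while your route trades the projective-limit identification for some bookkeeping at the approximation step.

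Two points in your write-up need repair. First, the parenthetical claim that the starting points $\b^\k_{i/2^n}$ are independent of the $\tilde\mu^\k_{n,i}$ is false: $\b^\k_{i/2^n}$ is a function of the Brownian increments on blocks $0,\dots,i-1$, hence strongly correlated with $\tilde\mu^\k_{n,j}$ for $j<i$. The correct justification is sequential conditioning via the Markov property: given the past, the conditional law of $\mu^\k_{n,i}$ is a rotation of the law of $\ad\ell^\k_{2^{-n}}$, and since the Donsker--Varadhan rate is rotation invariant (uniformly in the rotation), an inductive argument yields the joint LDP with rate $I_n^{DV}$ -- which is precisely the step the paper asserts from Theorem~\ref{thm-DV} and the Markov property. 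Second, the final step is stated too glibly: the contraction principle gives the law of $\rho_n^\k$ the rate function that equals $S_n$ on measures piecewise constant on the $n$-th dyadic grid and $+\infty$ elsewhere (not $S_n$ itself, since the embedding is not surjective), and \cite[Thm.\,4.2.16]{DZ10} then yields only a weak LDP with rate $I(\rho)=\sup_{\d>0}\liminf_n\inf_{B_\d(\rho)}I_n$. You must still (i) identify this expression with $S_{[0,1]}$, which follows from the monotonicity $S_m\le S_n$ for $m\le n$, the lower semicontinuity of each $S_m$, and the fact that the level-$n$ piecewise-constant average of $\rho$ lies within $2^{-n}$ of $\rho$ and has $I_n$-value $S_n(\rho)$; and (ii) upgrade the weak LDP to the full LDP with a good rate function, which is immediate once you note that $\mc N_{[0,1]}$ is compact (so every closed set is compact and goodness is automatic), or alternatively by verifying the hypothesis of part (b) of that theorem. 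These are fixable gaps, not flaws in the strategy.
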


\begin{proof}[Proof sketch]
We show that $\mc N_{[0,1]}$ is homeomorphic to the projective limit (Definition~\ref{df:projective}) of the projective system consisting of $\{\mc Y_n : = \mc M_1(S^1)^{2^n}\}_{n \ge 1}$ and 
$$\pi_{n,n+1} (\mu_0, \ldots, \mu_{2^{n+1}-1}) = \left(\frac{\mu_0 + \mu_1}{2}, \ldots, \frac{\mu_{2^{n+1}-2} +\mu_{2^{n+1}-1}}{2}\right)$$
(other projections $\pi_{ij}$ are obtained by composing consecutive projections). The canonical projection $\pi_n$ is given by $\mc N_{[0,1]} \to \mc Y_n :  \rho \mapsto (\mu_{n,i})_{i = 0, \cdots, 2^n-1}$, where 
$$ \quad \mu_{n,i} = 2^n \int_{i 2^{-n}}^{(i+1)2^{-n}} \rho_t \, \dd t \in \mc M_1 (S^1), \quad i = 0, \cdots, 2^n-1.$$
(Note that  $\pi_n (\d_{\b_t^\k} (\dd \zeta) \,\dd t) = (\mu_{n,i}^\k)_{i = 0, \cdots, 2^n-1}$.) See \cite[Lem.\,3.1]{APW}. 
We then show that 
$$\lim_{n \to \infty} I_n^{DV}(\pi_n (\rho)) = \sup_{n \ge 1} I_n^{DV}(\pi_n (\rho))  =  S_{[0,1]}(\rho),$$
see \cite[Lem.\,3.8]{APW}, and conclude with Dawson-G\"artner's Theorem~\ref{thm:Dawson-Gartner}.
\end{proof}

\begin{remark}
We note that if $\rho \in \mc N_{[0,1]}$ has finite Loewner-Kufarev energy, then $\rho_t$ is absolutely continuous with respect to the Lebesgue measure for a.e. $t$ with density being the square of a function in $W^{1,2} (S^1)$. In particular, $\rho_t$ is much more regular than a Dirac measure. We see once more the regularizing phenomenon from the large deviation consideration.
\end{remark}

From the contraction principle Theorem~\ref{thm:contraction} and Theorem~\ref{thm:cont-bij-loewner-transf}, we obtain immediately:

\begin{cor}[\!\!{\cite[Cor.\,1.3]{APW}}]
	\label{cor:ldp-sle}
	The family of $\operatorname{SLE}_{\kappa}$ on the time interval $[0,1]$ satisfies the $\kappa \to \infty$ large deviation principle with the good rate function $S_{[0,1]}.$
\end{cor}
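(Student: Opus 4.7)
The plan is to apply the contraction principle (Theorem~\ref{thm:contraction}), combining the large deviation principle of Theorem~\ref{thm:ldp-circ-bm} for the driving measures with the continuity of the Loewner transform established in Theorem~\ref{thm:cont-bij-loewner-transf}. With these two theorems in hand, the corollary is a short functorial consequence.

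First I would identify radial $\SLE_\k$ on $[0,1]$, viewed as a random element of $\mc L_{[0,1]}$, with the image under the Loewner transform $\Phi : \mc N_{[0,1]} \to \mc L_{[0,1]}$ of the random measure $\rho^\k := \delta_{\b^\k_t}(\dd \zeta)\,\dd t \in \mc N_{[0,1]}$. This matching is immediate on comparing the radial Loewner ODE~\eqref{eq:sle_ODE} with the Loewner-Kufarev ODE~\eqref{eq:ODE_LK}: plugging the disintegration $\rho_t = \delta_{\b^\k_t}$ into the latter recovers the former verbatim. Consequently, the law of the radial $\SLE_\k$ evolution on $[0,1]$ is exactly the pushforward $\Phi_*\m P^\k$ of the law $\m P^\k$ of $\rho^\k$ on $\mc N_{[0,1]}$.

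By Theorem~\ref{thm:ldp-circ-bm}, the family $\{\rho^\k\}_{\k>0}$ satisfies a large deviation principle in $\mc N_{[0,1]}$ with good rate function $S_{[0,1]}$, and by Theorem~\ref{thm:cont-bij-loewner-transf} the map $\Phi$ is a homeomorphism, in particular continuous. The contraction principle then yields a large deviation principle for $\{\Phi_*\m P^\k\}_{\k>0}$ on $\mc L_{[0,1]}$ with good rate function
\[
I'(f) \;=\; \inf\bigl\{ S_{[0,1]}(\rho) \,:\, \rho \in \mc N_{[0,1]},\; \Phi(\rho) = f \bigr\}, \qquad f \in \mc L_{[0,1]}.
\]
The crucial simplification is that $\Phi$ is a \emph{bijection}, so each $f$ has a unique preimage $\rho_f$, and the infimum collapses to $S_{[0,1]}(\rho_f)$. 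Under the identification of $\mc L_{[0,1]}$ with $\mc N_{[0,1]}$ via $\Phi$, this is precisely the functional $S_{[0,1]}(f)$ on $\mc L_{[0,1]}$ appearing in the corollary.

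Given the groundwork already in place, there is essentially no remaining obstacle: all of the real difficulty has been absorbed into Theorem~\ref{thm:cont-bij-loewner-transf} (bicontinuity of the Loewner transform, due to Miller-Sheffield and Johansson-Solynin-Turkiainen) and Theorem~\ref{thm:ldp-circ-bm} (which rests on the Donsker-Varadhan rate function for circular Brownian occupation measure and a Dawson-G\"artner projective-limit argument). The only verification truly belonging to the corollary is that $\rho^\k$ almost surely defines an element of $\mc N_{[0,1]}$ and that $\Phi(\rho^\k)$ agrees in law with the radial $\SLE_\k$ domain evolution; both are routine consequences of the continuity of the sample paths of $\b^\k$ and of the identification of the Loewner-Kufarev ODE with a Dirac disintegration as the ordinary Loewner ODE.
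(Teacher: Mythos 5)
Your proposal is correct and follows exactly the paper's route: the corollary is obtained by applying the contraction principle (Theorem~\ref{thm:contraction}) to the large deviation principle of Theorem~\ref{thm:ldp-circ-bm} via the homeomorphism of Theorem~\ref{thm:cont-bij-loewner-transf}, with the bijectivity of the Loewner transform collapsing the infimum so that the rate function is $S_{[0,1]}$ viewed on $\mc L_{[0,1]}$. No discrepancies with the paper's argument.
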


\section{Foliations by Weil-Petersson quasicircles} \label{sec:foliation}

SLE processes enjoy a remarkable duality \cite{Dub_duality,Zhan_duality,IG1}
coupling $\operatorname{SLE}_{\kappa}$ to the outer boundary of $\operatorname{SLE}_{16/\kappa}$ for $\kappa < 4$. 
It suggests that the rate functions of $\operatorname{SLE}_{0+}$ (Loewner energy) and  $\operatorname{SLE}_\infty$ (Loewner-Kufarev energy) are also dual to each other. Let us first remark that when $S_{[0,1]} (\rho) = 0$, the generated family $(D_t)_{t \in [0,1]}$ consists of concentric disks centered at $0$. In particular $(\partial D_t)_{t \in [0,1]}$ are circles and thus have zero Loewner energy.  This trivial example supports the guess that some form of energy duality holds.

Viklund and the author  investigated in \cite{VW2} the duality between these two energies, and more generally, the interplay with the Dirichlet energy of a so-called winding function. We now describe briefly those results. While our approach is originally inspired by SLE theory, they are of independent interest from the analysis perspective and the proofs do not involve probability theory.

\subsection{Whole-plane Loewner evolution}\label{subsec:whole_plane_Loewner_chain}

To describe our results in the most generality, we consider the Loewner-Kufarev energy for Loewner evolutions defined for $t \in \m R$, namely the whole-plane Loewner chain.
We define in this case the space of driving measures to be
$$\mc N := \{\rho\in \mc M (S^1 \times \m R): \rho(S^1\times I) = |I| \text{ for all intervals } I \}.$$  
The whole-plane Loewner chain driven by $\rho \in \mc N$, or equivalently by its measurable family of disintegration measures $\m R \to \mc M_1 (S^1): \, t \mapsto \rho_t$, is the unique family of conformal maps $(f_t : \m D \to D_t)_{t \in \m R}$ such that 
\begin{enumerate}[label= (\roman*)]
    \item For all $s < t$, $0 \in D_t \subset D_s$. \label{it:whole_monotone}
    \item For all $t \in \m R$, $f_t (0) = 0$ and $f_t'(0) = e^{-t}$ (namely,  the conformal radius of $D_t$ is $e^{-t}$). \label{it:whole_radius}
    \item For all $s \in \m R$, $( f_t^{(s)} : = f_{s}^{-1} \circ f_t :  \m D \to D_{t}^{(s)})_{t \ge s}$ is the Loewner chain driven by $(\rho_t)_{t\ge s}$, which satisfies \eqref{eq:loewner-pde} with the initial condition $f_s^{(s)} (z) = z$. 
    \label{it:whole_Loewner}
\end{enumerate}
See, e.g., \cite[Sec.\,7.1]{VW2}
for a proof of the existence and uniqueness of such family.

\begin{remark}
   If $\rho_t$ is the uniform probability measure for all $t \le 0$, then $f_t (z) = e^{-t} z$ for $t \le 0$ and $(f_t)_{t\ge 0}$ is the Loewner chain driven by $(\rho_t)_{t \ge 0} \in \mc N_+$. 
   Indeed, we check directly that $(f_t)_{t\in \m R}$ satisfy the three conditions above. A Loewner chain in $\m D$ considered in Section~\ref{subsec:LK_eq} can therefore be seen as a special case of whole-plane Loewner chain.
\end{remark}

 Note that the condition \ref{it:whole_Loewner} is equivalent to for all $t \in \m R$ and $z  \in \m D$,
\begin{equation} \label{eq:whole-plane-radial}
 \partial_t f_t (z) = -z f'_t (z) \int_{S^1} \frac{\zeta + z}{\zeta -z}  \,\rho_t(\dd \zeta).
\end{equation}
As in \eqref{eq:ODE_LK}, the family uniformizing maps $(g_t : = f_t^{-1})_{t\in \m R}$ satisfies the Loewner-Kufarev ODE: For $z \in D_{t_0}$ and $t \in (-\infty, t_0)$, we have
\begin{equation}\label{eq:whole_ODE}
    \partial_t g_t (z) = g_t(z) \int_{S^1} \frac{\zeta + g_t(z)}{\zeta - g_t(z)}  \,\rho_t(\dd \zeta).
\end{equation}

We remark that for $\rho \in \mc N$, then  $\cup_{t\in \m R} D_t = \m C$.   Indeed, $D_{t}$ has conformal radius $e^{-t}$, therefore contains the centered ball of radius $e^{-t}/4$ by Koebe's $1/4$ theorem. 
Since $\{ t\in \m R :  z \in D_t\} \neq \emptyset$, we define for all $z \in \m C$, 
$$\tau(z) : =  \sup\{ t\in \m R :  z \in D_t\} \in (-\infty, \infty].$$

We say that $\rho \in \mc N$ generates a \emph{foliation} $(\g_t : = \partial D_t)_{t\in \m R}$ of $\m C \smallsetminus \{0\}$ if
\begin{enumerate}
    \item For all $t \in \m R$, $\g_t$ is a chord-arc Jordan curve.
    \item It is possible to parametrize each curve $\g_t$ by $S^1$ so that the mapping $t \mapsto \g_t$ is continuous in the supremum norm.
    \item For all $z \in \m C \smallsetminus \{0\}$, $\tau(z) <\infty$.
\end{enumerate}
Each $\g_t$ of a foliation is called a \emph{leaf}. 
\begin{df}\label{df:rho_infty}
 We define similarly the \emph{Loewner-Kufarev energy} on $\mathcal{N}$ by
 $$
 S (\rho) :=\int_{-\infty}^\infty I^{DV}(\rho_t)\, dt,$$ 
 	where $I^{DV}$ is given by \eqref{eq:I_DV}. 
 \end{df}

\subsection{Energy duality}
The following result gives a qualitative relation between finite Loewner-Kufarev energy measures and finite Loewner energy curves (i.e., Weil-Petersson quasicircles  by Theorem~\ref{thm:intro_equiv_energy_WP}).

\begin{prop}[Weil-Petersson foliation {\cite[Thm.\,1.1]{VW2}}]\label{prop:WP-leaf}
  Suppose $\rho \in \mc N$ has finite Loewner-Kufarev energy. Then $\rho$ generates a foliation $(\g_t  = \partial D_t)_{t\in \m R}$ of $\m C \smallsetminus \{0\}$ in which all leaves are Weil-Petersson quasicircles.
\end{prop}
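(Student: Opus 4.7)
The plan divides into two parts: (i) showing each leaf $\g_s = \partial D_s$ is a Weil-Petersson quasicircle, and (ii) verifying the remaining foliation properties. For (i), I will apply Theorem~\ref{thm:intro_equiv_energy_WP} to the bounded Jordan curve $\g_s$: it suffices to bound the two Dirichlet energies $\mc D_\m D(\log|f_s'|)$ and $\mc D_{\m D^*}(\log|h_s'|)$, where $f_s\colon\m D\to D_s$ and $h_s\colon\m D^*\to\m C\smallsetminus\ad{D_s}$ are the normalized uniformizing maps (the additional $\pm 4\log$ terms are harmless since $\log|f_s'(0)|=-s$ and $\log|h_s'(\infty)|$ is a fixed conformal invariant). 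The strategy is to control the interior Dirichlet energy by the forward-time Loewner-Kufarev energy $\int_s^\infty I^{DV}(\rho_t)\,dt$ and the exterior one by the backward-time energy $\int_{-\infty}^s I^{DV}(\rho_t)\,dt$; finiteness of $S(\rho)$ then gives both.

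For the interior bound, the restriction of the whole-plane chain to $[s,\infty)$, pulled back through $f_s$, is by condition~\ref{it:whole_Loewner} a Loewner-Kufarev chain in $\m D$ starting from $\m D$ itself, driven by the pushforward of $(\rho_t)_{t\ge s}$ under $g_s$. Direct differentiation of \eqref{eq:whole-plane-radial} yields
\[
\partial_t \log f_t'(z) = -P_t(z) - z\,\tfrac{f_t''(z)}{f_t'(z)}\,P_t(z) - z\,P_t'(z),\qquad P_t(z) := \int_{S^1}\tfrac{\zeta+z}{\zeta-z}\,\rho_t(d\zeta),
\]
so $P_t$ is essentially the complex Poisson extension of $\rho_t$. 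Taking real parts and integrating $|\nabla\log|f_t'||^2$ on $\m D$, differentiating in $t$ should produce a Dirichlet-energy identity whose integrand is controlled by $I^{DV}(\rho_t)$, with any boundary flux at $t=\infty$ suppressed by the asymptotic roundness of $D_t$ (the normalization $f_t'(0)=e^{-t}$ together with Koebe distortion forces $\log|f_t'(z)|+t\to 0$ locally uniformly). Integrating from $s$ to $\infty$ then yields the bound $\mc D_\m D(\log|f_s'|)\lesssim \int_s^\infty I^{DV}(\rho_t)\,dt$. The exterior bound is obtained by the symmetric argument applied after the inversion $z\mapsto 1/z$, which converts the whole-plane chain on $(-\infty,s]$ into a Loewner-Kufarev-type chain in $\m D^*\cup\{\infty\}$ shrinking to $\infty$; rotation and reflection invariance of $I^{DV}$ ensures that the reversed driving measure again has total energy $\int_{-\infty}^s I^{DV}(\rho_t)\,dt$.

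Once $\g_s$ is known to be Weil-Petersson, Theorem~\ref{thm:asymptotically_smooth} supplies the chord-arc property. The continuity $t\mapsto\g_t$ in the supremum norm follows from the Carath\'eodory continuity of $t\mapsto D_t$ given by Theorem~\ref{thm:cont-bij-loewner-transf}, upgraded to uniform boundary convergence by the quasiconformal regularity coming from the energy bound. Finally, $\tau(z)<\infty$ for $z\neq 0$ is automatic: since $D_t$ has conformal radius $e^{-t}$, Koebe's $1/4$-theorem gives $D_t\subset\{|w|\le 4e^{-t}\}$, so any fixed $z\neq 0$ is swallowed in finite time. The main obstacle will be the energy identity linking $\mc D_\m D(\log|f_s'|)$ to $\int_s^\infty I^{DV}(\rho_t)\,dt$: the nonlinear term $z f_t''/f_t'\cdot P_t$ must be handled carefully (likely via an integration by parts exploiting that $P_t$ is holomorphic in $z$), and the vanishing of the $t=\infty$ boundary flux has to be justified quantitatively. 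The exterior half then requires checking that inversion combined with time reversal preserves the Loewner-Kufarev energy, which is a clean but essential computation.
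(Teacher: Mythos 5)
Your proposed decomposition has a fatal directional error at its core. The leaf $\gamma_s=\partial D_s$ is determined by the past $(\rho_t)_{t\le s}$ alone, and the future can always foliate the inside of $D_s$ at zero cost: if $(\rho_t)_{t\ge s}$ is replaced by the uniform measure, then by condition (iii) in Section~\ref{subsec:whole_plane_Loewner_chain} one has $f_t=f_s(e^{-(t-s)}\,\cdot\,)$, i.e.\ the future leaves are the equipotentials of $D_s$, while $\int_s^\infty I^{DV}(\rho_t)\,\mathrm{d}t=0$. Since by Theorem~\ref{thm:main-jordan-curve} any non-circular Weil--Petersson quasicircle separating $0$ from $\infty$ occurs as a leaf $\gamma_s$ of some finite-energy $\rho$, this modification produces a configuration with $\mathcal{D}_{\mathbb{D}}(\log|f_s'|)>0$ but vanishing forward energy; hence your claimed bound $\mathcal{D}_{\mathbb{D}}(\log|f_s'|)\lesssim\int_s^\infty I^{DV}(\rho_t)\,\mathrm{d}t$ is false, and the same example shows $\partial_t\mathcal{D}_{\mathbb{D}}(\log|f_t'|)$ cannot be controlled by $I^{DV}(\rho_t)$, so no Gr\"onwall/flux argument in the forward time direction can close. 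The correct quantitative statement controls \emph{both} $\mathcal{D}_{\mathbb{D}}(\log|f_s'|)$ and $\mathcal{D}_{\mathbb{D}^*}(\log|h_s'|)$ by the \emph{past} energy $\int_{-\infty}^s I^{DV}(\rho_t)\,\mathrm{d}t$ (equivalently $I^L(\gamma_s)\le 16\,S(\rho)$, cf.\ Theorem~\ref{thm:main-jordan-curve} and the energy bound corollary), and establishing this is precisely the hard analytic content of \cite{VW2}; it is not obtained by the short differentiation-in-$t$ identity you sketch.

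Two further steps are also defective. Your claim that $\tau(z)<\infty$ is ``automatic'' rests on ``Koebe's $1/4$-theorem gives $D_t\subset\{|w|\le 4e^{-t}\}$'', which reverses the inclusion: Koebe gives $B(0,e^{-t}/4)\subset D_t$, and conformal radius provides no outer bound at all — $D_t$ may be a thin tentacle containing a fixed $z\neq 0$ for all times, so for general $\rho\in\mathcal{N}$ one can indeed have $\tau(z)=\infty$. Thus condition (3) of the foliation definition genuinely requires the finite-energy hypothesis, and the same false outer bound underlies your ``asymptotic roundness''/vanishing-flux claim as $t\to\infty$. Similarly, continuity of $t\mapsto\gamma_t$ in the supremum norm does not follow from uniform Carath\'eodory convergence (Theorem~\ref{thm:cont-bij-loewner-transf}) without uniform boundary regularity estimates, which you do not supply. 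Finally, note that this survey gives no proof of the proposition (it quotes \cite[Thm.\,1.1]{VW2}); measured against that source, your outline is not a variant of the argument but a plan whose key inequality, as formulated, is untrue.
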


Every $\rho$ with $S(\rho) < \infty$ thus generates a family of Weil-Petersson quasicircles that continuously sweep out the Riemann sphere,
starting from $\infty$ and moving towards $0$, as $t$ goes from $-\infty$ to $\infty$.  Therefore, we can view $S(\rho)$ as the energy of the generated foliation. Conversely, Theorem~\ref{thm:main-jordan-curve} shows that any Weil-Petersson quasicircles can be generated by a measure with finite Loewner-Kufarev energy (if they separate $0$ from $\infty$).

\begin{remark}
This class of measures is a rare case for which a complete description of the geometry of the generated non-smooth interfaces is possible. Becker gave a sufficient condition on the Loewner chain to generate quasicircles \cite{becker1972}, but not all quasicircles can be generated this way \cite[Thm.\,3]{GP}.
\end{remark}

\begin{figure}
    \centering
    \includegraphics[width=.4\textwidth]{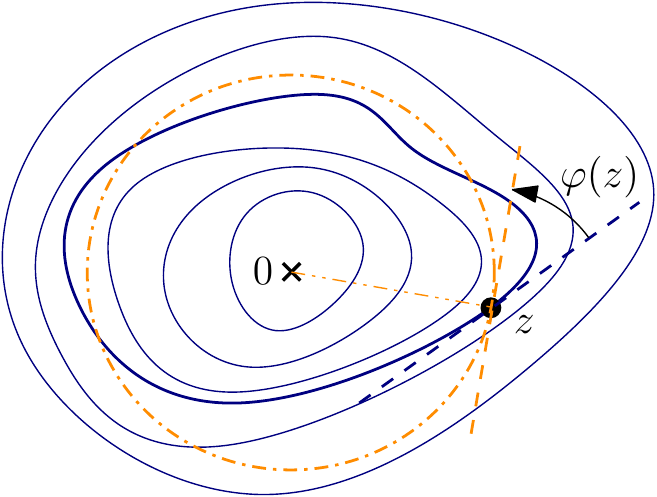}
    \caption{Illustration of the winding function $\varphi$.}
    \label{fig:intro_fol}
\end{figure}

We also show a quantitative relation among energies by introducing a real-valued \emph{winding function} $\varphi$ associated to a foliation $(\g_t)_{t \in \m R}$ as follows. 
Let $g_t = f_t^{-1}: D_t \to \m D$. 
Since $\g_t$ is by assumption chord-arc, thus rectifiable, $\g_t$ has a tangent arclength-a.e. Given $z$ at which $\g_t$ has a tangent, we define 
\begin{equation}\label{eq:winding}
\varphi(z) = \lim_{w \to z} \arg \frac{g'_{t}(w)w}{g_t(w)}
\end{equation}
where the limit is taken inside $D_t$ and approaching $z$ non-tangentially. We choose the continuous branch of $\arg$ which vanishes at $0$. 
Monotonicity of $(D_t)_{t \in \m R}$ implies that there is no ambiguity in the definition of $\varphi(z)$ if $z \in \g_t \cap \g_s$. See \cite[Sec.\,2.3]{VW2} for more details.
\begin{remark} \label{rem:geom}
Geometrically, $\varphi(z)$ equals the difference of the argument of the tangent to the circle centered at $0$ passing through $z$ and that of  $\gamma_{t}$  modulo $2\pi$, see Figure~\ref{fig:intro_fol}. 
Note also that in the trivial example when $\rho$ has zero energy, the generated foliation consists of concentric circles centered at $0$ whose winding function is identically $0$. 
\end{remark}

The following is our main theorem.

\begin{thm}[Energy duality {\cite[Thm.\,1.2]{VW2}}]\label{thm:main0} Assume that $\rho \in \mc N$ generates a foliation and
   let $\varphi$ be the associated winding function on $\m C$. Then $\mc D_{\m C} (\varphi) <\infty$  if and only if $S(\rho) <\infty$ and 
   $\mc D_{\m C} (\varphi) = 16 \, S(\rho)$.
   \end{thm}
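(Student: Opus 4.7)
The plan is to pull the winding function $\varphi$ back to $\m D$ leaf by leaf via the whole-plane Loewner chain and then compute its Dirichlet energy in adapted coordinates, using the Loewner--Kufarev PDE as the main analytic tool. For each $t\in\m R$, I would introduce the holomorphic function
$$\psi_t(\zeta):=\log\big(\zeta f_t'(\zeta)/f_t(\zeta)\big),\qquad \zeta\in\m D,$$
with the continuous branch normalized so that $\psi_t(0)=0$ (which is legitimate since $\zeta f_t'(\zeta)/f_t(\zeta)\to 1$ as $\zeta\to 0$). By the very definition of the winding function and the existence of non-tangential limits at the Weil--Petersson leaf $\gamma_t$ provided by Proposition~\ref{prop:WP-leaf}, one obtains $\varphi\circ f_t(\zeta)=\Im\psi_t(\zeta)$ for arclength-a.e.\ $\zeta\in\partial\m D$.

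Next, I would derive the transport equation for $\psi_t$. Writing $\psi_t(\zeta)=\log\zeta+\log f_t'(\zeta)-\log f_t(\zeta)$ and substituting the Loewner--Kufarev PDE $\partial_t f_t(\zeta)=-\zeta f_t'(\zeta)H_t(\zeta)$ with $H_t(\zeta):=\int_{S^1}\frac{\eta+\zeta}{\eta-\zeta}\rho_t(d\eta)$, a direct computation yields
$$\partial_t\psi_t(\zeta)=-\zeta\big(H_t(\zeta)\psi_t'(\zeta)+H_t'(\zeta)\big).$$
Expanding $\psi_t(\zeta)=\sum_{k\ge1}c_k(t)\zeta^k$ and $H_t(\zeta)=1+2\sum_{k\ge1}\hat\rho_t(k)\zeta^k$, this becomes an explicit linear ODE for the coefficients $c_k(t)$ driven by the Fourier coefficients $\hat\rho_t(k)$.

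For the main computation, I would parametrize $\m C\smallsetminus\{0\}$ by $(t,\theta)\mapsto f_t(e^{i\theta})$ and decompose $|\nabla\varphi|^2$ into tangential and normal components along the leaves. The tangential part on $\gamma_t$ is captured by the boundary trace of $\Im\psi_t$ on $\partial\m D$, whose $H^{1/2}$-energy equals $\mc D_{\m D}(\Im\psi_t)=\sum_k k|c_k(t)|^2$. The normal part is identified through $\partial_t\psi_t$ via the transport equation and produces mixed terms involving $c_k(t)$ and $\hat\rho_t(k)$. After integrating in $t$ and performing integration by parts, the cross-terms coming from the transport collapse, leaving a pure quadratic expression in the $\hat\rho_t(k)$. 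On the other side, writing $\rho_t=v_t^2|d\zeta|$ with $v_t\in W^{1,2}(S^1)$, the Donsker--Varadhan energy $I^{DV}(\rho_t)=\tfrac{1}{2}\int|v_t'|^2|d\zeta|$ is also expressible as a quadratic form in the Fourier coefficients of $v_t$, hence in $\hat\rho_t$ through convolution. Matching the two quadratic forms, with careful bookkeeping of constants (which produces the factor $16$), yields $\mc D_{\m C}(\varphi)=16\,S(\rho)$; both implications $\mc D_{\m C}(\varphi)<\infty\Leftrightarrow S(\rho)<\infty$ then drop out of the identity.

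The main obstacle is regularity. A Weil--Petersson leaf is only chord-arc with $H^{1/2}$ winding, the densities $v_t$ only lie in $W^{1,2}(S^1)$, the function $\psi_t$ need not extend smoothly to $\partial\m D$, and the parametrization $(t,\theta)\mapsto f_t(e^{i\theta})$ is bijective only modulo a set of measure zero. Consequently, the Fubini decomposition along the foliation, the convergence of boundary traces, and the integration-by-parts in $t$ all require a careful approximation argument: mollify $\rho$ in both $t$ and $\zeta$ to a smooth family $\rho^\epsilon$, prove the identity in that setting where all operations are classical, and pass to the limit using lower semicontinuity of $\mc D_{\m C}$ and $S$ together with the continuity of the Loewner transform from Theorem~\ref{thm:cont-bij-loewner-transf}. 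The qualitative statement of Proposition~\ref{prop:WP-leaf} is precisely what makes $\varphi$ well-defined and $\mc D_{\m C}(\varphi)$ a sensible quantity to begin with.
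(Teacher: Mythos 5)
Your preliminary steps are sound: $\psi_t(\zeta)=\log(\zeta f_t'(\zeta)/f_t(\zeta))$ is the right object to track (with the survey's convention \eqref{eq:winding} the boundary relation is $\varphi\circ f_t=-\Im\psi_t$, a harmless sign slip), and your transport equation $\partial_t\psi_t=-\zeta(H_t\psi_t'+H_t')$ is a correct consequence of \eqref{eq:whole-plane-radial}. The gap is in the central energy computation, which is asserted rather than carried out, and two of its key claims are wrong as stated. First, in any coarea/Fubini decomposition of $\mc D_{\m C}(\varphi)$ along the foliation, the ``tangential part'' on $\g_t$ is a \emph{local} weighted $L^2$ norm of the derivative of $\varphi$ along the leaf, weighted by the normal speed of the foliation (a Poisson-type density of $\rho_t$ times $|f_t'|$); it is not the nonlocal $H^{1/2}$ seminorm $\sum_k k|c_k(t)|^2=\mc D_{\m D}(\Im\psi_t)$ of the trace, which is the energy of a harmonic extension. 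Moreover the coordinates $(t,\theta)\mapsto f_t(e^{\ii\theta})$ are not orthogonal: one computes $\langle\partial_t F,\partial_\theta F\rangle=-|f_t'|^2\,\Im H_t$, which vanishes only when $\rho_t$ is uniform, so the pullback metric has off-diagonal terms and a nonconstant Jacobian. The entire content of the duality, including the factor $16$, sits exactly in this bookkeeping; ``the cross-terms collapse after integration by parts'' is the theorem, not an argument. Second, $I^{DV}(\rho_t)=\tfrac12\int|v_t'|^2$ with $\rho_t=v_t^2\,|\dd\zeta|$ is a quadratic form in the Fourier coefficients of $v_t$, i.e.\ of the \emph{square root} of the density; it is not a quadratic form in $\hat\rho_t(k)$ (passing ``through convolution'' makes it quartic in $\hat v_t$, not quadratic in $\hat\rho_t$), so matching two quadratic forms in $\hat\rho_t$ cannot produce it. Any proof must explain where this square-root structure emerges from the Loewner dynamics; your scheme has no mechanism for that.

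The limiting argument also does not close as described. Mollifying $\rho$ gives uniform Carath\'eodory convergence of the chains (Theorem~\ref{thm:cont-bij-loewner-transf}), but Carath\'eodory convergence controls nothing about boundary behaviour: the winding functions $\varphi^\vare$ are defined through tangents and traces on the leaves, and neither $\mc D_{\m C}(\varphi^\vare)\to\mc D_{\m C}(\varphi)$ nor even a one-sided semicontinuity statement for these Dirichlet energies follows from it, while lower semicontinuity of $S$ under weak-$*$ convergence only yields an inequality in one direction. In particular the two implications of the statement do not ``drop out of the identity'': the direction assuming only $\mc D_{\m C}(\varphi)<\infty$ is genuinely delicate, since there one has no a priori regularity of the leaves beyond the chord-arc property built into the definition of a foliation. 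For comparison, the survey does not reproduce the argument; in \cite{VW2} the two inequalities $\mc D_{\m C}(\varphi)\le 16\,S(\rho)$ and $\mc D_{\m C}(\varphi)\ge 16\,S(\rho)$ are proved separately, by a considerably longer analysis that confronts precisely the disintegration, trace and approximation issues you defer, so the proposal as it stands records plausible ingredients but not a proof.
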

   
Although this result is about deterministic growth processes, it has the heuristic interpretation of being the large deviation counterpart of a potential \emph{radial mating of trees} coupling between the whole-plane SLE$_\k$ with large $\k$  (whose large deviation rate function is $S(\rho)$) with a whole-plane Gaussian free field with vanishing multiplicative factor (whose rate function is $\mc D_{\m C} (\varphi)$) that we also speculated in \cite[Sec.\,10]{VW2}. Here, the SLE is the flow-line of a unit random vector field which makes the angle a multiple of GFF with the vector field  $\ii z$. It is closely related to \cite{IG4} and analogous to the mating of trees theorem of Duplantier, Miller, and Sheffield \cite{MoT}  (see also \cite{GHS_survey_MoT} for a recent survey) in the chordal SLE setup.  
In fact, our intuition for Theorem~\ref{thm:main0} comes from the belief that such a radial mating of trees result should hold although its exact form was not clear to us before we proved this deterministic result. 

We also note that if one stops the whole-plane $\SLE_\k$ at any time $t$, the boundary of the domain $D_t$ is locally a $\SLE_{16/\k}$ curve. 
This is consistent with the fact that a measure with finite Loewner-Kufarev energy drives an evolution of domains bounded by Jordan curves with finite Loewner energy as stated in Proposition~\ref{prop:WP-leaf}. It will become more apparent in Theorem~\ref{thm:main-jordan-curve}, which is a quantitative version of Proposition~\ref{prop:WP-leaf},  that the factor $16$ is consistent with the SLE duality which relates $\SLE_\k$ to $\SLE_{16/\k}$. See also \cite[Sec.\,10]{VW2} for more discussion.

\begin{remark}
   A subtle point in defining the winding function is that  in the general case of a chord-arc foliation, a function defined arclength-a.e. on each leaf need not be defined Lebesgue-a.e., see, e.g., \cite{FubiniFoiled}. Thus to consider the Dirichlet energy of $\varphi$, we use the following extension to  $W^{1,2}_{loc}$. A function $\varphi$ defined arclength-a.e.\ on all leaves of a foliation $(\g_t = \partial D_t)$ is said to have an \emph{extension} $\phi$ in $W^{1,2}_{loc}$ if for all $t \in \m R$, the Jonsson-Wallin trace  (see Equation \eqref{def:trace})  of $\phi$ on $\g_t$, that we simply write as $\phi|_{\g_t}$,
   coincides with $\varphi$ arclength-a.e on $\g_t$. We also show that if such extension exists then it is unique.
   The Dirichlet energy of $\varphi$ in the statement of Theorem~\ref{thm:main0} is understood as the Dirichlet energy of this extension.
\end{remark}

Theorem~\ref{thm:main0} has several applications which show that the foliation of Weil-Petersson quasicircles generated by $\rho$ with finite Loewner-Kufarev energy exhibits several remarkable features and symmetries. 
     
     The first is the reversibility of the Loewner-Kufarev energy. Consider $\rho \in \mc N$ and the corresponding family of domains $(D_t)_{t \in \m R}$. Applying $\iota \colon z\mapsto 1/z$ to $D_t^* := \hat{\m C} \smallsetminus D_t$, we obtain an evolution family of domains $(\tilde D_t = \iota (D_t^*))_{t \in \m R}$ upon time-reversal and reparametrization, which may be described by the Loewner equation with an associated driving measure $\tilde \rho$. While there is no known simple description of $\tilde \rho$ in terms of $\rho$, energy duality implies remarkably that the Loewner-Kufarev energy is invariant under this transformation. 
\begin{thm}[Energy reversibility {\cite[Thm.\,1.3]{VW2}}]  \label{thm:main_rev}
We have $S (\rho) = S(\tilde \rho).$
\end{thm}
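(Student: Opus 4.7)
The plan is to bootstrap the reversibility of $S$ from the reversibility (via conformal invariance) of the Dirichlet energy of the winding function, using the energy duality of Theorem~\ref{thm:main0}. Since the transformation $\rho \mapsto \tilde \rho$ is an involution on $\mc N$ up to the built-in time reversal (as $\iota \circ \iota = \Id$), it will suffice to prove $S(\tilde \rho) \le S(\rho)$ when $S(\rho) < \infty$; the reverse inequality then follows by symmetry. Assuming $S(\rho) < \infty$, Proposition~\ref{prop:WP-leaf} will give a Weil--Petersson foliation $(\g_t = \partial D_t)$ of $\m C \smallsetminus \{0\}$ whose winding function $\varphi$, realized as its unique $W^{1,2}_{\mathrm{loc}}$ extension, satisfies $\mc D_{\m C}(\varphi) = 16 \, S(\rho)$ by Theorem~\ref{thm:main0}.

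Next, I would observe that since Weil--Petersson quasicircles are preserved under M\"obius transformations, the image family $(\tilde D_t)$ obtained by applying $\iota$ to $(D_t^*)$ and reparametrizing by conformal radius from $0$ is again a Weil--Petersson foliation, so Proposition~\ref{prop:WP-leaf} and Theorem~\ref{thm:main0} apply to $\tilde \rho$ as well. The key step would be to establish $\tilde \varphi = \varphi \circ \iota$ arclength-a.e.\ on each leaf, up to an irrelevant additive constant. I would do this via the geometric interpretation of Remark~\ref{rem:geom}: $\varphi(z)$ is the signed angle between the tangent at $z$ to $\g_t$ and the tangent at $z$ to the circle centered at $0$ through $z$. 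Under $\iota$ both tangents transform by the conformal differential $d\iota_z$, preserving unsigned angles; moreover $\iota$ reverses the orientation of every circle centered at $0$ (since $\iota(r e^{\ii \theta}) = r^{-1} e^{-\ii \theta}$) and, because passing from $D_t$ to $\iota(D_t^*)$ swaps the bounded and unbounded complementary components, it also reverses the orientation of the leaf. These two orientation reversals cancel in the signed angle, giving the claimed identity.

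Finally, since $\iota$ is a conformal bijection of $\m C \smallsetminus \{0\}$ onto itself and removing a single point does not affect area integrals, the uniqueness of the $W^{1,2}_{\mathrm{loc}}$ extension combined with the conformal invariance of the planar Dirichlet energy will yield $\mc D_{\m C}(\tilde \varphi) = \mc D_{\m C}(\varphi \circ \iota) = \mc D_{\m C}(\varphi)$, and applying Theorem~\ref{thm:main0} once more to $\tilde \rho$ gives the theorem. The hardest part will be the orientation bookkeeping: one must verify that the two orientation reversals cancel consistently (not only up to a sign change on each leaf separately), and that the continuous branch of $\arg$ used in \eqref{eq:winding} transports under $\iota$ into the corresponding branch for $\tilde \varphi$, so that the extension identity $\tilde \varphi = \varphi \circ \iota + \mathrm{const}$ really holds in the $W^{1,2}_{\mathrm{loc}}$ sense required by the uniqueness clause.
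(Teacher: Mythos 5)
Your overall skeleton is exactly the paper's: show that the winding function of the reflected evolution satisfies $\tilde \varphi \circ \iota = \varphi$, then invoke conformal invariance of the Dirichlet energy and the duality $\mc D_{\m C}(\varphi) = 16\,S(\rho)$ of Theorem~\ref{thm:main0} on both sides. The gap is in the step you defer as ``orientation bookkeeping'': the identification of $\tilde\varphi$ with $\varphi\circ\iota$ cannot be extracted from the geometric reading of Remark~\ref{rem:geom}, because that interpretation is only valid modulo $2\pi$, pointwise at tangent points, while the winding function in \eqref{eq:winding} is defined through a specific continuous branch of $\arg\bigl(w\,g_t'(w)/g_t(w)\bigr)$ normalized at $0$ and evaluated as a non-tangential limit. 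For the reflected foliation one has $\tilde g = \iota\circ k\circ\iota$, where $k$ maps the unbounded side of the leaf to $\m D^*$, so the branch entering $\tilde\varphi$ is normalized at $\infty$ relative to the original picture. Matching the interior normalization (via $g$, branch vanishing at $0$) with the exterior one (via $k$, branch vanishing at $\infty$) arclength-a.e.\ on every leaf, exactly and not merely mod $2\pi$ or leaf-by-leaf up to integers, is precisely the content of the paper's Lemma~\ref{lem:matching_trace}, and it is the technical heart of the proof rather than a routine verification: the paper first treats smooth leaves with the maximum principle, then extends the interior winding function to the exterior along equipotentials, uses that the difference is continuous, $2\pi\m Z$-valued and vanishes near $\infty$, and finally needs rectifiability of the leaf together with Hardy space $\mc H^1$ arguments (Warschawski's theorem and a Hardy--Littlewood maximal estimate) to show that the non-tangential boundary values from the two sides agree arclength-a.e.

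Your angle-cancellation heuristic (circle orientation flips, leaf orientation flips, signs cancel) at best recovers the mod-$2\pi$ statement, and the passage from that to the trace identity required by the uniqueness clause of the $W^{1,2}_{\mathrm{loc}}$ extension is the whole difficulty; without it you cannot conclude $\mc D_{\m C}(\tilde\varphi) = \mc D_{\m C}(\varphi)$, since a priori the integer multiple of $2\pi$ (or an additive constant) could vary along and across leaves. (The reduction to $S(\tilde\rho)\le S(\rho)$ by involutivity, and the observation that $\tilde\rho$ again generates a foliation by chord-arc, indeed Weil--Petersson, leaves, are fine.) To complete the argument you would need to prove the analogue of Lemma~\ref{lem:matching_trace}, or adapt a continuity/VMO rigidity argument in the spirit of Lemma~\ref{lem:VMO_constant} to promote the a.e.\ mod-$2\pi$ agreement to exact agreement of traces.
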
 
To see this, we prove first the following result.
\begin{lem}\label{lem:matching_trace}
Let $\g$ be a rectifiable Jordan curve separating $0$ from $\infty$, $D$ and $D^*$ be respectively the bounded and unbounded domain of $\Chat \smallsetminus \g$. Let $g$ be a conformal map from $D\to \m D$ fixing $0$ and  $k$ be a conformal map $D^* \to \m D^*$ fixing $\infty$ such that $g'(0) > 0$ and $k'(\infty) > 0$. For a differentiable point  $z \in \g$, we have
$$\varphi (z) =  \lim_{w \to z, \, w \in D} \arg \frac{g'(w)w}{g(w)} = \lim_{ w \to z, \,w \in D^*} \arg \frac{k'(w)w}{k(w)} = : \psi (z)  $$
where the limits are taken non-tangentially and we choose the continuous branches of $\arg (\cdot)$ such that $\varphi (w) := \arg  \frac{g'(w)w }{g(w)} \to 0$ as $w \to 0$ and $\psi (w) := \arg \frac{k'(w)w}{k(w)} \to 0$ as $w \to \infty$.
\end{lem}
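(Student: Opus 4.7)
The strategy is to identify both limits with the same geometric quantity -- the ``winding defect'' at $z$, namely $\arg z + \pi/2 - \theta(z)$, where $\theta(z)$ denotes the tangent angle to $\g$ at $z$ -- and then to remove the resulting $2\pi \m Z$ ambiguity using the prescribed branches.

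\emph{Step 1 (geometric reduction modulo $2\pi$).} By the classical boundary correspondence at tangent points (Theorem~II.4.2 of \cite{GM}, cited in Section~\ref{sec:flowline}), the non-tangential limits of $\arg g'$ and $\arg k'$ exist at the differentiable point $z \in \g$. Denoting by $\theta(z)$ the angle of the unit tangent to $\g$ oriented as $\partial D$, the map $g$ sends the $\partial D$-tangent (angle $\theta(z)$) to the counterclockwise tangent of $S^1$ at $g(z)$ (angle $\arg g(z)+\pi/2$), while $k$ sends the $\partial D^*$-tangent (angle $\theta(z)+\pi$) to the clockwise tangent of $S^1$ at $k(z)$ (angle $\arg k(z)-\pi/2$). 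Both calculations yield, modulo $2\pi$,
$$\arg g'(z) \equiv \arg g(z)+\pi/2-\theta(z), \qquad \arg k'(z) \equiv \arg k(z)+\pi/2-\theta(z).$$
Substituting these into the defining formulas cancels $\arg g(z)$ and $\arg k(z)$, giving $\varphi(z) \equiv \psi(z) \equiv \arg z + \pi/2 - \theta(z) \pmod{2\pi}$.

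\emph{Step 2 (single-valued branches).} Since $g(w) = g'(0) w + O(w^2)$ with $g'(0)>0$, the holomorphic function $H(w) := g'(w) w / g(w)$ extends across $w=0$ with $H(0)=1$, and is non-vanishing on $D$ by univalence of $g$. As $D$ is simply connected, $\log H$ admits a single-valued branch with $\log H(0)=0$, so $\varphi = \Im \log H$ is a genuine continuous harmonic function on $D$ vanishing at $0$. Symmetrically, $K(w) := k'(w) w / k(w)$ extends across $\infty$ with $K(\infty)=1$, and $\psi = \Im \log K$ is continuous on $D^* \cup \{\infty\}$ and vanishes at $\infty$. Using the chord-arc hypothesis on $\g$, one may choose continuous branches of $\theta$ and of $\arg z$ along $\g$, each with total increment $2\pi$ around $\g$ (for $\arg z$, this uses $0 \in D$); thus $F(z) := \arg z + \pi/2 - \theta(z)$ descends to a single-valued continuous function on $\g$.

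\emph{Step 3 (matching the branches).} By Step~1, each of $\varphi|_\g - F$ and $\psi|_\g - F$ takes values in $2\pi\m Z$ at differentiable points and is locally constant there; by connectedness, each is a global constant. To conclude that both constants vanish, one tracks $\arg H$ continuously along a path in $D$ from $w=0$ (value~$0$) to a differentiable reference point $z_0 \in \g$: the local geometric computation of Step~1 at $z_0$, together with the single-valuedness of $\arg H$ on $D$, forces $\varphi(z_0) = F(z_0)$. The analogous path in $D^* \cup \{\infty\}$ from $\infty$ to $z_0$ gives $\psi(z_0) = F(z_0)$, hence $\varphi(z_0) = \psi(z_0)$.

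\emph{Main obstacle.} The delicate step is the branch identification in Step~3: the prescribed normalizations only determine $\varphi$ and $\psi$ up to integer multiples of $2\pi$ along $\g$, and upgrading the pointwise congruence to an equality requires a global matching argument. The chord-arc hypothesis is essential here, both to ensure continuity of $\theta$ along $\g$ (so that $F$ is single-valued) and to guarantee enough regularity of the harmonic extensions $\varphi$, $\psi$ up to $\g$ to apply the continuity argument. An alternative approach is to homotope $\g$ through normalized rectifiable Jordan curves to the reference case $\g=S^1$, where $\varphi=\psi=F\equiv 0$ is explicit, and to note that the integer discrepancy is locally constant under the deformation, hence vanishes throughout.
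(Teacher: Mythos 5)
There is a genuine gap, concentrated exactly where you flag it (Step 3), and it is compounded by an assumption not present in the statement. First, the lemma only assumes $\g$ is rectifiable, yet Step 2 invokes a ``chord-arc hypothesis'' to produce a \emph{continuous} branch of the tangent angle $\theta$ along $\g$; even for chord-arc (indeed Weil-Petersson) curves this fails, since such curves need not be $C^1$ -- the tangent exists only arclength-a.e.\ and the tangent-angle function is merely measurable, so $F(z)=\arg z+\pi/2-\theta(z)$ is neither continuous nor everywhere defined on $\g$. Consequently the argument of Step 3 collapses: $\varphi|_\g$ and $\psi|_\g$ are only non-tangential limits, defined at differentiable points (a full-measure but in general not open or connected set) and not continuous there, so ``$\varphi|_\g-F$ is locally constant, hence constant by connectedness'' is not available. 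The subsequent ``track $\arg H$ along a path from $0$ to $z_0$'' does not determine the integer either: the single-valuedness of $\arg H$ in $D$ only tells you that the non-tangential limit at $z_0$ is \emph{some} representative of the class mod $2\pi$; deciding which one is precisely the content of the lemma. The suggested homotopy to $S^1$ has the same problem -- continuity of the relevant boundary quantities under such a deformation of a merely rectifiable curve is exactly what needs proof.

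For comparison, the paper's proof treats the smooth case essentially as you do (congruence mod $2\pi$ plus harmonicity, with the maximum principle and the normalization $\varphi(0)=0$ pinning the branch when $\psi$ takes values in $(-2\pi,2\pi)$), but the rectifiable case is handled by a different mechanism: one foliates $D^*$ by the analytic equipotentials $\g_r=h(rS^1)$, defines $\varphi$ on $D^*$ leaf by leaf via the interior maps $g_r$ of the domains $D_r$, checks $\psi_r=\psi|_{\g_r}$, applies the smooth case near $\infty$ (where $|h'(z)z/h(z)-1|<1/2$ forces $\psi\in(-\pi/2,\pi/2)$) to fix the branch there, and uses continuity of the $2\pi\m Z$-valued difference in $D^*$ to get $\varphi=\psi$ throughout $D^*$. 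The remaining -- and hardest -- step, matching the exterior limit with the interior non-tangential limit across the rectifiable curve, is where rectifiability is really used: $f'\in\mc H^1(\m D)$, Warschawski's theorem giving $\int_{S^1}|f_r'-f'|\,|\dd z|\to 0$, and a Hardy--Littlewood maximal-function argument yield uniform convergence of $f_{r_n}'$ along radii $[0,z]$ for a.e.\ $z$, hence convergence of the arguments without any $2\pi$ ambiguity. Your proposal contains no substitute for this boundary-matching argument, so as written it does not prove the lemma.
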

In particular, this lemma implies that the winding function at $\iota (z)$ of the curve $\iota (\g)$ is
$$\lim_{w \to \iota (z), \, w \in \iota (D^*)} \arg \frac{\tilde g'(w)w}{\tilde g(w)} = \lim_{\iota (w) \to z, \,\iota (w) \in D^*} \arg \frac{k'(\iota (w)) \iota (w)}{k(\iota (w))} = \psi (z) = \varphi (z),  $$
since $\tilde g (w)= \iota \circ k \circ \iota (w)$. 
With this observation, the proof of Theorem~\ref{thm:main_rev} is immediate.
\begin{proof}[Proof of Theorem~\ref{thm:main_rev}]
Let $\tilde \varphi$ be the winding function associated to the foliation $(\partial \tilde D_t =\iota (\partial D_t))_{t \in \m R}$. 
Since $\tilde \varphi \circ \iota = \varphi$ and the Dirichlet energy is invariant under conformal mappings, we obtain $\mc D_{\m C} (\tilde \varphi) = \mc D_{\m C} ( \varphi)$ which implies $S (\rho) = S(\tilde \rho)$ by Theorem~\ref{thm:main0}.
\end{proof}

\begin{proof}[Proof of Lemma~\ref{lem:matching_trace}]
If $\g$ is smooth and $\psi$ takes values in $(-2\pi, 2\pi)$, then by the assumption that
$\varphi(w) \to 0$ as $w\to 0$,
 the harmonicity of $\varphi$ and the maximum principle, we have $\varphi|_\g = \psi|_\g$.

For the general case, we consider the foliation in $D^*$ formed by the family of equipotentials $(\g_r := h (r S^1))_{r \ge 1}$ where $h = k^{-1}$. Let $D_r$ and $D_r^*$ be the bounded and unbounded connected components of $\Chat \smallsetminus \g_r$ respectively.
Let $g_r : D_r \to \m D$ and $k_r : D_r^* \to \m D^*$ be the uniformizing conformal maps associated to $\g_r$.
We define $\varphi_r$ and  $\psi_r$ along $\g_r$
in a similar manner as in the statement of Lemma~\ref{lem:matching_trace}. From the construction, we have 
$$\psi_r = \psi|_{\g_r}, \quad \forall r \ge 1.$$ 
We extend $\varphi$ to $D^*$ by setting $\varphi|_{\g_r} = \varphi_r|_{\g_r}$ for all $r > 1$. 
Since $f_r = g_r^{-1}$ extends to a smooth function on $\ad {\m D}$, one can show that the function associated to the equipotentials 
$$(1, \infty) \to C^{\infty} (S^1, \m C) \colon r \mapsto f_r|_{S^1}$$
 is continuous for the $C^\infty$ norm, we obtain that $\varphi$ is continuous in $D^*$. As $|h' (z) z /h(z) -1 | < 1/2$ in a neighborhood of $\infty$, we see that $\psi$ takes values in $(-\pi/2, \pi/2)$ on that neighborhood. From the previous case, we obtain that $\varphi = \psi$ on a neighborhood of $\infty$.
 Since $\varphi - \psi$ takes values in $2\pi \m Z$, is continuous, and equals $0$ in a neighborhood of $\infty$, we obtain that $\varphi = \psi$ in $D^*$.
 Recall that we also defined $\varphi$ in $D$ as $\arg [g'(w)w /g(w)]$. We only need to show that the limit of $\varphi$ taken from $D^*$ coincides with the limit taken from $D$.

 For this, notice that $\g$ is a rectifiable curve, so is $\iota (\g)$. From  \cite[Thm.\,6.8]{Pommerenke_boundary},  $(\iota \circ h \circ \iota) '$ is in the Hardy space $\mc H^1 (\m D)$. In particular, 
 we have that the length $|\iota (\g_r)|$ converges to $|\iota (\g)|$, and $|\g_r|$ converges to $|\g|$ as $r \to 1 \scriptstyle +$. We have also $f_r  \to f := g^{-1}$ locally uniformly as $r \to 1 \scriptstyle +$ from the Carath\'eodory kernel convergence. A theorem of Warschawski \cite[Thm.\,6.12]{Pommerenke_boundary} shows that
 \begin{equation}\label{eq:Wars}
     \int_{S^1} |f_r'(z) - f '(z)|  \, |\dd z| \to 0 \quad \text{as} \quad r \to 1 \scriptstyle +.
 \end{equation}
 Define the function 
 $$F_r(e^{\ii\t}) : =  \sup_{0 \le \l <1} |f_r'(\l e^{\ii\t}) - f '(\l e^{\ii\t})|.$$
 A theorem of Hardy-Littlewood \cite[Thm.\,1.9]{Duren_Hardy} 
 implies that $F_r$ converges to $0$ in 
 $L^1 (S^1)$ as $r \to 1 \scriptstyle +$ 
 since $f'_r - f'$  converges to $0$ in $\mc H^1 (\m D)$ by \eqref{eq:Wars}.
  This limit and \eqref{eq:Wars} imply that along a subsequence $r_n\to 1 \scriptstyle +$, $f_{r_n}'(z)$ converges to $f'(z)$ and $F_{r_n}(z)$ converges to $0$ on a full measure set $E \subset S^1$, which shows that $f_{r_n}'$ converges to $f'$ uniformly on the radial segment $[0,z]$ for all $z \in E$.
  
  The limit \eqref{eq:Wars} also implies that $f_r \to f$ uniformly on $\ad {\m D}$, see, \cite[Ex.\,6.3.4]{Pommerenke_boundary}. Therefore,
  $$\frac{f_{r_n}' (w) w}{f_{r_n} (w)} \xrightarrow[{\text{unif. on } [0,z]}]{n \to \infty} \frac{f' (w) w}{f (w)}, \quad \forall z \in E.$$
  Since $\frac{f' (w) w}{f (w)}$ does not vanish on $[0,z]$, the uniform convergence implies that the argument also converges uniformly (without the ambiguity in the choice of multiples of $2\pi$). In particular,
  $$\varphi(f_{r_n} (z)) = \varphi_{r_n} (f_{r_n} (z)) =  -\arg \frac{f_{r_n}'(z)z}{f_{r_n}(z)}  \xrightarrow[]{ n \to \infty} -\arg \frac{f'(z)z}{f(z)} = \varphi (f(z)). $$
  We obtain that for $z \in E$, the limit of $\varphi$ at $f(z)$ taken from $D^*$ coincides with the limit taken from $D$ which concludes the proof.
 \end{proof}

\begin{remark}
    It is not known whether whole-plane SLE$_\k$ for $\k > 8$ is reversible. 
(For $\k \le 8$, reversibility was established in \cite{zhan_rev_whole,IG4}.) Therefore Theorem~\ref{thm:main_rev} cannot be predicted from the SLE point of view by considering the $\k \to \infty$ large deviations as we did for the reversibility of chordal Loewner energy in Theorem~\ref{thm:intro_rev}. This result on the other hand suggests that reversibility for whole-plane radial SLE might hold for large $\kappa$ as well.
\end{remark}

From Proposition~\ref{prop:WP-leaf}, being a Weil-Petersson quasicircle (separating $0$ from $\infty$) is a necessary condition to be a leaf in the foliation generated by a measure with finite Loewner-Kufarev energy.
The next result shows that this is also a sufficient condition and we can relate the Loewner energy of a leaf with the Loewner-Kufarev energy of the generating measure. 
In particular, we obtain a new and quantitative characterization of Weil-Petersson quasicircles.

Let $\g$ be a Jordan curve separating $0$ from $
  \infty$, $f$ (resp. $h$) a conformal map from $\m D$ (resp. $\m D^*$) to the bounded (resp. unbounded) component of $\m C \smallsetminus \g$ fixing $0$ (resp. fixing $\infty$).
\begin{thm}[Characterization  {\cite[Thm.\,1.4]{VW2}}]
\label{thm:main-jordan-curve}  
The curve $\g$ is a Weil-Petersson quasicircle if and only if $\g$ can be realized as a leaf in the foliation generated by a measure $\rho$ with $S(\rho) < \infty$. Moreover,  we have
  \[I^L(\g)  = 16 \inf_{\rho} S(\rho) + 2 \log |f'(0)/h'(\infty)|,\]
  where the infimum, which is attained, is taken over all $\rho \in \mc N$ such that $\g$ is a leaf of the generated foliation. 
\end{thm}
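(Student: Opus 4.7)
The forward implication is immediate: if $\g$ arises as a leaf of a foliation generated by some $\rho \in \mc N$ with $S(\rho) < \infty$, then $\g$ is a Weil-Petersson quasicircle by Proposition~\ref{prop:WP-leaf}. The substance of the theorem lies in the converse together with the explicit identification of the infimum.

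The strategy is to exhibit an explicit minimizer $\rho^*$ via the \emph{equipotential foliation}: for $t \ge t_0 := -\log|f'(0)|$ set $\g_t := f(e^{-(t-t_0)} S^1)$, and for $t < t_0$ set $\g_t := h(R_t S^1)$, where $R_t > 1$ is chosen so that the bounded component $D_t$ of $\m C \setminus \g_t$ has conformal radius $e^{-t}$ from $0$. The family $(D_t)_{t \in \m R}$ is monotone decreasing with the prescribed capacity growth, and the whole-plane analogue of Pommerenke's theorem (Section~\ref{subsec:whole_plane_Loewner_chain}) produces a unique $\rho^* \in \mc N$ generating it, with $\g = \g_{t_0}$ as a leaf. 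Writing $g = f^{-1}$ and $k = h^{-1}$, the winding function of this foliation is
$$\varphi^*(w) = \arg[g'(w) w/g(w)] \text{ for } w \in D, \qquad \varphi^*(w) = \arg[k'(w) w/k(w)] \text{ for } w \in D^*,$$
with branches chosen so that $\varphi^*(0) = \varphi^*(\infty) = 0$; the two expressions match along $\g$ by Lemma~\ref{lem:matching_trace}, and $\varphi^*$ is harmonic on each of $D$ and $D^*$.

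Applying Theorem~\ref{thm:main0} gives $16\,S(\rho^*) = \mc D_{\m C}(\varphi^*)$, which by conformal invariance and harmonic conjugacy equals $\mc D_{\m D}(\log|f(z)/(zf'(z))|) + \mc D_{\m D^*}(\log|h(z)/(zh'(z))|)$. The computational crux is to identify this sum with
$$\mc D_{\m D}(\log|f'|) + \mc D_{\m D^*}(\log|h'|) + 2\log|f'(0)/h'(\infty)|,$$
after which Theorem~\ref{thm:intro_equiv_energy_WP} immediately yields $16\,S(\rho^*) = I^L(\g) - 2\log|f'(0)/h'(\infty)|$. I expect this identity to follow from the decomposition $\log|f/(zf')| = \log|f/z| - \log|f'|$ (and similarly in $\m D^*$), expanding the Dirichlet energies, and evaluating the resulting cross terms via Stokes' theorem. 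The key observation is that the functions $\log|f(z)/z|$ and $\log|h(z)/z|$ pull back under $g$ and $k$ to a globally continuous function $\Phi$ on $\m C$ satisfying $\Phi|_\g = \log|z|$ together with $\Phi(0) = \log|f'(0)|$ and $\Phi(\infty) = \log|h'(\infty)|$, so that the $\log|f'(0)/h'(\infty)|$ correction arises from the regularization of the boundary integrals near $0$ and $\infty$.

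For the matching lower bound, consider any $\rho \in \mc N$ with $S(\rho) < \infty$ admitting $\g$ as a leaf, and let $\varphi_\rho$ be its winding function. By the geometric interpretation of the winding (Remark~\ref{rem:geom}), the trace of $\varphi_\rho$ on $\g$, from either side, equals the tangent direction of $\g$ relative to the concentric circle and so is intrinsic to $\g$; in particular it coincides with the corresponding trace of $\varphi^*$. Since $\varphi^*$ is the harmonic extension of this trace inside $D$ and inside $D^*$ (with the normalizations $\varphi^*(0) = \varphi^*(\infty) = 0$), Dirichlet's principle yields $\mc D_D(\varphi^*) \le \mc D_D(\varphi_\rho)$ and $\mc D_{D^*}(\varphi^*) \le \mc D_{D^*}(\varphi_\rho)$, hence $S(\rho^*) \le S(\rho)$ by a second application of Theorem~\ref{thm:main0}. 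The main obstacle will be the algebraic identity in the previous paragraph: the Stokes calculation must be executed carefully around the logarithmic singularities at $0$ and $\infty$, and the boundary trace manipulations on $\g$ must be justified using the Weil-Petersson regularity of $\log|f'|$ and $\log|h'|$.
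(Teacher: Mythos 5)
Your proposal is correct and takes essentially the route behind the cited result: the survey itself defers the proof to \cite{VW2}, but Remark~\ref{rem:harmonic_foliation} and identity \eqref{eq:energy_varphi_gamma} describe exactly your argument---the equipotential foliation as the attained minimizer with harmonic winding function \eqref{eq:harmonic_winding}, energy duality (Theorem~\ref{thm:main0}) converting $S(\rho)$ into $\mc D_{\m C}(\varphi)$, reduction of the constant $2\log|f'(0)/h'(\infty)|$ to the disk-energy formula \eqref{eq_disk_energy}, and the Dirichlet principle for the lower bound, using that the trace of the winding function on $\g$ is intrinsic to the leaf. The computational identity you isolate as the crux is indeed the correct one (it is equivalent to \eqref{eq:energy_varphi_gamma} combined with \eqref{eq_disk_energy}), so no gap beyond the careful Stokes/boundary-term bookkeeping you already flag.
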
 

\begin{remark}\label{rem:harmonic_foliation}
The infimum is only realized for the measure $\rho^\g$ generating the family of equipotentials on both sides of $\g$, i.e., the image of the circles centered at $0$ under $f$ and $h$. In this case, the winding function $\varphi^\g$ is harmonic in $\m C \smallsetminus \g$ and tends to $0$ as $z$ tends to $0$ or $\infty$. 
More precisely, we have
\begin{equation}\label{eq:harmonic_winding}
    \varphi^\g \circ f (z)= - \arg \frac{f'(z)z}{f(z)}, \quad \varphi^\g \circ h(z) = - \arg \frac{h'(z)z}{h(z)}.
\end{equation}
Theorem~\ref{thm:main-jordan-curve} and Theorem~\ref{thm:main0} then give the identity
\begin{equation}\label{eq:energy_varphi_gamma}
    I^L(\g) = \mc D_{\m C}(\varphi^\g) +   2 \log |f'(0)/h'(\infty)|.
\end{equation}

 Note also that this minimum is zero if and only if $\g$ is a circle centered at $0$, whereas $I^L(\g)$ is zero for all circles. This explains the presence of the derivative terms. 
\end{remark}
 \begin{cor}[Energy bound {\cite[Cor.\,8.7]{VW2}}] Any leaf $\g$ of the foliation generated by $\rho$  satisfies $I^L(\g) \le 16\, S(\rho)$.
 \end{cor}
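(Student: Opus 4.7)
My plan is to derive this bound directly from Theorem~\ref{thm:main-jordan-curve} together with a classical comparison between the inner conformal radius of the bounded component $\O$ of $\m C \smallsetminus \g$ and the transfinite diameter of $\g$.

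Since $\g$ is a leaf of the foliation generated by $\rho$, it is a chord-arc Jordan curve separating $0$ from $\infty$, and Proposition~\ref{prop:WP-leaf} guarantees that $\g$ is a Weil-Petersson quasicircle. Applying Theorem~\ref{thm:main-jordan-curve} to this $\g$ yields
\[
I^L(\g) \;=\; 16 \inf_{\rho'} S(\rho') + 2 \log \left| \frac{f'(0)}{h'(\infty)} \right| \;\le\; 16\, S(\rho) + 2 \log \left| \frac{f'(0)}{h'(\infty)} \right|,
\]
where the infimum is over all $\rho' \in \mc N$ having $\g$ as a leaf; the inequality uses that $\rho$ itself is one such measure.

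It therefore suffices to show that the correction term is non-positive, i.e., $R := |f'(0)| \le |h'(\infty)| =: T$ for any Jordan curve separating $0$ from $\infty$. This follows from two classical inputs. On the one hand, since $|f'|^2$ is subharmonic on $\m D$, the sub-mean value inequality at the origin gives
\[ \pi R^2 \;\le\; \int_{\m D} |f'(z)|^2 \,\dd A(z) \;=\; |\O|. \]
On the other hand, the Gronwall area theorem applied to the normalized exterior map $H(z) := h(z)/T = z + O(1)$, univalent on $\m D^*$, asserts that the complement $\Chat \smallsetminus H(\m D^*) = \O/T$ has area at most $\pi$, i.e., $|\O| \le \pi T^2$. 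Combining the two inequalities yields $R \le T$, so the log term is non-positive.

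Substituting $\log(R/T) \le 0$ into the first display gives $I^L(\g) \le 16\, S(\rho)$, as claimed. I anticipate no substantive obstacle: the only ingredients beyond the deep Theorem~\ref{thm:main-jordan-curve} are Gronwall's area theorem and the subharmonicity of $|f'|^2$, both classical. One pedantic remark is that $h'(\infty)$ must be read as the leading Laurent coefficient of $h$ at infinity (so $h(z) = h'(\infty) z + O(1)$ as $z \to \infty$), consistently with the paper's convention; equality $R = T$ holds only when $\O$ is a disk centered at $0$ and $\rho$ coincides with $\rho^\g$, which gives the trivial case $\g$ a circle and $I^L(\g) = S(\rho) = 0$.
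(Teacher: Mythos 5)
Your proposal is correct, and its skeleton is the same as the paper's: apply Theorem~\ref{thm:main-jordan-curve}, bound the infimum by $S(\rho)$ since $\rho$ itself has $\g$ as a leaf, and observe that the correction term $2\log|f'(0)/h'(\infty)|$ is non-positive. The only divergence is in how that last inequality is justified: the paper simply quotes the generalized Grunsky inequality (citing Takhtajan--Teo), whereas you prove $|f'(0)|\le|h'(\infty)|$ from scratch by comparing both quantities to the area of the bounded component $\O$ --- subharmonicity of $|f'|^2$ gives $\pi|f'(0)|^2\le|\O|$, and Gronwall's area theorem applied to $h/h'(\infty)$ gives $|\O|\le\pi|h'(\infty)|^2$. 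Both steps are sound (with $h'(\infty)$ read as the leading Laurent coefficient, as you note), so your route buys a self-contained, elementary verification of the conformal-radius-versus-capacity inequality at the cost of a slightly longer argument; the paper's citation is shorter but imports a deeper result. One small caveat: your equality discussion conflates two different equalities --- $|f'(0)|=|h'(\infty)|$ forces $\O$ to be a disk centered at $0$, while the further condition $\rho=\rho^\g$ pertains to equality in $I^L(\g)\le 16\,S(\rho)$; this is tangential to the stated bound and does not affect the proof.
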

 \begin{proof}
A consequence of generalized Grunsky inequality, see, e.g., \cite[p.\,70-71]{TT06}, shows that 
$\log |f'(0)/h'(\infty)| \le 0$.  Theorem~\ref{thm:main-jordan-curve} then implies
$I^L(\g) \le 16\, S(\rho^\g) \le 16 \, S(\rho).$
 \end{proof}

Another consequence of Theorem~\ref{thm:main-jordan-curve} is the following complex identity for bounded curve analogous to Corollary~\ref{cor:complex_field}, which simultaneously expresses the interplay between Dirichlet energies under ``welding'' and ``flow-line'' operations, see Theorem~\ref{thm:welding_coupling1} and \ref{thm:flow-line}.

More precisely, we say that a Weil-Petersson quasicircle $\g$ is \emph{compatible} with $\varphi \in W^{1,2}_{\textrm{loc}}$, if the winding function along $\g$, as defined in \eqref{eq:winding}, coincides with the trace $\varphi|_\g$ arclength-a.e.

\begin{thm}[Complex identity {\cite[Prop.\,1.5]{VW2}}] \label{thm:complex_id_bounded}
Let $\psi$ be a complex valued function on $\m C$ with 
$\mc D_{\m C}(\psi) = \mc D_{\m C}(\Re \psi) +  \mc D_{\m C}(\Im \psi)<\infty$ and $\g$ a Weil-Petersson quasicircle separating $0$ from $\infty$ compatible with $\Im \psi$. 
Let 
\begin{equation}\label{eq:complex_transform}
    \zeta (z): = \psi \circ f (z) + \log \frac{f'(z) z}{ f(z)} \quad  \text{and} \quad \xi(z) : = \psi \circ h(z) + \log \frac{h'(z) z}{ h(z)}.
\end{equation}
Then we have
$\mc D_{\m C} (\psi) = \mc D_{\m D} (\zeta) + \mc D_{\m D^*} (\xi)$. 
\end{thm}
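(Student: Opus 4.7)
The plan is to split $\zeta$ and $\xi$ into real and imaginary parts and treat them separately, leveraging that the ``twisted'' logarithmic terms $F(z) := \log\frac{f'(z)z}{f(z)}$ and $H(z) := \log\frac{h'(z)z}{h(z)}$ are holomorphic on $\m D$ and $\m D^*$, respectively, with $F(0) = H(\infty) = 0$, and that by Remark~\ref{rem:harmonic_foliation} we have $\Im F = -\varphi^\g \circ f$ and $\Im H = -\varphi^\g \circ h$, where $\varphi^\g$ is the winding function of the harmonic equipotential foliation adapted to $\g$, harmonic in each component of $\m C \smallsetminus \g$. Write $\Omega$ (resp. $\Omega^*$) for the bounded (resp. unbounded) component of $\m C \smallsetminus \g$.

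For the \emph{imaginary part}, compatibility of $\g$ with $\Im \psi$ combined with the fact that the trace $\varphi^\g|_\g$ equals the winding of $\g$ arclength-a.e.\ shows that $\eta_0 := \Im \psi - \varphi^\g$ has zero Jonsson--Wallin trace on $\g$. Since $\Im \zeta = \eta_0 \circ f$ on $\m D$ and $\Im \xi = \eta_0 \circ h$ on $\m D^*$, conformal invariance of the Dirichlet energy gives
\begin{equation*}
\mc D_{\m D}(\Im \zeta) + \mc D_{\m D^*}(\Im \xi) = \mc D_\Omega(\eta_0) + \mc D_{\Omega^*}(\eta_0) = \mc D_{\m C}(\eta_0).
\end{equation*}
Since $\varphi^\g$ is harmonic in $\Omega$ and $\Omega^*$ and $\eta_0|_\g = 0$, Green's identity in each component yields the orthogonality $\mc D_{\m C}(\eta_0, \varphi^\g) = 0$, whence $\mc D_{\m C}(\Im \psi) = \mc D_{\m C}(\eta_0) + \mc D_{\m C}(\varphi^\g)$, and therefore $\mc D_{\m D}(\Im \zeta) + \mc D_{\m D^*}(\Im \xi) = \mc D_{\m C}(\Im \psi) - \mc D_{\m C}(\varphi^\g)$.

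For the \emph{real part}, expand bilinearly $\mc D_{\m D}(\Re \zeta) = \mc D_{\m D}(\Re \psi \circ f) + 2\,\mc D_{\m D}(\Re \psi \circ f, \Re F) + \mc D_{\m D}(\Re F)$, and similarly for $\xi$. Conformal invariance makes the first terms sum to $\mc D_{\m C}(\Re \psi)$. For the last terms, Cauchy--Riemann applied to the holomorphic $F$ gives $|\nabla \Re F| = |\nabla \Im F|$, so $\mc D_{\m D}(\Re F) = \mc D_{\m D}(\varphi^\g \circ f) = \mc D_\Omega(\varphi^\g)$ and analogously $\mc D_{\m D^*}(\Re H) = \mc D_{\Omega^*}(\varphi^\g)$, summing to $\mc D_{\m C}(\varphi^\g)$. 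For the cross term on $\m D$, harmonicity of $\Re F$ and Green's identity yield
\begin{equation*}
\mc D_{\m D}(\Re \psi \circ f, \Re F) = \frac{1}{\pi}\int_0^{2\pi} (\Re \psi \circ f)(e^{\ii \theta}) \, \partial_r \Re F|_{r = 1}\, d\theta,
\end{equation*}
and Cauchy--Riemann converts the radial derivative into $\partial_r \Re F|_{r=1} = \partial_\theta \Im F|_{r=1} = -\partial_\theta(\varphi^\g \circ f)$; writing the result as a Stieltjes integral along $\g$ parametrized by $\alpha(\theta) := f(e^{\ii \theta})$ gives $-\tfrac{1}{\pi}\int_\alpha \Re \psi\, d\varphi^\g$. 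The analogous computation on $\m D^*$ picks up an extra sign from the outward normal $\partial_{n_{\m D^*}} = -\partial_r$ and yields $+\tfrac{1}{\pi}\int_\beta \Re \psi\, d\varphi^\g$, where $\beta(\theta) := h(e^{\ii \theta})$. Since $\alpha$ and $\beta$ parametrize $\g$ in the same orientation (counterclockwise as $\partial \Omega$), the two line integrals agree and the cross terms cancel. Hence $\mc D_{\m D}(\Re \zeta) + \mc D_{\m D^*}(\Re \xi) = \mc D_{\m C}(\Re \psi) + \mc D_{\m C}(\varphi^\g)$; adding the imaginary-part identity the $\pm \mc D_{\m C}(\varphi^\g)$ terms cancel and the claim $\mc D_{\m D}(\zeta) + \mc D_{\m D^*}(\xi) = \mc D_{\m C}(\psi)$ follows.

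The main technical hurdle is rigorously justifying Green's identity, the Jonsson--Wallin trace identities, and the reparametrization-invariance of the boundary integrals along $\g$, since $\g$ is only chord-arc rather than smooth. Weil--Petersson quasicircles are asymptotically smooth (Theorem~\ref{thm:asymptotically_smooth}), so the trace framework developed in \cite{VW1} applies, and an approximation argument via the smooth equipotential leaves $\g_r = f(r S^1)$ (in the spirit of the proof of Lemma~\ref{lem:matching_trace}) reduces each step to the classical smooth setting.
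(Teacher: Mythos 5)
Your argument is correct in outline, but it takes a different route from the paper's treatment: the survey does not prove this statement at all, it cites \cite[Prop.\,1.5]{VW2}, presents it as a consequence of the foliation/energy-duality machinery behind Theorem~\ref{thm:main-jordan-curve}, and then \emph{derives} the bounded flow-line and cutting identities from it. You go in the opposite direction: your imaginary-part step is precisely the flow-line-type orthogonal decomposition $\mc D_{\m C}(\Im\psi)=\mc D_{\m C}(\eta_0)+\mc D_{\m C}(\varphi^\g)$, your real-part step is the cutting-type computation with cancelling boundary cross terms (mirroring the smooth-case proof of Theorem~\ref{thm:welding_coupling1}, with the Cauchy--Riemann trick replacing the geodesic-curvature identity), and adding the two reproduces the complex identity exactly as in the chordal Corollary~\ref{cor:complex_field}. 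What your route buys is self-containedness: the Loewner--Kufarev energy and Theorem~\ref{thm:main-jordan-curve} never enter, and since the $\pm\mc D_{\m C}(\varphi^\g)$ terms cancel you never need the quantitative identity \eqref{eq:energy_varphi_gamma}, only the qualitative fact that $\mc D_{\m C}(\varphi^\g)<\infty$ for a Weil--Petersson quasicircle. Your sign and orientation bookkeeping checks out (both $f(e^{\ii\t})$ and $h(e^{\ii\t})$ do traverse $\g$ positively with respect to the bounded component).

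Two points deserve more care than your closing paragraph gives them. First, you must invoke $\mc D_{\m D}\bigl(\log|f'(z)z/f(z)|\bigr)<\infty$ (equivalently $\mc D_{\m C}(\varphi^\g)<\infty$, as in Remark~\ref{rem:harmonic_foliation}) \emph{before} expanding the squares, otherwise the bilinear expansion risks an $\infty-\infty$; this finiteness is a property of Weil--Petersson quasicircles and is not free. Second, the crux of the cancellation is that the interior and exterior boundary pairings are the same integral over $\g$: this needs (i) that the traces of $\Re\psi$ and of $\varphi^\g$ from the two sides agree arclength-a.e.\ (for $\varphi^\g$ this is Lemma~\ref{lem:matching_trace}), and (ii) that the pairing $\int u\,\dd v$ at $H^{1/2}$ regularity is unchanged when one passes from the parametrization $f|_{S^1}$ to $h|_{S^1}$, i.e.\ invariance under the quasisymmetric welding reparametrization, or alternatively a two-sided approximation of $\psi$ by smooth functions. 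Your proposed approximation by the equipotential leaves $\g_r=f(rS^1)$ smooths the curve from the interior side only and does not by itself identify the two pairings on $\g$; this is a fixable technical gap in the write-up rather than a flaw in the approach, but it is where the real work lies.
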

\begin{remark}
We do not use the complex conjugate in defining the transformation law \eqref{eq:complex_transform} as opposed to  Corollary~\ref{cor:complex_field}. The reason is that the convention for compatibility and for being a flow-line in both theorems differ by a sign.
\end{remark}
Similar to Remark~\ref{rem:complex_identity_two_theorems}, Theorem~\ref{thm:complex_id_bounded} can be viewed as the combination of the following two corollaries. Taking $\Re \psi = 0$ and $\varphi = \Im \psi$, we obtain the following result.

\begin{cor}[Flow-line identity for bounded curve]
 Let $\varphi$ be a real-valued function with finite Dirichlet energy. If there exists a chord-arc curve $\g$ which is compatible with $\varphi$, then 
 $$I^L(\g) = \mc D_{\m C}(\varphi) - \mc D_{\m C}(\varphi_0) +   2 \log |f'(0)/h'(\infty)|,$$
 where $\varphi_0$ is the zero-trace part of $\varphi$ in the complement of $\g$. In particular, $\g$ is a Weil-Petersson quasicircle.
\end{cor}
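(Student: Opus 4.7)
The plan is to reduce the claim to the identity \eqref{eq:energy_varphi_gamma} of Remark~\ref{rem:harmonic_foliation} via the orthogonal Dirichlet decomposition, after first upgrading the hypotheses to ensure $\gamma$ is Weil-Petersson. Throughout, $f$ and $h$ denote the conformal maps uniformizing the two components of $\m C \smallsetminus \gamma$, and $\varphi^\gamma$ the harmonic winding function of the equipotential foliation on each side of $\gamma$ given by \eqref{eq:harmonic_winding}.

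First I would establish that $\gamma$ is a Weil-Petersson quasicircle. Since $\gamma$ is chord-arc and $\varphi \in \mc E(\m C)$, the Jonsson-Wallin trace recalled in Section~\ref{sec:interplay} gives $\varphi|_\gamma \in H^{1/2}(\gamma)$. By the compatibility assumption, this trace coincides arclength-a.e. with the winding function $\tau$ of $\gamma$, so $\tau \in H^{1/2}(\gamma)$. Its Poisson extension $\mc P[\tau]$ to $\m C \smallsetminus \gamma$ then has finite Dirichlet energy, bounded by $\mc D_{\m C}(\varphi)$ via the Dirichlet principle. The expressions \eqref{eq:harmonic_winding} identify $\mc P[\tau]$ with the harmonic winding function $\varphi^\gamma$ of the equipotential family $(\g_r := f(rS^1))_{0<r<1} \cup (h(rS^1))_{r>1}$. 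Since the leaves $\g_r$ with $r \neq 1$ are images of circles under conformal maps, hence real-analytic and chord-arc, and the leaf at $r=1$ is $\gamma$ itself (chord-arc by assumption), this family is an honest foliation in the sense of Section~\ref{subsec:whole_plane_Loewner_chain}. Theorem~\ref{thm:main0} then yields $S(\rho^\gamma) = \mc D_{\m C}(\varphi^\gamma)/16 < \infty$, so by Theorem~\ref{thm:main-jordan-curve}, $\gamma$ is a Weil-Petersson quasicircle.

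Knowing $\gamma$ is Weil-Petersson, the orthogonal Dirichlet decomposition on $\m C \smallsetminus \gamma$ gives $\varphi = \mc P[\varphi|_\gamma] + \varphi_0$ with
$$\mc D_{\m C}(\varphi) = \mc D_{\m C}(\mc P[\varphi|_\gamma]) + \mc D_{\m C}(\varphi_0).$$
By compatibility, $\mc P[\varphi|_\gamma] = \mc P[\tau] = \varphi^\gamma$, hence $\mc D_{\m C}(\varphi) - \mc D_{\m C}(\varphi_0) = \mc D_{\m C}(\varphi^\gamma)$. Substituting into \eqref{eq:energy_varphi_gamma} produces the claimed identity
$$I^L(\gamma) = \mc D_{\m C}(\varphi^\gamma) + 2 \log |f'(0)/h'(\infty)| = \mc D_{\m C}(\varphi) - \mc D_{\m C}(\varphi_0) + 2 \log |f'(0)/h'(\infty)|.$$

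The main obstacle is the first stage: identifying the Poisson extension $\mc P[\tau]$ with the harmonic winding $\varphi^\gamma$ requires careful branch-choice for $\arg$ consistent with Lemma~\ref{lem:matching_trace}, and the invocation of Theorem~\ref{thm:main0} relies on verifying the foliation axioms for the equipotential family under only the chord-arc assumption on $\gamma$, particularly the continuity of $r \mapsto \g_r$ in the supremum norm across $r = 1$. An alternative route that avoids Theorem~\ref{thm:main0} and directly yields the orthogonal decomposition is to apply the complex identity Theorem~\ref{thm:complex_id_bounded} to $\psi = \ii\varphi$ (once Weil-Petersson is established): splitting $\zeta = \ii\varphi\circ f + \log(f'(z)z/f(z))$ and $\xi$ analogously into real and imaginary parts, using harmonic conjugacy of $\log|f'(z)z/f(z)|$ and $-\arg(f'(z)z/f(z)) = \varphi^\gamma \circ f$, and recognising $\Im\zeta = (\varphi-\varphi^\gamma)\circ f = \varphi_0 \circ f$, immediately recovers $\mc D_{\m C}(\varphi) = \mc D_{\m C}(\varphi^\gamma) + \mc D_{\m C}(\varphi_0)$ by conformal invariance of the Dirichlet energy.
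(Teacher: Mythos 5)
Your proposal is correct in substance, but the route the paper actually takes is the one you relegate to your final paragraph: it applies the complex identity (Theorem~\ref{thm:complex_id_bounded}) to $\psi=\ii\varphi$, uses compatibility to write $\Im\zeta=\varphi_0\circ f$ (and similarly for $\xi$), converts $\mc D_{\m D}(\log|f'(z)z/f(z)|)+\mc D_{\m D^*}(\log|h'(z)z/h(z)|)$ into $\mc D_{\m C}(\varphi^\g)$ by harmonic conjugacy, and finishes with \eqref{eq:energy_varphi_gamma}. Your primary route is genuinely different: you first upgrade chord-arc to Weil-Petersson by feeding the equipotential family of $\g$ into the energy duality Theorem~\ref{thm:main0} (together with Proposition~\ref{prop:WP-leaf}), and then only need the orthogonal decomposition $\mc D_{\m C}(\varphi)=\mc D_{\m C}(\mc P[\varphi|_\g])+\mc D_{\m C}(\varphi_0)$ plus \eqref{eq:energy_varphi_gamma}, with no harmonic-conjugacy step. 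What your route buys is that it makes explicit a step the survey proof silently elides: Theorem~\ref{thm:complex_id_bounded} is stated for Weil-Petersson quasicircles, whereas the corollary only assumes $\g$ chord-arc, so some a priori upgrade (handled in \cite{VW2}) is needed before the one-line application is legitimate. What the paper's route buys is brevity: it avoids re-verifying the foliation axioms for the equipotential family and invokes only one of the big theorems of Section~\ref{sec:foliation}.

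One caution: the step you describe as ``careful branch-choice'', namely $\mc P[\tau]=\varphi^\g$, is the real analytic content and needs more than branch bookkeeping. The function $\varphi^\g$ of \eqref{eq:harmonic_winding} is merely harmonic off $\g$, and a harmonic function need not be the Poisson integral of its non-tangential boundary values (the Poisson kernel itself has vanishing boundary values a.e.); one needs, for instance, that $\log\big(zf'(z)/f(z)\big)$ lies in $\BMOA$ for chord-arc domains so that $\arg(zf'/f)$ is represented by its boundary values, together with a Lemma~\ref{lem:VMO_constant}-type argument to rule out a $2\pi\m Z$ discrepancy, and a gluing argument across the chord-arc curve to get a single $W^{1,2}_{loc}$ extension whose Jonsson--Wallin trace on $\g$ is $\tau$. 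Note that the same identification is implicitly used in the paper's own computation, when it writes $\Im\zeta=\varphi_0\circ f$ with $\varphi_0$ the zero-trace part of $\varphi$ and when it identifies $\mc D_{\m D}\big(\arg(f'(z)z/f(z))\big)+\mc D_{\m D^*}\big(\arg(h'(z)z/h(z))\big)$ with $\mc D_{\m C}(\varphi^\g)$ via Remark~\ref{rem:harmonic_foliation}; so flagging it does not put your argument at a disadvantage, but in a complete write-up this is where the work lies.
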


\begin{remark}
As opposed to Theorem~\ref{thm:flow-line}, not every $\varphi$ admits a compatible Jordan curve. For instance, when $\varphi \equiv \pi /4$, the flow-line of the vector field $e^{\ii (\arg z  +\pi /2 - \varphi (z))} $ starting from any point in $\m C$ is a spiral which converges to $\infty$. See Figure~\ref{fig:intro_fol} and Remark~\ref{rem:geom}. In particular, not every real-valued function $\varphi$ with finite Dirichlet energy is the welding function of a foliation. It would be interesting to characterize the class of winding functions analytically.
\end{remark}

\begin{proof}
Using the notation in Theorem~\ref{thm:complex_id_bounded} and applying it to $\psi : = \ii\varphi$, we obtain
$$\zeta(z) = \ii \varphi_0 \circ f + \log \abs{ \frac{f'(z)z}{f(z)}}$$
since for $z \in S^1$, $\varphi \circ f (z) = - \arg (f'(z)z/f(z))$ by the assumption of $\varphi$ being compatible with $\g$ and \eqref{eq:winding}.  Similarly,
$$\xi(z) = \ii \varphi_0 \circ h + \log \abs{ \frac{h'(z)z}{h(z)}}.$$
Theorem~\ref{thm:complex_id_bounded} gives
\begin{align*}
\mc D_{\m C}(\varphi) & = \mc D_{\m C}(\varphi_0) + \mc D_{\m D}\left( \log \abs{ \frac{f'(z)z}{f(z)}} \right) + \mc D_{\m D^*}\left( \log \abs{ \frac{h'(z)z}{h(z)}}\right) \\
& =\mc D_{\m C}(\varphi_0) + \mc D_{\m D}\left( \arg \frac{f'(z)z}{f(z)}\right) + \mc D_{\m D^*}\left( \arg  \frac{h'(z)z}{h(z)}\right)\\
& = \mc D_{\m C}(\varphi_0) + \mc D_{\m C}(\varphi^\g)\\
& = \mc D_{\m C}(\varphi_0) + I^L(\g) -  2 \log |f'(0)/h'(\infty)|,
\end{align*}
where the last two equalities follow from Remark~\ref{rem:harmonic_foliation}. This completes the proof.
\end{proof}

\begin{cor}[Cutting identity for bounded curve]
 Let $\phi$ be a real-valued function with finite Dirichlet energy and $\g$ be a Weil-Petersson curve separating $0$ from $\infty$. Then we have the identity:
    \begin{equation}
    \mc D_\mathbb{C}(\phi) + I^L(\gamma) -  2 \log |f'(0)/h'(\infty)| = \mc D_{\mathbb{D}}(u) + \mc D_{\mathbb{D}^*}(v) ,\end{equation}
    where  
    \begin{equation}\label{eq:ppS_bounded}
     u (z)=  \phi \circ f (z)+ \log \abs{\frac{f'(z)z}{f(z)}}, \quad v (z)=  \phi \circ h (z)+ \log \abs{\frac{h'(z)z}{h(z)}}.
\end{equation}
We may view \eqref{eq:ppS_bounded} as the transformation law such that $e^{2u}  \dd A_{cyl}$ and $e^{2v}  \dd A_{cyl}$ are the pullback measures by $f$ and $h$ of $e^{2\phi} \dd A_{cyl}$, where $\dd A_{cyl} (z) = |z|^{-2} |\dd z|^2$. 
\end{cor}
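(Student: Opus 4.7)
The plan is to deduce this cutting identity as a direct application of the complex identity (Theorem~\ref{thm:complex_id_bounded}) by choosing the imaginary part of $\psi$ to be the harmonic winding function $\varphi^\g$ associated with $\g$ from Remark~\ref{rem:harmonic_foliation}.

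First, I would set $\psi := \phi + \ii \varphi^\g$. Since $\varphi^\g$ is the winding function of the harmonic foliation on both sides of $\g$, its non-tangential limits along $\g$ coincide arclength-a.e.\ with the winding defined by \eqref{eq:winding}, so $\varphi^\g$ is compatible with $\g$ in the sense required by Theorem~\ref{thm:complex_id_bounded}. Moreover, by \eqref{eq:energy_varphi_gamma},
$$\mc D_{\m C}(\varphi^\g) = I^L(\g) - 2 \log |f'(0)/h'(\infty)| < \infty,$$
so $\mc D_{\m C}(\psi) = \mc D_{\m C}(\phi) + \mc D_{\m C}(\varphi^\g) < \infty$ and Theorem~\ref{thm:complex_id_bounded} applies.

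Next, I would compute the transformed functions $\zeta$ and $\xi$ from \eqref{eq:complex_transform}. Splitting the complex logarithm as $\log w = \log|w| + \ii \arg w$ and using the identity $\varphi^\g \circ f(z) = -\arg(f'(z)z/f(z))$ from \eqref{eq:harmonic_winding}, the imaginary part of $\zeta$ cancels, leaving
$$\zeta(z) = \phi \circ f(z) + \log\left|\frac{f'(z) z}{f(z)}\right| = u(z).$$
The analogous computation with $h$ in place of $f$ yields $\xi = v$. Theorem~\ref{thm:complex_id_bounded} then gives $\mc D_{\m C}(\phi) + \mc D_{\m C}(\varphi^\g) = \mc D_{\m D}(u) + \mc D_{\m D^*}(v)$, and substituting the expression for $\mc D_{\m C}(\varphi^\g)$ produces exactly the claimed cutting identity.

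For the closing assertion about pullback measures, I would verify it by a direct change of variables: under $w = f(z)$,
$$f^*(e^{2 \phi(w)} \dd A_{cyl}(w)) = e^{2 \phi(f(z))} |f'(z)|^2 |f(z)|^{-2} |\dd z|^2 = e^{2 u(z)} |z|^{-2} |\dd z|^2 = e^{2 u(z)} \dd A_{cyl}(z),$$
matching the definition of $u$; the identical calculation with $h$ gives the statement for $v$. I do not anticipate a serious obstacle, as the substantive analytic work (traces on Weil--Petersson quasicircles, existence of the harmonic winding function, and the complex identity itself) is already carried out in Theorem~\ref{thm:complex_id_bounded} and Remark~\ref{rem:harmonic_foliation}; this corollary is essentially a repackaging of those results via the judicious choice of a compatible imaginary part for $\psi$.
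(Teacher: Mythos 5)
Your proof is correct and follows exactly the paper's argument: the paper likewise applies Theorem~\ref{thm:complex_id_bounded} to $\psi = \phi + \ii \varphi^\g$ with the harmonic winding function of Remark~\ref{rem:harmonic_foliation}, obtains $\zeta = u$, $\xi = v$, and concludes via \eqref{eq:energy_varphi_gamma}. Your added verification of compatibility and of the pullback-measure interpretation is a fine (if routine) elaboration of steps the paper leaves implicit.
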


\begin{proof}
Let $\varphi^\g \in \mc E (\m C)$ be the winding function that is harmonic in the complement of $\g$ as in \eqref{eq:harmonic_winding}.  
Applying Theorem~\ref{thm:complex_id_bounded}  to $\psi : = \phi + \ii \varphi^\g$, we obtain 
$\zeta = u, \quad \xi = v$ which implies
$\mc D_{\m C}(\phi) +  \mc D_{\m C}(\varphi^\g) = \mc D_{\mathbb{D}}(u) + \mc D_{\mathbb{D}^*}(v).$
We conclude with \eqref{eq:energy_varphi_gamma}.
\end{proof}

\section{Summary}
Let us end with a table summarizing the results presented in this survey, highlighting the close analogy between concepts and theorems from random conformal geometry and the finite energy/large deviation world. Although some results involving the finite energy objects are interesting on their own from the analysis perspective,  we choose to omit from the table those without an obvious stochastic counterpart such as results in Section~\ref{sec:application}. 

We hope that the readers are by now convinced that the ideas around large deviations of SLE are great sources for generating exciting results in the deterministic world. And vice versa, as finite energy objects are more regular and easier to handle, exploring their properties also provides a way to generate new conjectures in random conformal geometry. Rather than an end, we hope it to be the starting point of new development along those lines and this survey can serve as a first guide.

\renewcommand{\arraystretch}{1.06}

\begin{center}
\begin{longtable}{ l  l  }
 \toprule		
    \multicolumn{1}{c}{\bf SLE/GFF with $\k \ll 1$}  &   \multicolumn{1}{c}{\bf Finite energy} \\ \midrule 
  \multicolumn{2}{c}{\bf Sections~\ref{sec:intro}--\ref{sec:LDP_single}}\\
  \midrule
      Chordal $\SLE_\kappa$ in $(\domain; \bpt, \ept)$ & A chord $\g$ with $I_{\domain; \bpt, \ept} (\g) < \infty$ \\ 
         & (\emph{Definition~\ref{df:chordalLE}, Theorem~\ref{thm:main_single_LDP}})\\
            \hline 
            Chordal $\SLE_0$ in $(\domain; \bpt, \ept)$ & Hyperbolic geodesic in $(\domain; \bpt, \ept)$\\ 
            & (\emph{Remark~\ref{rem:geodesic}})\\
            \hline 
      Chordal $\SLE_\kappa$ is reversible & Chordal Loewner energy is reversible \\
      & (\emph{Theorem~\ref{thm:intro_rev}})
  \\ \hline 
       & Jordan curve $\gamma$ with $I^L(\g) <\infty$ \\
     $\SLE_\kappa$ loop  & i.e.,  a Weil-Petersson quasicircle $\g$\\
      & (\emph{Theorem~\ref{thm:intro_equiv_energy_WP}})\\
      \midrule
       \multicolumn{2}{c}{\bf Section~\ref{sec:interplay}}\\ \midrule
        Free boundary GFF $\sqrt \k \Phi$ on $\mathbb{H}$ (on $\m C$) &  $2u \in \mathcal{E}(\m H)$, i.e., $\mc D_{\m H}(u) < \infty$  ($2\varphi \in \mathcal{E}(\m C)$)   \\ \hline
              $\sqrt \k$-LQG on quantum plane $\approx e^{\sqrt \k \Phi} \dd A$ & $e^{2 \varphi} \,\dd A, \, \varphi \in \mathcal{E}(\m C)$ \\ \hline

  $\sqrt \k$-LQG on quantum half-plane on $\mathbb{H}$ & $e^{2 u} \,\dd A,  \, u \in \mathcal{E}(\m H)$
  \\ \hline
    $\sqrt \k$-LQG boundary measure on $\mathbb{R}$  & $e^{u}\,\dd x, \, u \in H^{1/2}(\mathbb{R})$ \\
     $\approx e^{\sqrt \k \Phi/2} \dd x$ & \\ \hline
       $\SLE_\kappa$ cuts an independent quantum & A Weil-Petersson quasicircle $\gamma$ cuts \\
      plane $e^{\sqrt \k \Phi} \dd A $ into  independent & $\varphi \in \mathcal{E}(\mathbb{C})$ into $u \in \mc E(\m H), v \in \mc E(\m H^*)$ and \\
     quantum half-planes $e^{\sqrt \k \Phi_1}, e^{\sqrt \k \Phi_2}$ &  $I^L(\gamma) + \mc{D}_{\m{C}}(\varphi) = \mc{D}_{\m{H}}(u) + \mc{D}_{\m{H}^*}(v)$\\
     & (\emph{Theorem~\ref{thm:welding_coupling1}})
     \\
 \hline
    Isometric welding  of  independent &  Isometric welding of $e^{u} \,\dd x$ and $e^{v} \,\dd x$, \\ 
 $\sqrt \k$-LQG  boundary measures on $\mathbb{R}$ &  $ u, v\in H^{1/2}(\m R)$ produces a Weil- \\ 
       produces $\SLE_\kappa$  & Petersson quasicircle (\emph{Theorem~\ref{thm:tuple12}})\\
 \hline
     Bi-infinite flow-line of $e^{\ii \Phi / \chi }\approx e^{\ii \sqrt \k \Phi / 2 }$ &  Bi-infinite flow-line of $e^{\ii \varphi}$ is   a Weil-\\ 
       is an $\SLE_\kappa$ loop & Petersson quasicircle (\emph{Theorem~\ref{thm:flow-line}}) \\
  \midrule 
       \multicolumn{2}{c}{\bf Section~\ref{sec:multi}}\\ \midrule
        Multichordal SLE$_\k$ in $\domain$ of link &  Multichord $\ad \g$ with $I^\a_\domain (\ad \g) <\infty$ \\
         pattern $\a$  & (\emph{Definition~\ref{df:multi_energy}, Theorem~\ref{thm:main_multi_LDP}})\\
  \hline
         Multichordal SLE$_0$ in $\m H$   & Real locus of a real rational function, \\
          & i.e., geodesic multichord  (\emph{Theorem~\ref{thm:real_rational}}) \\
  \hline
  Multichordal SLE$_\k$ pure partition & Minimal potential $\mc M^\a_\domain$ \\
  function  $\mc Z_\a$ of link pattern $\a$ & (\emph{Definition~\ref{df:potential}, Equation~(\ref{eq:intro_limit_Z}}))\\
  \hline
  Loewner evolution of a chord in   & Loewner evolution of a chord in a \\   
multichordal SLE$_\k$ &  geodesic multichord (\emph{Theorem~\ref{thm:marginal}}) \\
  \hline
  Level two null-state BPZ equation  & Semiclassical null-state equation  \\
  satisfied by  $\mc Z_\a$ &  satisfied by $\mc M^\a_{\m H}$ (\emph{Theorem~\ref{thm:null-state}})\\
  \midrule
       \multicolumn{2}{c}{\bf Sections~\ref{sec:radial_infty}--\ref{sec:foliation}}\\  \midrule
 & Loewner chain driven by $\rho \in \mc N$ with \\
    Whole-plane radial SLE$_{16/\k}$   &  Loewner-Kufarev energy $S(\rho) < \infty$; \\
    & Foliation by Weil-Petersson quasicircles\\
     & (\emph{Corollary~\ref{cor:ldp-sle}, Definition~\ref{df:rho_infty}}) \\
  \hline   
   Radial mating of trees (\emph{unknown}) & Energy duality $\mc D_{\m C} (\varphi) = 16 \, S(\rho)$\\ &(\emph{Theorem~\ref{thm:main0}}) \\
  \hline   
    & $\partial D_t$ is a Weil-Petersson quasicircle\\ The outer boundary of SLE$_{16/\k}$   &  for all $t \in \m R$ and \\ 
    is locally a SLE$_{\k}$ curve &  $I^L(\g)  = 16 \inf_{\rho} S(\rho) + 2 \log |f'(0)/h'(\infty)|$\\
   & (\emph{Theorem~\ref{thm:main-jordan-curve}}) \\
  \hline  
   Reversibility of whole-plane radial  & Loewner-Kufarev energy is reversible \\
 SLE$_{16/\k}$ (\emph{unknown})   & (\emph{Theorem~\ref{thm:main_rev}})\\
   \hline
  \bottomrule 
\end{longtable}
\end{center}

\bibliographystyle{alpha}
\bibliography{ref}

\end{document}